\documentclass[letterpaper, 11pt,  reqno]{amsart}

\usepackage[margin=1.1in,marginparwidth=1.5cm, marginparsep=0.5cm]{geometry}

\usepackage{amsmath,amssymb,amscd,amsthm,amsxtra, esint,comment}

\usepackage{kotex}
\usepackage{mathrsfs} 

\usepackage[implicit=true]{hyperref}

\usepackage{color} 
\usepackage{ulem}
\usepackage[makeroom]{cancel}

\allowdisplaybreaks[2]

\sloppy

\hfuzz  = 0.5cm 


\setlength{\pdfpagewidth}{8.50in}
\setlength{\pdfpageheight}{11.00in}

\definecolor{gr}{rgb}   {0.,   0.69,   0.23 }
\definecolor{bl}{rgb}   {0.,   0.5,   1. }
\definecolor{mg}{rgb}   {0.85,  0.,    0.85}
\definecolor{yl}{rgb}   {0.8,  0.7,   0.}
\definecolor{or}{rgb}  {0.7,0.2,0.2}

\newtheorem{theorem}{Theorem} [section]

\newtheorem{lemma}[theorem]{Lemma}
\newtheorem{proposition}[theorem]{Proposition}
\newtheorem{remark}[theorem]{Remark}


\DeclareMathOperator*{\supp}{supp}

\DeclareMathOperator{\Id}{Id}

\newcommand{\I}{\hspace{0.5mm}\text{I}\hspace{0.5mm}}
\newcommand{\II}{\text{I \hspace{-2.8mm} I} }
\newcommand{\III}{\text{I \hspace{-2.9mm} I \hspace{-2.9mm} I}}


\newcommand{\noi}{\noindent}
\newcommand{\Z}{\mathbb{Z}}
\newcommand{\R}{\mathbb{R}}

\newcommand{\T}{\mathbb{T}}

\let\Re=\undefined\DeclareMathOperator*{\Re}{Re}
\let\Im=\undefined\DeclareMathOperator*{\Im}{Im}

\let\P= \undefined
\newcommand{\P}{\mathbf{P}}

\newcommand{\E}{\mathbb{E}}

\newcommand{\F}{\mathcal{F}}

\newcommand{\al}{\alpha}
\newcommand{\be}{\beta}
\newcommand{\dl}{\delta}

\newcommand{\nb}{\nabla}

\newcommand{\Dl}{\Delta}
\newcommand{\eps}{\varepsilon}

\newcommand{\g}{\gamma}

\newcommand{\ld}{\lambda}
\newcommand{\Ld}{\Lambda}
\newcommand{\s}{\sigma}

\newcommand{\ft}{\widehat}

\newcommand{\wt}{\widetilde}
\newcommand{\cj}{\overline}
\newcommand{\dx}{\partial_x}

\newcommand{\dd}{\partial}

\newcommand{\LRA}{\Longrightarrow}

\newcommand{\too}{\longrightarrow}

\newcommand{\ta}{\theta}

\renewcommand{\o}{\omega}

\newcommand{\les}{\lesssim}
\newcommand{\ges}{\gtrsim}

\newcommand{\jb}[1]
{\langle #1 \rangle}

\newcommand{\ind}{\mathbf 1}

\newcommand{\M}{\mathcal{M}}

\newcommand{\N}{\mathbb{N}}

\newcommand{\GG}{\mathcal{G}}

\newcommand{\J}{\mathcal{J}}


\newtheorem*{ackno}{Acknowledgements}

\numberwithin{equation}{section}
\numberwithin{theorem}{section}



\newcommand{\PP}{\mathbb{P}}

\newcommand{\C}{\mathbb{C}}


\DeclareMathOperator{\Law}{Law}

\newcommand{\dr}{\theta}

\newcommand{\Ha}{\mathbb{H}_a}

\usepackage{tikz}

\usetikzlibrary{shapes.misc}
\usetikzlibrary{shapes.symbols}
\usetikzlibrary{shapes.geometric}
\tikzset{
	dot/.style={circle,fill=black,draw=black,inner sep=1pt,minimum size=0.5mm},
	>=stealth,
	}
\tikzset{
	ddot/.style={circle,fill=white,draw=black,inner sep=2pt,minimum size=0.8mm},
	>=stealth,
	}

\tikzset{decision/.style={ 
        draw,
        diamond,
        aspect=1.5
    }}

\tikzset{dia2/.style
={diamond,fill=white,draw=black,inner sep=0pt,minimum size=1mm},
	>=stealth,
	}

\tikzset{dia/.style
={star,fill=black,draw=black,inner sep=0pt,minimum size=1mm},
	>=stealth,
	}

\colorlet{symbols}{black}
\colorlet{testcolor}{green!60!black}

\def\1{\mathbf{{1}}}

\usetikzlibrary{shapes.misc}
\usetikzlibrary{shapes.symbols}
\usetikzlibrary{snakes}
\usetikzlibrary{decorations}
\usetikzlibrary{decorations.markings}
\definecolor{dblue}{rgb}{0.1, 0.1, 0.9}


\tikzset{
	root/.style={circle,fill=testcolor,inner sep=0pt, minimum size=2mm},		
	dot/.style={circle,fill=black,draw=black, solid,inner sep=0pt,minimum size=0.75mm},
	bdot/.style={circle,fill=blue,draw=dblue, solid,inner sep=0pt,minimum size=0.75mm},
		}
\colorlet{symbols}{blue!90!black}

\makeatletter
\def\DeclareSymbol#1#2#3{\expandafter\gdef\csname MH@symb@#1\endcsname{\tikz[baseline=#2,scale=0.15]{#3}}%
\expandafter\gdef\csname MH@symb@#1s\endcsname{\scalebox{0.6}{\tikz[baseline=#2,scale=0.15]{#3}}}}
\def\<#1>{\csname MH@symb@#1\endcsname}
\makeatother

\DeclareSymbol{sunset}{-3}{\draw (0,0) circle [x radius = 1.5, y radius = 1]; \draw (-1.5,0) node[dot] {} -- (1.5,0) node[dot] {};}
\DeclareSymbol{tadpole}{-3}{\draw (0,0) circle [x radius = 0.75, y radius = 1]; \draw (0,-1) node[dot] {};}

\DeclareSymbol{1}{0}{\draw[white] (-.4,0) -- (.4,0); \draw (0,0)  -- (0,1.2) node[dot] {};}
\DeclareSymbol{2}{0}{\draw (-0.5,1.2) node[dot] {} -- (0,0) -- (0.5,1.2) node[dot] {};}
\DeclareSymbol{3}{0}{\draw (0,0) -- (0,1.2) node[dot] {}; \draw (-.7,1) node[dot] {} -- (0,0) -- (.7,1) node[dot] {};}
\DeclareSymbol{4}{0}{\draw (-0.36,1.2) node[dot] {} -- (0,0) -- (0.36,1.2) node[dot] {}; \draw (-1,1) node[dot] {} -- (0,0) -- (1,1) node[dot] {};}
\DeclareSymbol{30}{-3}{\draw (0,0) -- (0,-1); \draw (0,0) -- (0,1.2) node[dot] {}; \draw (-.7,1) node[dot] {} -- (0,0) -- (.7,1) node[dot] {};}
\DeclareSymbol{31}{-3}{\draw (0,0) -- (0,-1) -- (1,0) node[dot] {}; \draw (0,0) -- (0,1.2) node[dot] {}; \draw (-.7,1) node[dot] {} -- (0,0) -- (.7,1) node[dot] {};}
\DeclareSymbol{32}{-3}{\draw (0,0) -- (0,-1) -- (1,0) node[dot] {}; \draw (0,0) -- (0,-1) -- (-1,0) node[dot] {}; \draw (0,0) -- (0,1.2) node[dot] {}; \draw (-.7,1) node[dot] {} -- (0,0) -- (.7,1) node[dot] {};}
\DeclareSymbol{20}{-3}{\draw (0,0) -- (0,-1);\draw (-.7,1) node[dot] {} -- (0,0) -- (.7,1) node[dot] {};}
\DeclareSymbol{22}{-3}{\draw (0,0.3) -- (0,-1) -- (1,0) node[dot] {}; \draw (0,0.3) -- (0,-1) -- (-1,0) node[dot] {};\draw (-.7,1) node[dot] {} -- (0,0.3) -- (.7,1) node[dot] {};}
\DeclareSymbol{202}{-3}{\draw (-.6625,0) -- (0,-1) -- (.6625,0)  {}; \draw (-1.1,1) node[dot] {} -- (-.6625,0) -- (-0.365,1) node[dot] {}; \draw (0.365,1) node[dot] {} -- (.6625,0) -- (1.1,1) node[dot] {};}

\makeatletter
\def\DeclareSymbol#1#2#3{\expandafter\gdef\csname MH@symb@#1\endcsname{\tikz[baseline=#2,scale=0.15]{#3}}}
\def\<#1>{\csname MH@symb@#1\endcsname}
\makeatother

\DeclareSymbol{X}{-2.4}{\node[dot] {};}
\DeclareSymbol{1}{0}{\draw[white] (-.4,0) -- (.4,0); \draw (0,0)  -- (0,1.2) node[dot] {};}
\DeclareSymbol{2}{0}{\draw (-0.5,1.2) node[dot] {} -- (0,0) -- (0.5,1.2) node[dot] {};}

\DeclareSymbol{sunset}{-3}{\draw (0,0) circle [x radius = 1.5, y radius = 1]; \draw (-1.5,0) node[dot] {} -- (1.5,0) node[dot] {};}
\DeclareSymbol{tadpole}{-3}{\draw (0,0) circle [x radius = 0.75, y radius = 1]; \draw (0,-1) node[dot] {};}

\DeclareSymbol{T0}{-2.7}
 { \draw (0,0) node[ddot]{};}

\DeclareSymbol{T1}{0}
 {  
 \draw (0,0) node[dot]{} -- (0,4) node[ddot] {}; 
 \draw (-3,0) node[dot] {} -- (0,4)node[ddot] {} -- (3,0) node[dot] {};
\draw[line width=1pt] (0, 4)node[ddot, label=above:$r_1$]{} 
.. controls(3,6) and (5,6) ..
(8, 4)node[ddot, label=above:$r_2$]{}
(4, 12) node[label ] {$\underline{j = 1}$};
 }

\DeclareSymbol{T2}{0}
 {\draw (0,0) node[dot]{} -- (0,4) node[ddot] {}; 
 \draw (-3,0) node[dot] {} -- (0,4)node[ddot] {} -- (3,0) node[dot] {};
 \draw (0,-4) node[dot]{} -- (0,0) node[dot] {}; 
 \draw (-3,-4) node[dot] {} -- (0,0)node[dot] {} -- (3,-4) node[dot] {};
\draw[line width=1pt] (0, 4)node[ddot, label=above:$r_1$]{} 
.. controls(3,6) and (5,6) ..
(8, 4)node[ddot, label=above:$r_2$]{}
(4, 12) node[label ] {$\underline{j = 2}$};
 }

\DeclareSymbol{T3}{0}
 {\draw (0,0) node[dot]{} -- (0,4) node[ddot] {}; 
 \draw (-3,0) node[dot] {} -- (0,4)node[ddot] {} -- (3,0) node[dot] {};
 \draw (0,-4) node[dot]{} -- (0,0) node[dot] {}; 
 \draw (-3,-4) node[dot] {} -- (0,0)node[dot] {} -- (3,-4) node[dot] {};
\draw (10,0) node[dot]{} -- (10,4) node[ddot] {}; 
 \draw (7,0) node[dot] {} -- (10,4)node[ddot] {} -- (13,0) node[dot] {};
\draw[line width=1pt] (0, 4)node[ddot, label=above:$r_1$]{} 
.. controls(4,6) and (6,6) ..
(10, 4)node[ddot, label=above:$r_2$]{}
(5, 12) node[label ] {$\underline{j = 3}$};
 }

\tikzstyle{dot1} = [ draw=  gray!00, 
 rectangle, rounded corners, fill=gray!00,  inner sep=1pt, inner ysep=3pt]

\tikzstyle{dot2} = [ draw=  black, 
ellipse, fill=gray!00,  inner sep=1pt, inner ysep=3pt]

\tikzstyle{dot3} = [ draw=  gray!00, 
ellipse, fill=gray!00,  inner sep=1pt, inner ysep=3pt]

\DeclareSymbol{T4}{0}
 {\draw (0,0) node[dot1]{$\phantom{n_2^{(1)} = n^{(2)}}$} -- (0,15) node[dot1] {$\phantom{n^{(1)}}$}; 
 \draw (-13,0) node[dot1] {$n_1^{(1)}$} -- (0,15)node[ddot] {$\phantom{n^{(1)}}$} 
 -- (13,0) node[dot1] {$n_3^{(1)}$};
 \draw (0,-15) node[dot1]{$n_2^{(2)}$} -- (0,0) node[dot1] {$\phantom{n_2^{(1)} = n^{(2)}}$}; 
 \draw (-13,-15) node[dot1] {$n_1^{(2)}$} -- (0,0)node[dot1] {$n_2^{(1)} = n^{(2)}$} -- (13,-15) node[dot1] {$n_3^{(2)}$};
\draw (40,0) node[dot1]{{$n_2^{(3)}$}} -- (40,15) node[dot3]  {$\phantom{n^{(1)} = n^{(3)}}$} ; 
 \draw (27,0) node[dot1] {$n_1^{(3)}$} -- (40,15)node[dot3]  {$\phantom{n^{(1)} = n^{(3)}}$}  -- (53,0) node[dot1] {$n_3^{(3)}$};
\draw[line width=1pt] (0, 15)node[ddot, label=above:$r_1$]{$n^{(1)}$} 
.. controls(10,23) and (27,23) ..
(40, 15)node[dot2, label=above:$r_2$] {$n^{(1)} = n^{(3)}$} ;
 }

\makeatletter
\def\DeclareSymbol#1#2#3{\expandafter\gdef\csname MH@symb@#1\endcsname{\tikz[baseline=#2,scale=0.15]{#3}}}
\def\<#1>{\csname MH@symb@#1\endcsname}
\makeatother

\begin{document}
\baselineskip = 14pt

\title[Central limit theorem]
{Central limit theorem for the focusing $\Phi^4$-measure in the infinite volume limit}


\author[K.~Seong and P. Sosoe]
{Kihoon Seong and Philippe Sosoe}

\address{Kihoon Seong\\
Department of Mathematics\\
Cornell University\\ 
310 Malott Hall\\ 
Cornell University\\
Ithaca\\ New York 14853\\ 
USA }

\email{kihoonseong@cornell.edu}

\address{Philippe Sosoe\\
Department of Mathematics\\
Cornell University\\ 
310 Malott Hall\\ 
Cornell University\\
Ithaca\\ New York 14853\\ 
USA }

\email{ps934@cornell.edu}

\subjclass[2020]{60H30, 35Q55, 81T08 }

\keywords{Gibbs measure; white noise;
soliton manifold; fluctuations;
 nonlinear Schr\"odinger equation}

\begin{abstract}
We study the fluctuations of the focusing $\Phi^4$-measure on the one-dimensional torus in the infinite volume limit. This measure is an invariant Gibbs measure for the nonlinear Schrödinger equation. It had previously been shown by B.~Rider that the measure is strongly concentrated around a family of minimizers of the Hamiltonian associated with the measure. These exhibit increasingly sharp spatial concentration, resulting in a trivial limit to first order. We study the fluctuations around this soliton manifold. We show that the scaled field under the Gibbs measure converges to white noise in the limit, identifying the next order fluctuations predicted by B.~Rider.
\end{abstract}



\maketitle


\tableofcontents

\setlength{\parindent}{0mm}
\setlength{\parskip}{6pt}


\section{Introduction}


\subsection{Implications of the main result}

In this paper, we study the fluctuations of the focusing $\Phi^4$-measure in the infinite volume limit. This measure has the following formal expression
\begin{align}
d\rho_L(\phi)=Z_L^{-1} e^{\frac{1}{4}\int_{\T_{L}}|\phi(x)|^4 dx-\frac 12\int_{\T_L}|\dx \phi(x)|^2 dx} \prod_{x\in \T_L} d\phi(x)
\label{Gibbs1}.
\end{align}

\noi 
Here, $Z_L$ is the partition function, $\T_L=\R / 2L\Z$ is a dilated torus of side length $2L>0$, and $\prod_{x\in\T_L}d\phi(x)$ represents the (non-existent) Lebesgue measure on fields $\phi: \T_L \to \C$. 
The Gibbs measure \eqref{Gibbs1} can be understood as an invariant measure for the following Hamiltonian PDE
\begin{align}
i \partial_t u + \dx^2 u +  |u|^2 u=0,
\label{NLS}
\end{align}

\noi 
known as the nonlinear Schrödinger equation (NLS) on $\T_L$, with the Hamiltonian
\begin{align}
H_L(\phi)=\frac 12 \int_{\T_L} |\dx \phi|^2 dx-\frac 14 \int_{\T_L} |\phi|^4 dx.
\label{Ham0}
\end{align}

\noi 
The nonlinear term in the Hamiltonian 
$H(\phi)$ is said to be focusing because it appears with a negative sign in \eqref{Ham0}. As a consequence, $H(\phi)$ is unbounded from below: it tends to 
$-\infty$ along certain directions in the phase space.  Due to this focusing nature, a rigorous construction of the measure \eqref{Gibbs1} 
\begin{align*}
d\rho_L(\phi)=Z_{L}^{-1}e^{-H_L(\phi)}\prod_{x \in \T_L} d\phi(x)
\end{align*}

\noi 
requires a mass cutoff, as in \eqref{Gibbs2}. The study of this Gibbs measure was initiated in seminal works of Lebowitz, Rose, and Speer \cite{LRS}, McKean and Vaninsky \cite{McKVa}, and continued by Bourgain \cite{BO94}, who confirmed that the Gibbs measure, suitably interpreted, is invariant under the flow of the equation \eqref{NLS}, as well as Brydges and Slade \cite{BS}, who studied the two-dimensional case. The original inspiration for \cite{LRS} was an analogy with equilibrium statistical mechanics for Hamiltonian systems. A major later development in this area was the construction, achieved by Burq and Tzvetkov \cite{BT1, BT2}, of global solutions to nonlinear wave equations in regimes where the equation is deterministically ill-posed, using random initial data and Bourgain’s methods. This approach has attracted significant attention in recent years. See Subsection \ref{SUBSEC:motiv}. 

Most of the works mentioned above use invariant Gibbs measures as a tool for proving global wellposedness for various classes of dispersive equations. The intrinsic properties of the invariant measures themselves have received more limited attention, especially in non-compact situations. This is because on short scales, which are most relevant to the wellposedness theory, sample paths have a Brownian structure. Understanding the structure of these measures as the volume tends to infinity is more delicate. 

In the defocusing case where one replaces the negative sign in the interaction \eqref{Ham0} by a positive one, one expects convergence to an analogous measure on $\mathbb{R}$ \cite{BL}. We note that the details of this convergence and the behavior of the corresponding invariant solutions have not been spelled out in all cases relevant to the analysis of PDEs. See \cite{BO20} and  \cite{FKV} for some examples in the case of the NLS, KdV and mKdV equations.

In the focusing case, McKean \cite{McK1} initially raised the question of the nature of the infinite volume limit. Rider \cite{Rider} realized that the concentration of measure due to the presence of solitons leads to triviality of the limit. This motivated the later investigations by Tolomeo–Weber \cite{TW} for Gibbs measures associated to the continuum NLS, and those of Rider \cite{Rider1},  Chatterjee–Kirkpatrick \cite{ChaKirk}, Chatterjee \cite{Chatt}, and Dey–Kirkpatrick–Krishnan \cite{DKK} for discrete models. The present work further advances this line of research.

Our interest is in the fluctuation behavior in the large scale limit in one dimension. For two and higher dimensions, a fundamental obstruction arises. As shown by Brydges and Slade \cite{BS}, the construction of the focusing Gibbs measure is not possible in the continuum, that is, there exists no meaningful probability distribution when  $d \ge 2$ corresponding to the two dimensional analog of \eqref{Ham0}. See also \cite{OSeoT}. This dimensional constraint naturally motivates a detailed analysis of the one-dimensional case, where the continuum measure is well-defined. Existing results for lattice models, further discussed in Section \ref{SEC:prers}, suggest that similar fluctuation behavior occurs in those models when they are in the appropriate phase. 

The main result of this article has the following implications:

\begin{itemize}
\item[(i)]  We resolve a conjecture of Rider \cite{Rider} by proving that the fluctuation behavior of the focusing Gibbs measure is characterized by white noise in the infinite volume limit.

\medskip 

\item[(ii)] 
By analyzing fluctuations of the Gibbs measure around the soliton manifold (a family of minimizers of the Hamiltonian \eqref{Ham0} on $\R$), our main theorem provides evidence for a generic statistical behavior of solutions of the equation \eqref{NLS}
\begin{align*}
u(t) \approx \text{soliton}+L^{-1}\cdot\textup{white noise}.
\end{align*}

\noi
This is consistent with the expectation that global solutions to NLS \eqref{NLS} asymptotically decompose into a localized (soliton-like) component and a small radiation term.

\medskip 

\item[(iii)]

We highlight a contrast in fluctuation behavior between the focusing Gibbs measure \eqref{Gibbs1} and the standard $\Phi^4$ quantum field theory, whose fluctuations are described by the Gaussian free field rather than white noise. In particular, the scaling required to observe Gaussian fluctuations in the focusing case differs from that in the $\Phi^4$ model.


\medskip

\item[(iv)] Our method is robust and can be extended to study fluctuation behavior for other continuum focusing Gibbs measures in the infinite volume limit, such as those associated with the KdV \cite{BO94} and Zakharov models \cite{BO94Zak}, where Gibbs measure constructions are also possible. 

\end{itemize}

As explained earlier, the study of the infinite volume limit behavior of the focusing Gibbs measure was initiated by McKean \cite{McK1} and later developed by Rider \cite{Rider} and Tolomeo–Weber \cite{TW}. These works primarily focus on describing the concentration behavior of the focusing Gibbs measure on $\T_L$ as $L\to \infty$. Here we take the next step, analyzing the fluctuations of the Gibbs measure in the infinite volume limit. In particular, the nature of the fluctuations is somewhat unexpected since in many quantum field theoretic and statistical field models, fluctuations are typically described by a Gaussian free field (or the underlying base field), rather than the rougher white noise; see \cite{ER1, ER2} and Subsection~\ref{SUBSEC:motiv}.

The proofs are based on the integration formula in Lemma \ref{LEM:chan}, inspired by Ellis–Rosen \cite{ER2}, and a careful study of the resulting conditional  Gibbs measure (with the $L^2$-cutoff explained below \eqref{Gibbs2}), large deviation theory, and the geometry of the soliton manifold.

As already stated, the construction of the focusing (continuum) Gibbs measure is not possible when $d \ge 2$, and thus our methods do not directly apply to extend our fluctuation result to higher dimensions. It would be interesting to investigate whether similar fluctuation behavior in the infinite volume limit can be established for discretized models on higher-dimensional lattices.



\subsection{Main result}\label{SUBSEC:result}

Before stating the main theorem, we first review the construction of the Gibbs measure \eqref{Gibbs1}.

Compared to the well-studied $\Phi^4$ model in quantum field theory, which corresponds to the defocusing case with a sign-definite Hamiltonian satisfying $H(\phi) \ge 0$, the focusing Gibbs measure \eqref{Gibbs1}  cannot be normalized as a probability measure. This is because the Hamiltonian \eqref{Ham0} is unbounded from below, and thus the density $e^{\frac 14 \int_{\T_L} |\phi|^4 dx }$ is not integrable with respect to the periodic Wiener measure $e^{-\frac 12 \int_{\T_L} |\dx \phi|^2 dx   }\prod d\phi(x)  $. 
In \cite{LRS}, Lebowitz, Rose, and Speer proposed introducing an additional $L^2$-cutoff to recover integrability as follows\footnote{
Here $Z_L$ denotes different normalizing constants that may differ from one line to line.}:
\begin{align}
d\cj \rho_L(\phi)=Z_L^{-1} e^{-H_L(\phi)} \ind_{ \{ M_L(\phi) \le LD \} } \prod_{x\in \T_L} d\phi(x),
\label{Gibbs2}
\end{align}

\noi
where $M_L(\phi)$ denotes the mass functional 
\begin{align}
M_L(\phi)=\int_{-L}^{L} |\phi|^2 dx.
\label{L2mass}
\end{align}


\noi 
The Gibbs measure \eqref{Gibbs2} is called a mixed ensemble, as it is canonical in the energy $H_L(\phi)$ and microcanonical in particle number $M_L(\phi)$, which is informally understood as a conditional distribution.   This $L^2$-mass truncation is particularly appropriate for the study of the statistical mechanics of NLS \eqref{NLS}, since the mass $M_L(\phi)$ is preserved by the flow of NLS \eqref{NLS}. In the thermodynamic limit $L\to \infty$, the $L^2$-mass $M_L(\phi)=\| \phi \|_{L^2 (\T_L)}^2$ and the volume $L$ grow proportionally, while the ``particle density" $D>0$  remains fixed.

Notice that the massless Gaussian field $e^{-\frac 12 \int_{\T_L}|\dx \phi|^2 dx }\prod_{x\in \T_L} d\phi(x)$, corresponding to the kinetic energy part of the Hamiltonian \eqref{Ham0}, is defined on the space of mean-zero fields to avoid the issue at the zeroth frequency. However, since the NLS \eqref{NLS} does not preserve the spatial mean under its flow, the massive Gaussian field 
\begin{align*}
e^{-\frac 12 \int_{\T_{L} }|\dx \phi|^2 dx -\frac 12 \int_{\T_L}|\phi|^2 dx  }\prod_{x \in \T_L} d\phi(x)
\end{align*}

\noi 
provides a more natural base measure for the Gibbs measure 
\begin{align}
d\rho_L(\phi) &= Z_L^{-1} e^{-H_L(\phi) - M_L(\phi)} \ind_{ \{ M_L(\phi) \le LD \} } \prod_{x \in \T_L} d\phi(x)
\label{Gibbs22}
\end{align}

\noi
because, in view of the conservation of the $L^2$-mass under the flow of NLS \eqref{NLS}, the corresponding Gibbs measure is invariant under the dynamics.

The main result of this paper is a central limit theorem for the Gibbs measure $ \rho_L$ \eqref{Gibbs22} in the infinite volume limit $L \to \infty$. More precisely, the field $\phi$ sampled from the Gibbs measure, appropriately scaled, exhibits Gaussian fluctuations around a soliton manifold  
\[\{e^{i\dr} Q(\cdot -x_0) \}_{x_0\in \R, \dr \in [0,2\pi]},\]
a family of minimizers of the variational problem
\begin{equation}\label{eqn: variational}
H(Q_{x_0,\dr})=\inf_{\substack{\phi\in H^1(\mathbb{R}),\\ \| \phi \|_{L^2(\R)}^2\le D}} H(\phi),
\end{equation}

\noi 
where   $Q_{x_0,\dr}:= e^{i\dr} Q(\cdot-x_0) $   and 
\begin{align}
H(\phi)=\frac 12 \int_{\R} |\dx \phi|^2 dx-\frac 14 \int_{\R} |\phi|^4 dx.
\label{HAmR}
\end{align}

\noi 
We begin by stating the main theorem of this paper, which describes the asymptotic fluctuations of the Gibbs measure $\rho_L$ in \eqref{Gibbs22}. In the following statement, $\mu>0$ is a fixed but arbitrary parameter and the weighted Sobolev spaces $H_{\mu}^s(\R)$ are as in Definition \ref{wesob} below.

\begin{theorem}\label{THM:1}
Let $s<-\frac 12$, $D>0$, and let $F$ be a bounded and continuous function on $H^{s}_\mu(\R)$. Then, we have
\begin{align}
\int F\big(L(\phi-\pi_L(\phi))\big) \rho_L(d\phi)  \too \int F(\phi) \nu(d\phi)
\label{THM1}
\end{align}

\noi
as $L \to \infty$. Here, $\pi_L$ is the projection map in $H^1(\T_L)$ onto the soliton manifold $\M_L$ 
\begin{align}
\M_L:=\big\{ e^{i \dr} \eta(\cdot-x_0) LQ(L(\cdot-x_0)): \dr \in [0,2\pi], \; x_0 \in \T_L  \big\}, 
\label{solm0}
\end{align}

\noi 
defined in \eqref{project}, where $\eta$ is the cutoff function\footnote{Here, the cutoff function is necessary to ensure that the solitons preserve the periodic boundary condition and that the manifold remains smooth.} in \eqref{cutoffeta},  and $\nu$ is the white noise measure on $H^s_\mu(\R)$ 
\begin{align*}
\nu(d\phi)=Z^{-1} e^{-\frac{1}{2} \int_{\R} |\phi|^2 dx } \prod_{x\in \R} d\phi(x).
\end{align*}

\noi
Hence, the scaled field under the Gibbs measure $\rho_L$ converges in distribution to white noise in the infinite volume limit.

\end{theorem}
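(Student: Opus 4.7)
The plan is to expand the focusing Gibbs measure around the soliton manifold $\M_L$ using modulation coordinates, and to exploit the combined effect of the Boltzmann factor and the $L^2$-cutoff to identify the effective Gaussian structure of the rescaled fluctuations. First, I would write $\phi = e^{i\theta}\eta(\cdot - x_0) Q_L(\cdot - x_0) + \psi$ with $Q_L := LQ(L\cdot)$ and $\psi$ orthogonal to $T_{\pi_L\phi}\M_L$ in $H^1(\T_L)$; Rider's concentration result guarantees this decomposition is valid on a set of $\rho_L$-probability tending to $1$. Applying the Ellis--Rosen-type integration formula of Lemma \ref{LEM:chan} and using translation/phase invariance to fix $(x_0, \theta) = (0,0)$, the problem reduces to analyzing the conditional distribution of $\psi$ on the transverse slice.

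On the slice, I would further split $\psi = \alpha Q_L + \psi^\perp$ with $\alpha \in \R$ and $\psi^\perp \perp Q_L$. The mass cutoff $M_L(\phi) \leq LD = M_L(Q_L)$ becomes the geometric constraint $(1+\alpha)^2 + \|\psi^\perp\|_{L^2}^2/(LD) \leq 1$, pinning $\alpha$ into an interval whose upper endpoint is $-\|\psi^\perp\|_{L^2}^2/(2LD) + O(\|\psi^\perp\|^4/L^2)$. Expanding $H_L + M_L$ around $Q_L$ and using the scaled soliton equation $-Q_L'' + L^2 Q_L - Q_L^3 = 0$, the linear term equals $(2-L^2)\alpha\cdot LD$ and the quadratic form is $\tfrac{1}{2}\langle (-\Delta + 2 - 3 Q_L^2) \psi, \psi\rangle$. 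Since the Gibbs weight strongly favors $\alpha$ near the upper endpoint, the large prefactor $(2-L^2)\sim -L^2$ combined with the upper endpoint $\alpha^\ast \sim -L^{-1}\|\psi^\perp\|^2/D$ produces, after Laplace integration in $\alpha$, an effective quadratic penalty of order $\tfrac{1}{2} L^2 \|\psi^\perp\|_{L^2}^2$. Rescaling $\psi^\perp = L^{-1} w$ turns the effective action into $\tfrac{1}{2}\langle (I + L^{-2}(-\Delta + 2 - 3 Q_L^2)) w, w\rangle$ plus subleading corrections, which converges as $L\to\infty$ to the white noise action $\tfrac{1}{2} \|w\|_{L^2(\R)}^2$.

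To conclude, I would verify convergence of characteristic functionals: for $f \in C_c^\infty(\R)$, establish $\E_{\rho_L}\big[\exp(i\langle L(\phi - \pi_L\phi), f\rangle)\big] \to \exp(-\tfrac{1}{2}\|f\|_{L^2(\R)}^2)$, which is the characteristic functional of white noise on $\R$. Tightness in $H^s_\mu(\R)$ for $s < -1/2$ then follows from uniform negative-Sobolev moment bounds on $L\psi$, extracted from the positive definiteness of the effective quadratic form. The main obstacle I expect is controlling the higher-order nonlinear terms in the expansion of $H_L + M_L$: the cubic $\int Q_L (\psi^\perp)^3$ and quartic $\int (\psi^\perp)^4$ contributions scale formally as $O(L^{-3})$ and $O(L^{-4})$ after the $\psi^\perp = L^{-1}w$ rescaling, but their suppression requires a priori $L^p(\T_L)$ bounds on $w$ uniform in $L$, which I would obtain by a bootstrap seeded by the Gaussian core. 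A further subtlety is the singular potential $-3Q_L^2$, concentrated on scale $L^{-1}$ with height $L^2$: its associated discrete spectral modes must be separated from the continuum via the soliton-adapted decomposition to rule out a spurious localized contribution in the limit.
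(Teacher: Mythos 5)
Your skeleton --- modulation coordinates around $\M_L$, the Ellis--Rosen formula of Lemma \ref{LEM:chan}, pinning of the $Q_L$-component by the mass cutoff, a Laplace-type integration in the pinned variable, then characteristic functionals plus tightness --- is essentially the route the paper takes. But two steps you elide are genuine gaps. First, you fix $(x_0,\theta)=(0,0)$ ``by translation/phase invariance'' and never return to the modulation parameters. Invariance only says that the recentered fluctuation paired with the \emph{translated} test function $f(\cdot+x_0)$ has an $x_0$-independent conditional law; the observable $\langle L(\phi-\pi_L\phi),f\rangle$ with $f$ fixed genuinely depends on the relative position of $\supp f$ and the soliton. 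Concretely, the constrained component $\alpha\approx-\|\psi^\perp\|_{L^2}^2/(2LD)$ contributes, after your rescaling, a term of order one (not $o(1)$) to the tested statistic whenever $\supp f$ meets the soliton location --- in the paper's variables this is the term $t^{+}\jb{\g_L,g}=O(L^{1/2})\cdot O(L^{-1/2})$, which is only shown to vanish under $\textup{dist}(\supp g,x_0)>0$ (Lemma \ref{LEM:remvit}), and the Gaussian-limit computation for the transverse part (Proposition \ref{PROP:glim}, Lemma \ref{LEM:varest}) is likewise proved only for test functions supported away from $x_0$. The theorem is rescued by keeping the $\mathrm{d}x_0\,\mathrm{d}\dr$ integration and using that $x_0$ is uniformly spread over $\T_L$, so the exceptional set has relative measure $|\supp f|/L\to 0$ (Proposition \ref{PROP:CHAa}); your proposal has no substitute for this averaging step.

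Second, after the Laplace integration in $\alpha$ you treat the outcome as if it simply added the quadratic penalty $\tfrac12 L^2\|\psi^\perp\|_{L^2}^2$ to the action, making the transverse field exactly Gaussian with an effective covariance. What the integration actually produces (Lemma \ref{lem: replace}) is a density of the form $e^{c_0(t^+)^2+b(\psi^\perp)t^+}$ with $t^{+}\simeq-\|\psi^\perp\|_{L^2}^2/(2L^{3/2})$ in the rescaled variables: a nonquadratic functional of the field, of size $e^{O(L)}$ on the bulk of the Gaussian measure, and a priori correlated with the linear statistic $\langle\psi^\perp,f_L\rangle$ you want to test. Proving that this weight decorrelates from the observable, so that it cancels between numerator and denominator, is the technical core of the argument (Section \ref{sec: Gaussian-limit}), and it requires quantitative a priori events $\|\psi^\perp\|_{L^2}^2\lesssim L^2$ and $\|\psi^\perp\|_{L^\infty}\lesssim L^{1/2}$ with exponential tail and moment-generating-function bounds; this is exactly where your ``bootstrap seeded by the Gaussian core'' must be made precise, both for the cubic/quartic errors and for the tightness moments. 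A smaller point: the rescaled soliton solves $-Q_L''+\Ld L^2Q_L-Q_L^3=0$ with the Lagrange multiplier $\Ld=\Ld(D)$ of \eqref{Lagmul}, not $L^2$, so $\Ld$ should be carried through your linear term and effective mass rather than silently normalized to $1$.
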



Theorem \ref{THM:1} shows Gaussian fluctuations of the focusing $\Phi^4$-measure around the soliton manifold in the infinite volume limit $L\to \infty$. This fluctuation behavior was predicted by B. Rider in \cite{Rider}. Before discussing the fluctuations, we briefly review the first-order behavior of the focusing $\Phi^4$ measure, specifically the concentration in the infinite volume limit $L \to \infty$.

In \cite{Rider}, Rider proved a large deviation estimate identifying the asymptotic behavior of the free energy as $L\to \infty$ (Proposition \ref{PROP:con}). He found that, as $L \to \infty$, the leading-order paths under the Gibbs measure $\rho_L$ concentrate near a single soliton $e^{i\dr}LQ(L(\cdot - x_0))$ with height $L$ and width $\frac 1L$. The translation invariance of the measure $\rho_L$ implies that the position $x_0$ of this peak is uniformly distributed over the circle. These two facts combined result in the measure converging weakly to a delta function on the zero path: $\rho_L \to \dl_0$ as $L\to \infty$. Rider's result was refined by Tolomeo and Weber in \cite{TW}. For further details on other studies of the infinite volume limit of the Gibbs measure $\rho_L$ in \eqref{Gibbs22} and a comparison with our main theorem, see Remarks \ref{REM:RIDER}, \ref{REM:CHATT}, \ref{REM:TOLWEB}, and \ref{REM:DKK}.




Here we address the question of the fluctuations away from the mean behavior. Given the collapse, with the typical path tending to 0, one looks for a scaling $\lambda_L \to \infty$ to ensure a nontrivial limit law for the scaled field $\lambda_L(\phi - e^{i\dr}LQ(L(\cdot - x_0)))$ under the Gibbs measure $\rho_L$. We show that with the scaling $\lambda_L = L$, under the ensemble $\rho_L$, the field exhibits a general behavior
\begin{align}
\phi \approx e^{i \dr} LQ(L(\cdot-x_0)))+L^{-1}\cdot\textup{white noise}
\label{eqn:des1}
\end{align}

\noi
as $L\to \infty$, where $\dr \in [0, 2\pi]$ and $x_0\in \T_L$ are independent and uniformly distributed random variables. The surprising fact here is that the fluctuation is of a different nature than the initial base field, which has the regularity of Brownian motion (recall that white noise has regularity $-\frac 12-$ in the classical Sobolev or H\"older scales, while the Brownian path has regularity $\frac 12-$). Compare this, for example, to fluctuations of the $\Phi^4$ model in quantum field theory, where the fluctuations behave like the base field, namely the Gaussian free field. For more details on these fluctuations in the low temperature limit, see \cite{ER2, ER1, GST24} and Subsection \ref{SUBSEC:motiv}. We also note that for the $\Phi^4$ model in quantum field theory, the infinite-volume limit is qualitatively influenced by the temperature parameter $\be>0$.  In contrast, theorem \ref{THM:1} is true regardless of the temperature scale for the temperature-dependent ensemble $e^{-\be H(\phi)} \prod_{x} d\phi(x)$. In other words, the system does not exhibit a phase transition.

We emphasize that the choice of scaling $L$  in \eqref{THM1} is crucial for obtaining the white noise fluctuation around the soliton manifold.  Notably, this scaling differs from the classical square root scaling that appears in central limit theorem behavior. This reflects the distinct fluctuation nature of the focusing Gibbs measure, in contrast to the standard $\Phi^4$ quantum field theory, where the Gaussian fluctuation (given by the Gaussian free field) arises under square root scaling. See Subsections~\ref{SUBSEC:strucpf} and \ref{SUBSEC:motiv} for details on the choice of scaling $L$ and further discussion.

\begin{remark}\rm 
Theorem \ref{THM:1} also applies when the Gibbs measure in \eqref{Gibbs22}, with the massive Gaussian field as the base field, is replaced by the one in \eqref{Gibbs2}, with the massless Gaussian field as the base field.
\end{remark}


\subsection{Related results}\label{SEC:prers}

\begin{remark}\rm \label{REM:RIDER}

In \cite{Rider1}, Rider studied a non-trivial limit for the scaled field $\sqrt{L}\phi$ in a discretized approximation of the Gibbs measure in \eqref{Gibbs2}. He proved white noise fluctuations of this field, but only for sufficiently large densities $D \gg 1$. In particular, Rider stated in \cite{Rider1}, ``Now such a computation in the infinite dimensional diffusion ensemble is beyond us".  Our proof of Theorem \ref{THM:1} directly handles the ensemble in the continuum, with no restriction on the magnitude of the density $D$ in \eqref{Gibbs2}. 


\end{remark}

\begin{remark}\rm \label{REM:CHATT}

In \cite{Chatt}, Chatterjee studied the long-term behavior of the discretized version of PDE \eqref{NLS} using microcanonical invariant measures. He showed that, in a joint infinite volume and continuum limit, a typical function in the ensemble decomposes into a ``visible" part, which is close to a single soliton in the $L^\infty$ distance, and an ``invisible" or a radiating part that is small in the $ L^\infty$ norm. 

Our result in Theorem \ref{THM:1} and the representation \eqref{eqn:des1} can be interpreted as providing a precise description of the radiation part in \cite{Chatt} as $L^{-1} \cdot $ white noise. For more details on the long-term behavior of PDE \eqref{NLS} in terms of solitons and radiation, and a comparison between deterministic and probabilistic initial data, see Subsection \ref{SUBSEC:motiv}.

\end{remark}

\begin{remark}\rm \label{REM:TOLWEB}
In \cite{TW}, Tolomeo and Weber studied the infinite volume limit of the Gibbs measure, emphasizing the role of the size of nonlinear interaction. Specifically, they studied the following Gibbs measure with a massive Gaussian free field: for any $\be,\g,\al>0$
\begin{equation}\label{eqn: TW-measure}
d\rho_{L,\g}(\phi)=Z_{L}^{-1}e^{\frac \be{L^\g} \int_{\T_L} |\phi|^4  dx}  \ind_{ \{M_L(\phi) \le LD \}} d\mu_{L,\al},
\end{equation}

\noi
where $\mu_{L,\al}$ denotes the massive free field with covariance
$(-\dx^2+\al)^{-1}$ on $\T_L$. Here, $\frac{\be}{L^\g}$ represents the strength of nonlinear interactions,  and $\al>0$ is a mass term. Tolomeo and Weber consider more general power nonlinearities $|\phi|^p$, $2<p<6$. For clarity we only discuss the case $p=4$ which we also treat in this paper.  The Gibbs measure $\rho_{L,\g}$ exhibits different behaviors in the limit $L\to \infty$, depending on the size of $\frac{\be}{L^\g}$ in front of the nonlinear interaction.

When $\g<1$ (strong nonlinearity), the Gibbs measure $\rho_{L,\g}$  converges weakly to the delta measure on the zero path, that is, $\rho_{L,\g} \to \dl_0$ as $L\to \infty$.

When $\g>1$ (weak nonlinearity), the Gibbs measure $\rho_{L,\g}$ converges weakly to an Ornstein-Uhlenbeck measure $\mu_{\text{OU}}$ on $\R$ as $L\to \infty$. Thus, in this regime, the nonlinearity no longer plays a role in the limit. We note that this seemingly intuitive fact is by no means trivial to prove.\footnote{Tolomeo and Weber prove this result under the assumption that the mass density $D$ in the constraint is larger than $1/2\sqrt{\alpha}$, the average mass density of the Ornstein-Uhlenbeck process. By the equivalence of ensembles heuristic, we expect that the convergence can be extended beyond this case, provided one adjusts the mass of the limiting OU process to reflect the constraint.}

When $\g=1$ (critical case), the limiting behavior is determined by the size of the parameter $\be$. Specifically, there exist $\beta_0\ll 1 \ll \be_1$ such that for $\be < \be_0$, $\rho_{L,1}$ converges to $\mu_{\text{OU}}$ weakly under the further density condition $D>\frac 1{2\sqrt{\al}}$, while  for $\beta> \beta_1$, no limit point of $\rho_{L,1}$ is equal to $\mu_{\text{OU}}$. 

The threshold $\gamma=1$ can be guessed heuristically by considering the effect of inserting a rescaled, smooth (``soliton") profile of the form
\[\varphi_L(\cdot):=L^a\varphi(L^b \cdot)\]
in the energy 
\begin{align*}
H_{\beta}(\phi)&=\int_{\T_L} |\partial_x \phi|^2 dx-\frac{\be}{L^\g} \int_{\T_L} |\phi|^4  dx+\frac{\alpha}{2}\int_{\T_L} |\phi|^2 dx\\
&=H_{0,\beta}(\phi)+\frac{\alpha}{2}\int_{\T_L} |\phi|^2 dx 
\end{align*}

\noi 
corresponding to \eqref{eqn: TW-measure}.
The mass constraint $\{M_L(\phi)\le LD\}$ leads to $a=\frac{1}{2}(b+1)$. Balancing the kinetic and potential terms requires $b=1-\g$ and so\footnote{The two $H_{0,\beta}$ on the left and right hand sides are defined on different intervals due to the change of variables. However, we ignore this distinction here and focus only on their size.  }
\begin{align*}
H_{0,\beta}(\varphi_L)= L^{3-2\g}H_{0,\beta}(\varphi)\sim L^{3-2\g},
\end{align*}

\noi
in which case the energy scales as
\begin{align*}
H_{\beta}(\varphi_L)= L^{3-2\gamma}H_{0,\beta}(\varphi)+O(L)\sim L^{3-2\g}+O(L).
\end{align*}


\noi 
Only when $\g=1$ does the energy scale linearly in volume, making it comparable to that of the original Ornstein–Uhlenbeck process, whose Hamiltonian with respect to a Brownian loop includes the additional $L^2$ term. In this critical regime, $\beta$ becomes significant, and depending on its magnitude, two distinct behaviors emerge: one featuring soliton formation and the other not. See the next remark \ref{REM:DKK} for a discussion of this critical case in the discrete setting. A detailed analysis of the critical case $\g=1$ for the one-dimensional continuum model is forthcoming in the work of Tolomeo and Weber \cite{TW-upcoming}.




In Theorem \ref{THM:1} we study the next-order fluctuation behavior under a sufficiently strong nonlinear effect ($\g=0$ and $\be>0$) for the leading order behavior to be driven by soliton formation.  Our results show that the measure decomposes into
$L^{-1}\cdot$ white noise plus a soliton with uniform center and modulation parameters, as described in \eqref{eqn:des1}.
In contrast, in the weakly nonlinear case ($\g>1$ or $\g=1$ with $\be<\be_0$), the Ornstein-Uhlenbeck noise $\mu_{\text{OU}}$ dominates, and no nonlinear effects appear in the limit.
We expect our methods can also identify the asymptotic second-order behavior of the measure in the case $0<\g<1$. It would be interesting to investigate the fluctuations in the case $\g=1$ with $\be>\be_1$, in the presence of the soliton.

\begin{remark}\rm \label{REM:DKK}
In \cite{DKK}, Dey, Kirkpatrick and Krishnan derive the higher dimensional analog of Tolomeo and Weber's result for the critical scaling $\gamma=1$, in the setting of NLS  on the lattice.   The Hamiltonian for their model, defined  on $\Lambda_n=\{0,\ldots,n-1\}^d$, which represents a discrete unit torus in dimension $d \ge 3$, is given by
\begin{equation}\label{eqn: discrete-H}
H_{\nu, N}(\psi) := \sum_{\substack{x,x'\in \Lambda_n\\x\sim x'}}|\psi_x-\psi_{x'}|^2 - \left( \frac{\nu}{N} \right)^{(p-1)/2} \cdot \frac{2 }{p+1}\sum_{x\in \Lambda_n}|\psi_x|^{p+1},
\end{equation}

\noi 
where $N=n^d$ denotes the total number of sites in the discrete unit torus $\Ld_n$ and $\nu$
represents the strength of the nonlinearity. Under the mass restriction
\[M(\psi):=\sum_{x\in \Ld_n} |\psi_x|^2 \le N,\]
the two terms in the Hamiltonian \eqref{eqn: discrete-H} are bounded by $N$. They become proportional if $\psi$ has size $N^{1/2}$ on a set of size $O(1)$, corresponding to the soliton phase. Otherwize, the first term, representing a kinetic energy, dominates. 

Dey-Kirkpatrick-Krishnan consider the Gibbs measure on $\mathbb{C}^{\Lambda_n}$ defined by the density
\[\frac{1}{Z_N(\theta, \nu)}\frac{}{}e^{-\theta H_{\nu,N}(\psi)}\mathbf{1}_{ \{M(\psi)\le N \}}\,\mathrm{d}\psi,\]

\noi
where $\dr$ denotes the inverse temperature. They identify a phase transition, similar to that in Tolomeo and Weber's result above (Remark \ref{REM:TOLWEB}) depending on the parameters $\theta$ and $\nu$. In one regime corresponding to small $\nu$, referred to as the dispersive phase, the limiting free energy density
\[\lim_{N\rightarrow \infty} \frac{1}{N}\log Z_N(\theta,\nu)\] 

\noi 
coincides with that of a Gaussian field. In the other regime, associated with large $\nu$, the ``solitonic" phase, the free energy receives a nontrivial contribution from the nonlinear term. 
The results in \cite{DKK} do not explicitly identify the limiting processes in either phase but they strongly suggest that in the dispersive phase, the limit is a free field, whereas in the solitonic phase, the field concentrates a macroscopic amount of mass on $O(1)$ sites (perhaps on a single site). 

At the beginning of the paper, we mentioned that the construction of the focusing Gibbs measure \eqref{Gibbs2} is not possible as a meaningful probability measure in two and higher dimensions. However, by discretizing the Gibbs measure, which is clearly well-defined on a higher dimensional lattice, it becomes natural to ask whether the fluctuation behavior around the soliton manifold, as described in Theorem \ref{THM:1}, arises in the regime where soliton formation occurs.

\end{remark}



\end{remark}

\subsection{Structure of the proof}\label{sec:structure_proof}\label{SUBSEC:strucpf}

In this subsection, we outline the structure of the proof, which is divided into the following five steps.



\textbf{Step~1~(Proposition \ref{PROP:REDUC}):} By the results of Rider \cite{Rider} and Tolomeo-Weber \cite{TW}, most of the probability mass of the Gibbs measure $\rho_L$ is concentrated in the region $\{\text{dist}(\phi, \M_L)\le \dl L^\frac 12\} $, where\footnote{For simplicity, we ignore the cutoff function $\eta$ introduced in \eqref{solm0}  } $\M_L=\{e^{i\dr} LQ(L(\cdot-x_0))  \}_{\dr, x_0}$,  with distance measured in the  $L^2$ metric. See also Proposition \ref{PROP:con}. 

In Proposition \ref{PROP:REDUC}, we show that the main contribution to the measure $\rho_L$ comes from configurations where
$M_L(\phi) \approx LD$, with $M_L$ denoting   the $L^2$ mass, as defined in \eqref{mass}. Combining this with the rescaling $\phi(x) \mapsto \phi_L(x)=L\phi(Lx)$,  we restrict our analysis to the region
\begin{align}
Z_{L}^{-1}\int_{  \{   \text{dist}(\phi, \M  ) \le \dl    \} } F(L(\phi_L-\pi_L(\phi_L )))  e^{-L^3H_{L^2}(\phi)} \ind_{  \{ M_{L^2}(\phi) \approx D \} } \prod_{x\in \T_{L^2}} d\phi(x),
\label{Gibbs3}
\end{align}

\noi
where  $H_L(\phi_L)=L^3H_{L^2}(\phi)$ and  $\mathcal{M}=\{ e^{i\dr} Q(\cdot-x_0) \}_{\dr,x_0} $.
In the neighborhood $\{\text{dist}(\phi, \M  ) \le \dl \}  $, we introduce a new coordinate system, $\phi=e^{i\dr}Q(\cdot-x_0)+h=Q_{x_0,\dr}+h $, which we refer to as the orthogonal coordinate system $(x_0,\dr, h)$. Here, the axes  $x_0 \in \T_{L^2}$, $\dr \in [0,2\pi]$ are tangential to the manifold $\M$, while $h \in V^L_{x_0,\dr}$, defined in \eqref{normalsp},  is normal to the manifold and represents a small perturbation $\|h \|_{L^2(\T_{L^2})} \le \dl$.

\noi 
In these coordinates, we expand the Hamiltonian\footnote{In this discussion, for simplicity, we assume that the Hamiltonian $H$ and $L^2$-mass $M$ are defined on $\R$. In fact, as $L\to \infty$,  $H_L$ and $M_L$  on $\T_{L}$, as defined in \eqref{Ham0} and \eqref{L2mass}, approximate  $H$ and $M$ on $\R$. } 
\begin{align}
H(Q_{x_0,\dr}+h)=H(Q_{x_0,\dr})+\jb{\nb H(Q_{x_0,\dr}),h }+\frac 12 \jb{\nb^2 H(Q_{x_0,\dr})h,h }+\mathcal{E}(Q_{x_0,\dr},h ),
\label{EXPHa}
\end{align}

\noi 
where $\mathcal{E}(Q_{x_0,\dr},h )$ represents a higher-order error term. Concerning this expansion, we make the following key observations:

\begin{enumerate}
\item (First variation): The Lagrange multiplier method for the variational problem in \eqref{eqn: variational} implies that
\begin{align*}
\jb{\nb H(Q_{x_0,\dr}),h }= \Ld \jb{\nb M (Q_{x_0,\dr}),h }=\Ld\jb{Q_{x_0,\dr},h},
\end{align*}

\noi 
where $\Ld>0$. See \eqref{Lagmul}.

\smallskip

\item (Cutoff constraint):  The $L^2$ cutoff restriction $\|Q_{x_0,\dr}+h \|_{L^2}^2 \approx D $ in \eqref{Gibbs3} implies
\begin{align*}
\jb{Q_{x_0,\dr}, h } \approx -\frac 12 \|h \|_{L^2}^2,
\end{align*}

\noi
given that $\|Q_{x_0,\dr} \|_{L^2} =D$ from Proposition \ref{PROP:Min}.

\smallskip
\item (Second Variation): The second variation is given by 
\begin{align*}
\nb^{2}H(Q_{x_0,\dr})=-\dx^2-3Q_{x_0,\dr}^2.
\end{align*}

\end{enumerate}

\noi 
Combining (1), (2), and (3), the expansion \ref{EXPHa} shows that $h$ is approximately distributed as Gaussian with covariance operator $A=(-\dx^2-3Q_{x_0,\dr}^2+\Ld)^{-1}$, the inverse of a Schr\"odinger operator. 
Using the new coordinate system $(x_0,\dr,h)$ and the expansion \eqref{EXPHa}, we find
\begin{align}
\eqref{Gibbs3}\approx &Z_{L}^{-1} \int_{x_0 \in \T_{L^2} } \int_{\dr \in [0,2\pi]} \int_{ \| h\|_{L^2} \le \dl } F(L^2 h(L\cdot)) e^{ \mathcal{E}(Q_{x_0,\dr}, h)  } e^{-\frac {L^3}2 \jb{ A^{-1} h,h  }} \notag \\
&\hphantom{XXXXXXXXXXXXXXX} \cdot \ind_{ \{  \jb{Q_{x_0,\dr},h }  \approx -\frac 12 \|h \|_{L^2}^2  \}  }\prod_{x\in \T_{L^2}} dh(x)\, d\s(x_0,\dr),
\label{average}
\end{align}

\noi
where $d\s$ is a surface measure on the soliton manifold. See also Lemma \ref{LEM:chan}.

\noi 
Here, $Q_{x_0,\dr}$ is localized at $x_0$ with an exponentially decaying tail. Consequently,  $\nb^2 H(Q_{x_0,\dr})=-\dx^2-3Q_{x_0,\dr}^2  \approx -\dx^2 $ away from $x_0$, an operator for which the Green's function does not exhibit correlation decay. On the other hand, the two-point function (kernel) associated to  $A$ decays exponentially due to the positive mass term $\Ld>0$.

\textbf{Step 2 (Lemma \ref{lem: replace}, Proposition \ref{PROP:main}):} 
Applying the rescaling $h \mapsto L^{-\frac 32}h$ and controlling the error term $\mathcal{E}(Q_{x_0,\dr}, L^{-\frac 32}h)$, we rewrite the expression as follows:
\begin{align}
&Z_{x_0,\dr}^{-1}\int F(L^\frac 12 h(L\cdot))  e^{-\frac 12 \jb{A^{-1}h,h } } \ind_{ \{  \jb{Q_{x_0,\dr},h }  \approx -\frac 12 L^{-\frac 32} \|h \|_{L^2}^2  \}  } \prod_{x \in \T_{L^2} } dh(x)\notag \\
&\approx \E_{\nu_{Q,x_0}^\perp }\bigg[F(L^\frac 12 h(L\cdot) ) \Big|  \jb{Q_{x_0,\dr}, h }\approx -\frac 1{2L^\frac 32} \| h\|_{L^2}^2 \bigg], 
\label{COND}
\end{align}

\noi
where $Z_{x_0,\dr}$ is the corresponding partition function, and $\nu^\perp_{Q,x_0}$ is the Gaussian measure with covariance  $A=(-\dx^2-3Q_{x_0,\dr}^2+\Ld)^{-1}$, projected onto the normal space. See Lemma \ref{LEM:SCHOP}.

The expectation in \eqref{COND} is conditioned on an event describing a nonlinear constraint on $h$, making it difficult to handle, even for a Gaussian field. This conditioning represents an atypical event and therefore cannot be ignored. See the explanation in \eqref{COND2}. In Lemma \ref{lem: replace}  and Proposition \ref{PROP:main}, we find an appropriate conditional density $f_{h|\jb{Q,h} }$ such that
\begin{align}
\eqref{COND} \approx  \E_{\nu^\perp_{Q,x_0} }\Big[ F(L^\frac 12 h(L\cdot))  f_{h|\jb{Q,h} }(h) \Big].
\label{COND1}
\end{align}

\noi
Regarding the conditioning in \eqref{COND}, we make the following observation. Applying the law of large numbers for the Ornstein-Uhlenbeck-type measure $\nu^\perp_{Q,x_0}$, we obtain $\| h\|_{L^2(\T_{L^2})}^2 \approx L^2$ from Proposition \ref{PROP: gaussian-conc}, which leads to the approximation
\begin{align}
\jb{Q_{x_0,\dr}, h }\approx -L^{\frac 12}.
\label{COND2}
\end{align}

\noi 
Thus, one expects that under the measure $\nu^\perp_{Q,x_0}$, the field $h$ is forced to align in the opposite direction to the (positive) soliton $Q(\cdot-x_0)$. In other words, $h$ is pushed downward, and in particular $h<0$ in a neighborhood of $x_0$.

\textbf{Step 3 (Proposition \ref{PROP:glim}):} In the third step, in Subsection \ref{SUBSEC:Gaussianlimit} we  analyze a characteristic function\footnote{When considering the characteristic function, we need to separate $\jb{\Re h,g_L}$ and $\jb{\Im h,g_L}$. However, for simplicity of notation, we assume $h$ here as if it were real-valued in this sketch.} $F(L^\frac 12 h(L\cdot))=e^{i\jb{h,g_L}}$ under the conditional expectation \eqref{COND1}, where $g$ is a test function with $g_L=L^{-\frac 12 }g(L^{-1}\cdot)$.

The main strategy is to prove that  $\jb{h,g_L}$ and the conditional density function $f_{h|\jb{Q,h} }$ are weakly correlated. This allows us to exploit the almost independent structure as follows
\begin{align*}
\frac{ \E_{\nu^\perp_{Q, x_0} }\Big[ e^{i \jb{h, g_L} } f_{h|\jb{Q,h} }(h) \Big]   }{   \E_{\nu^\perp_{Q,x_0} }\Big[ f_{h|\jb{Q,h} }(h) \Big] }
&\approx \frac{ \E_{\nu^\perp_{Q,x_0} }\Big[ e^{i \jb{h, g_L} } \Big] \E_{\nu^\perp_{Q,x_0} } \Big[f_{h|\jb{Q,h} }(h) (1+o_L(1))\Big]   }{   \E_{\nu^\perp_{Q,x_0} }\Big[ f_{h|\jb{Q,h}}(h) \Big] }\\
& \approx  \E_{\nu^\perp_{Q,x_0} }\Big[ e^{i \jb{h, g_L} } \Big] 
\end{align*}

\noi 
as $L\to \infty$. Hence, we can reduce the analysis of the conditional expectation to that of the expectation $\E_{\nu^\perp_{Q}}\Big[ e^{i\langle h^\perp, g_L\rangle} \Big] $.

\textbf{Step 4 (Lemma \ref{LEM:varest}):}
Given step 3 and the Gaussian nature of the field, we are left with the task of studying the limit of the variance $\E_{\nu_{Q,x_0}^\perp}\big[|\jb{h,g_L}|^2\big]=\jb{G_{Q,x_0}g_L, g_L }$ as $L\to \infty$ where $G_{Q,x_0}(x,y)=(-\dx^2-3Q_{x_0,\dr}^2+1 )^{-1}(x,y)$ is the Green's function, projected onto the normal space. 
The analysis in Lemma \ref{LEM:varest} shows that if the test function $g$ is supported away from the tangential component $x_0 \in \T_{L}$, that is, $\text{dist}(\supp g,x_0)>0$, then 
\begin{align*}
\jb{G_{Q,x_0}g_L, g_L}  \approx \jb{G_{ \text{OU}  }g_L, g_L}  \too \| g\|_{L^2}^2
\end{align*}


\noi
as $L\to \infty$, where $G_{\text{OU}}(x,y)=(-\dx^2+1)^{-1}(x,y)$. This approximation is valid because when $G_{Q,x_0}$ is tested against the scaled test function $g_L$, the ground state term $-3Q_{x_0,\dr}^2$, localized at $x_0$,
in the covariance $A=(-\dx^2-3Q_{x_0,\dr}^2+1)^{-1}$ has a negligible effect, as there is almost no interaction between the ground state and the scaled test function $g_L$.  Furthermore, the scaling $g_L(x)=L^{-\frac 12}g(L^{-1}x)$, derived from $L^\frac 12 h(L\cdot)$, ensures that 
the Ornstein-Uhlenbeck operator  $\jb{G_{\text{OU}}g_L,g_L}$  converges to white noise as $L\to \infty$.

\textbf{Step 5 (Proposition \ref{PROP:CHAa}):} The mean-zero white noise limit in previous step holds only for test functions $g$ whose support is separated from the location parameter $x_0 \in \T_{L}$:  $\text{dist}(\supp g,x_0)>0$. In \eqref{average} we take an average over both tangential directions $x_0$ and $\dr$, so the region where the mean-zero white noise limit potentially fails becomes relatively small as the size of the circle grows:
\begin{align*}
\frac{\big| x_0\in \T_{L}: \text{dist}(\supp g, x_0)=0 \big|}{\big|  \T_{L}    \big|}=\frac{|\supp g|}{L} \too 0
\end{align*}

\noi
as $L\to \infty$ since $g$ has a compact support. This observation is used in Proposition \ref{PROP:CHAa} to complete the proof of our main result.

\smallskip

By combining the five major steps presented above with the tightness of the scaled Gibbs measure in Section \ref{SEC:Tight}, we present the proof of Theorem \ref{THM:1} in Section \ref{SEC:MAIN}.

\subsection{Comments on the literature}\label{SUBSEC:motiv}

In this subsection, we present several motivations for our main theorem \ref{THM:1} and related problems.

\subsubsection{Infinite volume limit of Gibbs measures}
The construction of continuum Gibbs measures, first in finite volumes and later in infinite volume, was a major achievement in the constructive quantum field theory program. See, for example, the works \cite{FO76, MW1, GHb, BG3, GS0} on the construction of infinite volume $\Phi^4$ and sine-Gordon measures.

In contrast to the Gibbs measures in quantum field theory, Hamiltonians in focusing Gibbs measures are unbounded from below, as in \eqref{Ham0}. This unboundedness drives the field to exhibit significant local concentration as $L$ increases, ultimately leading to a collapse at $L=\infty$. This collapse can be seen as the result of intense competition between the quartic interaction and the Gaussian free field conditioned on a particular event as in \eqref{Gibbs2}. As explained in Subsection \ref{SUBSEC:result}, Rider, in \cite{Rider}, proved that the infinite volume limit of the focusing $\Phi^4$ measure on $\T_L$ is not only unique but also trivial, in the sense that the limiting measure $\delta_0$ concentrates all its mass on the zero path. We expect that a similar triviality phenomenon occurs in the large torus limit for other focusing models \cite{BO94Zak, BO97a, TzBO, OOT1, Seong, Seo1}. Methods similar to those in \cite{Rider,TW,SSosoe} and the present paper can likely be applied to obtain the fluctuations of other focusing Gibbs measures in the infinite volume limit.


\subsubsection{Gaussian Fluctuations in Euclidean $\Phi^4$ QFT}


In comparison with the focusing Hamiltonian in \eqref{Ham0}, the Hamiltonians in the quantum field theory model exhibit a coercive structure, meaning that they are bounded from below\footnote{For simplicity, in the following discussion, we consider Hamiltonians before applying renormalizations. In other words, we disregard the ultraviolet problem.}, which is referred to as defocusing. In this case, the fluctuation behavior is determined by the second variation of the Hamiltonian in a straightforward way. More precisely, in \cite{ER1, ER2, GST24}, it was shown that defocusing $\Phi^4_1$ and $\Phi^4_2$ measures exhibit Gaussian fluctuations in the low-temperature limit
\begin{align}
\lim_{\eps \to 0}Z^{-1}_\eps\int F\big(\sqrt{\eps}^{-1}(\phi-\pi(\phi) )   \big)\exp\Big\{-\frac 1\eps H(\phi)\Big\} \prod_{x} d\phi(x)=\sum_{w\in \M}  \dr_w \int   F(v)   \mu_w(dv),
\label{GFFQFT}
\end{align}

\noi
where $\eps$ is a positive (temperature) parameter, $\pi$ is the projection onto $\M$, $\{w\}_{w\in \M}$ is the family of minimizers of the Hamiltonian, $\dr_w$ represents the weight of each base measure $\mu_w$, and $\mu_w$ is the Gaussian free field characterized by the covariance operator $\big(\nb^{2} H(w) \big)^{-1}=(-\Dl+\nb^{2}V(w))^{-1}$. Here $V$ represents the interaction part in the $\Phi^4$ Hamiltonian.

Compared to Theorem \ref{THM:1}, the fluctuation behavior in \eqref{GFFQFT}, given by the Gaussian free field $\mu_w$, is different from that of white noise. This indicates that the defocusing $\Phi^4$ measures exhibit a fluctuation behavior of the same nature as the underlying base field, namely, the Gaussian free field. In particular, the fluctuations in \eqref{GFFQFT} are derived by taking the low-temperature limit with the square root scaling $\sqrt{\eps}$, which differs from the scaling by $L$ in Theorem \ref{THM:1}. Moreover, in Theorem \ref{THM:1}, the result holds regardless of whether we are in the high- or low-temperature regime, as it does not rely on an explicit parameter in the Hamiltonian. 


\subsubsection{Higher dimensional focusing $\Phi^4$ measure}
Since the pioneering work of Lebowitz, Rose, and Speer \cite{LRS} on constructing the one-dimensional focusing $\Phi^4$-measure in \eqref{Gibbs2}, Brydges and Slade \cite{BS} continued this line of study in two dimensions. They proved that the partition function of the two-dimensional focusing $\Phi^4$-measure blows up, even after renormalization, indicating that this measure cannot be constructed as a probability measure. Moreover, in \cite{OSeoT}, Oh, the first author, and Tolomeo demonstrated that under the two-dimensional focusing $\Phi^4$-measure, the typical configuration exhibits a concentrated blow-up profile resembling a soliton, providing an alternative proof of the non-construction of this measure. In addition, in \cite{GOTT24} Greco, Oh, Tao, and Tolomeo studied the weakly interacting focusing $\Phi^4_2$-measure, where the coupling constant $\ld_N$ depends on the ultraviolet cutoff $N$ and approaches zero ($\ld_N \to 0$) as the regularization is removed $(N\to \infty)$. In particular, they proved a phase transition by showing that the truncated $\Phi^4_2$-measure either converges or fails to converge, depending on the size of the coupling constant $\ld_N$. Given the non-normalizability of the two-dimensional focusing  $\Phi^4$-measure for fixed coupling constant $\ld_N=\ld$, we expect that a similar approach would yield the non-construction of the three-dimensional focusing $\Phi^4$-measure.


\subsubsection{Invariance of the Gibbs ensemble under Hamiltonian PDEs}

The construction of the Gibbs ensemble for Hamiltonian PDEs, including the focusing cubic Schrödinger equation \eqref{NLS}, was first studied by Lebowitz, Rose, and Speer \cite{LRS}. Subsequently, the invariance of Gibbs measures under the flow of Hamiltonian PDEs was proved in the seminal works of Bourgain \cite{BO94, BO96} and McKean \cite{McKean}. In recent years, significant progress has also been made on invariant Gibbs measures for nonlinear dispersive Hamiltonian PDEs. See, for example, the works \cite{BT1, BT2, DNY2, DNY3, BDNY, BR2, OOT1, OOT2}.  We point out that the articles cited above primarily address compact domains, such as the periodic box $\T^d$. However, our interest in this paper lies in the infinite-volume limit of Gibbs measures for Hamiltonian PDEs, as considered in \cite{BO20, Bring, OTWZ22, FKV}. In this context, since the Gibbs measures for classical equations such as the Schr\"odinger and wave equations, are translation-invariant, random distributions drawn from these measures exhibit no spatial decay on $\R^d$. This lack of decay poses significant challenges in establishing the well-posedness of the Hamiltonian PDEs under the Gibbs ensemble.





\subsubsection{Long time behavior of the dynamics}
The long-term behavior of solutions to the focusing Schr\"odinger equation \eqref{NLS} has been extensively studied. Finite energy solutions to the PDE \eqref{NLS} are expected to asymptotically decompose into a finite number of solitons, plus a small radiation part that disperses over time. This long-term behavior has been primarily studied with smooth, localized initial data decaying at infinity.

Since the Gibbs measure $\rho_L$ in \eqref{Gibbs22} is invariant under the flow of the PDE \eqref{NLS}, high probability events under $\rho_L$ reflect the long-time behavior of the dynamics of \eqref{NLS}. In this context, Theorem \ref{THM:1} has an interesting implication for the long-term behavior of solutions to the PDE \eqref{NLS}. Since the field under the ensemble $\rho_L$ behaves like a single soliton $e^{i\dr}LQ(L(\cdot-x_0))$ plus white noise, at any fixed time $t$, solutions to \eqref{NLS} approximately have this form (as a function of $x$) with high probability. It would be interesting to know if this distributional statement can be supplemented with a corresponding dynamical statement relating the solutions at different times.

This behavior, characterized by a single soliton plus white noise,  is different from that of generic localized initial data in the energy space, for which one expects a decomposition into a sum of multisolitons and a small, highly oscillatory remainder bearing some similarities to a linear wave. As noted above, the Gibbs measure $\rho_L$ is translation-invariant, so a random field drawn from $\rho_L$ lacks spatial decay in addition to not having finite energy.

\section{Notations and preliminary lemmas}\label{SEC:NOT}

\subsection{Notations}
We use $ A \les B $ to denote an estimate of the form $ A \leq CB $
for some $C> 0$.
We also write  $ A \approx B $ to denote $ A \les B $ and $ B \les A $ and use $ A \ll B $ 
when we have $A \leq c B$ for some small $c > 0$.
We may include subscripts to show dependence on external parameters; for example, $A\les_{p} B$ means $A\le C(p) B$, where the constant $C(p)$ depends on a parameter $p$. In addition, we use $a-$ 
and $a+$ to denote $a- \eps$ and $a+ \eps$, respectively for arbitrarily small $\eps > 0$. If this notation appears in an estimate, 
then an implicit constant can depend on $\eps> 0$.

Let $A_1,\dots, A_k$ be measurable sets. In the following, we use the notation 
\begin{align*}
\E \Big[F(\phi), A_1,\dots A_k \Big]=\E\bigg[F(\phi)\prod_{j=1}^k \ind_{A_j} \bigg],
\end{align*}

\noi
where $\E$ denotes the expectation with respect to the probability distribution of $\phi$ under consideration.

Throughout the paper, we study complex-valued functions as elements of real Hilbert and Banach spaces. In particular, the inner product denotes the real inner product
\begin{align}
\langle u,v \rangle_{L^2(\T_L)}:=\Re \int_{\T_{L}} u\bar{v}\,\mathrm{d}x.
\label{INER}
\end{align}

\subsection{Weighted Besov spaces}

In this subsection, we introduce weighted Besov spaces, beginning with the Littlewood-Paley projector, which is specifically adapted for use in these spaces.

Recall the following definition \cite[Definition 1]{MW}. Given $\ta_0 \ge 1$,  the Gevrey class 
 $\GG^{\ta_0} \subset C^\infty(\R^d; \R)$ 
 of order $\ta_0$
consists of functions $f$ satisfying
the following;
given any compact set $K \subset \R^d$, 
there exists $C_K > 0$ such that 
$\sup_{x \in K}|\dd^\al f(x)| \le (C_K)^{|\al|+1} (\al!)^{\ta_0}$
for any multi-index $\al$.
We use   $\GG^{\ta_0}_c$ to denote
the set of compactly supported functions in  $\GG^{\ta_0}$.

Let  $\Z_{\ge 0} := \N \cup\{0\}$. We introduce the 
Littlewood-Paley projector
$\Dl_k$, $k \in \Z_{\ge 0}$,  defined by Gevrey class multipliers.
As seen in \cite{MW},  this choice is suited for weighted spaces with a stretched exponential weight. Let us take 
\begin{align}
\chi_0, \chi_1 \in \GG^{\ta_0}_c(\R^d; [0, 1])
\label{GV}
\end{align}

\noi 
with 
\begin{align*}
\supp \chi_0 \subset \big\{|\xi|\le \tfrac{4}{3}\big\}
\qquad \text{and}\qquad 
\supp \chi_1 \subset \big\{\tfrac 34 \le |\xi|\le \tfrac{8}{3}\big\}
\end{align*}

\noi
such that 
\begin{align*}
\sum_{k = 0}^\infty \chi_k(\xi) \equiv 1
\end{align*}

\noi
on $\R^d$, where $\chi_k(\xi) = \chi_1(2^{1-k} \xi)$ for $k \ge 2$.
We define the Littlewood-Paley projector $\Dl_k$ by
\begin{align}
\Dl_k f = \F^{-1} (\chi_k \ft f)=\varphi_k*f,
\label{LPR}
\end{align}

\noi
where the Fourier transform of a function is 
\begin{align*}
\ft f(\xi) =  \F(f) (\xi)
=  \int_{\R^d} f(x) e^{- 2\pi i \xi \cdot x}\, \mathrm{d}x
\end{align*}

\noi
with the inverse Fourier transform given by 
$\F^{-1}(f) (\xi ) = \ft f(-\xi)$, and $\varphi_k$ is defined by $\varphi_k=\F^{-1}(\chi_k)$ for $k\ge 0$. Specifically, for $k\ge 1$, we have $\varphi_k(x)=2^{k}\varphi(2^k x)$, where $\varphi(x)=2^{-1}\varphi_1(2^{-1} x)$.

Before defining the weighted Besov spaces, we begin by recalling the definition of the stretched exponential weight introduced in \cite{MW}.
Let $\ta_0$ be as in \eqref{GV}, used in the definition of the Littlewood-Paley projector $\Dl_k$ in \eqref{LPR}. For $0 < \dl < \frac 1{\ta_0}$
and $\mu > 0$, 
we define the weight $w_\mu(x)$
by setting
\begin{align}
w_\mu(x) = e^{-\mu \jb{x}^\dl}.
\label{WEITH}
\end{align}

\noi
In the remaining part of the paper, we fix $0 < \dl < \frac{1}{\ta_0}$  as in \eqref{WEITH} and we frequently omit the dependence on $\dl$ in various estimates.

Let $1 \le r <  \infty$ and $\mu > 0$.
We then define the weighted Lebesgue space $ L^p_\mu(\R^d)$ by the norm
\begin{align*}
\|f  \|_{L^r_\mu}   =\bigg(\int_{\R^d} |f(x)|^r w_\mu (x) dx\bigg)^\frac 1r.
\end{align*}

Given $s\in \R$, $1\le r ,q  \le \infty$, and $\mu>0$,  we subsequently define the weighted Besov spaces $B^{r,\mu}_{r,q}(\R^d)$ as the completion of $C^{\infty}_c(\R^d)$ under the norm
\begin{align*}
\| f \|_{B^{s, \mu }_{r,q} }=  \Big\|  2^{s k}  \| \Dl_k f\|_{L^r_\mu (\R^d)}    \Big\|_{\ell^q_k(\Z_{\ge 0} )} .
\end{align*}

\noi 
When $\mu=0$, the weighted Besov spaces $B_{r,q}^{s,\mu}(\R^d)$ correspond to the classical Besov space $B^{s}_{r,q}(\R^d)$. In particular, for $p=q=2$, we define 
\begin{align}
H^{s}_\mu(\R^d)=B^{s,\mu}_{2,2}(\R^d).
\label{wesob}
\end{align}

We now present properties of the weighted Besov spaces introduced in \cite{MW}.

\begin{lemma}\label{LEM:compact}
Let $s,s'\in \R$ with $s>s'$, $r,q \in [1,\infty]$ with $r\neq \infty$, and $\mu'>\mu>0$. Then, the embedding $B^{s,\mu}_{r,q}  \hookrightarrow B^{s', \mu'}_{r,1}$ is compact.
\end{lemma}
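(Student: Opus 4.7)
The plan is a two-parameter truncation: the regularity gain $s > s'$ controls the high-frequency tail, the weight improvement $\mu' > \mu$ controls the spatial tail, and Rellich--Kondrachov handles the remaining low-frequency, compactly supported piece.

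Let $\{f_n\}$ be bounded in $B^{s,\mu}_{r,q}$ with $\|f_n\|_{B^{s,\mu}_{r,q}} \le M$. Since $\mu'>\mu$ implies $w_{\mu'} \le w_\mu$ and hence $\|\Dl_k f_n\|_{L^r_{\mu'}} \le \|\Dl_k f_n\|_{L^r_\mu}$, H\"older in the $\ell^q_k$--$\ell^{q'}_k$ pairing gives
\begin{align*}
\sum_{k > K} 2^{s'k}\,\|\Dl_k f_n\|_{L^r_{\mu'}} \le \Big(\sum_{k>K} 2^{(s'-s)kq'}\Big)^{1/q'} M,
\end{align*}
which tends to zero as $K \to \infty$ uniformly in $n$ because $s' < s$ (the case $q=1$ is handled by replacing $\ell^{q'}$ by $\ell^\infty$). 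Hence it suffices to produce a subsequence along which, for each of the finitely many $k \le K$, the sequence $\Dl_k f_n$ converges in $L^r_{\mu'}(\R^d)$; a Cantor diagonal extraction over $k$ then yields a subsequence Cauchy in $B^{s',\mu'}_{r,1}$.

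Fix $k$. Splitting $\R^d = B_R \cup B_R^c$ and using $w_{\mu'}(x) = w_\mu(x)\, e^{-(\mu'-\mu)\jb{x}^\delta}$ together with $\jb{x} \ge R$ on $B_R^c$, I get
\begin{align*}
\|\Dl_k f_n\|_{L^r_{\mu'}(B_R^c)}^r \le e^{-(\mu'-\mu)R^\delta}\,\|\Dl_k f_n\|_{L^r_\mu}^r \lesssim_k M^r e^{-(\mu'-\mu)R^\delta},
\end{align*}
which is small uniformly in $n$ for $R$ large. On $B_R$ the weight $w_{\mu'}$ is bounded above and below by positive constants depending on $R$, so weighted and unweighted $L^r$ norms are equivalent there; it is therefore enough to show precompactness of $\{\Dl_k f_n\}$ in $L^r(B_R)$. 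This I deduce from Rellich--Kondrachov after establishing the Bernstein--type inequality $\|\nabla \Dl_k f_n\|_{L^r_\mu} \lesssim_k \|\Dl_k f_n\|_{L^r_\mu}$, obtained by writing $\nabla \Dl_k f_n = (\nabla \widetilde\varphi_k) \ast \Dl_k f_n$ for a thickened Littlewood--Paley kernel $\widetilde\varphi_k$ whose Fourier transform equals $1$ on $\supp \chi_k$, and then applying a weighted Young inequality.

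The main technical point is the preceding weighted Young/Bernstein step: one must verify that convolution against $\nabla\widetilde\varphi_k$ is bounded on $L^r_\mu$. This is precisely where the Gevrey--class cutoffs and the hypothesis $\delta < 1/\ta_0$ enter, since the Gevrey regularity of $\chi_k$ forces $\widetilde\varphi_k$ to have stretched--exponential decay at a rate compatible with the weight $w_\mu$, an estimate already carried out in the Mourrat--Weber framework the paper imports. Everything else, namely the diagonal extraction and the two tail truncations, is standard.
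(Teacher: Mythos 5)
Your proof is correct, and it is essentially the standard argument: the paper itself states this lemma without proof, importing it from the Mourrat--Weber framework it cites, and your reconstruction (frequency tail controlled by $s>s'$ via H\"older in $\ell^q$--$\ell^{q'}$, spatial tail controlled by the weight gain $\mu'>\mu$, and Rellich--Kondrachov on balls after a weighted Bernstein estimate) is exactly the route taken there. Two small points worth making explicit if you write this up: the weighted Young step rests on the subadditivity $\jb{y}^\delta \le \jb{x}^\delta + \jb{x-y}^\delta$ for $0<\delta\le 1$ together with the stretched-exponential decay $|\nabla\widetilde\varphi_k(z)|\lesssim_k e^{-c|z|^{1/\ta_0}}$ of the Gevrey kernel, which is integrable against $e^{\frac{\mu}{r}\jb{z}^\delta}$ precisely because $\delta<1/\ta_0$; and passing from precompactness of $\{\Dl_k f_n\}$ in $L^r(B_R)$ for each $R$, plus the uniform exterior bound $e^{-(\mu'-\mu)R^\delta}$, to precompactness in $L^r_{\mu'}(\R^d)$ requires either a further diagonal extraction over $R\to\infty$ or the standard total-boundedness argument, which you should state rather than leave implicit.
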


\begin{lemma}\label{LEM:Ber}
For every $\mu_0$, and $q,r \in [1,\infty]$ with $r\ge q$, there exists $C<\infty$ such that for every $\mu \le \mu_0$ and $\ld \ge 1$,
\begin{align*}
\supp \ft f \subset \{ |\xi|\le \ld\} \qquad \LRA \qquad 
\| f  \|_{L^r_\mu} \le C \ld^{d(1/q-1/r)  } \| f\|_{L^q_{\mu q/r} }.
\end{align*}

\end{lemma}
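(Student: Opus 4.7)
The plan is to prove this via a weighted version of the standard Bernstein inequality argument, the key new ingredient being that the Gevrey regularity of the multiplier in \eqref{GV} produces a Littlewood--Paley kernel whose stretched exponential decay beats the weight.

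First, I construct an appropriate convolution kernel. Pick $\chi\in \GG^{\ta_0}_c(\R^d;[0,1])$ with $\chi\equiv 1$ on $\{|\xi|\le 1\}$ and $\supp\chi\subset\{|\xi|\le 2\}$, and let $\eta:=\F^{-1}\chi$. A standard consequence of membership in $\GG^{\ta_0}_c$ (via integration by parts together with the Gevrey derivative bounds) is the pointwise decay
\begin{align*}
|\eta(x)|\le C_0\, e^{-c_0 |x|^{1/\ta_0}}, \qquad x\in\R^d,
\end{align*}
for some $c_0,C_0>0$. Set $\eta_\ld(x):=\ld^d \eta(\ld x)$, so that $\widehat{\eta_\ld}(\xi)=\chi(\xi/\ld)\equiv 1$ on $\{|\xi|\le \ld\}$. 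Since $\supp\ft f\subset\{|\xi|\le \ld\}$, we have $f=\eta_\ld *f$ and hence $|f(x)|\le (|\eta_\ld|*|f|)(x)$.

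Next, I exploit the identity $w_\mu^{1/r}=w_{\mu q/r}^{1/q}$ (both equal $e^{-(\mu/r)\jb{x}^\dl}$). Define $g(x):=|f(x)|\,e^{-(\mu/r)\jb{x}^\dl}$ so that $\|f\|_{L^r_\mu}=\|g\|_{L^r}$ and $\|f\|_{L^q_{\mu q/r}}=\|g\|_{L^q}$. Since $0<\dl<1$, subadditivity gives $\jb{x}^\dl\le \jb{y}^\dl + C|x-y|^\dl$, which yields
\begin{align*}
g(x)\le \int |\eta_\ld(x-y)|\,e^{C(\mu/r)|x-y|^\dl}\, g(y)\, dy = (\wt\eta_\ld * g)(x),\qquad \wt\eta_\ld(z):=|\eta_\ld(z)|e^{C(\mu/r)|z|^\dl}.
\end{align*}
The hypothesis $r\ge q$ makes $1/s:=1+1/r-1/q\in[0,1]$ admissible for Young's convolution inequality, which gives $\|g\|_{L^r}\le \|\wt\eta_\ld\|_{L^s}\|g\|_{L^q}$.

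It remains to bound $\|\wt\eta_\ld\|_{L^s}$ by $C\ld^{d(1/q-1/r)}=C\ld^{d/s'}$ uniformly in $\mu\le\mu_0$ and $\ld\ge 1$. A change of variables $y=\ld x$ gives
\begin{align*}
\|\wt\eta_\ld\|_{L^s}^s = \ld^{d(s-1)}\int |\eta(y)|^s \exp\!\bigl(sC(\mu/r)\ld^{-\dl}|y|^\dl\bigr)\, dy \le \ld^{d(s-1)}\int |\eta(y)|^s e^{sC\mu_0|y|^\dl}\, dy,
\end{align*}
using $\ld^{-\dl}\le 1$ and $\mu\le \mu_0$. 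Because $\dl<1/\ta_0$, the Gevrey decay $|\eta(y)|\le C_0 e^{-c_0|y|^{1/\ta_0}}$ dominates the stretched exponential $e^{sC\mu_0|y|^\dl}$, so the remaining integral is a finite constant depending only on $\mu_0,s,\dl,\ta_0$. This yields $\|\wt\eta_\ld\|_{L^s}\le C \ld^{d(s-1)/s}=C\ld^{d(1/q-1/r)}$, completing the proof.

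The main obstacle is precisely the uniformity of the constant as $\mu$ ranges over $[0,\mu_0]$ and $\ld$ over $[1,\infty)$: the translation step introduces the factor $e^{C(\mu/r)|x-y|^\dl}$, and absorbing this into an $L^s$ norm of the convolution kernel is only possible because the Gevrey hypothesis \eqref{GV} produces a kernel $\eta$ with super-polynomial stretched-exponential decay of order $1/\ta_0>\dl$. A merely Schwartz-class multiplier would not suffice.
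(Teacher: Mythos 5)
Your argument is correct and reconstructs, in all essentials, the proof from Mourrat--Weber (which the paper cites without proof): reduce to an unweighted Young inequality for the auxiliary function $g=|f|w_\mu^{1/r}$ using the identity $w_\mu^{1/r}=w_{\mu q/r}^{1/q}$ and the subadditivity of $\jb{\cdot}^\dl$, and then control the $L^s$ norm of the modified convolution kernel $\wt\eta_\ld(z)=|\eta_\ld(z)|e^{(\mu/r)|z|^\dl}$ by exploiting that the Fourier transform of a Gevrey-$\ta_0$ cutoff decays like $e^{-c_0|y|^{1/\ta_0}}$ with $1/\ta_0>\dl$. Your diagnosis of where the Gevrey hypothesis is genuinely needed (absorbing $e^{C(\mu/r)\ld^{-\dl}|y|^\dl}$ uniformly in $\mu\le\mu_0$, $\ld\ge 1$) is exactly the point.

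One small observation worth noting in the write-up: by the Lipschitz bound $|\jb{x}-\jb{y}|\le|x-y|$ together with $(a+b)^\dl\le a^\dl+b^\dl$ for $\dl\in(0,1]$, the subadditivity constant can be taken to be $C=1$. Also, the endpoint $s=\infty$ (i.e., $q=1,\ r=\infty$) should be treated separately: there $\mu/r=0$, so $\wt\eta_\ld=|\eta_\ld|$ and $\|\eta_\ld\|_{L^\infty}=\ld^d\|\eta\|_{L^\infty}$, which gives the correct power $\ld^{d(1/q-1/r)}=\ld^d$. Neither point affects the validity of the argument.
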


\subsection{Concentration of measure around a family of minimizers}

In this subsection, we present the family of minimizers for the Hamiltonian \eqref{HAmR}, known as the soliton manifold, and  the concentration of the Gibbs measure $\rho_L$ \eqref{Gibbs22} around the manifold.

Let $D>0$. We first recall the minimization problem
\begin{align}
I(D)=\inf_{ \|\phi \|_{L^2(\R)}^2 \le D } H(\phi),
\label{MINPR}
\end{align}

\noi
where the Hamiltonian $H$ is given by
\begin{align}
H(\phi)=\frac{1}{2}\int_{\R} |\dx \phi|^2 dx -\frac 14\int_{\R} |\phi|^4 dx.
\label{Ham1}
\end{align}

\noi
The following proposition shows that the minimizers of the variational problem form a two-dimensional manifold, known as the soliton manifold.

\begin{proposition}\label{PROP:Min}
For every $D>0$, there exists $Q=Q_D>0$ in $H^1(\R)$ such that $\|Q_D \|_{L^2(\R^2)}^2=D$ and 
\begin{align*}
I(D)=\frac{1}{2} \int_{\R} |\dx Q_D|^2-\frac{1}{4} \int_{\R} |Q_D|^4 dx.
\end{align*}

\noi
Moreover, if $\phi \in H^1(\R)$ satisfies $\|\phi \|_{L^2(\R)}^2\le D$ and
\begin{align*}
I(D)=\frac{1}{2} \int_{\R} |\dx \phi|^2-\frac{1}{4} \int_{\R} |\phi|^4 dx,
\end{align*}

\noi
then there exists $x_0\in \R$ and $\dr \in [0,2\pi]$ such that
\begin{align*}
\phi(x)=e^{i\dr}Q_D(x-x_0).
\end{align*}

\end{proposition}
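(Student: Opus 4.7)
The plan is a direct-method argument via symmetric decreasing rearrangement, followed by the classical one-dimensional ODE classification of the Euler-Lagrange equation.

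\emph{Step 1: setting up the variational problem.} First I would show that $I(D)$ is finite and strictly negative. The one-dimensional Gagliardo-Nirenberg inequality $\|\phi\|_{L^4}^4 \les \|\phi\|_{L^2}^3 \|\dx\phi\|_{L^2}$ together with Young's inequality yields, for every $\phi$ with $\|\phi\|_{L^2}^2 \le D$,
\begin{align*}
H(\phi) \ge \tfrac{1}{4} \|\dx\phi\|_{L^2}^2 - C D^3,
\end{align*}
so $I(D) > -\infty$ and any minimizing sequence is bounded in $H^1(\R)$. Testing $H$ on the $L^2$-preserving dilation $\phi_\ld(x) = \ld^{1/2}\phi(\ld x)$ gives $H(\phi_\ld) = \tfrac{\ld^2}{2}\|\dx\phi\|_{L^2}^2 - \tfrac{\ld}{4}\|\phi\|_{L^4}^4 < 0$ for sufficiently small $\ld > 0$, hence $I(D) < 0$. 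Since $H(0) = 0$, this forces any minimizer to saturate the mass constraint $\|\phi\|_{L^2}^2 = D$.

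\emph{Step 2: existence via rearrangement.} Given a minimizing sequence $(\phi_n)$, the diamagnetic inequality $\|\dx|\phi_n|\|_{L^2}\le \|\dx\phi_n\|_{L^2}$ and the Polya-Szego inequality $\|\dx|\phi_n|^\ast\|_{L^2}\le \|\dx|\phi_n|\|_{L^2}$, together with equimeasurability of $L^p$-norms under symmetric decreasing rearrangement, let me replace each $\phi_n$ by $|\phi_n|^\ast$ without increasing $H$ or changing the mass. I may therefore assume the $\phi_n$ are nonnegative, even, and radially decreasing, and extract a subsequence $\phi_n \rightharpoonup Q$ weakly in $H^1(\R)$. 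The one-dimensional Strauss-type decay estimate $|\phi_n(x)| \les \|\phi_n\|_{H^1}|x|^{-1/2}$, valid for symmetric decreasing $H^1$-functions, controls the tails uniformly; combined with the compact embedding $H^1_{\text{loc}}\hookrightarrow L^4_{\text{loc}}$, this upgrades weak convergence to strong convergence in $L^4(\R)$. Weak lower semicontinuity of the $H^1$- and $L^2$-norms then yields $H(Q) \le I(D)$ and $\|Q\|_{L^2}^2 \le D$, which together with Step 1 forces equality throughout.

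\emph{Step 3: classification of minimizers.} Every minimizer $\phi$ satisfies the Euler-Lagrange equation $-\dx^2 \phi + \Ld \phi - |\phi|^2 \phi = 0$ for some Lagrange multiplier $\Ld$; positivity of $\Ld$ follows from the saturation of the mass constraint together with a Pohozaev-type identity. For a complex-valued minimizer I would write $\phi(x) = e^{i\theta(x)} u(x)$ with $u = |\phi|$; substituting into the equation, the imaginary part gives $(u^2 \theta')' = 0$, so $u^2 \theta' \equiv c$, and decay of $u$ at infinity forces $c = 0$. Thus $\theta$ is constant wherever $u > 0$, and the equality cases in the diamagnetic and Polya-Szego inequalities force $u$ to be a translate of its symmetric decreasing rearrangement. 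It remains to classify positive, even, decaying solutions of $-u'' + \Ld u - u^3 = 0$: multiplying by $u'$ gives the first integral $(u')^2 = \Ld u^2 - \tfrac{1}{2} u^4$ (the constant vanishes by decay), and separation of variables produces the explicit soliton $u(x) = \sqrt{2\Ld}\,\text{sech}(\sqrt{\Ld}(x-x_0))$. The constraint $\|u\|_{L^2}^2 = D$ then determines $\Ld$ uniquely, completing the characterization.

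\emph{Expected obstacle.} The main delicate point is producing strong $L^4$-convergence from the weakly convergent symmetrized minimizing sequence. In a general setting one would need Lions' concentration-compactness to exclude vanishing and dichotomy; in one dimension, however, the Strauss-type decay estimate for symmetric decreasing $H^1$-functions essentially bypasses this. A secondary subtlety is concluding that a generic complex-valued minimizer has a spatially constant phase, which I expect to handle cleanly through the equality cases in the rearrangement inequalities combined with the imaginary part of the Euler-Lagrange equation.
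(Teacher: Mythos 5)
The paper does not give its own proof of this proposition; it cites Theorem~1.1 of Frank's lecture notes and Lemma~2.4 of Tolomeo--Weber, which use exactly the direct method with symmetric decreasing rearrangement that you sketch, so your overall route matches the cited references. Steps~2 and~3 are sound (the Strauss-type decay $|u(x)| \le \|u\|_{L^2}/\sqrt{2|x|}$ for symmetric decreasing functions does give uniform tail control and hence strong $L^4$ convergence; the ODE classification and the $u^2\theta'\equiv c$ argument with $c=0$ forced by finiteness of $\int u^2(\theta')^2$ are correct).

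There is, however, one genuine gap in Step~1: the claim that $H(0)=0$ forces a minimizer to saturate the constraint does not follow. The inequalities $I(D)<0=H(0)$ only rule out the trivial minimizer; a priori, a minimizer could still have $\|\phi\|_{L^2}^2 = m$ for some $0<m<D$. To close this, you need either the scaling identity $I(D)=D^3 I(1)$ (obtained from $\phi\mapsto D\phi(D\cdot)$, the rescaling used later in the paper), which with $I(1)<0$ shows $I$ is strictly decreasing in $D$, or a direct multiplicative perturbation: if $\|\phi\|_{L^2}^2<D$ then $t\phi$ remains admissible for $t$ slightly greater than $1$, and combining the dilation stationarity $\|\dx\phi\|_{L^2}^2=\tfrac14\|\phi\|_{L^4}^4$ with $\tfrac{d}{dt}H(t\phi)\big|_{t=1}=\|\dx\phi\|_{L^2}^2-\|\phi\|_{L^4}^4=-3\|\dx\phi\|_{L^2}^2<0$ gives a contradiction. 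Secondarily, in Step~3, invoking the equality case of Polya--Szego to conclude that $|\phi|$ is a translate of $|\phi|^*$ requires knowing that $|\phi|^*$ has no level sets of positive measure at intermediate heights (the Brothers--Ziemer condition); this is indeed true here, but only after you have first identified $|\phi|^*$ as the sech profile via the ODE analysis, so the logical order in your write-up should be: classify the symmetric decreasing minimizer first, then apply the rigidity of the rearrangement inequality to the general minimizer.
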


For the proof of Proposition \ref{PROP:Min}, see Theorem 1.1 in \cite{Frank} or Lemma 2.4 in \cite{TW}.

\begin{remark}\rm 
The minimization problem $I(D)$ in \eqref{MINPR}  yields a negative value, that is, $I(D)<0$. 

For any fixed $D>0$, take any function $\psi \in H^1(\R)$ with $\| \psi \|_{L^2(\R)} \le D$. Define $\psi_\s(x):=\s^{-\frac 12} \psi (\s^{-1}x)$. Then, we have 
\begin{align*}
I(D)& \le \frac 12 \int_{\R} |\dx \psi_\s |^2 dx-\frac 14 \int_{\R} |\psi_\s |^4  dx \\
&\le \frac{\s^{-2}}{2} \int_{\R} |\dx \psi|^2-\frac {\s^{-1}}4\int_{\R} |\psi|^4 dx. 
\end{align*}

\noi
By choosing $\s \gg 1$ sufficiently large, we obtain
\begin{align}
I(D)<0
\label{Ineg}
\end{align}

\noi
for every $D>0$. Hence, we obtain the result.

\end{remark}

Note that the minimizer $Q=Q_D$  for the minimization problem 
\begin{align*}
I(D)=\inf_{ \|\phi \|_{L^2(\R)}^2 \le D } H(\phi)
\end{align*}

\noi
is characterized by the Euler-Lagrange equation
\begin{align}
\partial_{x}^2 Q+Q^3-\Ld Q=0,
\label{Lagmul}
\end{align}

\noi 
where $\Lambda>0$ is a Lagrange multiplier. See Remark \ref{REM:LAG} for the positive sign.

\begin{remark}\rm\label{REM:LAG} 
Suppose that $Q=Q_D$ is a minimizer for the variational problem \eqref{MINPR}.
Using the Lagrange multiplier method, we find that there exists $\ld \in \R$ such that
\begin{align}
-\dx^2 Q- |Q|^2Q=\ld Q.
\label{LMeqn0}
\end{align}

\noi
Multiplying \eqref{LMeqn0} by $ \cj \dx Q$, we have 
\begin{align*}
\dx\Big(-\frac 12 |\dx Q|^2 -\frac 14 |Q|^4-\frac \ld2 |Q|^2 \Big)=0.
\end{align*}

\noi
From Proposition \ref{PROP:Min}, the minimizer $Q$ is in $H^1(\R)$. This implies that
\begin{align*}
-\frac 12 |\dx Q|^2 -\frac 14 |Q|^4=\frac \ld2 |Q|^2. 
\end{align*}

\noi
Therefore, we conclude that $\ld<0$. Therefore, $\Ld=-\ld$ in \eqref{Lagmul} is a positive number.

\end{remark}

Throughout the paper, the Lagrange multiplier $\Ld$ \eqref{Lagmul} plays an important role by introducing a mass term in the Gaussian measures, which leads to correlation decay. See Subsection \ref{SUBSEC:strucpf}, especially \textbf{Step~1}. Note that $\Ld=\Ld(D)$ depends only on the fixed density parameter $D>0$ in \eqref{Gibbs2}, as explained in the remark below.

\begin{remark} \rm 
Note that the minimizer $Q_1$  for the minimization problem when $D=1$
\begin{align*}
I(1)=\inf_{ \|\phi \|_{L^2(\R)}^2 \le 1 } H(\phi)
\end{align*}

\noi
is characterized by the Euler-Lagrange equation
\begin{align*}
\partial_{x}^2 Q_1+Q_1^3-\Ld_1 Q_1=0,
\end{align*}

\noi 
where $\Lambda_1>0$ is a Lagrange multiplier. By taking the rescaling $Q_D(x)=DQ_1(Dx)$ for $D>0$, we have 
\begin{align}
\partial_{x}^2 Q_D+Q_D^3-\Ld_1 D^2 Q_D=0.
\label{Soleqn}
\end{align}

\noi
Here, we define 
\begin{align}
\Ld:=\Ld_1 D^2.
\label{Lagmul}
\end{align}

\noi 
Then, \eqref{Soleqn} is the Euler-Lagrange equation and $\Ld$ is the Lagrange multiplier for the problem
\begin{align*}
I(D)=\inf_{ \|\phi \|_{L^2(\R)}^2 \le D } H(\phi).
\end{align*}

\noi
This can be easily verified from $H(Q_D)=D^3 H(Q_1)=D^3 I(1)=I(D)$ and $\| Q_D\|_{L^2(\R)}^2=D$.

\end{remark}

The following proposition shows the limit of the free energy $\log Z_L$ as $L\to \infty$ and, correspondingly, the concentration of the measure  $\rho_L$  around the family of minimizers.

\begin{proposition}\label{PROP:con}
Let $D>0$. Then, we have the free energy limit
\begin{align*}
\lim_{L\to\infty } \frac{\log Z_L}{L^3}=-\inf_{\substack{ \|\phi \|_{L^2(\R)}^2 \le D  } } H(\phi),
\end{align*}

\noi
where  $Z_L$ is the partition function in \eqref{Gibbs22}, the Hamiltonian $H$ is as in \eqref{Ham1}, and the infimum is taken over all $\phi \in H^1(\R)$ satisfying  $\|\phi \|_{L^2(\R)}^2\le D$. Moreover, for any $\dl>0$, there exists $c=c(\dl)>0$ such that 
\begin{align*}
 \rho_L\Bigg(\bigg\{  \inf_{x_0 \in \T_{L}, \dr \in [0,2\pi] } \| \phi-e^{i\dr}\eta(x-x_0)LQ(L(\cdot-x_0)) \|_{L^2(\T_L)} \ge  \dl L^{\frac 12}    \bigg\}\Bigg)\les e^{-cL^3},
\end{align*}

\noi
where $\rho_L$ is the Gibbs measure in \eqref{Gibbs2}, $\eta$ is the 
cutoff function in \eqref{cutoffeta}, and $Q=Q_D$ is the minimizer arising from the variational problem with the parameter $D$ in Proposition \ref{PROP:Min}.
\end{proposition}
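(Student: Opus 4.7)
I would follow a Laplace/large deviations strategy in the spirit of Rider \cite{Rider} and Tolomeo--Weber \cite{TW}, decomposing the argument into a free energy asymptotic and a concentration estimate. The crucial first move is the scaling $\phi \mapsto L\phi(L\cdot)$, which maps $\T_{L^2}\to\T_L$, balances kinetic and potential terms via $H_L(L\phi(L\cdot)) = L^3 H_{L^2}(\phi)$, and transforms the mass as $M_L(L\phi(L\cdot)) = L\, M_{L^2}(\phi)$. Under this change of variables, the density becomes $e^{-L^3 H_{L^2}(\phi) - L M_{L^2}(\phi)}$ subject to $M_{L^2}(\phi)\le D$, so the prefactor $L^3$ sets up a Laplace asymptotic driven by the minimum of $H_{L^2}$.

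For the free energy limit, the upper bound $L^{-3}\log Z_L \le -I(D) + o(1)$ is obtained by pointwise bounding the exponent by $-L^3 \inf H_{L^2}$ on the support, and then showing --- via Gagliardo--Nirenberg together with a truncation of the minimizer $Q$ from $\R$ to $\T_{L^2}$ using the exponential decay of $Q$ --- that this finite-volume infimum converges to $I(D)$. The Gaussian-measure Jacobian and the linear-in-$L$ mass term are subleading relative to $L^3$ and are absorbed into the $o(1)$ error. The matching lower bound is produced by restricting the integration to a small $L^2$-tubular neighborhood of a truncated soliton and applying a standard Cameron--Martin/small-ball lower bound for the base Gaussian measure.

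The concentration statement is the main obstacle and reduces to a quantitative stability property of the variational problem: for every $\delta>0$, there exists $c(\delta)>0$ such that
\begin{equation*}
\inf\big\{H(\phi) : \|\phi\|_{L^2(\R)}^2 \le D,\ \text{dist}_{L^2(\R)}(\phi,\M) \ge \delta\big\} \ge I(D) + c(\delta).
\end{equation*}
I would establish this via a concentration-compactness argument. Given a minimizing sequence $\{\phi_n\}$ for the constrained problem, the \emph{vanishing} alternative is excluded because if $\|\phi_n\|_{L^\infty}\to 0$ then by Gagliardo--Nirenberg $\|\phi_n\|_{L^4}^4\to 0$ and $H(\phi_n)\ge 0 > I(D)$ using \eqref{Ineg}; the \emph{dichotomy} alternative is excluded by the strict subadditivity $I(D_1+D_2) < I(D_1) + I(D_2)$, which follows from the exact scaling $I(D) = D^3 I(1)$ with $I(1)<0$. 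Hence, after a suitable translation, $\phi_n$ converges in $L^2$ (up to a phase) to some $\phi_*\in H^1(\R)$ realizing $I(D)$, and by Proposition \ref{PROP:Min}, $\phi_*\in\M$, contradicting $\text{dist}(\phi_n,\M)\ge\delta$.

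Putting the pieces together, an $L^2(\T_L)$-distance of $\delta L^{1/2}$ between $\phi$ and $\M_L$ pulls back under the scaling to an $L^2(\T_{L^2})$-distance of order $\delta$ from $\M$, so the stability estimate lifts to $H_{L^2}(\phi) \ge I(D) + c(\delta)$ on the exceptional event (modulo a negligible error from the cutoff $\eta$, whose removal uses the exponential decay of $Q$). The configurations in this event therefore contribute at most $e^{-L^3(I(D)+c(\delta))}$ to the partition function, while the free energy asymptotic gives $Z_L \ges e^{-L^3(I(D) + \varepsilon)}$; dividing yields the announced $e^{-cL^3}$ bound after absorbing polynomial prefactors. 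The hard part of the execution is bookkeeping: one must carefully verify that the finite-volume truncation, the cutoff $\eta$ (which destroys the exact ground-state equation \eqref{Lagmul}), and the rescaling Jacobian each produce errors that are genuinely subexponential in $L^3$.
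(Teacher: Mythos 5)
The paper itself does not spell out a proof of Proposition~\ref{PROP:con}; it cites Rider \cite{Rider} and Tolomeo--Weber \cite{TW}, and a commented-out draft in the source shows the intended argument follows Tolomeo--Weber: the Bou\'e--Dupuis variational formula converts $\log Z_L[\mathcal{A}_\delta]$ into $\sup_{u}\mathbb{E}\big[\tfrac14\int|\phi+u|^4\ind_{\mathcal{A}_\delta}(\phi+u)\ind_{\{\|\phi+u\|_2^2\le DL\}}-\tfrac12\int|\partial_x u|^2\big]$, the Gaussian field $\phi$ is regularized to $\phi_\eps$ and separated from the drift $u$, and the resulting deterministic variational problem is compared to $I(D)$ via a Weinstein-type quantitative stability lemma (the commented Lemma~\ref{lem: compare}). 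Your lower bound (Cameron--Martin/small-ball around a truncated soliton), your scaling bookkeeping, and your stability argument via concentration-compactness and strict subadditivity of $I(D)=D^3 I(1)$ are all sound, and the latter is in fact a valid and arguably more elementary alternative to the quadratic Weinstein estimate, since the Proposition only needs existence of $c(\delta)>0$ rather than a rate.

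The genuine gap is in your upper bound. ``Pointwise bounding the exponent by $-L^3\inf H_{L^2}$'' is not a valid step for a Gaussian functional integral: the integrand $e^{\frac14\int|\phi|^4}$ is unbounded on the constraint set $\{M_L(\phi)\le LD\}$ (concentrated profiles have bounded $L^2$ but arbitrarily large $L^4$), and if one instead interprets ``exponent'' as $-H_L(\phi)-\tfrac12 M_L(\phi)$ against the nonexistent flat measure, the reference integral is infinite and the bound gives nothing. More to the point, after Bou\'e--Dupuis the expectation $\mathbb{E}[\tfrac14\int|\phi+u|^4]$ contains cross terms such as $\int\phi^2 u^2$ which, for a drift $u$ of soliton size $L$, are \emph{a priori} of order $L^3$ and hence compete with the main term rather than being ``absorbed into the $o(1)$ error'' as you assert; controlling them is precisely the content of the regularization scheme in \cite{TW} (and of Rider's discretization argument in \cite{Rider}). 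Without such a mechanism your upper bound --- and hence the entire concentration estimate, which is the ratio $Z_L[\mathcal{A}_\delta]/Z_L$ and needs the upper bound on the numerator --- does not close. The issue is not bookkeeping but the principal analytic difficulty of the proof.
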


For the proof of Proposition \ref{PROP:con}, see \cite{Rider, TW}.

\begin{remark}\rm
Using the cutoff function $\eta$, where $\supp \eta \subset [-\frac 14, \frac 14]$, we can interpret $\eta(\cdot-x_0)LQ(L(\cdot-x_0))$ as a function on the torus $\T_L$. Note that in \cite{TW}, Proposition \ref{PROP:Min} was proved without the cutoff function $\eta$. Since $Q(\cdot-x_0)$ is localized at $x_0$ and has an exponentially decaying tail, as $L\to \infty$, $LQ(L(\cdot-x_0))$ increasingly approximates $\eta(\cdot-x_0)LQ(L(\cdot-x_0))$. Hence, we can adapt the argument to address the case with the cutoff function $\eta$.

\end{remark}

\subsection{Orthogonal coordinate system }\label{SEC:Ortho}

In this subsection, we introduce the orthogonal coordinate system in a neighborhood of the soliton manifold $\{e^{i\dr}Q(\cdot-x_0) \}_{\dr, x_0} $ and a corresponding change of variable formula using the coordinate system.

Note that although the ground state $Q$ in Proposition \eqref{PROP:Min} is initially defined on $\R$, we now interpret it as a function on $[-L^2, L^2)$, identified with the torus $\T_{L^2} \cong [-L^2, L^2)$.
According to Proposition \ref{PROP:Min}, given $x_0\in \T_{L^2}$ and $\dr \in [0,2\pi]$, we consider the translated ground state  on $\T_{L^2}$
\begin{align*}
Q_{x_0,\dr}(x):=e^{i\dr}Q(x-x_0),
\end{align*}

\noi
where $x-x_0$ is interpreted mod $2L^2$, taking values in $[-L^2, L^2)$. 
Furthermore, based on Proposition \ref{PROP:con}, we also consider the rescaled and translated ground state on $\T_L\cong [-L, L)$
\begin{align*}
Q_{x_0,\dr,L}(x):=e^{i\dr }L Q(L(x-x_0)),
\end{align*}

\noi
where $x_0 \in \T_L$.  When restricted to the torus, the rescaled and shifted ground states lie in $H^1$, but fail to belong to $H^2$ (or any $H^k$ for any higher $k$). 
This arises because $Q_{x_0,\dr}'$ and  $Q'_{x_0,\dr,L}$, defined on $[-L^2, L^2)$ and $[-L, L)$, respectively,  do not satisfy the periodic boundary condition when truncated at $x = \pm L^2$ and $x=\pm L$. 
To address this issue, we introduce an even cutoff function $\eta \in C^\infty(\R; [0, 1])$ such that
\begin{equation} \label{cutoffeta}
\supp(\eta) \subset \big[-\tfrac14, \tfrac 14\big]
 \qquad \text{and}\qquad  \eta \equiv 1 \text{ on } \big[-\tfrac18, \tfrac 18\big],
\end{equation}

\noi
and consider 
\begin{align}
Q^{\eta_L}_{x_0,\dr}(x)=e^{i\dr}Q^{\eta_L}_{x_0}(x)=e^{i\dr}\eta(L^{-1}(x-x_0))Q(x-x_0)
\label{QetaL}
\end{align}

\noi 
for $x_0 \in \T_{L^2}$ and $\dr \in [0,2\pi]$, which is smooth at $x = \pm L^2$. In the following, we interpret $Q^{\eta_L}_{x_0,\dr}$ and $Q^{\eta_L}_{x_0}$
as functions on the torus $\T_{L^2}$. Additionally, by defining
\begin{align}
Q^{\eta}_{x_0,\dr,L}(x)=e^{i\dr}\eta(x-x_0)LQ(L(x-x_0)),
\label{QQ1}
\end{align}

\noi
where $x_0 \in \T_{L}$ and $\dr \in [0,2 \pi]$, we also interpret $ Q^{\eta}_{x_0,\dr,L}$ as a function on the torus $\T_L$.

\begin{remark}\rm\label{REM:SOL}
Note that on $\T_{L^2}$, $e^{i\dr}Q^{\eta_L}_{x_0}$ is not the minimizer of the Hamiltonian \eqref{Ham1} defined on $\T_{L^2}$.
However,  from the definition \eqref{QetaL}, as $L \to \infty$, 
$e^{i\dr}Q^{\eta_L}_{x_0}$ on $\T_{L^2}=[-L^2,L^2)$ 
provides a more and more precise approximation of the ground state $e^{i\dr}Q(x-x_0)$ on $\R$. For this reason, $e^{i\dr}Q^{\eta_L}_{x_0}$ is called the approximate soliton, mainly because of the presence of the cutoff function $\eta_L$.
\end{remark}

Given $L\ge 1$, we define the approximate soliton manifold
\begin{align}
\bar \M_L:=\big\{ e^{i\dr}\eta(L^{-1}(\cdot-x_0))Q(\cdot-x_0): \dr \in [0,2\pi], \; x_0 \in \T_{L^2}  \big\} \subset H^1(\T_{L^2}).
\label{solm0}
\end{align}

\noi
Then, $\bar \M_L$ is a smooth manifold of dimension $2$, embedded into $H^1(\T_{L^2})$. In particular, we remark that $\bar  \M_L$ is the compact manifold  for any fixed $L \ge 1$.

In the next section, we show that for a rescaled Gibbs measure under the transformation $\phi \mapsto \phi_L = L\phi(L \cdot)$, most of the probability mass is concentrated around the soliton manifold $\bar \M_L$.
Hence, the question reduces to studying the Gibbs measure
on an  $\eps$-neighborhood $U_\eps^L$ of the soliton manifold $\bar \M_L$
\begin{align*}
U_\eps^L=\Big\{\phi \in L^2(\T_{L^2}): \|\phi- e^{i\dr}Q^{\eta_L}_{x_0} \|_{L^2(\T_{L^2}) }<\eps \quad \text{for some $x_0 \in \T_{L^2}$ and $\dr \in \R$}  \Big\}, 
\end{align*}

\noi 
where $e^{i\dr}Q^{\eta_L}_{x_0}$ is the approximate soliton defined in \eqref{QetaL}. The key point is that for sufficiently small $\eps > 0$,
within the $\eps$-neighborhood $U^L_\eps$ of the soliton manifold $\bar \M_L$, we can introduce an orthogonal coordinate system in $U^L_\eps$ with the tangential directions
$x_0 \in \T_{L^2}$, $\dr \in [0,2\pi]$, and the normal direction $h \in V^L_{x_0,\dr}$, as defined in \eqref{normalsp}.

\begin{lemma}\label{LEM:coord}
Let $L\ge 1$. Given small $\eps_1>0$, there exists $\eps=\eps(\eps_1)>0$ such that
\begin{align*}
U_\eps^L\subset \Big\{ \phi \in L^2(\T_{L^2}): \; \phi=e^{i\dr}Q^{\eta_L}_{x_0}+h \quad &\text{for some $x_0\in \T_{L^2}$, $\dr\in \R$,}\\
&\text{  and $h\in V^{L^2}_{x_0,\dr}$ with $\|h \|_{L^2(\T_{L^2})}<\eps_1$} \Big\},
\end{align*}

\noi
where, at each point $e^{i\dr}Q^{\eta_L}_{x_0}$ on the manifold $\bar \M_L$, $V^{L^2}_{x_0,\dr}$ represents the normal space to the soliton manifold
\begin{align}
V_{x_0,\dr}^{L^2}=\big\{\phi \in L^2(\T_{L^2}): \;  &\jb{\phi, (1-\dx^2) \dd_{x_0}( e^{i\dr}Q^{\eta_L}_{x_0} )  }=0 \notag \\  
&\jb{\phi, (1-\dx^2) \dd_{\dr}( e^{i\dr}Q^{\eta_L}_{x_0})  }=0 \big\}.
\label{normalsp}
\end{align}

\noi
Here, the inner product on $L^2(\T_{L^2})$, defined in \eqref{INER}, means the real inner product.

\end{lemma}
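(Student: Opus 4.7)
The natural approach is the implicit function theorem applied to the constraint map
\begin{align*}
F:\T_{L^2}\times[0,2\pi]\times L^2(\T_{L^2})\to \R^2,\qquad F_j(x_0,\dr,\phi):=\jb{\phi-Q^{\eta_L}_{x_0,\dr},\,(1-\dx^2)\dd_{j}Q^{\eta_L}_{x_0,\dr}},
\end{align*}
where $\dd_{1}=\dd_{x_0}$ and $\dd_{2}=\dd_{\dr}$. By construction, $F(x_0,\dr,\phi)=0$ is exactly the statement $\phi-Q^{\eta_L}_{x_0,\dr}\in V^{L^2}_{x_0,\dr}$, and for any base point $\phi_0=Q^{\eta_L}_{y_0,\ta_0}$ on $\bar\M_L$ the pair $(x_0,\dr)=(y_0,\ta_0)$ trivially solves $F=0$. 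Once a quantitative IFT is applied uniformly in the base point, the desired coordinate chart follows by setting $h:=\phi-Q^{\eta_L}_{x_0(\phi),\dr(\phi)}$.

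The core of the argument is the invertibility of $D_{(x_0,\dr)}F$ at a base point. Differentiating $F_j$ in $(x_0,\dr)$ and evaluating at $\phi=Q^{\eta_L}_{y_0,\ta_0}$, $(x_0,\dr)=(y_0,\ta_0)$, every term in which $(x_0,\dr)$-derivatives hit the second factor $(1-\dx^2)\dd_{j}Q^{\eta_L}_{x_0,\dr}$ carries a prefactor $\phi-Q^{\eta_L}_{x_0,\dr}=0$ and drops out, leaving
\begin{align*}
\bigl(D_{(x_0,\dr)}F\bigr)_{ij}\Big|_{\textup{base}} = -\jb{\dd_{i}Q^{\eta_L}_{y_0,\ta_0},\,(1-\dx^2)\dd_{j}Q^{\eta_L}_{y_0,\ta_0}},
\end{align*}
which is (up to sign) the Gram matrix of the two tangent vectors in the real $H^1$ inner product. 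Since $Q$ is real-valued, $\dd_{x_0}Q^{\eta_L}_{y_0,\ta_0}$ equals $e^{i\ta_0}$ times a real function while $\dd_{\dr}Q^{\eta_L}_{y_0,\ta_0}=iQ^{\eta_L}_{y_0,\ta_0}$ equals $e^{i\ta_0}$ times a purely imaginary function, so the real inner product \eqref{INER} kills the off-diagonal entries. The diagonal entries reduce to $-\|\dd_{x_0}Q^{\eta_L}_{y_0,\ta_0}\|_{H^1(\T_{L^2})}^2$ and $-\|Q^{\eta_L}_{y_0,\ta_0}\|_{H^1(\T_{L^2})}^2$; using the cutoff \eqref{cutoffeta} and the exponential decay of $Q$ and $Q'$, both are bounded away from zero by a constant independent of $y_0\in\T_{L^2}$ and of $L\ge 1$, since $\eta(L^{-1}\cdot)\equiv 1$ on the bulk of the support of $Q(\cdot-y_0)$.

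With this uniform invertibility in hand, it remains to invoke a quantitative implicit function theorem. Smoothness of $F$ in all three variables is immediate, and the exact translation invariance $Q^{\eta_L}_{x_0,\dr}(x)=Q^{\eta_L}_{0,\dr}(x-x_0)$ together with the phase invariance in $\dr$ reduces the local IFT data at every base point to that at a single one. Consequently there is a uniform $\eps=\eps(\eps_1)>0$ such that for every $(y_0,\ta_0)$ and every $\phi$ with $\|\phi-Q^{\eta_L}_{y_0,\ta_0}\|_{L^2(\T_{L^2})}<\eps$, a unique pair $(x_0,\dr)$ close to $(y_0,\ta_0)$ solves $F(x_0,\dr,\phi)=0$, and shrinking $\eps$ enforces $\|h\|_{L^2(\T_{L^2})}<\eps_1$ via continuity of the implicit solution. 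The main potential obstacle is precisely this uniformity across the noncompact (as $L\to\infty$) manifold $\bar\M_L$; the translation/phase symmetries collapse the problem to a single local IFT estimate, so no genuine difficulty arises.
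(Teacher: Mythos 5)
Your proposal is correct and follows the standard implicit function theorem strategy for producing an orthogonal tubular coordinate system around the soliton manifold; this is precisely the argument the paper defers to by citing [OST1, Proposition~6.4], so the route is essentially the same as the paper's. The two details worth stating explicitly are (i) the Jacobian $D_{(x_0,\dr)}F$ is not merely invertible but has inverse bounded uniformly together with a uniform modulus of continuity of $D_{(x_0,\dr)}F$ (i.e.\ a bound on the mixed second derivatives of $F$, which involve $\jb{\dd_iQ^{\eta_L}_{x_0,\dr},(1-\dx^2)\dd_j\dd_kQ^{\eta_L}_{x_0,\dr}}$ and are controlled by the uniform Sobolev bounds on $Q^{\eta_L}_{x_0}$), since a quantitative IFT requires both; and (ii) the parameter $\dr$ should be regarded as living on the circle $\R/2\pi\Z$ rather than $[0,2\pi]$, so that the phase invariance $\dr\mapsto\dr+c$ and the translation invariance $x_0\mapsto x_0+c$ genuinely reduce the IFT data at every base point of the compact manifold $\bar\M_L$ to a single chart, yielding an $\eps$ depending only on $\eps_1$ (and converging as $L\to\infty$ since $\|Q^{\eta_L}_{x_0}\|_{H^1}\to\|Q\|_{H^1(\R)}$ and $\|\dd_{x_0}Q^{\eta_L}_{x_0}\|_{H^1}\to\|Q'\|_{H^1(\R)}$).
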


The proof of Lemma \ref{LEM:coord} very closely follows that of \cite[Proposition 6.4]{OST1}, so we omit it in the in the interest of brevity.

\begin{remark} \rm

By the definition \eqref{normalsp}, the space $V_{x_0,\dr}^{L^2}$ represents a real subspace of codimension $2$ in $L^2(\T_{L^2})$, orthogonal (with the weight $1-\dd_x^2$) to the tangent vectors  $\dd_{x_0}( e^{i\dr}Q^{\eta_L}_{x_0} ) $ and  $\dd_{\dr}( e^{i\dr}Q^{\eta_L}_{x_0})$ of the soliton manifold $\bar \M^L$.  We note that, given the limited regularity of $\phi \in L^2(\T_{L^2})$, where the underlying Gaussian measure is supported, orthogonality in \eqref{normalsp} is measured using the $L^2$-inner product with the weight $1-\dx^2$, ensuring that the inner products are well-defined for $\phi \in L^2(\T_{L^2})$. 
Furthermore, it can be easily verified that two tangent vectors $\dd_{x_0}( e^{i\dr}Q^{\eta_L}_{x_0} ) $ and  $\dd_{\dr}( e^{i\dr}Q^{\eta_L}_{x_0})$ are orthogonal in $L^2(\T_{L^2})$ and $H^1(\T_{L^2})$.


\end{remark}

Thanks to Lemma \ref{LEM:coord}, we can establish a new coordinate system
in terms of the translation $x_0 \in \T_{L^2}$, the rotation $\dr \in [0,2\pi]$, and the component $h \in L^2(\T_{L^2})$ orthogonal to the soliton manifold $\bar \M_L$. This setup allows us to introduce a change of variables for the Gaussian functional integral
\begin{align*}
\int_{U_\eps^L } F(\phi) \mu_{L^2}(d\phi)
\end{align*}

\noi 
on the $\eps$-neighborhood $U^L_\eps$ of the soliton manifold $\bar \M_L$, where $\mu_{L^2}$ is the Gaussian measure on $\T_{L^2}$, having the mass term $m>0$,
\begin{align}
\mu_{L^2}(d\phi)&=Z_{L }^{-1} e^{-\frac {1}2  \int_{\T_{L^2} } |\dx \phi|^2 dx -\frac {m}{2} \int_{\T_{L^2}} |\phi|^2 dx  }  \prod_{x \in \T_{L^2}} d\phi(x) \notag \\
&=Z_{L}^{-1} e^{-\frac 12 \jb{A^{-1} \phi, \phi }  } \prod_{x \in \T_{L^2}} d\phi(x) 
\label{GFF},
\end{align}

\noi 
where $A=(-\dx^2+m)^{-1}$ is the covariance operator.

\begin{remark}\rm \label{REM:mass0}
When the underlying Gaussian measure has mass $m>0$ as in \eqref{GFF}, we modify the definition of the normal space to the soliton manifold \eqref{normalsp} by using the weight $m-\dx^2$, instead of $1-\dx^2$. 
Note that the Gaussian measure $\mu_L$ with mass $m$ has the Cameron–Martin space $H_m^1$ associated with the operator  $-\dx^2+m$. If there is no confusion, we may write in the following that $\mu_L$ has the Cameron–Martin space $H^1(\T_L)$, corresponding to the case $m=1$, since we will change the mass term in several places. 
\end{remark}

We are now ready to introduce the change of variable formula.

\begin{lemma}\label{LEM:chan}
Let $L\ge 1$ and $F$ be a bounded, continuous function on $H^s(\T_{L^2})$ with $s<\frac 12$. Then, we have
\begin{align}
\int_{U_\eps^L}F(\phi) \mu_{L^2}(d\phi)&=\iiint_{\wt U_\eps^L} F\big(e^{i\dr}(Q^{\eta_L}_{x_0}+h) \big)e^{-\frac 12 \jb{A^{-1} Q^{\eta_L}_{x_0}, Q^{\eta_L}_{x_0} }_{L^2(\T_{L^2})} -\jb{ A^{-1 }Q^{\eta_L}_{x_0}, \Re h }_{L^2(\T_{L^2})} } \notag  \\
&\hphantom{XXXX} \cdot \det\big( \Id-W_{x_0,\dr,h}\big)  \mu^\perp_{x_0, L^2,m}(dh)d\s_L(x_0,\dr),
\label{cha0}
\end{align}

\noi
where $A=(-\dx^2+m)^{-1}$ and 
\begin{align*}
\wt U^L_\eps=\Big\{ (x_0,\dr,h)\in \T_{L^2}\times [0,2\pi]\times V^{L^2}_{x_0,0}: \|h \|_{L^2(\T_{L^2})}< \eps  \Big\}
\end{align*}


\noi
and $W_{x_0, \dr, h}$ is the Weingarten map on the tangent space of $\bar \M^L$ at $e^{i\dr}Q^{\eta_L}_{x_0}$. See \eqref{eqn: WM} for an explicit expression in our case. 
Here, the measure $\mu_{x_0,L^2,m }^{\perp}$ is a Gaussian measure defined on the normal space $V^{L^2}_{x_0,0}$, with $V_{x_0,0}^{L^2} \cap   H^1(\T_{L^2} ) \subset  H^1(\T_{L^2} )$ as its Cameron-Martin space, and admits the formal expression
\begin{align}
\mu_{x_0,L^2,m}^{\perp}(dh)=Z_{L^2}^{-1} e^{-\frac {1}2  \int_{\T_{L^2} } |\dx h|^2 dx -\frac m2 \int_{\T_{L^2}} |h|^2 dx  }  \prod_{x \in \T_{L^2}} dh(x).
\label{GFFNOR}
\end{align} 

\noi
In addition,  $d\s_L $ is a surface measure on the soliton manifold $\bar \M^L$ in \eqref{solm0}, parametrized by $x_0 \in \T_{L^2}$ and $\dr \in [0, 2\pi]$ as in \eqref{surff}.
\end{lemma}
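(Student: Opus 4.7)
The plan is to reduce the infinite-dimensional change of variables to a classical Jacobian computation via Galerkin truncation, and then pass to the limit. Concretely, I would project $\mu_{L^2}$ onto the span $E_N \subset L^2(\T_{L^2})$ of the first $N$ Fourier modes, chosen large enough to contain the two tangent vectors to $\bar\M_L$ at $(x_0, \dr)$ and to capture the two linear constraints defining $V^{L^2}_{x_0,0}$. On $E_N$ the measure $\mu_{L^2}^N$ has explicit Lebesgue density $Z_N^{-1} e^{-\frac12 \jb{A_N^{-1}\phi, \phi}}$, and by Lemma~\ref{LEM:coord} the parametrization $\Phi(x_0, \dr, h) = e^{i\dr}(Q^{\eta_L}_{x_0} + h)$ restricts to a diffeomorphism from the finite-dimensional slice of $\wt U^L_\eps$ onto $U^L_\eps \cap E_N$, so the usual change of variables applies verbatim at level $N$.

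The second step is to expand the Gaussian exponent. Using that $A^{-1} = -\dx^2 + m$ is a real operator commuting with the phase rotation in the sense that $\jb{A^{-1}(e^{i\dr}u), e^{i\dr}v} = \jb{A^{-1}u, v}$ under the real inner product \eqref{INER}, together with the fact that $Q^{\eta_L}_{x_0}$ is real-valued, I expect to obtain
\[
\tfrac12 \jb{A^{-1}\phi, \phi} = \tfrac12 \jb{A^{-1}Q^{\eta_L}_{x_0}, Q^{\eta_L}_{x_0}} + \jb{A^{-1}Q^{\eta_L}_{x_0}, \Re h} + \tfrac12 \jb{A^{-1}h, h}.
\]
The first two contributions produce the exponential prefactor in \eqref{cha0}; the third combines with Lebesgue measure on $V^{L^2}_{x_0,0} \cap E_N$ to assemble the conditional Gaussian $\mu^\perp_{x_0, L^2, m}(dh)$ of \eqref{GFFNOR}. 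In the limit $N \to \infty$ this is identified with the regular conditional distribution of $\mu_{L^2}$ given the two smooth linear constraints of \eqref{normalsp}.

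The central geometric step is the Jacobian of $\Phi$. Choosing orthonormal frames adapted to the decomposition $L^2(\T_{L^2}) = T_{(x_0,\dr)}\bar\M_L \oplus V^{L^2}_{x_0,\dr}$, with the normal frame at $(x_0,\dr)$ obtained from $V^{L^2}_{x_0,0}$ via the isometry $h \mapsto e^{i\dr}h$, the differential $d\Phi$ becomes block diagonal. The normal-to-normal block is the unit-Jacobian rotation $e^{i\dr}$, while the tangent-to-tangent block equals $\Id - W_{x_0,\dr,h}$, where $W_{x_0,\dr,h}$ is the Weingarten (shape) operator of $\bar\M_L$ in the normal direction $e^{i\dr}h$. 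It is obtained by differentiating the tangent vectors $\dd_{x_0}(e^{i\dr}Q^{\eta_L}_{x_0})$ and $ie^{i\dr}Q^{\eta_L}_{x_0}$ of the perturbed family $\{p(x_0,\dr) + e^{i\dr}h\}$ along $h$ and projecting back onto the tangent plane, producing the explicit expression \eqref{eqn: WM}. Since $\bar\M_L$ is two-dimensional, $\det(\Id - W_{x_0,\dr,h})$ is a $2 \times 2$ determinant stable once $N$ is large, and the remaining tangent-direction Gram factor combines with $dx_0\, d\dr$ to yield the Riemannian surface measure $d\s_L$ of \eqref{surff}.

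The main obstacle will be justifying the limit $N \to \infty$. One must verify that (i) $\mu_{L^2}$ is supported on $H^s(\T_{L^2})$ for $s < \tfrac12$, so both the $L^2$-tubular neighborhood $U^L_\eps$ and the parametrization $\Phi$ are meaningful on this support and $F$ can be integrated against the corresponding measures; (ii) the two linear constraints cutting out $V^{L^2}_{x_0,0}$ are given by sufficiently regular test functions, so the Gaussian disintegration agrees with the formal expression \eqref{GFFNOR}; and (iii) the Cameron--Martin shift by $Q^{\eta_L}_{x_0} \in H^1(\T_{L^2})$ is well-defined, so the prefactor $\exp(-\jb{A^{-1}Q^{\eta_L}_{x_0}, \Re h})$ has a rigorous meaning as a function of $h$ sampled from $\mu^\perp_{x_0, L^2, m}$. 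These three facts follow from standard Gaussian analysis combined with the smoothness of the approximate soliton manifold $\bar\M_L$ as an embedded submanifold of $H^1(\T_{L^2})$, and together they allow the Galerkin identity on $E_N$ to pass to the stated formula \eqref{cha0}.
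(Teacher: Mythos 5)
The paper itself does not prove this lemma by hand: its entire proof is a citation of the Ellis--Rosen change-of-variables formula \cite[Lemma 3]{ER2} together with Lemma \ref{LEM:coord}, which supplies the tube coordinates. Your proposal in effect tries to reprove that cited input from scratch, and the ingredients you assemble are the right ones: the expansion of the quadratic form using rotation invariance and the realness of $Q^{\eta_L}_{x_0}$ (which correctly produces the prefactor $e^{-\frac12\jb{A^{-1}Q^{\eta_L}_{x_0},Q^{\eta_L}_{x_0}}-\jb{A^{-1}Q^{\eta_L}_{x_0},\Re h}}$), the tube-coordinate Jacobian with the factor $\det(\Id-W_{x_0,\dr,h})$, the disintegration of the Gaussian along two smooth linear constraints, and the Cameron--Martin-type pairing being well defined on the support of the field. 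However, the central reduction step, as you state it, would fail.

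Concretely, you take $E_N$ to be the span of the first $N$ Fourier modes, ``chosen large enough to contain the two tangent vectors'' and claim that $\Phi(x_0,\dr,h)=e^{i\dr}(Q^{\eta_L}_{x_0}+h)$ restricts to a diffeomorphism onto $U^L_\eps\cap E_N$, so that the finite-dimensional change of variables applies verbatim. No fixed finite-dimensional Fourier subspace contains the soliton manifold or its tangent vectors $\partial_{x_0}(e^{i\dr}Q^{\eta_L}_{x_0})$ and $ie^{i\dr}Q^{\eta_L}_{x_0}$, which moreover vary with $(x_0,\dr)$; if $\phi\in E_N$ lies in the tube, its coordinates $(x_0(\phi),\dr(\phi))$ depend on the full field and the normal component $h=e^{-i\dr}\phi-Q^{\eta_L}_{x_0}$ does not lie in $E_N$, so there is no finite-dimensional slice on which the classical formula applies directly. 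A correct finite-dimensional approximation has to either project the manifold itself and control the resulting errors in the constraints, the Weingarten factor and the Gaussian weights as $N\to\infty$, or condition on finitely many linear functionals adapted to the geometry; carrying this out is exactly the technical content of the Ellis--Rosen lemma that the paper outsources. A second, smaller imprecision: you do not fix the inner product in which your frames are orthonormal. The orthogonality defining $V^{L^2}_{x_0,0}$ in \eqref{normalsp}, the Gram determinant in \eqref{surff}, and the Weingarten map \eqref{eqn: WM} are all taken with respect to the $H^1$-type (Cameron--Martin) inner product, not plain $L^2$; with $L^2$-orthonormal frames the surface measure and shape operator come out with a different normalization, so the metric must be chosen consistently for the factors in \eqref{cha0} to match the statement.
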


\begin{proof}
The formula \eqref{cha0} follows from  \cite[Lemma 3]{ER2} and Lemma \ref{LEM:coord}.
\end{proof}

Note that the geometry of the soliton manifold $\bar \M_L$ is reflected in the surface measure $d\s_L$ and the Weingarten map $W_{x_0,\dr,h}$. 

The orthonormal vectors $t_k = t_k(x_0, \dr)$, for $k = 1, 2$, are obtained by applying the Gram-Schmidt orthonormalization procedure in $H^1(\T_{L^2})$ to the tangent vectors $\big\{ \partial_{x_0}(e^{i\ta}Q_{ x_0}^{\eta_L}), \partial_{\dr}(e^{i\ta}Q_{ x_0, \dr}^{\eta_L})\big\}$. Then, the surface measure $d\s_L(x_0,\dr)$ is given by 
\begin{align}
d \s_L (x_0,\dr)=|\g_L(x_0,\dr)| d x_0   d \dr_0,
\label{surff} 
\end{align}

\noi
where
\begin{align*}
\g_L(x_0,\dr)=\det 
\begin{pmatrix}
\langle \partial_{x_0}(e^{i\ta}Q_{ x_0}^{\rho_L}),t_1\rangle_{H^1(\T_{L^2})} 
& \langle \partial_{\ta}e^{i\ta} Q_{ x_0}^{\rho_L},t_1\rangle_{H^1(\T_{L^2})}\\
\langle \partial_{x_0}(e^{i\ta}Q_{ x_0}^{\rho_L}),t_2\rangle_{H^1(\T_{L^2})} 
& \langle \partial_{\ta}e^{i\ta} Q_{ x_0}^{\rho_L},t_2\rangle_{H^1(\T_{L^2})}
\end{pmatrix}.
\end{align*}

\noi 
In particular, the tangent vectors $t_k$, $k=1,2$ are chosen so that 
$t_k( x_0, \ta)
= \tau_{x_0}t_k( 0, \ta)$, where $\tau_{x_0}$ is the translation defined by $\tau_{x_0}\phi(x)=\phi(x-x_0)$. Combined with the invariance under multiplication by a unitary complex number, we obtain
\begin{align}
|\g_L( x_0,\ta)|=|\g_L( 0, 0)|
\label{gasurf}
\end{align}

\noi
for any $x_0 \in \T_{L^2}$ and $\dr\in [0,2\pi ]$.

The Weingarten map $W_{x_0,\dr,h}$ encodes the curvature of the surface $\bar \M_L$ by capturing how the normal vector changes direction as we move along different tangent directions on the surface. 
More precisely, the Weingarten map $W_{x_0,\dr,h}=-dN_{x_0,\dr}(h)$ at a point $e^{i\dr}Q^{\eta_L}_{x_0} \in \bar \M_L$, defined via the differential of the Gauss map $N_{x_0,\dr}$ at $e^{i\dr}Q^{\eta_L}_{x_0} \in \bar \M_L$, is the linear map 
\begin{align*}
W_{x_0,\dr,h}: T_{x_0,\dr} \bar \M_L \to T_{x_0,\dr} \bar \M_L, 
\end{align*}

\noi 
where $T_{x_0,\dr} \bar \M_L:=\text{span}\big\{ \partial_{x_0}(e^{i\ta}Q_{ x_0}^{\eta_L}), \partial_{\dr}(e^{i\ta}Q_{ x_0, \dr}^{\eta_L})\big\}$ is the tangent space of $\bar M_L$ at $e^{i\dr}Q_{x_0}^{\eta_L}$. Specifically, the Weingarten map $W_{x_0,\theta,h}$ in the basis $\{\partial_{x_0} (e^{i\theta}Q_{x_0}^{\eta_L}), \partial_\theta (e^{i\theta} Q_{x_0}^{\eta_L})\}$ is given by
\begin{equation}\label{eqn: WM}
\left(\begin{array}{cc}
\langle -\partial_\theta N_{x_0,\theta}(h), \partial_{x_0} e^{i\theta} Q_{x_0,\theta}^{\rho_L}\rangle_{H^1(\T_{L^2})}& \langle- \partial_\theta N_{x_0,\theta}(h), \partial_{\theta} e^{i\theta} Q_{x_0,\theta}^{\rho_L}\rangle_{H^1(\T_{L^2})}\\
\langle- \partial_{x_0} N_{x_0,\theta}(h), \partial_{x_0} e^{i\theta} Q_{x_0,\theta}^{\rho_L}\rangle_{H^1(\T_{L^2})}& \langle- \partial_{x_0} N_{x_0,\theta}(h), \partial_{\theta} e^{i\theta} Q_{x_0,\theta}^{\rho_L}\rangle_{H^1(\T_{L^2})}
\end{array}\right),
\end{equation}

\noi 
 where 
\begin{equation*}
L^2(\T_{L^2}) \ni h\mapsto N_{x_0,\theta}(h)
\end{equation*}

\noi
is a parametrization of the normal space to $\bar \M_L$ at a point $e^{i\dr}Q^{\eta_L}_{x_0}$.  In particular, the $2\times 2$ determinant 
\begin{align}
\det\big( \Id-W_{x_0,\dr,h}\big)=1+O(\| h\|_{L^\infty}^2)
\label{Det0}
\end{align}

\noi 
is a quadratic function of $h$.


Before concluding this subsection, we define a projection operator onto a compact manifold.  Let $M$ be a compact manifold in a Hilbert space $\mathcal{H}$.
If $\dl>0$ is sufficiently small, we can assign to any $\phi \in \mathcal{H}$ with $\text{dist}(\phi,M) \le \dl$ a unique closest point $\pi(\phi)$ in $\M$. This follows from the $\eps$-neighborhood theorem \cite[p.69]{GP}. If  $\text{dist}(\phi,M) \ge \dl$, then we set $\pi(\phi)=0$.  Note that 
\begin{align*}
\bar \M_L:=\big\{ e^{i\dr}Q^{\eta_L}_{x_0}: \dr \in [0,2\pi], \; x_0 \in \T_{L^2}  \big\},
\end{align*}

\noi 
where $Q^{\eta_L}_{x_0}(\cdot)=\eta(L^{-1}(\cdot-x_0))Q(\cdot-x_0)$ as in \eqref{QetaL}, is a two dimensional compact manifold.  So, if the field $\phi$ on $\T_{L^2}$ satisfies 
\begin{align}
\text{dist}(\phi, \bar \M_L)=\inf_{x_0\in \T_{L^2},\dr\in [0,2\pi]}\|\phi(\cdot)-e^{i\dr}\eta(L^{-1}(\cdot-x_0))Q(\cdot-x_0)\|_{L^2(\T_{L^2})}<\dl
\label{rescpro0}
\end{align}

\noi
for sufficiently small $\dl>0$,  we can assign a unique pair $(x_0, \dr)\in \T_{L^2} \times [0,2\pi]$ such that 
\begin{align*}
\phi=e^{i\dr}\eta(L^{-1}(\cdot-x_0))Q(\cdot-x_0)+h,
\end{align*}

\noi 
where the coordinate $h$ satisfies $\|h \|_{L^2(\T_{L^2} )}<\dl$. Therefore, for the rescaled field  $\phi_L=L\phi(L\cdot)$ defined on $\T_L$, we can uniquely assign $x_0 \in \T_{L}$ and $\dr \in [0,2\pi]$ such that 
\begin{align}
\phi_L=L\phi(L\cdot)=e^{i\dr} \eta(\cdot-x_0)LQ(L(\cdot-x_0))+Lh(L\cdot),
\label{rescpro}
\end{align}

\noi
satisfying $\|Lh(L\cdot)\|_{L^2(\T_L)}<\dl L^\frac 12$, that is,
\begin{align*}
\text{dist}(\phi_L,\M_L)=\inf_{x_0\in \T_L,\dr\in [0,2\pi] }\|L\phi(L\cdot)-e^{i\dr }\eta(\cdot-x_0)LQ(L(\cdot-x_0))\|_{L^2(\T_L)}<\dl L^{\frac 12}.
\end{align*}

\noi
Here $\M_L$ denotes the rescaled soliton manifold
\begin{align*}
\M_L:=\big\{ e^{i\dr} \eta(\cdot-x_0)LQ(L(\cdot-x_0)): x_0\in \T_L, \dr \in [0,2\pi] \big\}.
\end{align*}

\noi
Based on \eqref{rescpro}, we are now ready to define the projection operator $\pi_L$, which appears in Theorem~\ref{THM:1}, onto the soliton manifold $\M_L$. If a field $\psi$ on $\T_L$ is given by $\psi=L\phi(L\cdot )$, where $\phi$ on $\T_{L^2}$
satisfies \eqref{rescpro0}, then there exist unique $x_0 \in \T_L$ and $\dr \in [0,2\pi]$ such that 
\begin{equation}\label{project}
\pi_L(\psi):=
e^{i\dr} \eta(\cdot-x_0)LQ(L(\cdot-x_0)), 
\end{equation}

\noi
where $ \text{dist}(\psi,\M_L) \le \dl L^\frac 12 $. 
Otherwise, we define $\pi_L(\psi)=0$. For the use of the projection operator $\pi_L$ in context, see \eqref{project1}.

\begin{remark}\rm 
In the massless case $m=0$,  the change of variable formula in Lemma \ref{LEM:chan} still holds, provided that the underlying Gaussian measures
$\mu_L$ in \eqref{GFF} and $\mu^\perp_{x_0,L}$ in \eqref{GFFNOR}  are defined on the space of mean zero fields. In this setting, the definition of the normal space and the notion of orthogonality are defined in terms of the operator $-\dx^2$.
\end{remark}

\section{Reduction to a neighborhood of the soliton manifold}\label{SEC:INSIDE}

In this section, we study a representation of the scaled Gibbs measure $(T_L)_{\#}\rho_L$ on a neighborhood of the soliton manifold, where $T_L(\phi)=L(\phi-\pi_L(\phi))$, as defined in \eqref{THM:1}.

\begin{proposition}\label{PROP:REDUC}
Let $F$ be a bounded, continuous function. Then, we have 
\begin{align}
&\int F\big(L(\phi-\pi_L(\phi)) \big) \rho_L(d\phi) \notag \\
&=\cj Z_{L}^{-1}\iiint\limits_{\mathcal{S}_{Q,x_0}\cap \mathcal{K}^L_\dl } F(L^\frac 12h(L\cdot))e^{L^3\mathcal{E}(Q^{\eta_L}_{x_0}, L^{-\frac 32}h)}
e^{\Ld_L L^{\frac{3}{2}}\langle Q^{\eta_L}_{x_0}, \Re h\rangle} e^{\frac 12 \jb{Q^{\eta_L }_{x_0}, 3|\Re h|^2+|\Im h|^2 } } \notag \\
&\hphantom{XXXXXXXXXXXXX}\cdot \mathrm{Det}_L(h) \, \mu^\perp_{x_0,L^2}(\mathrm{d}h) \,\mathrm{d} \dr \, \mathrm{d}x_0+O(e^{-cL^{0+}})
\label{REDUCT}
\end{align}

\noi
for some constant $c>0$. Here, the partition function $\cj Z_{L}$ represents the integral $\iiint$ over $S_{Q,x_0}\cap \mathcal{K}^L_\dl$ without $F$. The measure $\mu^\perp_{x_0, L^2}$ is the free field  as defined in \eqref{GFFNOR}, corresponding to the mass term $m=\frac 1{L^2}$ \textup{(}see \eqref{massl2}\textup{)}, 
and $\Ld_L=\Ld-\frac 1{L^2}$, where $\Ld$ is the Lagrange multiplier from \eqref{Lagmul}. The terms  $L^3\mathcal{E}(Q^{\eta_L}_{x_0}, L^{-\frac 32}h)$ and $\mathrm{Det}_L(h)$ are given by
\begin{align}
L^3\mathcal{E}(Q^{\eta_L}_{x_0}, L^{-\frac 32}h)&=\frac 1{L^\frac 32} \int_{\T_{L^2} }  Q^{\eta_L}_{x_0}  |h|^2 \Re h \, \mathrm{d}x  +\frac {1}{L^3} \int_{\T_{L^2}} |h|^4 \, \mathrm{d}x  \label{Eerror}\\
\mathrm{Det}_L(h)&=\det\big( \Id-W_{x_0,\dr,L^{-\frac 32}h}\big).  
\label{Det}
\end{align}

Here, the integral is taken over 
\begin{align}
\mathcal{S}_{Q,x_0}&=\big\{h\in L^2(\mathbb{T}_{L^2}):-\frac{1}{2}\|h\|_{L^2(\T_{L^2})}^2-L^{0+} < L^{\frac{3}{2}}\langle Q^{\eta_L}_{x_0},\Re h\rangle < -\frac{1}{2}\|h\|^2_{L^2(\T_{L^2})}\big\} \notag \\
\mathcal{K}^L_\dl&=\big\{ (x_0,\dr,h)\in \T_{L^2}\times [0,2\pi]\times V^{L^2}_{x_0,0}: \|h\|_{L^2(\T_{L^2}) }<\dl L^{\frac 32}  \big\}.
\label{SQK}
\end{align}

\end{proposition}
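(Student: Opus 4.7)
The plan is to carry out the coordinate change described in Step~1 of Subsection~\ref{SUBSEC:strucpf}. First I apply Proposition~\ref{PROP:con} to confine the integration to $\{\mathrm{dist}(\phi,\M_L)\le \dl L^{1/2}\}$ at the cost of an $O(e^{-cL^3})$ error; outside this set $\pi_L\equiv 0$ by \eqref{project}, so the omitted contribution is bounded by $\|F\|_\infty e^{-cL^3}$. Next I rescale $\phi_L(x)=L\phi(Lx)$ from $\T_L$ to $\T_{L^2}$, producing the balance $H_L(\phi_L)+M_L(\phi_L)=L^3 H_{L^2}(\phi)+LM_{L^2}(\phi)$, turning the cutoff into $M_{L^2}(\phi)\le D$, and transforming the neighborhood of $\M_L$ into the $\dl$-neighborhood $U^L_\dl$ of $\bar{\M}_L$ provided by Lemma~\ref{LEM:coord}. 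The change-of-variables formula of Lemma~\ref{LEM:chan}, applied with the free field $\mu^\perp_{x_0,L^2}$ of mass $1/L^2$, then introduces the orthogonal coordinates $(x_0,\dr,h)$ with $h\in V^{L^2}_{x_0,0}$ together with the Weingarten Jacobian $\mathrm{Det}_L(h)$ of \eqref{Det}.

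The core calculation is the Taylor expansion of $H_{L^2}(Q^{\eta_L}_{x_0}+h)+L^{-2}M_{L^2}(Q^{\eta_L}_{x_0}+h)$ around $h=0$. The constant terms are absorbed into the partition function $\bar Z_L$. The linear term is $\langle \Re h,-\dx^2 Q^{\eta_L}_{x_0}-(Q^{\eta_L}_{x_0})^3+cL^{-2}Q^{\eta_L}_{x_0}\rangle$, which via the Euler--Lagrange equation \eqref{Lagmul} for $Q$, combined with the fact that $Q^{\eta_L}_{x_0}$ differs from $Q(\cdot-x_0)$ only by exponentially small amounts (thanks to the cutoff \eqref{cutoffeta} and the exponential decay of $Q$), reduces to a multiple of $\Ld_L\langle Q^{\eta_L}_{x_0},\Re h\rangle$. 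The second-order part splits into the Gaussian density $\tfrac12\langle(-\dx^2+L^{-2})h,h\rangle$ of $\mu^\perp_{x_0,L^2}$ plus the residual $\tfrac12\langle Q^{\eta_L}_{x_0},3|\Re h|^2+|\Im h|^2\rangle$, which provides the extra exponential factor in \eqref{REDUCT}. The cubic and quartic remainders are $-\int Q^{\eta_L}_{x_0}|h|^2\Re h-\tfrac14\int|h|^4$. Finally I rescale $h\mapsto L^{-3/2}h$, the unique scaling that makes the Gaussian unit-order by absorbing the $L^3$ in front of the Hamiltonian; after this rescaling the cubic-quartic terms become exactly $L^3\mathcal{E}(Q^{\eta_L}_{x_0},L^{-3/2}h)$ as in \eqref{Eerror}, and the linear term acquires the prefactor $\Ld_L L^{3/2}$.

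To install the shell $\mathcal{S}_{Q,x_0}$, I insert the mass cutoff into the new coordinates. Using $\|Q^{\eta_L}_{x_0}\|_{L^2(\T_{L^2})}^2=D-O(e^{-cL})$ (which follows from $\|Q\|_{L^2(\R)}^2=D$ in Proposition~\ref{PROP:Min} and the decay of $Q$), the expansion $\|Q^{\eta_L}_{x_0}+h\|_{L^2}^2=\|Q^{\eta_L}_{x_0}\|_{L^2}^2+2\langle Q^{\eta_L}_{x_0},\Re h\rangle+\|h\|_{L^2}^2$, and the rescaling $h\mapsto L^{-3/2}h$ together convert $M_{L^2}(\phi)\le D$ into the upper bound $L^{3/2}\langle Q^{\eta_L}_{x_0},\Re h\rangle<-\tfrac12\|h\|^2_{L^2(\T_{L^2})}$. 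For the matching lower bound with slack $L^{0+}$, I argue that fields with $L^{3/2}\langle Q^{\eta_L}_{x_0},\Re h\rangle<-\tfrac12\|h\|^2_{L^2}-L^{0+}$ correspond in the original variables to $M_{L^2}(\phi)\le D-L^{-3+0+}$; since the minimal energy $I(D)=D^3 I(1)$ is strictly decreasing in $D$, the free energy $L^3 H_{L^2}(\phi)$ then exceeds its minimum by at least $L^{0+}$ on this set, so Proposition~\ref{PROP:con} yields the $O(e^{-cL^{0+}})$ tail bound. Uniformity in $(x_0,\dr)\in\T_{L^2}\times[0,2\pi]$ follows by translation-rotation invariance; in particular $|\g_L(x_0,\dr)|=|\g_L(0,0)|$ by \eqref{gasurf}.

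The main obstacle is ensuring that each approximation in the chain fits within the $O(e^{-cL^{0+}})$ budget uniformly in the soliton parameters. In particular, replacing $Q^{\eta_L}_{x_0}$ by $Q(\cdot-x_0)$ in the Euler--Lagrange step and in the mass identity requires a quantitative version of $|Q(x)|\le Ce^{-c|x|}$, and the tightening of $\{M_{L^2}\le D\}$ to a thin shell of width $L^{-3+0+}$ hinges on the precise cubic dependence $I(\mu)=\mu^3 I(1)$ together with the sharp large-deviation rate from Proposition~\ref{PROP:con}.
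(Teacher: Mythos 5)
Your proposal follows essentially the same route as the paper's proof: truncate to the $\dl L^{1/2}$-neighborhood of $\M_L$ via Proposition~\ref{PROP:con}, rescale $\phi_L=L\phi(L\cdot)$ to balance the Hamiltonian, apply the Ellis--Rosen change of variables of Lemma~\ref{LEM:chan}, Taylor-expand the quartic and exploit the Euler--Lagrange equation \eqref{Lagmul} (with exponentially small corrections from the cutoff $\eta$) to isolate the $\Ld_L L^{3/2}\langle Q^{\eta_L}_{x_0},\Re h\rangle$ term, rescale $h\mapsto L^{-3/2}h$, and finally tighten the mass ball to the shell $\mathcal{S}_{Q,x_0}$ of width $L^{-3+}$ using the cubic free-energy dependence $I(D)=D^3 I(1)$. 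The only organizational difference is that the paper carves off both error regions ($\I_1$: far from $\M_L$; $\I_2$: mass $\le L(D-\eps)$) at the outset before applying the change of variables, and correspondingly decomposes the partition function $Z_L^D=Z_L^D[A_1]+Z_L^D[A_2]+Z_L^D[A_3]$ so the constant prefactor $C_L$ cancels explicitly, whereas you push the full ball through the change of variables first and thin it to the shell afterwards; the controlling estimate in both cases is the same free-energy comparison $\log Z_L^{D-\eps}/L^3-\log Z_L^D/L^3\sim I(1)\big(D^3-(D-\eps)^3\big)$ with $\eps=O(L^{-3+})$.
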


\begin{proof}

We first separate the main contribution and the error term. Note that 
\begin{align}
&\int F\big(L(\phi-\pi_L(\phi)) \big) \rho_L(d\phi) \notag \\
&=\int_{\{  \text{dist}(\phi,\M_L)    \ge \dl L^{\frac 12} \} } F\big(L(\phi-\pi_L(\phi))\big) \rho_L(d\phi) \notag \\
&\hphantom{X} + \int_{\{  \text{dist}(\phi,\M_L)   < \dl L^{\frac 12} \} } F\big(L(\phi-\pi_L(\phi))\big) \ind_{ \{ M_L(\phi) \le L(D-\eps)  \}  }   \rho_L(d\phi) \notag \\
&\hphantom{X}+ \int_{\{  \text{dist}(\phi,\M_L)   < \dl L^{\frac 12} \} } F\big(L(\phi-\pi_L(\phi))\big) \ind_{ \{ L(D-\eps)  \le M_L(\phi) \le LD  \}  }   \rho_L(d\phi) \notag \\
&=\I_1+\I_2+\I_3,
\label{SOLRED0}
\end{align}

\noi
where 
\begin{align*}
&\big\{ \text{dist}(\phi,\M_L)   < \dl L^{\frac 12} \big\}\\
&=\bigg\{\phi\in H^{\frac 12-}(\T_L): \inf_{x_0\in \T_L, \dr\in[0,2\pi] } \| \phi-e^{i\dr}\eta(x-x_0)LQ(L(\cdot-x_0))  \|_{L^2(\T_L)}<\dl L^{\frac 12} \bigg\} .
\end{align*}

\noi 
Here, we verify below that $\I_1$ and $\I_2$  are error terms of order
$O(e^{-cL^3})$ as $L\to \infty$ for some constant $c>0$. Therefore, most of the probability mass is concentrated near the soliton manifold, and
$\M_L(\phi)  \approx LD$,  which corresponds to the $\I_3$ term
\begin{align*}
\I_3=\int_{\{  \text{dist}(\phi,\M_L)   < \dl L^{\frac 12} \} } F\big(L(\phi-\pi_L(\phi))\big) \ind_{ \{ L(D-\eps)  \le M_L(\phi) \le LD  \}  }  \rho_L(d\phi).
\end{align*}

\noi
In particular, we check below that $\eps$  can be at most $\eps=O(L^{-3+})$. See \eqref{alloweps}.

\noi 
Regarding the term $\I_1$ in \eqref{SOLRED0}, we have  from Proposition \ref{PROP:con} that
\begin{align}
|\I_1| \les e^{-c(\dl)L^3}
\label{I1decay}
\end{align}

\noi
as $L\to \infty$.

\noi 
Next, we show that $\I_2$ in \eqref{SOLRED0} is an error term as $L\to \infty$. Note that 
\begin{align}
\I_2&=\int_{\{  \text{dist}(\phi,\M_L)   < \dl L^{\frac 12} \} } F\big(L(\phi-\pi_L(\phi))\big) \ind_{ \{ M_L(\phi) \le L(D-\eps)  \}  }  \rho_L(d\phi) \notag \\
&=\int_{\{  \text{dist}(\phi,\M_L)   < \dl L^{\frac 12} \} } F\big(L(\phi-\pi_L(\phi))\big) e^{\frac 14 \int_{\T_L}|\phi|^4 dx } \ind_{ \{ M_L(\phi) \le L( D-\eps)  \} }  \frac{ \mu_L(d\phi)  } {Z_L^D},
\label{III}
\end{align}

\noi
where $\mu_L$ is the  free field, as defined in \eqref{GFF}, and 
\begin{align}
Z_L^D= \int e^{\frac 14 \int_{\T_L}|\phi|^4 dx }    \ind_{ \{ M_L(\phi) \le LD  \} } \mu_L(d\phi).
\end{align}

\noi 
From Proposition \ref{PROP:con}, we have  
\begin{align*}
\lim_{L\to\infty } \frac{\log Z_L^D}{L^3}=-\inf_{\substack{ \|\phi \|_{L^2(\R)}^2 \le D  } } H(\phi),
\end{align*}

\noi
where the infimum is taken over all $\phi \in H^1(\R)$ satisfying  $\|\phi \|_{L^2(\R)}^2=D$. Therefore, we obtain
\begin{align}
\lim_{L\to\infty } \frac{\log Z_L^D}{L^3}&=\sup_{\| \phi \|_{L^2}^2 \le D} \bigg\{ \frac 14\int_{\R} |\phi|^4 dx- \frac 12 \int_{\R} |\dx \phi|^2 dx \bigg\}\notag \\
&=\sup_{\ld \le D}  \sup_{\| \phi \|_{L^2}^2=\ld } \bigg\{ \frac 14\int_{\R} |\phi|^4 dx- \frac 12 \int_{\R} |\dx \phi|^2 dx \bigg\}\notag \\
&= \sup_{\ld \le D} \ld^3  \sup_{\| \phi \|_{L^2}^2=1 } \bigg\{ \frac 14\int_{\R} |\phi|^4 dx- \frac 12 \int_{\R} |\dx \phi|^2 dx \bigg\} \notag \\
&=-D^3I(1),
\label{free1}
\end{align}

\noi
where in the last line we used $I(1)<0$ from \eqref{Ineg}, and from the second to third line,  we applied the scaling argument $\phi_\ld(x)=\ld\phi(\ld x)$. From \eqref{free1}, we have that as $L\to \infty$  
\begin{align}
\log |\I_2| \le L^3 \bigg( \frac{ \log Z_L^{D-\eps} }{L^3}-\frac{ \log Z_L^D}{L^3} \bigg)+\log(1+\|F\|_{L^\infty}) \les c I(1) L^3\big( D^3-(D-\eps)^3 \big)+c
\label{logII}
\end{align}

\noi
for some constant $c>0$. Since $I(1)<0$  from \eqref{Ineg}, this implies that
\begin{align}
|\I_2|\les e^{- c_1 L^3} \to 0
\label{III0}
\end{align}

\noi
as $L\to\infty$, where $ c_1>0$ is a positive constant. In particular, in \eqref{logII}, since $(D^3-(D-\eps)^3)=O(\eps D^2)$ as $\eps \to 0$, $\eps$ is allowed to be at most $\eps=O(L^{-3+})$ to obtain
\begin{align}
|\I_2| \les  e^{ -c_1L^3(D^3-(D-\eps)^3 )}=e^{-c_1L^{0+}} \too 0
\label{alloweps}
\end{align}

\noi
as $L\to \infty$. Therefore, from \eqref{I1decay} and \eqref{alloweps}, we conclude that it suffices to consider the term $\I_3$ in \eqref{SOLRED0}, where most of the probability mass is concentrated.

Let $u$ be a Gaussian field on $\T_L$ 
\begin{align*}
u(x; \o)=\frac{1}{\sqrt{L}} \sum_{n \in \Z} \frac{g_n(\o)}{\sqrt{1+4\pi^2 |\tfrac{n}{L}|^2}  } e^{2\pi i \frac{n}{L}x }
\end{align*}

\noi
whose law is given by $\mu_L$ as in \eqref{GFF}, corresponding to the mass term $m=1$. Similarly, let $v$ be a Gaussian field on $\T_{L^2}$ 
\begin{align*}
v(x; \o)=\frac{1}{L} \sum_{n \in \Z} \frac{g_n(\o)}{  \sqrt{\tfrac{1}{L^2}+4\pi^2 |\tfrac{n}{L^2}|^2}    } e^{2\pi i \frac{n}{L^2}x }
\end{align*}

\noi
whose distribution is described by $\mu_{L^2}$ as in \eqref{GFF}, corresponding to the mass term $m=\frac 1{L^2}$. Then, we have the following relation $v(x)=\sqrt{L}u(L^{-1}x)$ on $\T_{L^2}$. Thanks to the relation, we have the change of variable 
\begin{align}
u(x)=L\wt u(Lx) \quad \text{on $\T_L$} 
\label{a1}
\end{align}

\noi
where
\begin{align}
\wt u=L^{-\frac 32}v.
\label{a2}
\end{align}

\noi
Then, the distribution of $\wt u$ can be formally expressed as follows
\begin{align*}
\nu_L(d\phi)=Z_{L}^{-1} e^{-\frac {L^3}2  \big(\int_{\T_{L^2} } |\dx \phi|^2 dx +\frac{1}{L^2}\int_{\T_{L^2} } |\phi|^2 dx \big) }  \prod_{x \in \T_{L^2}} d\phi(x),
\end{align*} 

\noi
where the covariance is $L^{-3}(-\dx^2+\frac 1{L^2})^{-1}$. By using the change of variable \eqref{a1} and \eqref{a2}, we have
\begin{align} 
\I_3&=\int_{\{  \text{dist}(\phi,\M_L)   <  \dl L^{\frac 12}  \}}   F\big(L(\phi-\pi_L(\phi))\big) e^{\frac 14 \int_{\T_L}|\phi|^4 dx } \ind_{ \{ L( D-\eps) \le M_L(\phi) \le LD \} } \frac{ \mu_L(d\phi)  } {Z_L^D} \notag \\
&=\E_{\mu_L}\bigg[ F(L(\phi-\pi_L(\phi)) )  e^{\frac{1}{4}\int_{-L}  ^{L} |\phi|^4 dx } \ind_{ \{ L( D-\eps) \le M_L(\phi) \le LD \} }   \ind_{ \{ \text{dist}(\phi,\M_L)<  \dl L^{\frac 12}    \}  } \bigg] \bigg/Z_{L}^D  \notag \\
&=\E_{\nu_L}\bigg[ F(L (\phi_L -\pi_L(\phi_L) ) e^{\frac{L^3}{4}\int_{-L^2}^{L^2} |\phi|^4 dx }   \ind_{ \{ D-\eps \le M_{L^2}(\phi)    \le D   \}   }  \ind_{ \{ \text{dist}(\phi_L,\M_L)< \dl L^{\frac 12}  \}  }  \bigg] \bigg/Z_{L}^D,
\label{INS0}
\end{align}

\noi
where we denote $\phi_L:=L\phi(L\cdot)$. Here, the condition $  \{ \text{dist}(\phi_L,\M_L)<\dl L^{\frac 12} \}   $ implies that 
\begin{align*}
\inf_{x_0\in \T_L,\dr\in [0,2\pi] }\|L\phi(L\cdot)-e^{i\dr }\eta(x-x_0)LQ(L(x-x_0))\|_{L^2(\T_L)}<\dl L^{\frac 12}.
\end{align*}

\noi
Then, the change of variables gives
\begin{align*}
\inf_{x_0\in \T_{L^2},\dr\in [0,2\pi]}\|\phi(\cdot)-e^{i\dr}\eta(L^{-1}(\cdot-x_0))Q(\cdot-x_0)\|_{L^2(\T_{L^2})}<\dl.
\end{align*}

\noi
Then, Lemma  \ref{LEM:coord} implies that if $\dl>0$ is sufficiently small, there exists $x_0\in \T_{L^2}, \dr\in [0,2\pi]$, and $h \in V_{x_0,\dr}^{L^2}$ such that
\begin{align*}
\phi=e^{i\dr}\eta(L^{-1}(\cdot-x_0))Q(\cdot-x_0)+h   =e^{i\dr}Q^{\eta_L}_{x_0}+h,
\end{align*}

\noi
where the normal coordinate $h$ satisfies $\|h \|_{L^2(\T_{L^2} )}<\dl$.
In other words, we can use the new coordinate system $(x_0,\dr,h)$.
Then, by the definition of the projection operator $\pi_L$,  given in \eqref{project},  then there exist unique $x_0 \in \T_L$ and $\dr \in [0,2\pi]$ such that 
 \begin{align}
\pi_L(\phi_L)=e^{i\dr} \eta(\cdot-x_0)LQ(L(\cdot-x_0)), 
\label{project1}
\end{align}

\noi
where $\phi_L=L\phi(L\cdot)=e^{i\dr} \eta(\cdot-x_0)LQ(L(\cdot-x_0))+Lh(L\cdot)$.

The change of variable formula, as given in Lemma \ref{LEM:chan}, implies that the expectation in \eqref{INS0} can be expressed as\footnote{Here, we apply Lemma \ref{LEM:chan} to the Gaussian measure $\nu_L$ with covariance $L^{-3}(-\dx^2+\tfrac 1{L^2})^{-1}$. }
\begin{align}
&\int_{\{ \text{dist}(\phi_L,\M_L)< \dl L^{\frac 12}  \}}
F(L (\phi_L -\pi_L(\phi_L) ) e^{\frac{L^3}{4}\int_{-L^2}^{L^2} |\phi|^4 dx }   \ind_{ \{ D-\eps \le M_{L^2}(\phi)    \le D   \}   } \nu_L(d\phi) \notag \\
&=\iiint_{\wt U^L_\dl} F(L\cdot Lh(L\cdot) ) \exp\bigg\{\frac{L^3}{4}\int_{\T_{L^2}} |e^{i\dr}(Q^{\eta_L}_{x_0}+h)|^4 dx -\frac {L^3}2
\jb{A_L^{-1}Q^{\eta_L}_{x_0},  Q^{\eta_L}_{x_0}}_{L^2(\T_{L^2})}  \notag \\
&\hphantom{XXXXXXXXXXX} -L^3  \jb{ A_L^{-1}  Q^{\eta_L}_{x_0} ,\Re h}_{L^2(\T_{L^2}) } \bigg\}\cdot 
\ind_{ \{ D-\eps \le \| e^{i\dr}(Q^{\eta_L}_{x_0}+h) \|_{L^2(\T_{L^2})}^2 \le D  \} }  \notag  \\
&\hphantom{XXXXXXXXXXX}\times \det\big( \Id-W_{x_0,h}\big)  \mu^{\perp, L^3}_{ x_0, L^2 }(dh) d\s(x_0, \dr),
\label{cha1}
\end{align}

\noi
where $A_L=(-\dx^2+\tfrac 1{L^2})^{-1}$, and the domain of the integration on the right hand side of \eqref{cha1} is
\begin{align}
\wt U^L_\dl=\Big\{ (x_0,\dr,h)\in \T_{L^2}\times [0,2\pi]\times V^{L^2}_{x_0,0}: \|Lh(L\cdot)\|_{L^2(\T_L) }<\dl L^{\frac 12}  \Big\}
\label{dlsize1}
\end{align}

\noi
and 
\begin{align}
\mu_{x_0,L^2}^{\perp, L^3}(dh)=Z_{L}^{-1} e^{-\frac {L^3}2  \big(\int_{\T_{L^2} } |\dx h|^2 dx  + \frac 1{L^2} \int_{\T_{L^2}} |h|^2 dx  \big)}  \prod_{x \in \T_{L^2}} dh(x) 
\label{GaussL3}
\end{align} 

\noi
denotes  the Gaussian measure with $V_{x_0,0}^{L^2} \cap H^1(\T^2_{L^2})\subset H^1(\T^2_{L^2})$ as its Cameron-Martin space and the covariance operator $L^{-3}(-\dx^2+\tfrac{1}{L^2})^{-1}$. 
In fact, when applying Lemma \ref{LEM:chan} in \eqref{cha1}, it is necessary for the integrand to be both bounded and continuous.
However, the integrand
\begin{align*}
e^{\frac{L^3}{4}\int_{-L^2}^{L^2} |\phi|^4 dx }   \ind_{ \{ D-\eps \le M_{L^2}(\phi)    \le D   \}   }
\end{align*}

\noi 
is neither bounded nor continuous. Here, the discontinuity arises from the sharp cutoff $\ind_{ \{ D-\eps \le M_{L^2}(\phi)    \le D   \}   }$. However, since the set of discontinuities has $\nu_L$-measure zero, we can start by applying a smooth cutoff and then take the limit. We can also replace the integrand with a bounded one, provided the bounds are uniform, relying on a simple approximation argument that we omit.

We expand the quartic term in \eqref{cha1} as follows:
\begin{align}
\frac{L^3}{4} \int_{\T_{L^2}} |Q^{\eta_L}_{x_0}+h |^4 \,\mathrm{d}x &= \frac {L^3}4 \int_{\T_{L^2}} |Q^{\eta_L}_{x_0}|^4\, \mathrm{d}x+L^3\int_{\T_{L^2} } (Q^{\eta_L}_{x_0})^3 \Re h \, \mathrm{d}x \notag \\
&\hphantom{X}+L^3\int_{\T_{L^2}} |Q^{\eta_L}_{x_0}|^2\Big(\frac 12 |h|^2 +|\Re h|^2\Big)\, \mathrm{d}x+L^3 \int_{\T_{L^2} }  Q^{\eta_L}_{x_0}  |h|^2 \Re h \, \mathrm{d}x  \notag \\
&\hphantom{X}+\frac {L^3}4 \int_{\T_{L^2}} |h|^4 \, \mathrm{d}x.
\label{expquar}
\end{align}

\noi
By combining \eqref{expquar}, $e^{-\frac {L^3}2
\jb{A_L^{-1}Q^{\eta_L}_{x_0},  Q^{\eta_L}_{x_0}}_{L^2(\T_{L^2})} }$, and $e^{-L^3  \jb{ A_L^{-1}  Q^{\eta_L}_{x_0} ,\Re h}_{L^2(\T_{L^2}) } }$ from \eqref{cha0}, where $A_L=(-\dx^2+\tfrac 1{L^2})^{-1}$, we obtain
\begin{align}
&-\frac{L^3}{2} \int_{\T_{L^2}} |\dx   Q^{\eta_L}_{x_0} |^2\, \mathrm{d}x+\frac{L^3}{4} \int_{\T_{L^2}} |Q^{\eta_L}_{x_0} |^4 \, \mathrm{d}x -\frac L2 \int_{\T_{L^2}} |Q^{\eta_L}_{x_0} |^2 dx \notag \\
&=-L^{3}H_{L^2}( Q^{\eta_L}_{x_0})(1+O(\tfrac{1}{L^2})), 
\label{exp00}
\end{align}

\noi
and
\begin{align}
&L^3 \langle\dx^2 Q^{\eta_L}_{x_0},  \Re h \rangle_{L^2(\T_{L^2})}+L^3\langle  (Q^{\eta_L}_{x_0} )^3, \Re h \rangle_{L^2(\T_{L^2})}-L\langle  Q^{\eta_L}_{x_0}, \Re h \rangle_{L^2(\T_{L^2})} \notag \\
&= \Lambda L^3\langle  Q^{\eta_L}_{x_0}, \Re h \rangle_{L^2(\T_{L^2})} \cdot (1-\tfrac{1}{\Ld L^2})+O(e^{-cL}),
\label{eexp00} 
\end{align}

\noi
where $\Ld>0$ is the Lagrange multiplier in \eqref{Lagmul} and $c>0$ is a constant. 
In \eqref{eexp00}, the error term  $O(e^{-cL})$  arises from the presence of the cutoff function $\eta$. Note that 
\begin{align}
\dx^2 Q^{\eta_L}_{x_0}&=L^{-2}(\dx^2 \eta)(L^{-1}(\cdot-x_0) )Q(\cdot-x_0)+2L^{-1}(\dx \eta)(L^{-1}(\cdot-x_0))\dx Q(\cdot-x_0) \notag \\
&\hphantom{X}+\eta(L^{-1}(\cdot-x_0) ) (\dx^2 Q)(\cdot-x_0).
\label{firstterm0}
\end{align}

\noi
The cutoff function $\eta$, as defined in \eqref{cutoffeta}, satisfies
$\eta=1 $ on $\big[-\tfrac 18, \tfrac 18\big]$, 
Consequently, both $(\dx^2 \eta)(L^{-1}(\cdot-x_0) )$ and $(\dx \eta)(L^{-1}(\cdot-x_0))$  vanish for $|x-x_0|\le \frac L8$.  On the other hand, $Q(\cdot-x_0)$ is localized at $x_0$ with an exponential tail.
Therefore, for $|x-x_0| \ge \frac L8$, we have $Q(\cdot-x_0)=O(e^{-cL})$.  Combined with $\| h\|_{L^2(\T_{L^2})} \le \dl $, as given \eqref{dlsize1}, we can conclude that the first and second terms in \eqref{firstterm0}, when multiplied by $L^3$ and paired with $\jb{\cdot, \Re h}$, are absorbed into the error term $O(e^{-cL})$ in \eqref{eexp00}. 
The last term in \eqref{firstterm0} and the term $(Q^{\eta_L}_{x_0})^3$ in \eqref{eexp00} take the form
\begin{align}
&\eta(L^{-1}(\cdot-x_0) ) (\dx^2 Q)(\cdot-x_0)+\eta(L^{-1}(\cdot-x_0) )^3 Q(\cdot-x_0)^3 \notag \\
&=\eta(L^{-1}(\cdot-x_0) ) \big( (\dx^2 Q)(\cdot-x_0)+ Q(\cdot-x_0)^3 \big) \notag \\
&\hphantom{X}+Q(\cdot-x_0)^3\big(\eta(L^{-1}(\cdot-x_0) )^3-\eta(L^{-1}(\cdot-x_0) )      \big) \notag \\
&=\Ld Q^{\eta_L}_{x_0}+Q(\cdot-x_0)^3\big(\eta(L^{-1}(\cdot-x_0) )^3-\eta(L^{-1}(\cdot-x_0) )      \big), 
\label{LM0}
\end{align}

\noi
where we used the structure of the equation for the ground state  $Q=Q_D$ in \eqref{Lagmul}. As before, since $\eta=1 $ on $\big[-\tfrac 18, \tfrac 18\big]$, 
$\eta(L^{-1}(\cdot-x_0) )^3-\eta(L^{-1}(\cdot-x_0) )=0$ on $|x-x_0|\le \frac L8$. For $|x-x_0| \ge \frac L8$, we have $Q(\cdot-x_0)=O(e^{-cL})$. Hence, combined with $\| h\|_{L^2(\T_{L^2})} \le \dl $, as given \eqref{dlsize1}, the second term in \eqref{LM0}, when multiplied by $L^3$ and paired with $\jb{\cdot, \Re h}$, are absorbed into the error term $O(e^{-cL})$ in \eqref{eexp00}.

The fourth and fifth terms on the right-hand side of equation \eqref{expquar} can be interpreted as higher-order error terms
\begin{align}
L^3\mathcal{E}(Q^{\eta_L}_{x_0},h)=L^3 \int_{\T_{L^2} }  Q^{\eta_L}_{x_0}  |h|^2 \Re h \, \mathrm{d}x +\frac {L^3}4 \int_{\T_{L^2}} |h|^4\, \mathrm{d}x.
\label{exp01}
\end{align}

\noi
It follows from \eqref{cha1}, \eqref{expquar}, \eqref{exp00}, \eqref{eexp00}, and \eqref{exp01} that
\begin{align}
&\int_{\{ \text{dist}(\phi_L,\M_L)< \dl L^{\frac 12}  \}}
F(L (\phi_L -\pi_L(\phi_L) ) e^{\frac{L^3}{4}\int_{-L^2}^{L^2} |\phi|^4 dx }   \ind_{ \{ D-\eps \le M_{L^2}(\phi)    \le D   \}   } \nu_L(d\phi)
\notag \\
&=(1+O(e^{-cL}))\iiint_{\wt U^L_\eps } F(L^2 h(L\cdot ) ) \exp\Big\{-L^3 H_{L^2}(Q^{\eta_L}_{x_0} )(1+O(\tfrac{1}{L^2}))  +L^3\mathcal{E}(Q^{\eta_L}_{x_0}, h)  \Big\}  \notag \\
&\hphantom{XXXXXXX}\cdot  \exp\bigg\{ \Ld L^3 \langle Q^{\eta_L}_{x_0}, \Re h \rangle (1+O(\tfrac{1}{L^2})) +\frac {L^3}2 \langle (Q^{\eta_L}_{x_0})^2h,h \rangle+ L^3 \langle (Q^{\eta_L}_{x_0})^2 \Re h, \Re h  \rangle   \bigg\} \notag  \\
&\hphantom{XXXXXXX} \cdot \ind_{ \big\{ D-\eps \le \| e^{i\dr}(Q^{\eta_L}_{x_0}+h) \|_{L^2(\T_{L^2})}^2 \le D  \big\} } \notag \\
&\hphantom{XXXXXXX}\cdot  \det\big( \Id-W_{x_0,\dr,h}\big) \mu^{\perp, L^3}_{x_0,L^2}(dh) d\s(x_0, \dr),
\label{INS01}
\end{align}

\noi
where the term $ 1+O(e^{-cL}) $ arises from the error term in \eqref{eexp00} and $e^{x}=1+O(x)$ when $|x| \ll 1$. Here, the inner product means the real inner product as defined in \eqref{INER}. We observe that if $u$ represents a Gaussian random field with $\Law(u)=\mu_{x_0,L^2}^{\perp}$ on the normal space $V^{L}_{x_0,0}$
\begin{align}
\mu_{x_0,L^2}^{\perp}(dh)=Z_{L}^{-1} e^{-\frac {1}2  \int_{\T_{L^2} } |\dx h|^2 dx -\frac 1{2L^2} \int_{\T_{L^2}} |h|^2 dx  }  \prod_{x \in \T_{L^2}} dh(x),
\label{massl2}
\end{align}

\noi 
applying the linear transformation $u \mapsto L^{-\frac 32} u$, $L^{-\frac 32} u$ becomes a Gaussian random field with $\Law(L^{-\frac 32} u )=\mu_{x_0,L^2}^{\perp, L^3}$ in \eqref{GaussL3}. Therefore, we obtain 
\begin{align}
&\eqref{INS01}\notag \\
&=(1+O(e^{-cL}))\iiint_{ \mathcal{K}^L_\dl } F(L^{\frac 12} h(L\cdot ) ) \exp\Big\{-L^3 H_{L^2}(Q^{\eta_L}_{x_0} )(1+O(\tfrac{1}{L^2}))  +L^3\mathcal{E}(Q^{\eta_L}_{x_0}, L^{-\frac 32}h)  \Big\}  \notag \\
&\hphantom{XXXXXXXXX}\cdot  \exp\bigg\{ \Lambda L^{\frac 32} \langle Q^{\eta_L}_{x_0},  \Re h \rangle (1+O(\tfrac{1}{L^2}))  +\frac {1}2 \langle (Q^{\eta_L}_{x_0})^2h,h \rangle+  \langle (Q^{\eta_L}_{x_0})^2 \Re h, \Re h  \rangle   \bigg\} \notag  \\
&\hphantom{XXXXXXXXX} \cdot \ind_{ \{ D-\eps \le \| e^{i\dr}(Q^{\eta_L}_{x_0}+L^{-\frac 32}h) \|_{L^2(\T_{L^2})}^2 \le D  \} } \notag  \\
&\hphantom{XXXXXXXXX}\cdot \det\big( \Id-W_{x_0,\dr,L^{-\frac 32}h}\big) \mu^{\perp}_{x_0,L^2}(dh) d\s(x_0, \dr), 
\label{expL}
\end{align}

\noi
where
\begin{align}
\mathcal{K}^L_\dl&=\Big\{ (x_0,\dr,h)\in \T_{L^2} \times [0,2\pi]\times V^{L^2}_{x_0,0}: \|h\|_{L^2(\T_{L^2}) }<\dl L^{\frac 32}  \Big\}.
\label{expL1}
\end{align}

\noi
Note that the condition  $D-\eps \le \| e^{i\dr}(Q^{\eta_L}_{x_0}+L^{-\frac 32}h) \|_{L^2(\T_{L^2})}^2 \le D  $ is equivalent to 
\begin{align}
L^3e^{-cL}-\eps L^3 -\|h\|_{L^2(\T_L^2)}^2\le 2L^{\frac 32} \langle Q^{\eta_L}_{x_0}, \Re h \rangle \le  L^3e^{-cL}-\|h\|_{L^2(\T_L^2)}^2
\label{mass}
\end{align}

\noi
for some $c>0$ since $\|Q^{\eta_L}_{x_0} \|_{L^2(\T_{L^2})}^2=D-e^{-cL}$ as $L\to \infty$. Here, we used the fact that $Q(\cdot-x_0)$ has an exponentially decaying tail.  From \eqref{alloweps}, we see that $\eps$ can be at most $\eps=O(L^{-3+})$. Hence, the functional integral in \eqref{expL} is taken over the regions $\mathcal{K}^L_\dl$ and $S_{Q,x_0}$ as defined in \eqref{SQK}.

By using the translation invariance \eqref{gasurf} of the surface measure $|\g(x_0,\dr)|\, \mathrm{d}\dr\, \mathrm{d}x_0$ in \eqref{surff}, that is, the independence of all the quantities from $x_0$ and $\dr$,
and noting that the term $-L^3 H_{L^2}(Q^{\eta_L}_{x_0})(1+O(\tfrac{1}{L^2}))$ depends only on the $\dot{H}^1$, $L^4$, and $L^2$ size,  we can take out the terms from the integrals over the two tangential directions $x_0$ and $\dr$ to obtain 
\begin{align}
C_L&=e^{-L^3 H_{L^2}(Q^{\eta_L}_{0})\big(1+O(\tfrac{1}{L^2})\big)  }|\g_L(0,0)| \notag .
\end{align}

\noi
Then, this constant cancels out from the partition function $Z_L^D$, which contains the same factor, as shown in the arguments below. 
In summary, by combining \eqref{INS0}, \eqref{INS01}, and \eqref{expL}, we obtain 
\begin{align}
\I_3&=C_L(1+O(e^{-cL}))\notag \\
&\hphantom{X}\times \iiint\limits_{\mathcal{S}_{Q,x_0}\cap \mathcal{K}^L_\dl } F(L^\frac 12h(L\cdot))e^{L^3\mathcal{E}(Q^{\eta_L}_{x_0}, L^{-\frac 32}h)}
e^{\Ld_L L^{\frac{3}{2}}\langle Q^{\eta_L}_{x_0}, \Re h\rangle  } e^{\frac 12 \jb{Q^{\eta_L }_{x_0}, 3|\Re h|^2+|\Im h|^2 } } \notag \\
&\hphantom{XXXXXXXXXXXXXXX}\cdot \mathrm{Det}_L(h) \, \mu^\perp_{x_0,L^2}(\mathrm{d}h) \,\mathrm{d} \dr \, \mathrm{d}x_0 \bigg/ Z_L^D,
\label{numeradeno} 
\end{align}

\noi
where $\Ld_L$ follows from \eqref{eexp00}
\begin{align*}
\Ld_L=\Ld(1-\tfrac{1}{\Ld L^2})=\Ld-\frac 1{L^2}.
\end{align*}

We now study the partition function $Z^D$. As in \eqref{SOLRED0}, we decompose the partition function into the main contribution and error terms as follows 
\begin{align}
Z_L^D=Z_L^D[A_1]+Z_L^D[A_2]+Z_L^D[A_3],
\label{partidecom}
\end{align}

\noi
where
\begin{align*}
Z_L^D[A_i]:=\int_{A_i} e^{\frac 14\int_{\T_{L}} |\phi|^4 \, \mathrm{d}x} \ind_{ \{ M_L(\phi) \le LD \} }\mu_L(d\phi)
\end{align*}

\noi
and
\begin{align*}
A_1:&=\{  \text{dist}(\phi,\M_L)    \ge \dl L^{\frac 12} \} \\
A_2:&=\{  \text{dist}(\phi,\M_L)   < \dl L^{\frac 12} \}\cap  \{ M_L(\phi)\le L(D-\eps) \} \\
A_3:&=\{  \text{dist}(\phi,\M_L)   < \dl L^{\frac 12} \}\cap \{ L(D-\eps)  \le M_L(\phi)\le LD \}.
\end{align*}

From \eqref{partidecom}, Proposition \ref{PROP:con}, \eqref{III}, and \eqref{III0}, we have
\begin{align*}
1=O(e^{-cL^3})+Z_L^D[A_3]/ Z_L^D.
\end{align*}

\noi
This implies 
\begin{align}
Z_L^D[A_3]=Z_L^D(1+O(e^{-cL^3})).
\label{MATT3}
\end{align}

\noi
Therefore, from \eqref{partidecom} and  \eqref{MATT3}, we can rewrite 
\begin{align}
Z_L^D&=Z_L^D[A_1]+Z_L^D[A_2]+Z_L^D[A_3] \notag \\
&=Z_L^D[A_3]\bigg(1+  \frac{Z_L^D[A_1]}{Z_L^D(1+O(e^{-cL^3} ))  }+\frac{Z_L^D[A_1]}{Z_L^D(1+O(e^{-cL^3} ))  }  \bigg) \notag \\
&=Z_L^D[A_3](1+O(e^{-cL^3})),
\label{partidecom1}
\end{align}

\noi
where in the last line, we used  Proposition \ref{PROP:con}, \eqref{III}, and \eqref{III0}.

Regarding the term $Z_L^D[A_3]$,  we can follow the argument of obtaining the numerator in \eqref{numeradeno} to get
\begin{align}
Z_L^D[A_3]&=C_L(1+O(e^{-cL}))\iiint\limits_{\mathcal{S}_{Q,x_0}\cap \mathcal{K}^L_\dl } e^{L^3\mathcal{E}(Q^{\eta_L}_{x_0}, L^{-\frac 32}h)}
e^{\Ld_L L^{\frac{3}{2}}\langle Q^{\eta_L}_{x_0}, \Re h\rangle} e^{\frac 12 \jb{Q^{\eta_L }_{x_0}, 3|\Re h|^2+|\Im h|^2 } } \notag \\
&\hphantom{XXXXXXXXXXXXXXXXXXXX}\cdot \mathrm{Det}_L(h) \, \mu^\perp_{x_0,L^2}(\mathrm{d}h) \,\mathrm{d} \dr \, \mathrm{d}x_0 \notag \\
&=C_L(1+O(e^{-cL})) \cj Z_{L}.
\label{ZLDA3}
\end{align}

\noi
Combining \eqref{numeradeno}, \eqref{partidecom1}, and \eqref{ZLDA3} yields 
\begin{align}
\I_3&=\cj Z_L^{-1}\iiint\limits_{\mathcal{S}_{Q,x_0}\cap \mathcal{K}^L_\dl } F(L^\frac 12h(L\cdot))e^{L^3\mathcal{E}(Q^{\eta_L}_{x_0}, L^{-\frac 32}h)}
e^{\Ld_L L^{\frac{3}{2}}\langle Q^{\eta_L}_{x_0}, \Re h\rangle} e^{\frac 12 \jb{Q^{\eta_L }_{x_0}, 3|\Re h|^2+|\Im h|^2 } }\notag \\
&\hphantom{XXXXXXXXXXXXX}\cdot \mathrm{Det}_L(h) \, \mu^\perp_{x_0,L^2}(\mathrm{d}h) \,\mathrm{d} \dr \, \mathrm{d}x_0 
\label{I3final}
\end{align}

\noi
as $L\to \infty$. In summary, the desired result follows from \eqref{SOLRED0}, \eqref{I1decay}, \eqref{alloweps}, and \eqref{I3final}.


\end{proof}

\begin{remark}\rm \label{REM:Lx_0}
In the tangential component $x_0$-integral over $\T_{L^2}$, given in \eqref{REDUCT},  we can take the change of variable $x_0 \mapsto Lx_0$ to use 
\begin{align}
\eta(L^{-1}(\cdot-Lx_0) )Q(\cdot-L x_0)
\label{appground0}
\end{align}

\noi
instead of $Q^{\eta_L}_{x_0}=\eta(L^{-1}(\cdot-x_0) ) Q(\cdot-x_0)$ in \eqref{REDUCT}. Consequently, if necessary, we may assume that the approximate ground state $Q^{\eta_L}_{x_0}$ is localized at $Lx_0$, as expressed in \eqref{appground0}. In this case, the tangential component $x_0$-integral in \eqref{REDUCT} is taken over $\T_{L}$ not $\T_{L^2}$. Note that when performing the change of variable $x_0 \to Lx_0$, the Jacobian determinant also arises from the partition function $\cj Z_L$. Therefore, we can disregard the common factors. 
\end{remark}



\section{Conditional Ornstein Uhlenbeck measure}\label{SEC:conOU}

According to Proposition \ref{PROP:REDUC}, the measure is conditioned on the nonlinear event $S_{Q,x_0}$ defined in \eqref{SQK}
\begin{align}
S_{Q, x_0}=\Big\{   \jb{Q^{\eta_L}_{x_0}, \Re h }\approx -\frac {1}{2L^{\frac 32}}\| h\|_{L^2}^2  \Big\},
\label{RECOND}
\end{align}

\noi
where all norms are taken over the torus $\T_{L^2}$,  unless otherwise stated.

\noi 
Our task in this section is to study the conditional expectation
\begin{align}
\E_{\s_{x_0,\dr}^\perp }\bigg[F(h) \Big|  \jb{Q^{\eta_L}_{x_0}, \Re h }\approx -\frac 1{2L^\frac 32} \| h\|_{L^2}^2 \bigg]=\frac{\int_{S_{Q,x_0}} F(h)\, \s^{\perp}_{x_0,\dr}(dh)}{\int_{S_{Q,x_0}} \s^{\perp}_{x_0,\dr}(dh)  },
\label{sigmamea00}
\end{align}

\noi
where $\s^\perp_{x_0,\dr}$ comes from Proposition \ref{PROP:REDUC}
\begin{align}
d\s_{x_0,\dr}^\perp(h)= e^{L^3\mathcal{E}(Q^{\eta_L}_{x_0}, L^{-\frac 32}h)}e^{\Ld_L L^{\frac{3}{2}}\langle Q^{\eta_L}_{x_0}, \Re h\rangle} e^{\frac 12 \jb{Q^{\eta_L }_{x_0}, 3|\Re h|^2+|\Im h|^2 } }\mathrm{Det}_L(h) \ind_{\mathcal{K}^L_\dl }(h)\, \frac{\mu^\perp_{x_0,L^2}(\mathrm{d}h)}{Z_{x_0,L^2}}.
\label{sigmamea}
\end{align}

\noi
Here $Z_{x_0, L^2}$ denotes the corresponding partition function, and
\begin{align}
\mu_{x_0,L^2}^{\perp}(dh)=Z_{x_0, L}^{-1} e^{-\frac {1}2  \int_{\T_{L^2} } |\dx h|^2 dx -\frac 1{2L^2} \int_{\T_{L^2}} |h|^2 dx  }  \prod_{x \in \T_{L^2}} dh(x).
\label{JW01}
\end{align}

\noi 
In this section, we show that the conditional expecation \eqref{sigmamea00} admits a conditional density (see Lemma \ref{lem: replace}).

Notice that as $L \to \infty$, the mass term $\frac 1{L^2}$ in the Gaussian measure $\mu_{x_0,L^2}^{\perp}$ \eqref{JW01} vanishes, leading to the disappearance of correlation decay in the infinite volume limit.
However, by combining the term $\Ld_L L^{\frac{3}{2}}\langle Q^{\eta_L}_{x_0}, \Re h\rangle$ in \eqref{sigmamea} and the nonlinear conditioning $S_{Q,x_0}$ in \eqref{RECOND},  we introduce an Ornstein Uhlenbeck type measure $\mu^\perp_{x_0, \Ld}$   in what follows
\begin{align}
\mathrm{d}\mu^{\perp }_{x_0,\Ld} = e^{-\frac{\Ld_L}{2}\|h\|^2_{L^2(\T_{L^2})}}\,\frac{\mathrm{d}\mu^\perp_{x_0,L^2}}{\tilde{Z}_{x_0,L^2}},
\label{OU00}
\end{align}

\noi 
where $\tilde{Z}_{x_0, L^2}$ is the appropriate normalization and $\mu_{x_0, L^2}^{\perp}$ is the Gaussian measure in \eqref{JW01}.
Hence, the Ornstein Uhlenbeck measure \eqref{OU00} has covariance operator $(-\dx^2+\Ld_L)^{-1}=(-\dx^2+\Ld-\tfrac 1{L^2})^{-1}$ projected on the normal space $V^{L}_{x_0,0}$, where $\Ld$ is the Lagrange multiplier in \eqref{Lagmul}.

\begin{remark}\rm\label{REM:mass}
As $L\to \infty$, we have $\Ld_L=\Ld-\frac 1{L^2}\approx \Ld$, which is a small perturbation of $\Ld>0$. Thereore, in the limit $L\to \infty$,  this perturbation $\frac 1{L^2}$ does not affect key features of the Gaussian process, such as the decay of correlations. In what follows, we may thus assume that the Gaussian measure
\begin{align}
\mathrm{d}\mu^{\perp }_{x_0,\Ld} =\tilde{Z}_{x_0,L^2}^{-1} e^{-\frac {1}2  \int_{\T_{L^2} } |\dx h|^2 dx -\frac {\Ld}{2} \int_{\T_{L^2}} |h|^2 dx  }  \prod_{x \in \T_{L^2}} dh(x).
\label{OU1}
\end{align}

\noi
has covariance operator $(-\dx^2+\Ld)^{-1}$.

As a result of the replacement in \eqref{OU00}, we also study the error term in \eqref{sigmamea}
\begin{align*}
e^{\Ld_L L^{\frac{3}{2}}\langle Q^{\eta_L}_{x_0}, \Re h\rangle} e^{\frac{\Ld_L}{2}\|h\|^2_{L^2}}=e^{\Ld_L (L^{\frac{3}{2}}\langle Q^{\eta_L}_{x_0}, \Re h\rangle+\frac 12 \|h\|_{L^2}^2  )     }
\end{align*}

\noi
using the conditioning \eqref{RECOND}. Here, the perturbative term $\frac 1{L^2}$ in  $\Ld_L=\Ld-\frac 1{L^2}$ does not affect the analysis that follows, and thus we may assume $\Ld_L=\Ld$. 

\end{remark}

Correspondingly, combined with Proposition \ref{PROP:REDUC}, \eqref{sigmamea}, \eqref{OU00}, and Remark \ref{REM:mass},  we introduce the notation 
\begin{align}
\mathcal{I}(F)=\int\limits_{S_{Q,x_0}\cap \mathcal{K}^L_\dl}  F(h)&e^{L^3\mathcal{E}(Q^{\eta_L}_{x_0}, L^{-\frac{3}{2}}h)} e^{\Ld L^{\frac 32} \jb{ Q^{\eta_L}_{x_0}, \Re h }+\frac \Ld2\|h \|_{L^2}^2 } \notag  \\
\cdot & e^{\frac 12\jb{ (Q^{\eta_L}_{x_0})^2, 3|\Re h|^2+|\Im h|^2 }  }\mathrm{Det}_L(h) \,\mathrm{d}\mu_{x_0,\Ld}^\perp(h).
\label{Idef}
\end{align}

\noi 
Then, we can write the conditional expectation conditioned on the set $S_{Q,x_0}$ in \eqref{SQK} as follows
\begin{align}
\E_{\s_{x_0,\dr}^\perp }\bigg[F(h) \Big|  \jb{Q^{\eta_L}_{x_0}, \Re h }\approx -\frac 1{2L^\frac 32} \| h\|_{L^2}^2 \bigg]=\frac{\mathcal{I}(F) }{\mathcal{I}(1) }.
\label{condssig}
\end{align}

\noi
Note that, using the notation introduced in \eqref{Idef} along with \eqref{INS01}, \eqref{expL}, and \eqref{OU1}, we can write
\begin{align}
&\int_{ \{ \text{dist}(\phi,\M_L)<\dl L^{\frac 12} \}  } F\big(L(\phi_L-\pi_L(\phi_L)) \big) e^{\frac 14 \int_{\T_L}|\phi|^4 \, \mathrm{d}x } \ind_{ \{ L(D-\eps) \le M_{L}(\phi)    \le LD   \}   }   \mu_L(d\phi) \notag \\
&=C_L\int_{\T_L^{2} }\int_{[0,2\pi] } \mathcal{I}(F) \, \mathrm{d}\dr\, \mathrm{d}x_0,
\label{CHA00}
\end{align}

\noi 
where $\eps$ is chosen as $\eps=O(L^{-3+})$ from \eqref{alloweps}, and
\begin{align}
C_L&=e^{-L^3 H_{L^2}(Q^{\eta_L}_{0} )(1+O(\tfrac{1}{L^2} )) }|\g_L(0,0)|. \notag
\end{align}

\noi
Here, we used the translation invariance \eqref{gasurf} of the surface measure $|\g(x_0,\dr)|\, \mathrm{d}\dr\, \mathrm{d}x_0$ in \eqref{surff}, that is, the independence of all the quantities from $x_0$ and $\dr$.

\noi 
In the following subsections, we derive a conditional density function associated with the conditional expectation given in \eqref{condssig}. see Lemma \ref{lem: replace}.

\begin{remark}\rm 
Under the Ornstein-Uhlenbeck-type measure $\mu^\perp_{x_0, \Ld}$ in \eqref{OU1}, thanks to the law of large numbers for the measure, we have $\| h\|_{L^2(\T_{L^2})}^2 \approx L^2$ with high probability. Hence, we can simplify the nonlinear conditioning as follows 
\begin{align*}
\eqref{condssig} \approx \E_{\s_{x_0,\dr}^\perp }\bigg[F(h) \Big|  \jb{Q^{\eta_L}_{x_0}, \Re h }\approx -L^{\frac 12} \bigg].
\end{align*}

\noi 
even if the effect remains a large negative number $-L^\frac 12$. 

\end{remark}



\subsection{Orthogonal decomposition of the field}

To express the conditional expectation in \eqref{condssig}
in terms of conditional density, we separate the mean-zero Gaussian random variable component $\jb{ Q^{\eta_L}_{x_0}, \Re h }$  from the rest of the field $h$.

For the computations below, it will be useful to have a concrete representation of $h$. We begin with a white noise
\begin{equation}\label{eqn: WN-def}
w_0=\sum_n  g_n u_n.
\end{equation}

\noi 
Here, $\{g_n\}$ is a sequence of i.i.d. complex Gaussian random variables with mean zero and variance $1$, and $\{u_n\}$ is an orthonormal basis of $L^2(\T_{L^2};\mathbb{C})$. Then 
\begin{align*}
\mathbb{E}\Big[\big\|\big(\sqrt{1-\dx^2}\big)^{-\frac{1}{2}-\frac{\eps}{2}}w_0\big\|_{L^2(\T_{L^2})}^2 \Big]=\sum_{n=1}^\infty \big\|\big(\sqrt{1-\dx^2}\big)^{-\frac{1}{2}-\frac{\eps}{2}}u_n\big\|_{L^2(\T_{L^2}) }^2.
\end{align*}

\noi 
The quantity is the Hilbert-Schmidt norm of the operator $\big(\sqrt{1-\dx^2}\big)^{-\frac{1}{2}-\frac{\eps}{2}}$ on $\T_{L^2}$, which is independent of the orthonormal basis, and thus can be computed on the basis $\frac{1}{\sqrt{2}L}e^{2\pi i \frac{n}{L^2}x}$:
\[\big\|\big(\sqrt{1-\dx^2}\big)^{-\frac{1}{2}-\frac{\eps}{2}} \big\|_{\mathrm{HS}}^2\le C\sum_{n\in \mathbb{Z}}\frac{1}{(1+|\frac{n}{L^2}|^2)^{\frac 12+\frac \eps2}}\cdot \frac 1{L^2}<\infty,\]

\noi 
uniformly in $L\ge 1$, where we used the Riemann sum approximation. Thus \eqref{eqn: WN-def} converges in $H^{-\frac{1}{2}-\eps}(\T_{L^2})$.

Define the (unbounded) operators $B_1$ and $B_2$ on real subspace $\Re L^2(\T_{L^2})$ and the imaginary subspace $\Im L^2(\T_{L^{2}})$, respectively \begin{equation}\label{eqn:defB}
\begin{split}
B_1&=-\dx^2+\Ld\\
B_2&=-\dx^2+\Ld,
\end{split}
\end{equation}

\noi
where $\Ld>0$ is the Lagrange multiplier in \eqref{Lagmul}. Note that the base Gaussian measure $\mu_{x_0,\Lambda}$ \eqref{OU1} is defined on the normal space $V^{L^2}_{x_0,0}$, where the normal space is understood with the mass term $\Lambda$, as explained in Remark~\ref{REM:mass0}.
Accordingly, we need to project the covariance operators $B_1$ and $B_2$ onto this space.
Define $\mathbf{P}_{V^{L^2}_{x_0,0}}$ to be the orthogonal projector in $H^1_\Ld$ onto $V_{x_0,0}^{L^2}$, where $H^1_\Ld$ denotes the Sobolev space associated with the operator $-\dx^2+\Ld$. Then
\begin{equation}\label{eqn: P-def}
\begin{split}
\mathbf{P}_{V^{L^2}_{x_0,0}} &= \mathrm{id}-\frac{\langle (\Ld-\partial_{x}^2) \partial_{x_0}Q_{x_0}^{\eta_L}, \cdot\,\rangle_{L^2}}{\|\partial_{x_0} Q_{x_0}^{\eta_L}\|^2_{ H^1_\Ld} 
}\partial_{x_0} Q_{x_0}^{\eta_L}-\frac{\langle i(\Ld-\partial_{x}^2) Q_{x_0}^{\eta_L}, \cdot\,\rangle_{L^2}}{\| Q_{x_0}^{\eta_L}\|^2_{H^1_\Ld} } iQ_{x_0}^{\eta_L}\\
&= \mathrm{id}- \langle Q^{\eta_L}_{1, x_0},\,\cdot\rangle Q^{\eta_L}_{1, x_0}-\langle  Q^{\eta_L}_{2, x_0},\, \cdot\rangle  Q^{\eta_L}_{2, x_0},
\end{split}
\end{equation}

\noi 
where 
\begin{align}
Q^{\eta_L}_{1, x_0}&:=\frac{ \partial_{x_0}Q_{x_0}^{\eta_L}}{\|\partial_{x_0} Q_{x_0}^{\eta_L}\|_{     H^1_\Ld }} \label{TQ1}\\
Q^{\eta_L}_{2, x_0}&:= \frac{i Q_{x_0}^{\eta_L}}{\| Q_{x_0}^{\eta_L}\|_{  H^1_\Ld}}. \label{TQ2}
\end{align}

\noi 
Finally, we define
\begin{equation}\label{eqn: C1-def}
C_i:= \mathbf{P}_{V^{L^2}_{x_0,0}} B_i^{-1}\mathbf{P}_{V^{L^2}_{x_0,0}}, \quad i=1,2.
\end{equation}

\noi 
Note that in defining the orthogonal projection \eqref{eqn: P-def}, the insufficient regularity of the field  $h$ under the measure $\mu^\perp_{x_0, \Ld}$ requires measuring orthogonality with respect to the weighted $L^2$-inner product (with weight $-\Ld+\dx^2$) instead of the $H^1_\Ld$-inner product, ensuring that the projectors in \eqref{eqn: P-def} are well-defined for $h\in V^{L^2}_{x_0,0}\subset L^2(\T_{L^2})$. In the following lemma, we show that $C_1$ and  $C_2$ in \eqref{eqn: C1-def}  are the covariance operators for the Gaussian processes $\Re h$ and $\Im h, $respectively, on the normal space.

\begin{proposition}\label{PROP:OUproj}
Define the field
\begin{align}
h&:=\mathbf{P}_{V^{L^2}_{x_0,0}} B^{-\frac{1}{2}}w_0 \notag \\
&=\mathbf{P}_{V^{L^2}_{x_0,0}} B_1^{-\frac{1}{2}}\Re w_0+i\mathbf{P}_{V^{L^2}_{x_0,0}} B_2^{-\frac{1}{2}} \Im w_0
\label{gprocess}
\end{align}

\noi 
lies in $H^{\frac{1}{2}-\epsilon}(\T_{L^2})$ for each $\epsilon>0$. Moreover, the $h$ is a Gaussian process with covariance operator
\begin{equation}\label{eqn: cov-def}
\left(\begin{array}{cc}
C_1& \\
& C_2
\end{array}\right)=\mathbf{P}_{V^{L^2}_{x_0,0}}\left(\begin{array}{cc}
B_1^{-1}& \\
& B_2^{-1}
\end{array}\right)\mathbf{P}_{V^{L^2}_{x_0,0}}
\end{equation}
on $L^2(\mathbb{T}_{L^2})=\Re L^2(\T_{L^2})\oplus i\Im L^2(\T_{L^2})$.
\begin{proof}
For $f, g\in L^2(\T_{L^2};\mathbb{R})$, we have
\begin{align*}
\mathbb{E}[\langle \Re h, f \rangle \langle \Re h, g\rangle]&=\sum_{n} \langle \mathbf{P}_{V^{L^2}_{x_0,0}} B_1^{-\frac{1}{2}} u_n,f\rangle\langle \mathbf{P}_{V^{L^2}_{x_0,0}} B_1^{-\frac{1}{2}} u_n,g\rangle\\
&=\langle B_1^{-\frac{1}{2}} \mathbf{P}_{V^{L^2}_{x_0,0}} f, B_1^{-\frac{1}{2}}\mathbf{P}_{V^{L^2}_{x_0,0}} g\rangle\\
&=\langle f, \mathbf{P}_{V^{L^2}_{x_0,0}} B_1^{-1}\mathbf{P}_{V^{L^2}_{x_0,0}} g\rangle\\
&=\langle f,C_1 g\rangle.
\end{align*}
Similar computations show
\begin{align*}
\mathbb{E}[\langle \Im h, f \rangle \langle \Im h, g\rangle]&=\langle f, C_2 g\rangle,\\
\mathbb{E}[\langle \Im h, f \rangle \langle \Re h, g\rangle]&=0.
\end{align*}
This proves the claim regarding the covariance.
\end{proof}
\end{proposition}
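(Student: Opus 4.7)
The plan splits naturally into a regularity statement and a covariance identification. For the regularity, I would track Sobolev orders along the composition $h = \mathbf{P}_{V^{L^2}_{x_0,0}} B^{-1/2} w_0$. The Hilbert--Schmidt calculation carried out just before the statement shows that the white noise $w_0$ lies in $H^{-1/2-\eps}(\T_{L^2})$; the inverse square root $B_i^{-1/2}$ of the uniformly elliptic operator $-\dx^2+\Ld$ is of order $-1$ and hence gains one derivative, sending $H^{-1/2-\eps}$ into $H^{1/2-\eps}$; and $\mathbf{P}_{V^{L^2}_{x_0,0}}$ is bounded on every $H^s(\T_{L^2})$ because it differs from the identity by rank-one operators whose output directions $\dd_{x_0}Q^{\eta_L}_{x_0}$ and $iQ^{\eta_L}_{x_0}$ are smooth and essentially compactly supported thanks to the cutoff $\eta$ and to the exponential decay of $Q$. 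Composing the three bounded maps places $h$ in $H^{1/2-\eps}(\T_{L^2})$.

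Gaussianity is then automatic, since $w_0$ is a Gaussian random distribution and $h$ is its image under a continuous linear map. To identify the covariance, I would fix real-valued $f,g \in L^2(\T_{L^2};\R)$, transpose the bounded real operators onto the test functions to obtain $\jb{\Re h, f}_{L^2} = \sum_n (\Re g_n)\,\jb{u_n,\, B_1^{-1/2}\mathbf{P}^{\ast} f}_{L^2}$, and apply the independence of the Gaussian coefficients together with Parseval's identity in $\{u_n\}$:
\begin{equation*}
\E\bigl[\jb{\Re h, f}_{L^2}\,\jb{\Re h, g}_{L^2}\bigr]
= \jb{B_1^{-1/2}\mathbf{P}^{\ast} f,\; B_1^{-1/2}\mathbf{P}^{\ast} g}_{L^2}
= \jb{f,\; \mathbf{P} B_1^{-1}\mathbf{P}^{\ast} g}_{L^2}.
\end{equation*}
The imaginary component is handled identically with $B_2$, and the off-diagonal terms vanish by independence of $\Re w_0$ and $\Im w_0$, producing the block-diagonal structure claimed in \eqref{eqn: cov-def}.

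The most delicate step will be reconciling $\mathbf{P} B_i^{-1}\mathbf{P}^{\ast}$ with the stated $C_i = \mathbf{P} B_i^{-1}\mathbf{P}$, since $\mathbf{P}_{V^{L^2}_{x_0,0}}$ is the $H^1_\Ld$-orthogonal projector and hence not $L^2$-self-adjoint. My strategy is to use the defining relation $\jb{\,\cdot\,,\,\cdot\,}_{H^1_\Ld} = \jb{\,\cdot\,,\; B_i\,\cdot\,}_{L^2}$ to derive the commutation identity $\mathbf{P} B_i^{-1} = B_i^{-1}\mathbf{P}^{\ast}$ by a direct calculation on the explicit rank-two formula \eqref{eqn: P-def} (the Gram operator $B_i$ is precisely what converts the two projectors into each other). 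Once this is in hand, the two expressions $\mathbf{P} B_i^{-1}\mathbf{P}^{\ast}$ and $\mathbf{P} B_i^{-1}\mathbf{P}$ agree as bilinear forms on $V^{L^2}_{x_0,0}\times V^{L^2}_{x_0,0}$, because $\mathbf{P}$ acts as the identity on its range. This algebraic reconciliation, rather than the regularity input or the Gaussian bookkeeping, is where the bulk of the care will lie.
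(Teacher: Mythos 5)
Your proposal is correct and follows the same basic route as the paper's proof (Parseval over the ONB $\{u_n\}$, then transpose to identify the covariance), but you are meaningfully more careful on two points. First, the paper's proof does not supply a regularity argument at all; your plan to track Sobolev orders through the composition — white noise in $H^{-\frac12-\eps}$, the order $-1$ operator $B_i^{-1/2}$ gaining one derivative, and the projector $\mathbf{P}_{V^{L^2}_{x_0,0}}$ bounded on every $H^s$ because $\Id-\mathbf{P}_{V^{L^2}_{x_0,0}}$ is rank two with smooth output directions — is exactly what is needed and is a genuine addition. Second, and more substantially, you correctly flag that $\mathbf{P}_{V^{L^2}_{x_0,0}}$ is $H^1_\Ld$-orthogonal, not $L^2$-self-adjoint, so Parseval actually produces $\langle B_1^{-1/2}\mathbf{P}^{\ast}f,\, B_1^{-1/2}\mathbf{P}^{\ast}g\rangle = \langle f,\, \mathbf{P} B_1^{-1}\mathbf{P}^{\ast} g\rangle$; the paper's displayed calculation silently writes $\mathbf{P}$ in place of $\mathbf{P}^{\ast}$ at the second line. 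Your commutation identity $\mathbf{P} B_i^{-1}=B_i^{-1}\mathbf{P}^{\ast}$ (which follows abstractly from $\langle \mathbf{P}u,v\rangle_{H^1_\Ld}=\langle u,\mathbf{P}v\rangle_{H^1_\Ld}$ and $\langle u,v\rangle_{H^1_\Ld}=\langle B_i u,v\rangle_{L^2}$, so you do not even need the rank-two formula) is exactly the right fix, and your conclusion that $\mathbf{P} B_i^{-1}\mathbf{P}^{\ast}$ and $\mathbf{P} B_i^{-1}\mathbf{P}$ coincide as bilinear forms when the second argument lies in $V^{L^2}_{x_0,0}$ is correct: in that case $\mathbf{P}g=g$, so both reduce to $\langle f,\,B_i^{-1}\mathbf{P}^{\ast}g\rangle$. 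One caveat worth stating explicitly in a final write-up: the two operators are \emph{not} equal on all of $L^2(\T_{L^2})$, since that would require $\mathbf{P}^{\ast}$ to annihilate $\text{Ker}\,\mathbf{P}=\operatorname{span}\{\dd_{x_0}Q^{\eta_L}_{x_0},\,iQ^{\eta_L}_{x_0}\}$, which fails because $\dd_{x_0}Q^{\eta_L}_{x_0}$ is not an eigenvector of $B_1$. So the covariance operator on $L^2$ is really $\mathbf{P} B_i^{-1}\mathbf{P}^{\ast}=\mathbf{P}B_i^{-1}=B_i^{-1}\mathbf{P}^{\ast}$, and the stated $C_i=\mathbf{P} B_i^{-1}\mathbf{P}$ agrees with it only after restriction to $V^{L^2}_{x_0,0}$ — which is all that is used downstream, but it is worth making the restriction explicit rather than asserting equality of operators on $L^2$.
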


Recall that if $X$ and $Y$ are jointly Gaussian, then we can decompose $X$ into two independent Gaussians as follows
\begin{align*}
X=\frac{\E[XY]}{\E[Y^2] } Y +X^\perp,
\end{align*}

\noi
where $X^\perp:=X-\frac{\E[XY]}{\E[Y^2] } Y$. Since $X^\perp$ and $\frac{\E[XY]}{\E[Y^2] } Y$ are independent, we have that 
for any measurable $F$
\begin{align*}
\E[F(X)]&=\E\Big[F\big(X^\perp + \frac{\E[XY]}{\E[Y^2] } Y \big)\Big]\\
&=\int_{\R} \E\Big[F\big(X^\perp + \frac{\E[XY]}{\E[Y^2] } \al \big)\Big] f_Y(\al) d\al,
\end{align*}

\noi
where $f_Y$ is the marginal PDF of $Y$. This implies that
\begin{align*}
\Law(X|Y=\al)=\Law\Big(X^\perp + \frac{\E[XY]}{\E[Y^2] } \al  \Big).
\end{align*}

\noi

In the following lemma, we isolate the mean-zero Gaussian random variable component $\jb{ Q^{\eta_L}_{x_0}, \Re h }$ from the rest of the field $h$.

\begin{lemma}\label{LEM:decomh}
Under the Ornstein Uhlenbeck measure $\mu^\perp_{x_0, \Ld}$ in \eqref{OU1}, the Gaussian process $h$ in \eqref{gprocess} admits the orthogonal decomposition
\begin{align}
&h(x)=\gamma(x) \langle Q^{\eta_L}_{x_0}, \Re h\rangle +h^\perp(x), \label{eqn: decomp}
\end{align}

\noi
where $\jb{Q^{\eta_L}_{x_0}, \Re h }$ and $h^\perp$ are independent Gaussian random fields, and $\|\g \|_{L^p}=O(1)$ for any $1\le p \le \infty$, as defined in \eqref{ga0}. In particular, we have
\begin{align}
\jb{Q^{\eta_L}_{x_0}, \Re  h^\perp }=0.
\label{Qhperp}
\end{align}

\noi 
Furthermore, from \eqref{eqn: decomp},  we obtain the following decomposition of the measure
\begin{align}
\mathrm{d}\mu_{x_0, \Ld}^\perp(h)&=\frac{1}{\sqrt{2\pi }\s }e^{-\frac{t^2}{2\s^2}}\, \mathrm{d}t \, \mathrm{d}\nu_{x_0}^\perp(h^\perp),\label{measuredecom0}
\end{align}

\noi
where $\s^2=\E_{\mu^\perp_{x_0,\Ld}}\big[ |\jb{Q^{\eta_L}_{x_0}, \Re h } |^2 \big]=O(1)$, uniformly in $x_0\in \T_{L^2}$ and $\dr \in [0,2\pi]$.  Here, $ \nu^\perp_{x_0}$ is an Ornstein-Uhlenbeck measure with covariance $(-\dx^2+\Ld)^{-1}$ projected on the Cameron-Martin space $\cj V^{L^2}_{x_0,0} \cap H^1_\Ld(\T_{L^2})$, where
\begin{align}
V^{L^2}_{x_0,0}=\cj V^{L^2}_{x_0,0}\oplus \textup{span}\{e\}
\label{Vbar}
\end{align}

\noi
with $\| e\|_{H^1(\T_{L^2})}=1$, and $e$ is orthogonal in $H^1(\T_{L^2})$ to
\begin{align}
\cj V^{L^2}_{x_0,0}=\big\{w \in V^{L^2}_{x_0, 0}: \jb{Q^{\eta_L}_{x_0}, w }=0  \big\}.
\label{ORTHOGO1}
\end{align}

\end{lemma}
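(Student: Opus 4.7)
The plan is to invoke the standard Gaussian decomposition: if $(X,Y)$ is jointly Gaussian and $Y$ is scalar, then $X-(\E[XY]/\E[Y^2])Y$ is independent of $Y$. I would apply this to the scalar $t:=\jb{Q^{\eta_L}_{x_0},\Re h}$ and the field $h$ sampled from $\mu^\perp_{x_0,\Ld}$, producing \eqref{eqn: decomp} with
\begin{align}
\gamma(x) := \sigma^{-2}\,(C_1 Q^{\eta_L}_{x_0})(x), \qquad \sigma^2 := \E_{\mu^\perp_{x_0,\Ld}}[t^2] = \jb{Q^{\eta_L}_{x_0}, C_1 Q^{\eta_L}_{x_0}}, \label{ga0}
\end{align}
where the identity $\E_{\mu^\perp_{x_0,\Ld}}[h(x)\,t]=(C_1 Q^{\eta_L}_{x_0})(x)$ (real-valued, killing any imaginary contribution to $\gamma$) and the formula for $\sigma^2$ both follow from Proposition \ref{PROP:OUproj} together with the independence of $\Re h$ and $\Im h$. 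The field $h^\perp := h - \gamma\, t$ is then uncorrelated with $t$ by construction, hence independent by joint Gaussianity; the orthogonality \eqref{Qhperp} follows from the key identity $\jb{Q^{\eta_L}_{x_0},\gamma}=\sigma^{-2}\jb{Q^{\eta_L}_{x_0},C_1 Q^{\eta_L}_{x_0}}=1$, which gives $\jb{Q^{\eta_L}_{x_0},\Re h^\perp}=t-t=0$. Since $h\in V^{L^2}_{x_0,0}$ and $\gamma\in\mathrm{Range}(\mathbf P_{V^{L^2}_{x_0,0}})\subset V^{L^2}_{x_0,0}$, one has $h^\perp\in\cj V^{L^2}_{x_0,0}$ per \eqref{ORTHOGO1}.

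For the uniform bounds $\|\gamma\|_{L^p}=O(1)$ and $\sigma^2=O(1)$, I would exploit the $L$-independent exponential decay of the Green's function of $B_1=-\dx^2+\Ld$ on $\T_{L^2}$, guaranteed by the positive Lagrange-multiplier mass $\Ld>0$ from \eqref{Lagmul}. Since $Q^{\eta_L}_{x_0}$ itself is exponentially localized at $x_0$, the convolution $B_1^{-1}Q^{\eta_L}_{x_0}$ inherits exponential decay uniformly in $x_0\in\T_{L^2}$; the two rank-one corrections in $\mathbf P_{V^{L^2}_{x_0,0}}$ (see \eqref{eqn: P-def}) involve only $\dd_{x_0}Q^{\eta_L}_{x_0}$ and $iQ^{\eta_L}_{x_0}$, also exponentially localized, and therefore preserve the $L^p$ estimate. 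Moreover, since $\dd_x Q$ is odd while $Q$ is even and $iQ^{\eta_L}_{x_0}$ is purely imaginary, up to an $O(e^{-cL})$ error from the cutoff $\eta_L$ one has $\mathbf P_{V^{L^2}_{x_0,0}} Q^{\eta_L}_{x_0}=Q^{\eta_L}_{x_0}$, so $\sigma^2=\jb{Q,(-\dx^2+\Ld)^{-1}Q}_{L^2(\R)}+O(e^{-cL})$ is a strictly positive $O(1)$ constant uniform in $(x_0,\dr)$.

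Finally, \eqref{measuredecom0} will follow from the independence of $t$ and $h^\perp$ by pushing $\mu^\perp_{x_0,\Ld}$ forward along the affine measurable bijection $h\mapsto(t(h),h^\perp(h))$ with inverse $(t,h^\perp)\mapsto\gamma t+h^\perp$: the image measure factors as the density $(\sqrt{2\pi}\sigma)^{-1}e^{-t^2/(2\sigma^2)}\,\mathrm d t$ on $\R$ times the Gaussian law $\nu^\perp_{x_0}$ on $\cj V^{L^2}_{x_0,0}$, which is recognized from its covariance as the restriction of $(-\dx^2+\Ld)^{-1}$ to $\cj V^{L^2}_{x_0,0}$. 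The main technical obstacle is the careful handling of the projection $\mathbf P_{V^{L^2}_{x_0,0}}$ in $C_1$: only after verifying that this projection produces only exponentially small, localized perturbations of $Q^{\eta_L}_{x_0}$ can one conclude that $\sigma^2$ is bounded below by a positive $L$-independent constant, which is what makes $\sigma^{-2}$ in \eqref{ga0} meaningful and all subsequent bounds on $\gamma$ uniform in $x_0$ and $\dr$.
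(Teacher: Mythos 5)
Your proposal is correct and takes essentially the same approach as the paper: both hinge on the standard one-step Gaussian conditional decomposition, and both arrive at the same coefficient $\gamma=\sigma^{-2}C_1Q^{\eta_L}_{x_0}$ with $\sigma^2=\jb{C_1Q^{\eta_L}_{x_0},Q^{\eta_L}_{x_0}}$. Your route is marginally more direct --- the paper first passes through the normalized vector $\tilde{Q}^{\eta_L}_{x_0}$ and an orthonormal basis completion of $V^{L^2}_{x_0,\theta}$, then applies a second scalar Gaussian decomposition, arriving at the same $\gamma$ --- and you helpfully make explicit the uniform-in-$L$ strict positivity of $\sigma^2$ (via parity to show $\mathbf{P}_{V^{L^2}_{x_0,0}}Q^{\eta_L}_{x_0}=Q^{\eta_L}_{x_0}$ and positivity of $B_1^{-1}$), which the paper leaves implicit but which is what makes $\gamma$ well-defined.
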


\begin{proof}
Recall the covariance operator $C_1$ for the process $\Re h$ introduced in \eqref{eqn: C1-def} and define 
\begin{align}
\tilde{Q}^{\eta_L}_{x_0}(x):=\frac{C_1 Q^{\eta_L}_{x_0}(x)}{\|C_1 Q^{\eta_L}_{x_0}\|_{L^2}}.
\label{wtQeta}
\end{align}

\noi
We  complete $\tilde{Q}^{\eta_L}_{x_0}$ with a collection $\{f_n\}_{n\ge 1}$ so that
\[\{\tilde{Q}^{\eta_L}_{x_0},f_n: n\ge 1\}\]

\noi 
forms an orthonormal basis of $V_{x_0, \dr}^{L^2}$ in \eqref{normalsp}.
Since $\Re h$ and $\Im h$ are independent under the measure $\mu^{\perp}_{x_0, \Ld}$, and 
the orthogonality of $\wt Q^{\eta_L}_{x_0} $ and $f_n$ in $L^2$, we have 
\begin{align*}
\mathbb{E}_{\mu^{\perp}_{x_0, \Ld}}[\langle Q^{\eta_L}_{x_0},\Re h\rangle \langle f_n,\Im h\rangle]&=0,\\
\mathbb{E}_{\mu^{\perp}_{x_0, \Ld}}\big[\langle Q^{\eta_L}_{x_0},\Re h\rangle \langle f_n,\Re h\rangle \big]&=\langle C_1 Q^{\eta_L}_{x_0},f_n\rangle =0.
\end{align*}

\noi
Therefore, $\jb{Q^{\eta_L}_{x_0}, \Re h}$ and $\jb{f_n, h}$ are independent Gaussian random variables.

Since $\{\tilde{Q}^{\eta_L}_{x_0},f_n: n\ge 1\}$ is an orthonormal basis, we can write 
\begin{equation}\label{eqn: hhtilde}
\begin{split}
h(x)&=\langle \tilde{Q}^{\eta_L}_{x_0},h\rangle \tilde{Q}^{\eta_L}_{x_0}(x)+\sum_n \langle f_n, h \rangle f_n \\
&= \langle \tilde{Q}^{\eta_L}_{x_0},\Re h\rangle \tilde{Q}^{\eta_L}_{x_0}(x) + i\langle \tilde{Q}^{\eta_L}_{x_0},\Im h\rangle \tilde{Q}^{\eta_L}_{x_0}(x)+ \tilde{h},
\end{split}
\end{equation}

\noi
where $\tilde{h}:=\sum_n \langle f_n, h \rangle f_n$.

As a next step, we decompose the Gaussian random variable $\langle \tilde{Q}^{\eta_L}_{x_0},\Re h\rangle$ as follows
\begin{equation}
\label{eqn: Re-decomp}
\begin{split}
\langle \tilde{Q}^{\eta_L}_{x_0},\Re h\rangle&= \frac{\mathbb{E}_{\mu^\perp_{x_0,\Ld}}\big[\langle \tilde{Q}^{\eta_L}_{x_0},\Re h\rangle\langle Q^{\eta_L}_{x_0},\Re h\rangle\big]}{\mathbb{E}_{\mu^\perp_{x_0, \Ld}}\big[|\langle Q^{\eta_L}_{x_0},\Re h\rangle |^2\big]}\langle Q^{\eta_L}_{x_0},\Re h\rangle+g\\
&= a\langle Q^{\eta_L}_{x_0},\Re h\rangle + g,
\end{split}
\end{equation}

\noi 
where $g$ and $\jb{Q^{\eta_L}_{x_0}, \Re h } $ are independent Gaussian random variables, and from \eqref{wtQeta}, 
\begin{align}
a:=\frac{\|C_1 Q_{x_0}^{\eta_L}\|_{L^2}}{\|C_1^{1/2}Q_{x_0}^{\eta_L}\|_{L^2}^2 }.
\label{aprojec}
\end{align}

\noi 
Combining \eqref{eqn: hhtilde}, \eqref{eqn: Re-decomp}, and \eqref{wtQeta}, we have
\begin{equation}
\begin{split}
h(x)&= \big(a\langle Q^{\eta_L}_{x_0},\Re h\rangle +g + i\langle \tilde{Q}^{\eta_L}_{x_0},\Im h\rangle\big) \tilde{Q}^{\eta_L}_{x_0}(x) +\tilde{h}\\
&=\gamma(x) \langle Q^{\eta_L}_{x_0}, \Re h\rangle +h^\perp(x),
\end{split}
\label{hdec}
\end{equation}

\noi 
where
\begin{align}
\gamma(x):&=a\tilde{Q}^{\eta_L}_{x_0}(x)=\frac{C_1 Q^{\eta_L}_{x_0}(x)}{\|C_1^{1/2}Q_{x_0}^{\eta_L}\|_{L^2}^2 } \label{ga0}\\
h^\perp(x):&=\big(g+i\langle \tilde{Q}^{\eta_L}_{x_0},\Im h\rangle\big)\tilde{Q}^{\rho_L}_{x_0}(x)+\tilde{h}(x).
\label{eqn: hperp-g}
\end{align}

\noi
Here, $\langle Q^{\eta_L}_{x_0}, \Re h\rangle$ and $h^\perp(x)$ are independent since $\langle Q^{\eta_L}_{x_0}, \Re h\rangle$, $g$, $\langle Q^{\eta_L}_{x_0}, \Im h\rangle$, and $\wt h$ are independent Gaussian random fields. In particular, from \eqref{hdec} and \eqref{ga0} we have  
\begin{align*}
\jb{Q^{\eta_L}_{x_0}, \Re h^\perp }=\jb{Q^{\eta_L}_{x_0}, \Re h }-\jb{Q^{\eta_L}_{x_0},\g }\jb{Q^{\eta_L}_{x_0}, \Re h }=0,
\end{align*}

\noi
which implies \eqref{Qhperp}.

Since $Q^{\eta_L}_{x_0}$ is a  Schwartz function, from \eqref{ga0} we have $\|\g \|_{L^p}=O(1)$ for any $1\le p \le  \infty$, uniformly in $x_0\in \T_{L^2}, \dr \in [0, 2\pi]$ and
\begin{align*}
\s^2=\E_{\mu^\perp_{x_0,\Ld}}\big[ |\jb{Q^{\eta_L}_{x_0}, \Re h } |^2 \big]=\jb{C_1Q^{\eta_L}_{x_0}, Q^{\eta_L}_{x_0} }=O(1),
\end{align*}

\noi
uniformly in $x_{0}\in \T_{L^2}$ and $\dr \in [0,2\pi]$. This completes the proof of Lemma \ref{LEM:decomh}.

\end{proof}

According to Lemma \ref{LEM:decomh}, we introduce the new coordinate $(h^\perp , t)$ with $h(x)=h^\perp(x)+t\g(x)$, where $t$ is the symbol for the Gaussian random variable $\jb{Q^{\eta_L}_{x_0}, \Re h }$ that appears in the conditioning in \eqref{RECOND}. We represent original quantities using the new coordinate system in the following lemmas.

\begin{lemma}\label{LEM:Error0}
Under the orthogonal decomposition $h(x)=h^\perp(x)+\langle Q^{\eta_L}_{x_0}, \Re h\rangle \gamma(x)$ in \eqref{eqn: decomp}, we have that on the set $S_{Q,x_0}\cap \mathcal{K}^L_\dl$ in Proposition \ref{PROP:REDUC}
\begin{align}
\|h \|_{L^2(\T_{L^2})} &\le (1+\dl_1) \|h^\perp \|_{L^2(\T_{L^2})} \label{hperpL20}\\
|\jb{Q^{\eta_L}_{x_0},  \Re h }|& \le \dl_2  \|h^\perp \|_{L^2(\T_{L^2})} \label{hperpL21}\\
\| h\|_{L^\infty(\T_{L^2}) } &\le \|h^\perp \|_{L^\infty(\T_{L^2})}+\dl_3 \| h^\perp\|_{L^2(\T_{L^2}) } \label{hperpLinf}\\
\|h^\perp \|_{L^2(\T_{L^2})} &\le \dl_4 L^{\frac 32} \label{hperpL2}
\end{align}

\noi
for arbitrary small constants $\dl_1, \dl_2, \dl_3, \dl_4>0$.

\begin{proof}
From the condition $S_{Q,x_0}\cap \mathcal{K}^L_\dl$, we can obtain
\begin{align}
|\jb{Q^{\eta_L}_{x_0}, \Re h }| \le \frac{2}{L^\frac 32} \|h \|_{L^2}^2 \le 2 \dl \| h\|_{L^2}.
\label{SJW1}
\end{align}

\noi
This, together with Young's inequality, implies that 
\begin{align}
|h(x)|^2 &\le (1+\eps)|h^\perp(x)|^2+(1+\eps^{-1})|\jb{Q^{\eta_L}_{x_0}, \Re h }|^2 \g^2(x) \notag \\
&\le (1+\eps)|h^\perp(x)|^2+2 \eps^{-1} \dl^2 \|h \|_{L^2}^2 \g^2(x)
\label{SJW3}
\end{align}

\noi
for some small constant $\eps>0$.
By integrating and choosing $0<\dl \ll \eps$ sufficiently small, we have
\begin{align}
\int_{\T_{L^2}} |h|^2 \, \mathrm{d}x \le \frac{1+\eps}{1-\cj \dl } \int_{\T_{L^2}} |h^\perp|^2 \, \mathrm{d}x
\label{SJW2}
\end{align}

\noi
for some small constant $\cj \dl>0$. This implies \eqref{hperpL20}. Furthermore, from \eqref{SJW1} and \eqref{SJW2}, we obtain \eqref{hperpL21}. 

From $h(x)=h^\perp(x)+\langle Q^{\eta_L}_{x_0}, \Re h\rangle \gamma(x)$, $\|\g \|_{L^\infty}=O(1)$, and \eqref{hperpL21}, we have
\begin{align*}
\| h\|_{L^\infty} \le \| h^\perp \|_{L^\infty}+ \| \g\|_{L^\infty} |\langle Q^{\eta_L}_{x_0}, \Re h\rangle | \le \| h^\perp\|_{L^\infty}+\dl \|h^\perp \|_{L^2}
\end{align*}

\noi
for some small $\dl>0$. This implies \eqref{hperpLinf}.

From $\|h \|_{L^2}\le \dl L^\frac 32$ on the set $\mathcal{K}^L_\dl $, $\| \g\|_{L^\infty}=O(1)$, and \eqref{hperpL21}, we have 
\begin{align*}
\| h^\perp\|_{L^2} \le \|h \|_{L^2}+ \| \g\|_{L^\infty} |\langle Q^{\eta_L}_{x_0}, \Re h\rangle | \le  \dl L^\frac 32+ \zeta \| h^\perp\|_{L^2}
\end{align*}

\noi
for some small $\zeta>0$. By choosing $\zeta>0$ sufficiently small, we obtain \eqref{hperpL2}.

\end{proof}

\end{lemma}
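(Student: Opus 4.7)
The plan is to convert the defining inequality of $S_{Q,x_0}$ into a smallness estimate for the Gaussian coordinate $t := \langle Q^{\eta_L}_{x_0}, \Re h\rangle$ in terms of $\|h\|_{L^2}$, and then propagate this smallness through the decomposition $h = h^\perp + t\gamma$ using the uniform bounds $\|\gamma\|_{L^p} = O(1)$ supplied by Lemma \ref{LEM:decomh}. All four estimates should then follow from triangle and Young inequalities together with a single absorption step; there is no deeper analytic input required.

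First I would note that on $S_{Q,x_0}$ one has $L^{3/2}|t| \le \tfrac{1}{2}\|h\|_{L^2}^2 + L^{0+}$, so
\[
|t| \le \frac{1}{2L^{3/2}}\|h\|_{L^2}^2 + L^{-3/2+0+}.
\]
On $\mathcal{K}^L_\delta$ we have $\|h\|_{L^2} \le \delta L^{3/2}$; in the main regime $\|h\|_{L^2}^2 \gtrsim L^{0+}$ the second term on the right is subdominant, yielding the controlling bound $|t| \le C\delta \|h\|_{L^2}$, while in the complementary regime $|t|$ is itself of order $L^{-3/2+0+}$ and all four target inequalities hold trivially.

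For estimate (1) I would apply Young's inequality pointwise to $h = h^\perp + t\gamma$, integrate over $\T_{L^2}$, and use $\|\gamma\|_{L^2} = O(1)$ to obtain
\[
\|h\|_{L^2}^2 \le (1+\varepsilon) \|h^\perp\|_{L^2}^2 + (1+\varepsilon^{-1}) |t|^2 \|\gamma\|_{L^2}^2 \le (1+\varepsilon)\|h^\perp\|_{L^2}^2 + C\varepsilon^{-1}\delta^2 \|h\|_{L^2}^2.
\]
Choosing $\delta \ll \varepsilon$ lets me absorb the last term into the left-hand side and yields (1). Estimate (2) is then immediate by combining (1) with the bound $|t| \le C\delta \|h\|_{L^2}$. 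Estimate (3) follows from the triangle inequality $\|h\|_{L^\infty} \le \|h^\perp\|_{L^\infty} + |t|\,\|\gamma\|_{L^\infty}$, the bound $\|\gamma\|_{L^\infty} = O(1)$, and estimate (2). Finally, (4) comes from $\|h^\perp\|_{L^2} \le \|h\|_{L^2} + |t|\|\gamma\|_{L^2} \le (1 + C\delta)\|h\|_{L^2} \le C\delta L^{3/2}$, using only the definition of $\mathcal{K}^L_\delta$.

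The only real subtlety is the self-referential nature of the constraint: the definition of $S_{Q,x_0}$ bounds $|t|$ in terms of $\|h\|_{L^2}$, while $h$ itself is built from $h^\perp$ and $t\gamma$, so a naive substitution loops. This is resolved by the one absorption argument in estimate (1), after which (2)--(4) are purely algebraic consequences. I expect no other obstacle; the lemma is essentially bookkeeping preparing the ground for the conditional-density analysis that follows.
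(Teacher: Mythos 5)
Your proof follows the paper's argument essentially line for line: derive $|t| = |\langle Q^{\eta_L}_{x_0},\Re h\rangle| \lesssim \delta\|h\|_{L^2}$ from $S_{Q,x_0}\cap\mathcal{K}^L_\delta$, then use Young's inequality and one absorption step to obtain \eqref{hperpL20}, feed that back to get \eqref{hperpL21}, and deduce \eqref{hperpLinf} and \eqref{hperpL2} by triangle inequality together with $\|\gamma\|_{L^p}=O(1)$. (Your route to \eqref{hperpL2} is slightly more direct in that you substitute the bound on $|t|$ against $\|h\|_{L^2}$ before invoking $\mathcal{K}^L_\delta$, whereas the paper absorbs a $\zeta\|h^\perp\|_{L^2}$ term; this is cosmetic.)

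One comment on your two-regime split: you are right that the paper's chain $|t| \le \tfrac{2}{L^{3/2}}\|h\|_{L^2}^2 \le 2\delta\|h\|_{L^2}$ implicitly requires $\|h\|_{L^2}^2 \gtrsim L^{0+}$ so that the additive $L^{0+}$ in the definition of $S_{Q,x_0}$ can be absorbed, and it is good hygiene to flag the complementary regime. But your claim that all four inequalities hold "trivially" there deserves more care. In the regime $\|h\|_{L^2}^2 \lesssim L^{0+}$ one can have, e.g., $h = -aQ^{\eta_L}_{x_0}$ with $a$ of size $L^{-3/2+0+}$; such $h$ lies in $S_{Q,x_0}\cap\mathcal{K}^L_\delta$ and satisfies $|t| \asymp a \asymp \|h^\perp\|_{L^2}$ (since $Q^{\eta_L}_{x_0}$ is not an eigenfunction of $C_1$, so $\|Q^{\eta_L}_{x_0} - D\gamma\|_{L^2} > 0$), which violates \eqref{hperpL21} for small fixed $\delta_2$. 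The resolution is not internal to the lemma but lies in its use: these inequalities are only ever applied on events $D_L\cap B_K$ where $\|h^\perp\|_{L^2}^2 \approx L^2$, so the low-mass regime never arises. The paper's implicit assumption is therefore harmless, and so, in effect, is yours, but the word "trivially" overstates the case.
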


\begin{lemma}\label{LEM:Error1}
Under the orthogonal decomposition $h(x)=h^\perp(x)+\langle Q, \Re h\rangle \gamma(x)$ in \eqref{eqn: decomp}, we have that on the set $S_{Q,x_0}\cap \mathcal{K}^L_\dl$ in Proposition \ref{PROP:REDUC}
\begin{align}
\int_{\T_{L^2}} (Q^{\eta_L}_{x_0})^2|\Re h|^2 \, \mathrm{d}x &\le (1+\eps ) \int_{\T_{L^2}} (Q^{\eta_L}_{x_0})^2 | \Re h^\perp|^2 \, \mathrm{d}x+\dl\int_{\T_{L^2}}  | h^\perp|^2 \, \mathrm{d}x \label{NPL1}\\
L^3\mathcal{E}(Q^{\eta_L}_{x_0}, L^{-\frac 32}h) &\le \frac 1{L^\frac 32} \int_{\T_{L^2} }  Q^{\eta_L}_{x_0}  |h^\perp|^3 \, \mathrm{d}x  +\frac {1}{L^3} \int_{\T_{L^2}} |h^\perp |^4 \, \mathrm{d}x +\dl \int_{\T_{L^2}} |h^\perp|^2 \, \mathrm{d}x
\label{NPL2}
\end{align}

\noi
for arbitrary small constants $0<\dl \ll \eps$, where 
$L^3\mathcal{E}(Q^{\eta_L}_{x_0}, L^{-\frac 32}h)$ is defined in \eqref{Eerror}.


\end{lemma}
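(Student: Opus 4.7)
The plan is to establish both inequalities by substituting the orthogonal decomposition $h=h^\perp+t\gamma$ from \eqref{eqn: decomp} (where $t:=\langle Q^{\eta_L}_{x_0},\Re h\rangle$ is a scalar and $\gamma$ is \emph{real}, since by \eqref{ga0} it equals $C_1 Q^{\eta_L}_{x_0}/\|C_1^{1/2}Q^{\eta_L}_{x_0}\|^2$ and $C_1$ preserves real-valuedness while $Q^{\eta_L}_{x_0}$ is real), expanding, and then absorbing the cross terms via the pointwise and norm estimates already gathered in Lemma \ref{LEM:Error0} together with the uniform bounds $\|\gamma\|_{L^p}=O(1)$ for all $p\in[1,\infty]$ from Lemma \ref{LEM:decomh}.

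For \eqref{NPL1} I would use that $\Re h=\Re h^\perp+t\gamma$ and apply Young's inequality to obtain $|\Re h|^2\le(1+\eps)|\Re h^\perp|^2+(1+\eps^{-1})t^2\gamma^2$. Multiplying by $(Q^{\eta_L}_{x_0})^2$ and integrating reproduces the first term on the right of \eqref{NPL1}, while the second contributes $(1+\eps^{-1})\,t^2\int(Q^{\eta_L}_{x_0})^2\gamma^2\lesssim t^2$. The bound \eqref{hperpL21} gives $t^2\le\delta_2^2\|h^\perp\|_{L^2}^2$ on $S_{Q,x_0}\cap\mathcal{K}^L_\delta$, so choosing the parameter $\delta$ of Proposition \ref{PROP:REDUC} small compared to $\eps$ (which forces $\delta_2$ small) makes this contribution $\le\delta\|h^\perp\|_{L^2}^2$.

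For \eqref{NPL2} I would treat the cubic and quartic pieces of $L^3\mathcal{E}(Q^{\eta_L}_{x_0},L^{-3/2}h)$ from \eqref{Eerror} separately. Since $Q^{\eta_L}_{x_0}\ge 0$ and $\Re h\le|h|$, the cubic piece is dominated by $L^{-3/2}\int Q^{\eta_L}_{x_0}|h|^3$, and the triangle inequality $|h|\le|h^\perp|+|t||\gamma|$ together with the binomial expansion $(|h^\perp|+|t||\gamma|)^3=|h^\perp|^3+3|t||\gamma||h^\perp|^2+3t^2\gamma^2|h^\perp|+|t|^3|\gamma|^3$ produces the main term $L^{-3/2}\int Q^{\eta_L}_{x_0}|h^\perp|^3$ plus three cross terms of the schematic form $L^{-3/2}|t|^k\|\gamma\|_{L^\infty}^k\|Q^{\eta_L}_{x_0}\|_{L^{p_k}}\|h^\perp\|_{L^{q_k}}^{3-k}$. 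Each cross term inherits, through the chain $|t|\le\delta_2\|h^\perp\|_{L^2}$ from \eqref{hperpL21} and $\|h^\perp\|_{L^2}\le\delta_4 L^{3/2}$ from \eqref{hperpL2}, a prefactor $\delta_2^{k_1}\delta_4^{k_2}$ with $k_1+k_2\ge 1$, and is bounded by $\delta\int|h^\perp|^2$ once $\delta_2$ and $\delta_4$ are small. The quartic piece is handled identically by expanding $|h|^4=(|h^\perp|^2+2t\gamma\Re h^\perp+t^2\gamma^2)^2$ into its six monomial contributions and dominating each by $|t|^k\|\gamma\|_{L^\infty}^k|h^\perp|^{4-k}$; after integration I would use Young's inequality in the form $|h^\perp|^3\le\eta|h^\perp|^4+(4\eta)^{-1}|h^\perp|^2$ (for a judiciously chosen $\eta$, e.g.\ $\eta=L^{-3/2}$) to split odd-power cross terms into an $L^{-3}\int|h^\perp|^4$-type piece and an $\int|h^\perp|^2$-type piece.

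The main obstacle I expect lies in the quartic expansion: verifying that every cross term, after Young's/Hölder's inequality, carries a genuine positive power of $\delta_2$ or $\delta_4$, so that the coefficient of $L^{-3}\int|h^\perp|^4$ on the right-hand side of \eqref{NPL2} is not inflated beyond $1$ and all remainders are genuinely absorbed into $\delta\int|h^\perp|^2$. This is elementary but requires careful bookkeeping of the interpolation exponents and of the scaling between $\|h^\perp\|_{L^2}$, $\|h^\perp\|_{L^4}$, and $L$.
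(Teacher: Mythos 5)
Your proposal is correct and follows essentially the same route as the paper: substitute the decomposition $h=h^\perp+\langle Q^{\eta_L}_{x_0},\Re h\rangle\gamma$, apply Young's inequality pointwise for \eqref{NPL1}, expand the cubic and quartic pieces of \eqref{Eerror} for \eqref{NPL2}, and absorb every cross term using $\|\gamma\|_{L^p}=O(1)$ together with \eqref{hperpL21} and \eqref{hperpL2} from Lemma \ref{LEM:Error0}. The "obstacle" you flag about the quartic coefficient is harmless, since each cross term carries a small factor $\delta_2$ or $\delta_4$ (the paper itself tolerates an implicit constant there, and all downstream uses are via $\lesssim$).
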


\begin{proof}

By integrating  \eqref{SJW3} and using \eqref{hperpL20}, we have 
\begin{align*}
\int_{\T_{L^2}} (Q^{\eta_L}_{x_0})^2|\Re h|^2 \, \mathrm{d}x &\le (1+\eps )\int_{\T_{L^2}} (Q^{\eta_L}_{x_0})^2 |\Re h^\perp|^2\, \mathrm{d}x+2\eps^{-1}\dl^2 \|Q^{\eta_L}_{x_)} \|_{L^\infty} \|\g \|_{L^2}^2 \int_{\T_{L^2}} |h|^2\, \mathrm{d}x\\
&\le  (1+\eps )\int_{\T_{L^2}} (Q^{\eta_L}_{x_0})^2 |\Re h^\perp|^2\, \mathrm{d}x+\zeta \int_{\T_{L^2}} |h^\perp|^2 \, \mathrm{d}x
\end{align*}

\noi
by choosing $\dl>0$ sufficiently small, where $\zeta>0$ is a small constant. This shows \eqref{NPL1}.

\noi
We now prove \eqref{NPL2}. From the orthogonal decomposition of the field $h(x)=\g(x)\jb{Q^{\eta_L}_{x_0}, \Re h }+h^\perp(x)$ in \eqref{eqn: decomp} and $L^{3}\mathcal{E}(Q^{\eta_L}_{x_0}, L^{-\frac 32}h):=\mathcal{E}_1(h)+\mathcal{E}_2(h) $ in \eqref{Eerror}, we expand
\begin{align*}
\mathcal{E}_1(h)&:=\frac 1{L^{\frac 32}} \int_{\T_{L^2} }  Q^{\eta_L}_{x_0}  |h|^2 \Re h dx \notag \\
&= \sum_{\substack{u_i\in \{h^\perp, \gamma(x)\langle Q^{\eta_L}_{x_0}, \Re h\rangle\}\\i=1,2,3} } \frac{1}{L^{\frac{3}{2}}}\int_{\T_{L^2}} Q_{x_0}^{\eta_L}u_1\bar{u}_2\Re u_3\,\mathrm{d}x, \\
\mathcal{E}_2(h):&=\frac{1}{L^3} \int_{\T_{L^2}}|h(x)|^4\, \mathrm{d}x \notag \\
&= \sum_{\substack{u_i\in \{h^\perp, \gamma(x)\langle Q^{\eta_L}_{x_0}, \Re h\rangle\}\\i=1,2,3,4}} \frac{1}{L^3}\int_{\T_{L^2}} u_1\bar{u}_2 u_3\bar{u}_4\,\mathrm{d}x. 
\end{align*}

\noi
By using Young's inequality, \eqref{hperpL21}, and \eqref{hperpL2}, we have
\begin{align*}
|\mathcal{E}_1(h)|&\le \frac 1{L^\frac 32} \int_{\T_{L^2}} Q^{\eta_L}_{x_0}|h^\perp|^3, \mathrm{d}x +\frac{| \jb{Q^{\eta_L}_{x_0}, \Re h }|^3 }{L^\frac 32} \| \g\|_{L^3}^3\\
&\le L^{-\frac{3}{2}}\int_{\T_{L^2}} Q^{\eta_L}_{x_0}|h^\perp|^3\,\mathrm{d}x  + \dl \|h^\perp \|_{L^2}^2, \\
|\mathcal{E}_2(h)|&\le  C_1 \frac 1{L^3} \int_{\T_{L^2}} |h^\perp|^4 \, \mathrm{d}x+\frac{| \jb{Q^{\eta_L}_{x_0}, \Re h }|^4 }{L^3} \| \g\|_{L^4}^4\\
&\le  C_1 \frac 1{L^3} \int_{\T_{L^2}} |h^\perp|^4 \, \mathrm{d}x+\dl \| h^\perp\|_{L^2}^2
\end{align*}

\noi
for arbitrary small $\dl>0$. This shows \eqref{NPL2}.

\end{proof}

\subsection{Gaussian measures associated with Schrödinger operators}

In this subsection, we introduce Gaussian measures associated with the Schrödinger operators in \eqref{SCHOP}, using the orthogonal decomposition of the field $h$ as established in Lemma \ref{LEM:decomh}.

From the expression of the conditional expectation \eqref{condssig} involving  $\mathcal{I}(F)$  defined in \eqref{Idef},  we focus in this subsection on studying the quadratic part, making the Gaussian measure as follows
\begin{align}
e^{\frac 32 \int_{\T_{L^2}   } (Q^{\eta_L}_{x_0} )^2 |\Re  h|^2\, \mathrm{d}x+ \frac 12 \int_{\T_{L^2}   } (Q^{\eta_L}_{x_0} )^2 |\Im  h|^2\, \mathrm{d}x}\, \mathrm{d}  \mu^\perp_{x_0,\Ld} (h),
\label{GAUSS0}
\end{align}

\noi
where $\mu^\perp_{x_0, \Ld}$ is the Gaussian measure, as defined in \eqref{OU1}, with covariance $(-\dx^2+\Ld)^{-1}$ projected on the normal space $V^{L^2}_{x_0,0} \cap H^1_\Ld(\T_{L^2})$  as the Cameron-Martin space. The key issue with \eqref{GAUSS0} is that the covariance operators for the real and imaginary parts, given by
\begin{align*}
&-\dx^2-3(Q^{\eta_L}_{x_0})^2+\Ld \notag \\
&-\dx^2-(Q^{\eta_L}_{x_0})^2+\Ld,
\end{align*}

\noi
are not positive on $\Re L^2(\T_{L^2})$ and $\Im L^2(\T_{L^2})$, respectively, as $L\to \infty$.  This prevents the Gaussian measure in \eqref{GAUSS0} from being properly defined as written. See \eqref{Degeneracy}.


\noi
Therefore, instead of focusing on the coordinate $h$ under $\mu^\perp_{x_0,\Ld}$, we use the alternative coordinate $ h^\perp$  under the measure $\nu_{x_0}^\perp$, obtained from  Lemma \ref{LEM:decomh} (particularly \eqref{eqn: decomp} and \eqref{measuredecom0}). Expanding \eqref{GAUSS0} in terms of the orthogonal decomposition $h(x)=\g(x) \jb{Q^{\eta_L}_{x_0}, \Re h }+h^\perp(x)$ from \eqref{eqn: decomp}, we write the Gaussian part   as
\begin{align}
e^{\frac 32 \int_{\T_{L^2}   } (Q^{\eta_L}_{x_0} )^2 |\Re  h^\perp|^2\, \mathrm{d}x+ \frac 12 \int_{\T_{L^2}   } (Q^{\eta_L}_{x_0} )^2 |\Im  h^\perp|^2\, \mathrm{d}x}\, \mathrm{d}  \nu^\perp_{x_0}( h^\perp),
\label{SCH0}
\end{align}

\noi
where we used $\Im h=\Im h^\perp$, and the weight, using the symbol $t=\jb{Q^{\eta_L}_{x_0}, \Re h }$, as
\begin{align}
e^{\frac {3}{2}t^2 \int_{\T_{L^2} } (Q^{\eta_L}_{x_0}\g)^2\,\mathrm{d}x+3t\int_{\T_{L^2}} (Q^{\eta_L}_{x_0})^2 \g \Re h^\perp \, \mathrm{d}x}. 
\label{WEIGHT}
\end{align}

\noi
In \eqref{SCH0}, $ \nu^\perp_{x_0}$ is the Gaussian measure, as defined in \eqref{measuredecom0}, with covariance $(-\dx^2+\Ld)^{-1}$ projected on the normal space $\cj V^{L^2}_{x_0,0} \cap H^1_\Ld(\T_{L^2})$  as the Cameron-Martin space. The weight \eqref{WEIGHT} will be controlled separately in the subsequent sections.

\noi 
The expression \eqref{SCH0} suggests defining Gaussian measures on $\Re L^2(\T_{L^2})$ and $\Im L^2(\T_{L^2})$ as follows
\begin{align}
e^{-\frac 12 \jb{ B_1^Q \Re  h^\perp, \Re  h^\perp } } \prod_{x\in \T_{L^2}} \mathrm{d} \Re   h^\perp(x) \label{GRE1}\\
e^{-\frac 12 \jb{ B_2^Q  \Im  h^\perp, \Im  h^\perp }} \prod_{x\in \T_{L^2}} \mathrm{d} \Im  h^\perp(x), \label{GIM1}
\end{align}

\noi
where each measure has the covariance operator $B_i^{-1}$
\begin{align}
B_1^{Q}&=-\dx^2-3(Q^{\eta_L}_{x_0})^2+\Ld \notag \\
B_2^{Q}&=-\dx^2-(Q^{\eta_L}_{x_0})^2+\Ld,
\label{SCHOP}
\end{align}

\noi
where $\Ld>0 $ is the Lagrange multiplier  in \eqref{Lagmul}. 
This also corresponds to the second variation $\nb^2 H_{L^2}(Q^{\eta_L}_{x_0})$ of the Hamiltonian in \eqref{Ham0}.  As $L \to \infty$, these operators, known as Schr\"odinger operators, play a central role in the stability theory of ground states.

In particular, as $L\to \infty$, that is, when the approximate soliton $Q^{\eta_L}_{x_0}$ in \eqref{QetaL}  approaches the exact ground state (see Remark \ref{REM:SOL}), it is well known that $B_1^Q$ and $B_2^Q$ defined on $\mathbb{R}$ have the following spectral properties
\begin{align}
B_1^Q&=0  \quad \text{on} \quad \text{span}\,\{ \partial_{x_0}Q_{x_0} \} \notag \\
B_1^Q&<0  \quad \text{on} \quad \text{span}\,\{Q_{x_0} \} \notag \\
B_2^Q&=0  \quad \text{on} \quad \text{span}\,\{Q_{x_0} \},
\label{Degeneracy}
\end{align}

\noi
where $Q_{x_0}=Q(\cdot-x_0)$. See \cite[Lemma 2.1, Lemma 2.2]{NAK}.
Furthermore, when $Q^{\eta_L}_{x_0}$ is the exact ground state, the operator $B_2^Q $ on $\R$ is nonnegative: $B_2^Q \ge 0$, while $B_1^Q$ on $\R$ has exactly one negative eigenvalue and so $B_1^Q \ge 0$ on span$\{ Q_{x_0} \}^\perp$. Therefore, we cannot define the Gaussian measures \eqref{GRE1} and \eqref{GIM1} on $\Re L^2(\T_{L^2})$ and $\Re L^2(\T_{L^2})$ as $L\to \infty$. This implies that to define the Gaussian measures associated with the Schrödinger operators $B_1^Q$ and $B_2^Q$ on $\R$, we need to remove the directions $\text{span}\{Q_{x_0}, \partial_{x_0} Q_{x_0} \}$ and $\text{span}\{Q_{x_0} \}$, respectively, which lead to degeneracy in the covariance operators. By doing so, we ensure that
\begin{align}
B_1^Q>0 \quad \text{on} \quad &\text{span}\{Q_{x_0}, \partial_{x_0} Q_{x_0} \}^\perp \label{DEGN0}\\
B_2^Q>0 \quad \text{on} \quad &\text{span}\{Q_{x_0} \}^\perp.
\label{DEGN}
\end{align}

\noi
In this way, we can define Gaussian measures on the space while excluding certain directions.


\noi 
Recall that $\text{span}\big\{ \partial_{x_0}(e^{i\ta}Q_{ x_0}^{\eta_L})|_{\dr=0}, \partial_{\dr}(e^{i\ta}Q_{ x_0, \dr}^{\eta_L})|_{\dr=0}\big\}=T_{x_0,0} \bar \M_L$ is the tangent space of the soliton manifold $\bar M_L$ \eqref{solm0} at $Q_{x_0}^{\eta_L}$. 
From Proposition \ref{PROP:REDUC}, since  we are working on the normal space $V^{L^2}_{x_0,0}$ in \eqref{normalsp} 
\begin{align}
V^{L^2}_{x_0,0}=\big\{ w\in L^2(\T_{L^2}):   \jb{w, (\Ld-\dx^2)\partial_{x_0}Q^{\eta_L}_{x_0}  }=0, \;  \jb{w, i(\Ld-\dx^2)Q^{\eta_L}_{x_0}  }=0    \big\},
\label{ORTHOGO}
\end{align}

\noi 
which excludes the tangential directions $T_{x_0,0} \bar \M_L$, it follows that $ h^\perp \in V^{L^2}_{x_0,0} $. Here, the inner product, as defined in \eqref{INER}, means the real inner product. 
In particular, 
due to \eqref{ORTHOGO} and \eqref{ORTHOGO1}, we have 
\begin{align}
\Re  h^\perp \in \cj V_{x_0,0}^{L^2, \text{Re}}:=\big\{ w \in \Re L^2(\T_{L^2}) :  \jb{w, (\Ld-\dx^2)\partial_{x_0}Q^{\eta_L}_{x_0}  }=0,   \; \jb{Q^{\eta_L}_{x_0}, w }=0  \big\}.
\label{VRE}
\end{align}

\noi
Note that   $Q^{\eta_L}_{x_0}, \partial_{x_0} Q^{\eta_L}_{x_0} \notin \cj V_{x_0,0}^{L^2, \text{Re}}$ since $\| Q^{\eta_L}_{x_0}\|_{L^2}, \|\dx \partial_{x_0} Q^{\eta_L}_{x_0}  \|_{L^2} \neq 0$. Furthermore, 
for any $c_1,c_2 \in \R\setminus\{0\}$, we have 
\begin{align}
c_1Q^{\eta_L}_{x_0}+c_2 \partial_{x_0} Q^{\eta_L}_{x_0} \notin  \cj V_{x_0,0}^{L^2, \text{Re}},
\label{VRE1}
\end{align}

\noi 
due to the orthogonality of   $Q^{\eta_L}_{x_0}$ and $\partial_{x_0} Q^{\eta_L}_{x_0}$ in $\dot{H}^k(\T_{L^2})$ for any $k \in \Z_{ \ge 0}$. This orthogonality arises from the fact that $Q^{\eta_L}_{x_0}$ is an even function, while $\partial_{x_0}Q^{\eta_L}_{x_0} $ is odd, based on parity considerations of these functions centered at $x=x_0$. Therefore, regarding $\Re  h^\perp$, we can remove all directions in \eqref{DEGN0} that cause degeneracy as $L\to \infty$. On the other hand, regarding the imaginary part, we have 
\begin{align}
\Im  h^\perp \in \cj V_{x_0,0}^{L^2, \text{Im}}:=\big\{ w \in \Im L^2(\T_{L^2}) :   \jb{w,(\Ld-\dx^2)Q^{\eta_L}_{x_0} }=0  \big\}.
\label{VIM}
\end{align}

\noi
Note that  
\begin{align}
Q^{\eta_L}_{x_0} \notin \cj V_{x_0,0}^{L^2, \text{Im}}
\label{VIM1}
\end{align}

\noi 
since $\| \dx Q^{\eta_L}_{x_0}\|_{L^2} \neq 0$.
Therefore, regarding $\Im  h^\perp$, we remove the direction in \eqref{DEGN} that causes degeneracy as $L\to \infty$.

\begin{lemma}\label{LEM:positive}
There exits $\zeta>0$ such that for any sufficiently large $L\ge 1$, we have 
\begin{align*}
\jb{B_1^Qw,w}&\ge \zeta \|w \|_{L^2(\T_{L^2})}^2 \quad \text{on} \quad \cj V_{x_0,0}^{L^2, \textup{Re}}\\
\jb{B_2^Qw,w}&\ge \zeta \|w \|_{L^2(\T_{L^2})}^2 \quad \text{on} \quad \cj V_{x_0,0}^{L^2, \textup{Im}},
\end{align*}

\noi
where $B_1^Q$ and $B_2^Q$ are Schr\"odinger operators, defined in \eqref{SCHOP}. Here, the spaces $\cj V_{x_0,0}^{L^2, \textup{Re}}$ and $\cj V_{x_0,0}^{L^2, \textup{Im}}$ are defined in \eqref{VRE} and \eqref{VIM}, respectively.
\end{lemma}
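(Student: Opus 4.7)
The plan is to transfer classical Weinstein-type coercivity estimates for the linearized ground-state operators from $\R$ to $\T_{L^2}$, exploiting the exponential decay of $Q=Q_D$ and the $O(L)$-scale localization of the cutoff $\eta_L$. First I would recall the classical spectral picture on $\R$. Let $\wt B_1^\infty=-\dx^2-3Q^2+\Ld$ and $\wt B_2^\infty=-\dx^2-Q^2+\Ld$ act on $L^2(\R)$. Then $\wt B_2^\infty\ge 0$ with simple kernel $\mathrm{span}\{Q\}$, while $\wt B_1^\infty$ has a unique simple negative eigenvalue with eigenvector $\chi$ satisfying $\jb{\chi,Q}\ne 0$ (Weinstein), a simple kernel $\mathrm{span}\{\dx Q\}$, and essential spectrum $[\Ld,\infty)$. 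Combined with the variational characterization of $Q$ as a constrained energy minimizer, these yield
\begin{align*}
\jb{\wt B_1^\infty u,u}_{L^2(\R)}&\ge c_0\|u\|_{L^2(\R)}^2 \qquad \text{on }\{u\in H^1(\R):\jb{u,Q}=\jb{u,\dx Q}=0\},\\
\jb{\wt B_2^\infty u,u}_{L^2(\R)}&\ge c_0\|u\|_{L^2(\R)}^2 \qquad \text{on }\{u\in H^1(\R):\jb{u,Q}=0\}.
\end{align*}

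Next I would reduce the torus orthogonality conditions to their counterparts on $\R$. Differentiating $-\dx^2 Q+\Ld Q=Q^3$ yields $\wt B_1^\infty\dx Q=0$, hence $(\Ld-\dx^2)\dx Q=3Q^2\dx Q$; combined with $\dd_{x_0}Q^{\eta_L}_{x_0}=-\dx Q(\cdot-x_0)+O_{L^\infty}(e^{-cL})$ and the exponential decay of $Q$, I would obtain
\begin{align*}
(\Ld-\dx^2)\dd_{x_0}Q^{\eta_L}_{x_0}&=-3(Q^{\eta_L}_{x_0})^2\dx Q(\cdot-x_0)+R_L,\\
(\Ld-\dx^2)Q^{\eta_L}_{x_0}&=(Q^{\eta_L}_{x_0})^3+R'_L,
\end{align*}
with $\|R_L\|_{L^2(\T_{L^2})},\|R'_L\|_{L^2(\T_{L^2})}=O(e^{-cL})$. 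Thus $w\in\cj V^{L^2,\text{Re}}_{x_0,0}$ satisfies $\jb{w,Q^{\eta_L}_{x_0}}=0$ and $\jb{w,(Q^{\eta_L}_{x_0})^2\dx Q(\cdot-x_0)}=O(e^{-cL}\|w\|_{L^2})$, while $w\in\cj V^{L^2,\text{Im}}_{x_0,0}$ satisfies $\jb{w,(Q^{\eta_L}_{x_0})^3}=O(e^{-cL}\|w\|_{L^2})$.

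I would then extend $w$ on $\T_{L^2}$ to $\wt w\in H^1(\R)$ by multiplying by a smooth bump supported in a neighborhood of $x_0$ of radius $L^2/2$ that contains the effective support of $Q^{\eta_L}_{x_0}$, introducing $O(e^{-cL}\|w\|_{L^2})$ errors in all relevant norms. Decomposing $\wt w=\alpha Q+\beta\dx Q+\wt w_0$ with $\wt w_0\perp_{L^2(\R)}\mathrm{span}\{Q,\dx Q\}$, the first orthogonality gives $\alpha=O(e^{-cL}\|w\|_{L^2})$, and since $\int Q^3\dx Q\,dx=0$ by parity while $\int Q^2(\dx Q)^2\,dx>0$, the second yields $|\beta|\le C\|\wt w_0\|_{L^2}+Ce^{-cL}\|w\|_{L^2}$. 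Using $\wt B_1^\infty Q=-2Q^3$, $\wt B_1^\infty\dx Q=0$, and the first-step bound applied to $\wt w_0$, I would get
\[\jb{\wt B_1^\infty\wt w,\wt w}_{L^2(\R)}\ge c_0\|\wt w_0\|_{L^2}^2-C\alpha^2-Ce^{-cL}\|w\|_{L^2}^2\ge c_0'\|w\|_{L^2(\T_{L^2})}^2\]
for $L$ sufficiently large, and transferring back to $\T_{L^2}$ delivers the desired coercivity for $B_1^Q$. The argument for $B_2^Q$ on $\cj V^{L^2,\text{Im}}_{x_0,0}$ is strictly analogous and simpler: decompose $\wt w=\alpha Q+\wt w_0$ with $\wt w_0\perp Q$ and invert using $\int Q^4\,dx>0$.

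The main obstacle will be reconciling the weighted $H^1_\Ld$-type orthogonality conditions defining $\cj V^{L^2,\bullet}_{x_0,0}$ with the $L^2$-orthogonality natural to Weinstein coercivity. This reconciliation rests on the identities $\wt B_1^\infty\dx Q=0$ and $\wt B_2^\infty Q=0$, on quantitative $O(e^{-cL})$ control of the cutoff errors $R_L$, $R_L'$, and on the nondegeneracy $\int Q^2(\dx Q)^2\,dx>0$, which is needed to invert the finite-dimensional linear system relating $\alpha,\beta$ and $\wt w_0$.
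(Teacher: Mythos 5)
Your approach is in the same family as the paper's — comparing the torus problem to the classical Weinstein coercivity on $\R$ — but where the paper sketches a compactness/contradiction argument and defers to \cite[Proposition 6.15]{OST1} for details, you attempt a self-contained constructive transfer. That is a legitimate alternative. Two points of your reduction are worked out correctly and are worth keeping: the identity $(\Ld-\partial_x^2)\partial_x Q = 3Q^2\partial_x Q$ explains why the weighted orthogonality in \eqref{VRE} does not simply become $L^2$-orthogonality to $\partial_x Q$, and you rightly observe that it does not need to — since $\partial_x Q\in\ker \widetilde B_1^\infty$, the coefficient $\beta$ only needs to be bounded by $\|\widetilde w_0\|_{L^2}$, not made small, and the nondegeneracy $\int Q^2(\partial_x Q)^2\,\mathrm{d}x>0$ supplies that bound.

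There is, however, a genuine gap in the extension step, and it is load-bearing. Multiplying $w$ by a bump supported in a neighborhood of $x_0$ does \emph{not} perturb "all relevant norms" by $O(e^{-cL}\|w\|_{L^2})$. The kinetic term picks up an error of order $\|(\partial_x\chi)w\|_{L^2}^2$, which is at best polynomially small in $L$; more seriously, $\|\widetilde w\|_{L^2(\R)}$ can be an $O(1)$ fraction smaller than $\|w\|_{L^2(\T_{L^2})}$ whenever $w$ carries mass far from $x_0$, since that mass is simply annihilated by the bump. Your chain of inequalities lands on
\[
\langle \widetilde B_1^\infty \widetilde w,\widetilde w\rangle_{L^2(\R)}\;\gtrsim\;\|\widetilde w_0\|_{L^2}^2\;\gtrsim\;\|\widetilde w\|_{L^2(\R)}^2,
\]
which is \emph{not} a lower bound by $\|w\|_{L^2(\T_{L^2})}^2$. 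The coercivity you transfer from $\R$ controls only the part of $w$ near $x_0$, and the proposal gives nothing for the remainder.

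The fix is standard but must be spelled out. Choose a quadratic partition $\chi_1^2+\chi_2^2=1$ on $\T_{L^2}$ with $\chi_1\equiv 1$ on $\operatorname{supp}Q^{\eta_L}_{x_0}$ and transitioning on scale $L$, and use an IMS-type localization
\[
\langle B_1^Q w,w\rangle=\langle B_1^Q\,\chi_1 w,\chi_1 w\rangle+\langle B_1^Q\,\chi_2 w,\chi_2 w\rangle-\sum_{i=1,2}\|(\partial_x\chi_i)w\|_{L^2}^2.
\]
On $\chi_2 w$ the potential $-3(Q^{\eta_L}_{x_0})^2$ is exponentially negligible, so $\langle B_1^Q\chi_2 w,\chi_2 w\rangle\ge(\Ld-O(e^{-cL}))\|\chi_2 w\|_{L^2}^2$; this is the contribution your proposal drops. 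On $\chi_1 w$, which is compactly supported and inherits the orthogonality conditions up to $O(e^{-cL}\|w\|_{L^2})$, your $\R$-decomposition argument applies verbatim. The localization errors $\|(\partial_x\chi_i)w\|_{L^2}^2=O(L^{-2}\|w\|_{L^2}^2)$ are polynomially small, which is enough — but it is not $O(e^{-cL})$, so the phrasing in your sketch should be corrected. Finally, $\|w\|_{L^2}^2=\|\chi_1 w\|_{L^2}^2+\|\chi_2 w\|_{L^2}^2$, and the bounds on $\alpha$ and $\beta$ give $\|\chi_1 w\|_{L^2}^2\lesssim\|\widetilde w_0\|_{L^2}^2+O(e^{-cL}\|w\|_{L^2}^2)$, which is what lets you absorb the errors and conclude $\langle B_1^Q w,w\rangle\ge\zeta\|w\|_{L^2}^2$ for $L$ large.
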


\begin{proof}
For the details of the proof, we can follow \cite[Proposition 6.15]{OST1}.
Here, we briefly outline the idea.

As discussed in \eqref{DEGN0} and \eqref{DEGN}, it follows from \cite[Lemma 2.1, Lemma 2.2]{NAK} that 
\eqref{DEGN0} and \eqref{DEGN} represent the directions causing degeneracy for the operators $B_1^Q$ and $B_2^Q$ as $L\to \infty$.
From the definitions of the spaces $\cj V_{x_0,0}^{L^2, \textup{Re}}$ and $\cj V_{x_0,0}^{L^2, \textup{Im}}$, along with \eqref{VRE1} and \eqref{VIM1}, we eliminate these directions, ensuring that there exists a constant $\zeta>0$ such that
\begin{align}
\jb{ B_{i}^Q w, w } \ge \zeta \| w \|_{L^2}^2
\label{positive0}
\end{align}

\noi
for $i=1,2$. That is, $B_1^Q$ and $B_2^Q$ are positive operators on
$\cj V_{x_0,0}^{L^2, \textup{Re}}$ and $\cj V_{x_0,0}^{L^2, \textup{Im}}$, respectively. 
Note that $\zeta$ appearing in \eqref{positive0}  does not depend on $L$. If $\zeta$ depends on $L$, especially $\zeta=\zeta_L\to0$ as $L\to \infty$, then a contradiction arises because the operators  $B_i^Q $ on $\R$ for $i=1,2$ are positive on $\cj V_{x_0,0}^{ \textup{Re}}$ and $\cj V_{x_0,0}^{ \textup{Im}}$, where $\cj V_{x_0,0}^{ \textup{Re}}$ and $\cj V_{x_0,0}^{ \textup{Im}}$ are interpreted in the case $\T_{\infty}=\R$.
\end{proof}

To define Gaussian measures corresponding to the Schrödinger operators
$B_1^Q$ and $B_2^Q$ in \eqref{SCHOP}, we first introduce the following projection operators.
Define $\mathbf{P}_{\cj V^{L^2, \text{Re}}_{x_0,0}}$ to be the orthogonal projector onto $\cj V_{x_0,0}^{L^2, \text{Re}}$ defined in \eqref{VRE} as follows 
\begin{equation}\label{eqn: project}
\begin{split}
\mathbf{P}_{\cj V^{L^2, \text{Re}}_{x_0,0}} &= \Id-\frac{\langle (\Ld- \partial_{x}^2) \partial_{x_0}Q_{x_0}^{\eta_L}, \cdot\,\rangle_{L^2}}{\|\partial_{x_0} Q_{x_0}^{\eta_L}\|^2_{    H^1} 
  }\partial_{x_0} Q_{x_0}^{\eta_L}-\frac{\langle Q^{\eta_L}_{x_0}, \cdot\,\rangle_{L^2}}{\| Q_{x_0}^{\eta_L}\|^2_{ L^2}} Q_{x_0}^{\eta_L}    \\
&= \Id- \langle Q^{\eta_L}_{1, x_0},\,\cdot\rangle_{L^2} Q^{\eta_L}_{1, x_0}-\langle  Q^{\eta_L}_{2, x_0},\, \cdot\rangle_{L^2}  Q^{\eta_L}_{2, x_0},
\end{split}
\end{equation}

\noi 
where
\begin{align}
Q^{\eta_L}_{1, x_0}&:=\frac{ \partial_{x_0}Q_{x_0}^{\eta_L}}{\|\partial_{x_0} Q_{x_0}^{\eta_L}\|^2_{ H^1_\Ld}  }  \label{TQF1}\\
Q^{\eta_L}_{2, x_0}&:= \frac{ Q_{x_0}^{\eta_L}}{\| Q_{x_0}^{\eta_L}\|_{L^2}^2 }. \label{TQF3}
\end{align}

\noi 
Define $\mathbf{P}_{\cj V^{L^2, \text{Im}}_{x_0,0}}$ to be the orthogonal projector onto $\cj V_{x_0,0}^{L^2, \text{Im}}$ defined in \eqref{VIM} as follows 
\begin{equation}
\begin{split}
\mathbf{P}_{\cj V^{L^2, \text{Im}}_{x_0,0}} &= \Id-\frac{\langle (\Ld-\partial_{x}^2) Q_{x_0}^{\eta_L}, \cdot\,\rangle_{L^2}}{\| Q_{x_0}^{\eta_L}\|^2_{ H^1_\Ld}  } Q_{x_0}^{\eta_L}= \Id- \langle Q^{\eta_L}_{3, x_0},\,\cdot\rangle_{L^2} Q^{\eta_L}_{3, x_0}, 
\end{split}
\end{equation}

\noi 
where
\begin{align}
Q^{\eta_L}_{3, x_0}&:= \frac{ Q_{x_0}^{\eta_L}}{\| Q_{x_0}^{\eta_L}\|^2_{ H^1_\Ld}  } \label{TQF2}.
\end{align}

\noi 
Finally, we define
\begin{align}
C_{1,x_0}^{Q}:&= \mathbf{P}_{\cj V^{L^2, \text{Re}}_{x_0,0}} (B_1^Q)^{-1}\mathbf{P}_{\cj V^{L^2, \text{Re}}_{x_0,0}} \notag \\
C_{2,x_0}^{Q}:&= \mathbf{P}_{\cj V^{L^2, \text{Im}}_{x_0,0}} (B_2^Q)^{-1}\mathbf{P}_{\cj V^{L^2, \text{Im}}_{x_0,0}}.
\label{CovC}
\end{align}

\noi 
In the following lemma  $C_{1,x_0}^{Q}$ and  $C_{2,x_0}^{Q}$ play a role 
as the covariance operators for the Gaussian processes $\Re  h^\perp$ and $\Im  h^\perp $, respectively.

\begin{lemma}\label{LEM:SCHOP}
For sufficiently large $L\ge 1$ and any $x_0 \in \T_{L^2}$, we can define an Ornstein Uhlenbeck type measure $ \nu^\perp_{Q, x_0}$ as follows
\begin{align}
\nu^\perp_{Q,x_0}= \nu_{C_{1,x_0}^Q}^{\perp} \otimes  \nu_{C_{2,x_0}^Q}^{\perp}, 
\label{GFFSch}
\end{align}

\noi
where 
\begin{align*}
d\nu_{C_{1,x_0}^Q}^{\perp}(\Re  h^\perp )&=Z_{L}^{-1} e^{-\frac 12 \jb{(C_{1,x_0}^Q)^{-1} \Re  h^\perp, \Re  h^\perp} } \prod_{x\in \T_{L^2}}\mathrm{d} \Re  h^\perp(x)\\
d\nu_{C_{2,x_0}^Q}^{\perp}(\Im  h^\perp)&=Z_{L}^{-1} e^{-\frac 12 \jb{(C_{2,x_0}^Q)^{-1} \Im  h^\perp, \Im  h^\perp} } \prod_{x\in \T_{L^2}}\mathrm{d} \Im  h^\perp(x).
\end{align*}

\noi
Here, $\nu_{C_{1,x_0}^Q}^{\perp}$, $\nu_{C_{2,x_0}^Q}^{\perp}$ have 
covariance operators $(B_1^Q)^{-1}$ and $(B_2^Q)^{-1}$, projected on $\cj V_{x_0,0}^{L^2, \textup{Re}}$ and $\cj V_{x_0,0}^{L^2, \textup{Im}}$, defined in \eqref{VRE} and \eqref{VIM}.
\end{lemma}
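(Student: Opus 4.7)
The plan is to construct the two Gaussian measures $\nu_{C_{1,x_0}^Q}^\perp$ and $\nu_{C_{2,x_0}^Q}^\perp$ separately via a white noise representation analogous to \eqref{eqn: WN-def}--\eqref{gprocess}, and then define $\nu_{Q,x_0}^\perp$ as their product. The entire construction rests on the uniform positivity provided by Lemma \ref{LEM:positive}.

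First, I would use Lemma \ref{LEM:positive} together with self-adjointness and compactness of the resolvent of $B_i^Q = -\partial_x^2 - k_i (Q_{x_0}^{\eta_L})^2 + \Lambda$ (with $k_1 = 3$, $k_2 = 1$) on $L^2(\T_{L^2})$ to diagonalize the restriction of $B_i^Q$ to $\bar V_{x_0,0}^{L^2,\text{Re}}$ and $\bar V_{x_0,0}^{L^2,\text{Im}}$, respectively. Since these subspaces are of finite codimension in the corresponding real/imaginary $L^2$-spaces and are left invariant by $B_i^Q$ (a direct check using the definitions \eqref{VRE}, \eqref{VIM}, the equation \eqref{Lagmul} for $Q$, and the self-adjointness of $B_i^Q$), the operators $B_1^Q|_{\bar V^{L^2,\text{Re}}_{x_0,0}}$ and $B_2^Q|_{\bar V^{L^2,\text{Im}}_{x_0,0}}$ admit orthonormal eigenbases $\{e_n^{(i)}\}_n$ with eigenvalues $\lambda_n^{(i)} \ge \zeta > 0$ and $\lambda_n^{(i)} \to \infty$. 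In particular, the bounded self-adjoint operators $(B_i^Q)^{-1/2}$ are well-defined via functional calculus on these subspaces.

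Next, I would construct the Gaussian fields explicitly. Let $w_0^{(i)}$, $i = 1, 2$, be two independent real white noises on $L^2(\T_{L^2};\R)$ as in \eqref{eqn: WN-def}. Set
\begin{align*}
\Re h^\perp := \mathbf{P}_{\bar V^{L^2,\text{Re}}_{x_0,0}} (B_1^Q)^{-1/2} w_0^{(1)}, \qquad \Im h^\perp := \mathbf{P}_{\bar V^{L^2,\text{Im}}_{x_0,0}} (B_2^Q)^{-1/2} w_0^{(2)}.
\end{align*}
To check convergence in $H^{\frac{1}{2}-\eps}(\T_{L^2})$, I would bound the Hilbert--Schmidt norm of $(1-\partial_x^2)^{-\frac{1}{4}-\frac{\eps}{2}}(B_i^Q)^{-1/2}$ by comparing with the free case: since $B_i^Q = (-\partial_x^2 + \Lambda) - k_i (Q_{x_0}^{\eta_L})^2$ is a bounded multiplicative perturbation of $-\partial_x^2 + \Lambda$ on the invariant subspace (where $B_i^Q \ge \zeta$), a standard resolvent identity reduces this to the corresponding Hilbert--Schmidt bound for $(-\partial_x^2 + 1)^{-\frac{1}{4}-\frac{\eps}{2}}(-\partial_x^2+\Lambda)^{-1/2}$, which is finite by the same Riemann-sum computation already performed after \eqref{eqn: WN-def}.

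Finally, I would verify that $\Re h^\perp$ and $\Im h^\perp$ have the claimed covariances by repeating the computation in the proof of Proposition \ref{PROP:OUproj}: for $f, g \in L^2(\T_{L^2};\R)$,
\begin{align*}
\mathbb{E}\big[\langle \Re h^\perp, f\rangle \langle \Re h^\perp, g\rangle\big]
= \langle f, \mathbf{P}_{\bar V^{L^2,\text{Re}}_{x_0,0}} (B_1^Q)^{-1} \mathbf{P}_{\bar V^{L^2,\text{Re}}_{x_0,0}} g\rangle
= \langle f, C_{1,x_0}^Q g\rangle,
\end{align*}
and analogously for the imaginary component; independence of $\Re h^\perp$ and $\Im h^\perp$ follows from the independence of $w_0^{(1)}$ and $w_0^{(2)}$, giving the product structure \eqref{GFFSch}. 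The main obstacle in the argument is controlling the projected inverses uniformly in $L$; this is where the crucial $L$-independence of the spectral gap $\zeta$ in Lemma \ref{LEM:positive} is essential, since it prevents the covariance from blowing up in the thermodynamic limit and ensures that subsequent limits involving this measure (as in Steps~3--5 of the outline in Subsection \ref{SUBSEC:strucpf}) are well-behaved.
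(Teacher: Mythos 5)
There is a genuine gap in the construction. Your formula $\Re h^\perp := \mathbf{P}_{\bar V^{L^2,\mathrm{Re}}_{x_0,0}} (B_1^Q)^{-1/2} w_0^{(1)}$ is modeled on \eqref{gprocess}, but that construction relied on $B_1 = -\partial_x^2+\Lambda$ being a strictly positive operator on all of $L^2(\T_{L^2})$, so that $B_1^{-1/2}$ is defined by functional calculus. Here, as the paper itself records in \eqref{Degeneracy}, the operator $B_1^Q = -\partial_x^2 - 3(Q^{\eta_L}_{x_0})^2 + \Lambda$ is \emph{not} positive on the full space: for $L$ large (when $Q^{\eta_L}_{x_0}$ is close to the exact soliton) it has one negative direction (on $\mathrm{span}\{Q_{x_0}\}$) and a near-zero direction (on $\mathrm{span}\{\partial_{x_0}Q_{x_0}\}$). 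Consequently $(B_1^Q)^{-1/2}$ does not exist as a self-adjoint operator on $L^2(\T_{L^2})$, and the expression $\mathbf{P}(B_1^Q)^{-1/2} w_0$ is meaningless as written. Your proposed Hilbert--Schmidt comparison with the free case via "a standard resolvent identity" would likewise fail to get started, because there is nothing to take a resolvent of.

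A second, smaller issue: the claim that $\bar V^{L^2,\mathrm{Re}}_{x_0,0}$ is invariant under $B_1^Q$ is false. For $w\in\bar V^{L^2,\mathrm{Re}}_{x_0,0}$ the constraint $\langle Q^{\eta_L}_{x_0}, w\rangle = 0$ does not propagate: by self-adjointness $\langle Q^{\eta_L}_{x_0}, B_1^Q w\rangle = \langle B_1^Q Q^{\eta_L}_{x_0}, w\rangle$, and using \eqref{Lagmul} (up to cutoff errors) $B_1^Q Q^{\eta_L}_{x_0} \approx -2(Q^{\eta_L}_{x_0})^3$, so this pairing equals $-2\langle (Q^{\eta_L}_{x_0})^3, w\rangle$, which has no reason to vanish. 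So you cannot diagonalize $B_1^Q$ restricted to that subspace as if it mapped the subspace into itself.

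The fix, which the paper's terse proof uses implicitly, is to work with the \emph{projected} operator $\mathbf{P}_{\bar V} B_i^Q \mathbf{P}_{\bar V}$ restricted to $\bar V$ rather than with $B_i^Q$ itself. Lemma \ref{LEM:positive} gives precisely $\langle B_i^Q w,w\rangle \ge \zeta\|w\|_{L^2}^2$ for $w\in\bar V$, i.e.\ $\mathbf{P}_{\bar V} B_i^Q \mathbf{P}_{\bar V}\big|_{\bar V}$ is a self-adjoint, strictly positive operator with compact, trace-class inverse (trace class because in one dimension $(-\partial_x^2+\Lambda)^{-1}$ on $\T_{L^2}$ already is, and $(Q^{\eta_L}_{x_0})^2$ is a relatively compact Schwartz perturbation). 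This is enough, by the standard abstract Wiener-space construction on the separable Hilbert space $\bar V$, to define the Gaussian measure with covariance $\big(\mathbf{P}_{\bar V} B_i^Q \mathbf{P}_{\bar V}\big|_{\bar V}\big)^{-1}$. If you want an explicit white-noise representation along the lines of your proposal, take a white noise $w_0$ built from an orthonormal basis of $\bar V$ itself, and apply $\big(\mathbf{P}_{\bar V} B_i^Q \mathbf{P}_{\bar V}\big|_{\bar V}\big)^{-1/2}$, rather than a fractional power of the unprojected $B_i^Q$.
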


\begin{proof}
From Lemma \ref{LEM:positive}, $B_1^Q$ and $B_2^Q$ are positive operators on $\cj V_{x_0,0}^{L^2, \textup{Re}}$ and $\cj V_{x_0,0}^{L^2, \textup{Im}}$, respectively, for sufficiently large $L \ge 1$. Therefore, these operators are invertible on the spaces $\cj V_{x_0,0}^{L^2, \textup{Re}}$ and $\cj V_{x_0,0}^{L^2, \textup{Im}}$. 
Furthermore, by the elementary theory of Schr\"odinger operators, $(B_1^Q)^{-1}$ and $(B_2^Q)^{-1}$ are of trace class since $(Q^{\eta_L}_{x_0})^2$ is a Schwartz function. This implies that we can define the Gaussian measures $\nu_{C_{1,x_0}^Q}^{\perp}$ and  $\nu_{C_{2,x_0}^Q}^{\perp}$.

\end{proof}


\subsection{Conditional density function}

In this subsection, we express the conditional measure in \eqref{condssig} in terms of a conditional density.

We begin by recalling that if $X$ and $Y$ are jointly continuous random variables, the conditional probability $\PP\big\{ X\in A \;|\;  Y=s  \big\}$ can be expressed as
\begin{align}
\PP\big\{ X\in A \;|\;  s \le Y \le s+\dl  \big\}=\frac{ \int_A \int_{s\le y \le s+\dl } f_{X,Y}(x,y)\, \mathrm{d}y \, \mathrm{d}x   }{\int_\R \int_{s\le y \le s+\dl  } f_{X,Y}(x,y)\, \mathrm{d}y \, \mathrm{d}x   } \too \int_A f_{X|Y}(x|s)\,\mathrm{d}x
\label{CONDden}
\end{align}

\noi
as $\dl \to 0$, where $f_{X|Y}(x|s)=\frac{f_{X,Y}(x,s)}{\int_\R f_{X,Y}(x,s) \, \mathrm{d}x   }$. In the following discussion, $s$ may depend on $x$, that is, $s=s(x)$. The expression \eqref{CONDden}  shows that by integrating out $y$-component, $\dl^{-1}\int_{s(x) \le y \le s(x)+\dl }f_{X,Y}(x,y)\, \mathrm{d}y \approx f_{X,Y}(x,s(x))$, we can obtain the conditional density function.


In the following, we focus on the integral $\mathcal{I}(F)$ in \eqref{condssig}  without $\mathcal{K}^L_\dl$ and $\mathrm{Det}_L(h)$ which will be controlled separately later
\begin{align}
\int_{S_{Q,x_0}}  F(h)&e^{L^3\mathcal{E}(Q^{\eta_L}_{x_0}, L^{-\frac{3}{2}}h)} e^{\Ld L^{\frac 32} \jb{Q^{\eta_L}_{x_0}, \Re h }+\frac \Ld2\|h \|_{L^2}^2 }e^{\frac 12\jb{ (Q^{\eta_L}_{x_0})^2, 3|\Re h|^2+|\Im h|^2 }  } \,\mathrm{d}\mu_{x_0,\Ld}^\perp(h),
\label{R1}
\end{align}

\noi
where the conditioning $S_{Q,x_0}$, defined in \eqref{SQK}, is given by
\begin{align}
S_{Q,x_0}&=\big\{ -\frac 12 \| h\|_{L^2(\T_{L^2})}^2-L^{0+}<L^\frac 32 \jb{Q^{\eta_L}_{x_0}, \Re h  }<-\frac 12 \|h \|_{L^2(\T_{L^2})}^2    \big\}.
\label{SQK1}
\end{align}

\noi
By the law of large numbers for the Ornstein–Uhlenbeck-type measure 
$\mu^\perp_{Q,x_0}$ (and also $\nu^\perp_{Q,x_0}$ defined in Lemma \ref{LEM:SCHOP}), we have $\| h\|_{L^2(\T_{L^2})}^2 \approx L^2$. See Propsotion \ref{PROP: gaussian-conc}. Hence, the conditioning in \eqref{SQK1} can be expressed as $-L^2 \approx L^\frac 32 \jb{Q^{\eta_L}_{x_0}, \Re h  }$. In the following, by integrating out the $\jb{Q^{\eta_L}_{x_0}, \Re h  }$-component in the integral \eqref{R1}, we derive an expression for the conditional density function
as in \eqref{CONDden}.


\noi 
From Lemma \ref{LEM:decomh}, under the decomposition $h(x)=\gamma(x) \langle Q^{\eta_L}_{x_0}, \Re h\rangle +h^\perp(x)$ in \eqref{eqn: decomp}, $\langle Q^{\eta_L}_{x_0}, \Re h\rangle$ and $h^\perp$ are independent Gaussian random fields under the measure $\mu^\perp_{x_0,\Ld}$. Therefore, from \eqref{measuredecom0}, we write 
\begin{align}
\mathrm{d}\mu_{x_0, \Ld}^\perp(h)=\frac{1}{\sqrt{2\pi }\s }e^{-\frac{t^2}{2\s}}\, \mathrm{d}t \, \mathrm{d}\nu_{x_0}^\perp(h^\perp),
\label{OUperp}
\end{align}

\noi
where $\sigma^2=O(1)$, uniformly in $x_0 \in \T_{L^2}$ and $\dr \in [0,2\pi]$.  From the coordinate $h(x)=h^\perp(x)+t\g(x)$, where  $t$ is the symbol for the Gaussian random varible $\jb{Q^{\eta_L}_{x_0}, \Re h }$, we can write
\begin{align}
\| h\|_{L^2}^2&=\| h^\perp\|_{L^2}^2+2\jb{Q^{\eta_L}_{x_0}, \Re h }g+|\jb{Q^{\eta_L}_{x_0}, \Re h }|^2 \| \g\|_{L^2}^2 \notag \\
&=\| h^\perp\|_{L^2}^2+2tg+t^2 \| \g\|_{L^2}^2,
\label{R2}
\end{align}

\noi
where $g$ is a Gaussian random variable derived from  $\jb{h^\perp, 
\g}$. This also implies that the conditioning $S_{Q,x_0}$  in \eqref{SQK1} can be expressed as
\begin{align}
S_{Q,x_0}=\big\{ -\frac 12 \|h^\perp \|_{L^2}^2-L^{0+}< G_1(t)  <-\frac 12 \|h^\perp \|_{L^2}^2-L^{0+} \big\},
\label{COND3}
\end{align}

\noi
where $G_1(t)=\frac {1}{2}\| \g \|_{L^2}^2t^2 +(L^\frac 32+g)t$.

Using the coordinate $h(x)=h^\perp(x)+t\g(x)$,  along with \eqref{OUperp}, \eqref{R2}, \eqref{COND3}, and $\Im h=\Im h^\perp$, we can rewrite
\begin{align}
\eqref{R1}&=\int_{h^\perp} \int_{S_{h^\perp}}F(h^\perp, t)e^{\bar{\mathcal{E} }(h^\perp, t) }e^{\frac{\Ld}{2}\|h^\perp\|_{L^2}^2}e^{\Ld G_1(t)}e^{G_2(t,h^\perp)} e^{-\frac{t^2}{2\sigma^2}}\frac{\mathrm{d}t}{\sqrt{2\pi} \sigma} \notag \\
&\hphantom{XXXXXXXXX}\cdot e^{\frac 32  \int_{\T_{L^2}}(Q^{\eta_L}_{x_0} )^2  |\Re h^\perp|^2\, \mathrm{d}x } e^{\frac 12  \int_{\T_{L^2}}(Q^{\eta_L}_{x_0} )^2  |\Im h^\perp|^2\, \mathrm{d}x } \,\nu^\perp_{x_0}(\mathrm{d}h^\perp) \notag \\
&=Z_{x_0,L}\int_{h^\perp} \int_{S_{h^\perp}}F(h^\perp, t)e^{\bar{\mathcal{E} }(h^\perp, t) }e^{\frac{\Ld}{2}\|h^\perp\|_{L^2}^2}e^{\Ld G_1(t)}e^{G_2(t,h^\perp)} e^{-\frac{t^2}{2\sigma^2}}\frac{\mathrm{d}t}{\sqrt{2\pi} \sigma}\, \nu^\perp_{Q,x_0}(\mathrm{d}h^\perp),
\label{eqn: plug-here}
\end{align}

\noi
where 
\begin{align}
\bar{\mathcal{E} }(h^\perp, t)&=L^3 \mathcal{E}(Q^{\eta_L}_{x_0}, L^{-\frac 32}(h^\perp+t\g) ) \label{errht} \\
G_1(t)&=\frac{\|\g \|_{L^2}^2}{2} t^2+ (L^{\frac{3}{2}}+g)t \label{eqn: G-def}\\
G_2(t,h^\perp)&=\frac {3}{2}t^2 \int_{\T_{L^2} } (Q^{\eta_L}_{x_0}\g)^2\,\mathrm{d}x+3t\int_{\T_{L^2}} (Q^{\eta_L}_{x_0})^2 \g \Re h^\perp \, \mathrm{d}x,  
\label{eqn:G_2-def}\\
S_{h^\perp}:&=\Big\{t\in\mathbb{R}:-\frac{1}{2}\|h^\perp\|_{L^2}^2-L^{0+}<G_1(t)< -\frac{1}{2}\|h^\perp\|^2_{L^2}\Big\}.
\label{Shperp}
\end{align}

\noi
In \eqref{eqn: plug-here}, $\nu^\perp_{Q,x_0}$ is the Gaussian measure associated with Schr\"odinger operators, as defined in Lemma \ref{LEM:SCHOP}, and $Z_{x_0,L}$ is the partition function for the measure $\nu^\perp_{Q,x_0}$.

\begin{remark}\rm
The partition function $Z_{x_0, L}$ in \eqref{eqn: plug-here}  is independent of $x_0$ due to the translation invariance of the measure, that is, $\Law_{ \nu^\perp_{Q, x_0} }( h^\perp(\cdot+x_0))= \nu^\perp_{Q, 0}$ since
\begin{align*}
\E_{ \nu^\perp_{Q, x_0}}\big[ h^\perp(x+x_0)   h^\perp(y+x_0)  \big]=\E_{ \nu^\perp_{Q,0 }}\big[ h^\perp(x)  h^\perp(y)  \big].
\end{align*}

\noi
Therefore, when taking the average integral over both tangential components $x_0\in \T_{L^2}$ and $\dr \in [0,2\pi]$ in \eqref{REDUCT}, $Z_{x_0, L}$ cancels out with the corresponding factor in the denominator
$\cj Z_L$ in \eqref{REDUCT}.  Consequently, we will disregard it in the following discussion.
\end{remark}

In the following lemma, 
by integrating out the $t$-component, we represent the conditional expectation in \eqref{condssig} in terms of a conditional density under the set
\begin{equation}\label{eqn: DL-def}
D_L:=\big\{\|h^\perp\|^2_{L^2}\le DL^{2} \big\} \cap \{ |g|\le L^{\frac{3}{2}-\eps}\}
\end{equation}

\noi 
for some large $D \ge 1$. Under the measure $\nu^\perp_{Q,x_0}$, $D_L$  is a high-probability event due to the law of large numbers for the Ornstein–Uhlenbeck-type measure $\nu^\perp_{Q,x_0}$, and $g$ is the Gaussian random variable in \eqref{R2}. See Proposition \ref{PROP: gaussian-conc}.




\begin{lemma}\label{lem: replace}
Let $F$, $H$ be bounded functions that are Lipschitz in $t$, with the Lipschitz norm given by
\begin{equation}\label{eqn: F-lip}
\sup_{h^\perp}\|F(h^\perp,\cdot)\|_{\mathcal{L}}=\sup_{h^\perp}\sup_{t,t'\in \mathbb{R}}\frac{|F(h^\perp,t)-F(h^\perp, t')|}{|t-t'|}.
\end{equation}

\noi 
Here,  the Lipschitz norms $\sup\limits_{h^\perp}\| F\|_{\mathcal{L}}$ and $\sup\limits_{h^\perp}\|H \|_{\mathcal{L}}$ are allowed to be at most $L^{\frac 32-\eps}$. Then, on the set $D_L$, we can integrae out the $t$-component   as follows 
\begin{align*}
&\int_{S_{h^\perp}}F(h^\perp,t^{+} )\, e^{H (h^\perp, t^{+}) } e^{\frac{\Ld}{2}\|h^\perp\|_{L^2}^2}e^{\Ld G_1(t)}e^{G_2(t,h^\perp)}e^{-\frac{t^2}{2\sigma^2}}\frac{\mathrm{d}t}{\sqrt{2\pi} \sigma}\\
=&~ F(h^\perp, t^+) \frac{1}{L^{\frac{3}{2}}} \frac{ 
e^{H (h^\perp, t) }   e^{c_0(t^+)^2+b(h^\perp)t^+}}{\sqrt{2\pi}\s}(1+O(L^{-\eps}))
\end{align*}


\noi
as $L \to \infty$, where 
\begin{align}
G_1(t^+)&=-\frac 12 \| h^\perp \|_{L^2}^2\\
b:&=b(h^\perp)=3\int_{\T_{L^2}}(Q^{\eta_L}_{x_0})^2\g(x)h^\perp(x)\, \mathrm{d}x \label{bhper}\\
c_0:&=\frac 32\int_{\T_{L^2}}(Q^{\eta_L}_{x_0}\g)^2(x) \, \mathrm{d}x-\frac 1{2\s^2}.
\label{c0}
\end{align}

\end{lemma}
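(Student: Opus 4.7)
The lemma is essentially a Laplace-type concentration statement: the window $S_{h^\perp}$ is so narrow in $t$ that every slowly varying factor can be frozen at a single endpoint, while the remaining exponential is an elementary one-dimensional integral of $e^{\Ld G_1(t)}$ that is computed explicitly. The overall strategy is therefore (i)~control the geometry of $S_{h^\perp}$, and in particular show that it is a window of width $O(L^{-\frac32+0+})$ around a well-defined upper endpoint $t^+$; (ii)~freeze $F$, $H$, and the quadratic-in-$t$ terms coming from $G_2$ and $-t^2/(2\s^2)$ at $t=t^+$, paying a multiplicative error $1+O(L^{-\eps})$; (iii)~evaluate the remaining exponential integral and use the defining identity $G_1(t^+)=-\tfrac12\|h^\perp\|_{L^2}^2$ to cancel $e^{\frac{\Ld}{2}\|h^\perp\|_{L^2}^2}e^{\Ld G_1(t^+)}=1$.

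For (i), differentiate $G_1(t)=\tfrac{\|\g\|_{L^2}^2}{2}t^2+(L^{\frac32}+g)t$ to get $G_1'(t)=\|\g\|_{L^2}^2 t + L^{\frac32}+g$. On $D_L$ we have $|g|\le L^{\frac32-\eps}$, and the root of $G_1(t)=-\tfrac12\|h^\perp\|_{L^2}^2=O(L^2)$ that lies near zero satisfies $|t^+|\les L^{\frac12}$ by a direct two-term analysis; hence $G_1'(t^+)=L^{\frac32}(1+O(L^{-\eps}))$. The lower endpoint $t^-$ satisfies $G_1(t^-)-G_1(t^+)=-L^{0+}$, so the mean value theorem gives $t^+-t^-=L^{0+}/G_1'(t^*) = O(L^{-\frac32+0+})$. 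For (ii), the Lipschitz hypothesis combined with $|t-t^+|\le L^{-\frac32+0+}$ yields $|F(h^\perp,t)-F(h^\perp,t^+)|+|H(h^\perp,t)-H(h^\perp,t^+)|\les L^{-\eps+0+}$; since $F,H$ are bounded, this produces the required $1+O(L^{-\eps})$ factor after exponentiating $H$. Writing $s=t-t^+$ and Taylor-expanding the smooth part of the exponent yields a linear term
\begin{equation*}
\Bigl(3t^+\!\!\int(Q^{\eta_L}_{x_0}\g)^2\,\mathrm dx + b(h^\perp) - \tfrac{t^+}{\s^2}\Bigr)\,s \;+\; O(s^2),
\end{equation*}
and using $|t^+|\les L^{\frac12}$ together with $|b(h^\perp)|\les \|Q^{\eta_L}_{x_0}\g\|_{L^2}\|h^\perp\|_{L^2}=O(L)$ on $D_L$, both the linear and quadratic corrections in $s$ are $O(L^{-1+0+})$. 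Finally, step~(iii) evaluates
\begin{equation*}
\int_{t^-}^{t^+}e^{\Ld G_1'(t^+)(t-t^+)}\,\mathrm dt \;=\;\frac{1-e^{-\Ld L^{0+}}}{\Ld\, G_1'(t^+)}\;=\;\frac{1}{L^{\frac32}}(1+O(L^{-\eps})),
\end{equation*}
and assembling $G_2(t^+,h^\perp)-\tfrac{(t^+)^2}{2\s^2}=c_0 (t^+)^2+b(h^\perp)t^+$ from \eqref{bhper}--\eqref{c0} recovers the displayed formula.

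\textbf{Main obstacle.} The delicate point is that the Lipschitz norms of $F$ and $H$ are allowed to grow as $L^{\frac32-\eps}$, which is essentially the same scale as $G_1'(t^+)\sim L^{\frac32}$; the room between them is only $L^{\eps}$, so every estimate must be carried out tightly enough to convert the window width $L^{-\frac32+0+}$ into an error no worse than $L^{-\eps+0+}$. The other subtle ingredient is the bound $|b(h^\perp)|\les \|h^\perp\|_{L^2}$, which on $D_L$ keeps the Taylor remainder at $O(L^{-1+0+})$; outside $D_L$ this cannot be done, but that event is of negligible Gaussian measure (Proposition~\ref{PROP: gaussian-conc}) and is to be absorbed by the surrounding argument rather than within this lemma.
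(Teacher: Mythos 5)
Your proposal is correct and follows essentially the same route as the paper's proof: solve for $t^\pm$, use the monotonicity and size of $G_1'(t)\approx L^{3/2}$ to show the window width is $t^+-t^-=O(L^{-3/2+0+})$, freeze the slowly varying factors at $t^+$ via the Lipschitz hypothesis, and exactly evaluate the residual one-dimensional exponential integral, using $G_1(t^+)=-\tfrac12\|h^\perp\|_{L^2}^2$ to cancel $e^{\frac{\Ld}{2}\|h^\perp\|_{L^2}^2}$. The paper freezes each factor $e^{-t^2/(2\s^2)}$, $e^{\frac{\|\g\|_{L^2}^2}{2}t^2}$, $e^{G_2}$ separately at $t^+$ rather than Taylor-expanding the whole exponent, but this is cosmetic. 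One small arithmetic slip worth flagging: with $|b(h^\perp)|\les\|h^\perp\|_{L^2}=O(L)$ on $D_L$ and window width $O(L^{-3/2+0+})$, the linear correction you extract in the exponent is $O(L^{-1/2+0+})$, not $O(L^{-1+0+})$; since $\eps$ is taken small this is still absorbed into $1+O(L^{-\eps})$, so the conclusion is unaffected.
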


\begin{proof}[Proof of Lemma \ref{lem: replace}]

On the set $D_L$ in \eqref{eqn: DL-def}, the inequality from $S_{h^\perp}$ in \eqref{Shperp}
\[-\frac{1}{2}\|h^\perp\|_{L^2}^2-L^{0+} \le G_1(t)\le -\frac{1}{2}\|h^\perp\|^2_{L^2}\]
implies
\begin{align}
-DL^2\cdot (1-o_L(1)) \le G_1(t)\le 0,
\label{DL1}
\end{align}

\noi
as $L\to \infty$. Recall the definition of $G_1(t)$  in \eqref{eqn: G-def}
\begin{align}
G_1(t)=\frac{\|\g \|_{L^2}^2}{2} t^2+ (L^{\frac{3}{2}}+g)t.
\label{DL2}
\end{align}

\noi
Hence, combining \eqref{DL1} and \eqref{DL2} shows that $t<0$ with magnitude given by
\begin{equation}
t=O(L^{\frac{1}{2}}).
\label{tL12}
\end{equation}

\noi 
Note that  on the interval
\[\left[-(L^{\frac{3}{2}}+2g)/(2\| \g\|_{L^2}^2),0\right],\]

\noi 
$t\mapsto G_1(t)$ is a negative, strictly increasing function, with derivative
\begin{equation}\label{eqn: derivative-lb}
G'_1(t)=\| \g\|_{L^2}^2t+L^{\frac{3}{2}}+g\ge \frac{1}{2}L^{\frac{3}{2}},
\end{equation}

\noi
since $t=O(L^\frac 12)$ and $|g|\le L^{\frac 32-\eps}$ on the set $D_L$.
Let $t^+$ be such that
\[G_1(t^+)=-\frac{1}{2}\|h^\perp\|_{L^2}^2\]

\noi 
and $t^-$ such that
\begin{align*}
G_1(t^-)=-\frac{1}{2}\|h^\perp\|_{L^2}^2-L^{0+}.
\end{align*}

\noi 
Then, we have
\begin{equation}\label{eqn: interval-map}
\begin{split}
S_{h^\perp}&=\big\{-\frac{1}{2}\|h^\perp\|_{L^2}^2-L^{0+} \le G_1(t)\le -\frac{1}{2}\|h^\perp\|_{L^2}^2\}\\
&=\{t^-\le t\le t^+\big\}.
\end{split}
\end{equation}

\noi 
From \eqref{eqn: derivative-lb} and Taylor expansion, we have
\begin{align}
t^+-t^-=L^{0+}/L^{\frac{3}{2}}.
\label{sizetint}
\end{align}

\noi 
Solving the quadratic equation $G_1(t^{+})$ for $t^+$, we find\footnote{The other solution to the quadratic is not $O(L^{1/2})$.}
\begin{equation}\label{eqn: t-solve}
\begin{split}
t^+=&\frac{-(L^{\frac{3}{2}}+2g)+ \sqrt{(L^{\frac{3}{2}}+2g)^2-\frac{\| \g\|_{L^2}^2}{2}\|h^\perp\|^2_{L^2}}}{\|\g \|^2_{L^2}}\\
=&-\frac{1}{2}\frac{1}{L^{\frac{3}{2}}+2g}\|h^\perp\|^2_{L^2}+\frac{3a^2}{32}\frac{\|h^\perp\|^4_{L^2}}{(L^{\frac{3}{2}}+2g)^3}+O\bigg(\frac{\|h^\perp\|^6_{L^2}}{(L^{\frac{3}{2}}+2g)^5}\bigg).
\end{split}
\end{equation}

\noi 
For notational convenience, we set $t^+=0$ if 
\[\|h^\perp\|_{L^2}^2 > \frac{2}{\|\g \|^2_{L^2}}(L^{\frac{3}{2}}+2g)^2.\]

\noi
On the set $D_L$, from \eqref{eqn: t-solve}, we thus have
\begin{equation}\label{eqn: t-oot}
t^+=-\frac{L^{\frac{1}{2}}}{2}\frac{\|h^\perp\|^2_{L^2}}{L^2} (1+O(L^{-\varepsilon}))+\frac{3}{32}\frac{\|\g \|^2_{L^2} }{L^{\frac{1}{2}}}\frac{\|h^\perp\|^4_{L^2}}{L^4} (1+O(L^{-\varepsilon}))+O(L^{-\frac{3}{2}})
\end{equation}

\noi
and correspondingly, since $\|h^\perp \|_{L^2}^2=O(L^2)$ on $D_L$,
\begin{align}
t^{+}=O(L^\frac 12).
\label{toot}
\end{align}

\noi 
Using \eqref{eqn: interval-map}, we write 
\begin{align}
&\int_{S_{h^\perp}}F(h^\perp,t )\, e^{H(h^\perp, t) }e^{\frac{\Ld}{2}\|h^\perp\|_{L^2}^2}e^{\Ld G_1(t)}e^{G_2(t,h^\perp)}e^{-\frac{t^2}{2\sigma^2}}\frac{\mathrm{d}t}{\sqrt{2\pi} \sigma} \notag \\
&= \int_{t^-}^{t^+} F(h^\perp,t )\, e^{H(h^\perp, t) }e^{\frac{\Ld}{2}\|h^\perp\|_{L^2}^2}e^{\Ld G_1(t)}e^{G_2(t,h^\perp)}e^{-\frac{t^2}{2\sigma^2}}\frac{\mathrm{d}t}{\sqrt{2\pi} \sigma}.
\label{REP1}
\end{align}

\noi 
Thanks to the Lipschitz assumption on $F$ and $H$, for $t\in[t^-,t^+]$ with $t^{+}-t^{-}=O(L^{-\frac 32+ })$ from \eqref{sizetint} we have
\begin{align}
|F(h^\perp,t)-F(h^\perp,t^{+})|&\le \sup_{h^\perp}\|F\|_{\mathcal{L}}L^{-\frac{3}{2}+}\le L^{-\eps} \label{REP2}\\
|G_2(h^\perp, t) - G_2(h^\perp, t^{+}) |&\le \sup_{h^\perp }\| G_2 \|_{\mathcal{L}} L^{-\frac 32+}\le L^{-\eps}
\label{REP22}\\
|H(h^\perp, t) - H(h^\perp, t^{+}) |&\le \sup_{h^\perp}\| H\|_{\mathcal{L}} L^{-\frac 32+}\le L^{-\eps},
\notag 
\end{align}

\noi
which implies
\begin{align}
|e^{H(h^\perp, t) }-e^{H(h^\perp, t^{+}) }| &=e^{H(h^\perp, t^{+}) }|1-e^{H(h^\perp, t)  -H(h^\perp, t^{+})  }| \notag \\
&= e^{H(h^\perp, t^{+}) }(1-O(L^{-\eps})).
\label{REP3}
\end{align}

\noi
Furthermore, we have 
\begin{align}
e^{-\frac{t^2}{2\sigma^2}}&=e^{-\frac{(t^+)^2}{2\sigma^2}}e^{|t^+|O(L^{-\frac{3}{2}+})} \notag \\
&=e^{-\frac{(t^+)^2}{2\sigma^2}}(1+L^{-1+}),
\label{REP4}
\end{align}
and
\begin{align}
e^{\frac{\|\g \|_{L^2}^2}{2}t^2}=e^{\frac{\|\g \|_{L^2}^2}{2}(t^+)^2}(1+O(L^{-1+})).
\label{REP5}
\end{align}

\noi
Hence, by combining \eqref{REP1}, \eqref{REP2}, \eqref{REP22}, \eqref{REP3}, and \eqref{REP4}, we obtain 
\begin{align*}
&\int_{S_{h^\perp}}F(h^\perp,t )\, e^{H(h^\perp, t) }e^{\frac{\Ld}{2}\|h^\perp\|_{L^2}^2}e^{\Ld G_1(t)}e^{G_2(t,h^\perp)}e^{-\frac{t^2}{2\sigma^2}}\frac{\mathrm{d}t}{\sqrt{2\pi} \sigma} \\
&= F(h^\perp ,t^{+})    e^{H(h^\perp, t^{+} ) }e^{G_2(t^{+}, h^\perp ) } e^{-\frac{(t^{+})^2 }{2\s^2}} e^{\frac{|\Lambda|}{2}\|h^\perp\|_{L^2}^2} 
\cdot\int_{t^-}^{t^+}  e^{\Ld G_1(t) }\,\frac{\mathrm{d}t}{\sqrt{2\pi}\sigma}(1+o_L(1))
\end{align*}

By using \eqref{REP5} and recalling the definitions of $G_1$, $G_2$, $c_0$, and $b(h^\perp)$ from  \eqref{eqn: G-def}, \eqref{eqn:G_2-def}, \eqref{c0}, and \eqref{bhper}, we have 
\begin{align}
&=F(h^\perp,t^+)e^{H(h^\perp, t) }e^{c_0 (t^+)^2 +b(h^\perp)t^{+}} e^{\frac{|\Lambda|}{2}\|h^\perp\|_{L^2}^2} e^{\frac{|\Lambda| \| \g\|^2_{L^2} }{2} (t^+)^2  } \int_{t^-}^{t^+}e^{|\Lambda|(L^{\frac{3}{2}}+g)t}\frac{\mathrm{d}t}{\sqrt{2\pi}\sigma}(1+o_L(1)) \notag \\
&=\frac{1}{\sqrt{2\pi}\sigma}  \frac{1}{|\Lambda|(L^{\frac{3}{2}}+g)}
F(h^\perp,t^+)e^{H(h^\perp, t) }e^{c_0 (t^+)^2 +b(h^\perp)t^{+}}(1+o_L(1)),
\end{align}

\noi
where in the last line we used $G(t^{+})=-\frac 12 \|h^\perp \|_{L^2}^2$ and  $t^{+}-t^{-}=O(L^{-\frac 32+ })$ from \eqref{sizetint}.
This completes the proof of Lemma \ref{lem: replace}.

\end{proof}

To show that the term $\bar{\mathcal{E} }(h^\perp, t)$ in \eqref{errht} satisfies the Lipschitz assumption in Lemma \ref{lem: replace}, we define the set
\begin{equation}\label{eqn: BK-def}
B_K:=\{\|h^\perp\|_{L^\infty(\T_{L^2})}\le K\sqrt{\log L^2}\}.
\end{equation} 

\noi
Here, for sufficiently large $K \ge 1$, $B_K$  is a high-probability event as $L\to \infty$ under the Ornstein–Uhlenbeck-type measure $\nu^\perp_{Q,x_0}$. See Proposition \ref{prop: BK-prob}. In particular, $K=M\sqrt{ \frac{L}{\log L^2} } $ is chosen from Proposition \eqref{PROP:error1} and so $\| h^\perp\|_{L^\infty} \le M L^\frac 12 $ on the set $B_K$. See Remark \ref{REM:choiK} for an explanation of the choice of $K$.

\begin{lemma}\label{LEM:EpsLip}
On the set $D_L \cap B_K$, $\bar{\mathcal{E} }(h^\perp, t)$ is 
Lipschitz in $t$, with the Lipschitz norm   given in \eqref{eqn: F-lip}
\begin{equation*}
\sup_{h^\perp}\|\bar{\mathcal{E} }(h^\perp,\cdot)\|_{\mathcal{L}}=\sup_{h^\perp}\sup_{t,t'\in \mathbb{R}}\frac{|\bar{\mathcal{E} }(h^\perp,t)-\bar{\mathcal{E} }(h^\perp, t')|}{|t-t'|}.
\end{equation*}

\noi 
In particular, the Lipschitz norm satisfies $\sup\limits_{h^\perp}\| \bar{\mathcal{E}}\|_{\mathcal{L}}=O(L^{\frac 32+}) $. 
\end{lemma}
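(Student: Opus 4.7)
The plan is to differentiate $\bar{\mathcal{E}}(h^\perp, t)$ directly in $t$ and to bound the resulting integrals by the pointwise size of $h = h^\perp + t\gamma$ on the effective region $D_L \cap B_K$. From \eqref{Eerror} and \eqref{errht},
\[
\bar{\mathcal{E}}(h^\perp, t) = \frac{1}{L^{3/2}}\int_{\T_{L^2}} Q^{\eta_L}_{x_0}\,|h|^2\,\Re h\,dx + \frac{1}{L^3}\int_{\T_{L^2}}|h|^4\,dx.
\]
Since $\gamma$ is real-valued (see \eqref{ga0}), we have $\partial_t \Re h = \gamma$ and $\partial_t |h|^2 = 2\gamma\,\Re h$, whence
\[
\partial_t \bar{\mathcal{E}} = \frac{1}{L^{3/2}}\int_{\T_{L^2}} Q^{\eta_L}_{x_0}\,\gamma\,\bigl(2(\Re h)^2 + |h|^2\bigr)\,dx + \frac{4}{L^3}\int_{\T_{L^2}} \gamma\,|h|^2\,\Re h\,dx.
\]

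Next I would collect the relevant a priori $L^\infty$ bounds. On $B_K$ with $K = M\sqrt{L/\log L^2}$, definition \eqref{eqn: BK-def} gives $\|h^\perp\|_{L^\infty(\T_{L^2})} \les L^{1/2}$. On $D_L$, the range of $t$ actually occurring in the application (namely $t \in S_{h^\perp} = [t^-, t^+]$) satisfies $|t| \les L^{1/2}$ by \eqref{tL12} and \eqref{toot}, obtained inside the proof of Lemma \ref{lem: replace} from the solution of $G_1(t^+) = -\tfrac{1}{2}\|h^\perp\|_{L^2}^2$. Combined with $\|\gamma\|_{L^\infty} = O(1)$ from Lemma \ref{LEM:decomh}, this yields $\|h\|_{L^\infty(\T_{L^2})} \les L^{1/2}$ uniformly. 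Using the exponential localization of $Q^{\eta_L}_{x_0}$ (a compactly cut-off Schwartz function) and of $\gamma = a\,C_1 Q^{\eta_L}_{x_0}/\|C_1^{1/2}Q^{\eta_L}_{x_0}\|_{L^2}^2$ (since the Green's function of $B_1 = -\dx^2 + \Ld$ decays exponentially), I obtain the $L^1$-bounds $\|Q^{\eta_L}_{x_0}\gamma\|_{L^1(\T_{L^2})} = O(1)$ and $\|\gamma\|_{L^1(\T_{L^2})} = O(1)$, uniformly in $L$. Inserting these into the two terms of $\partial_t \bar{\mathcal{E}}$ gives
\[
|\partial_t \bar{\mathcal{E}}| \les \frac{1}{L^{3/2}}\|Q^{\eta_L}_{x_0}\gamma\|_{L^1}\|h\|_{L^\infty}^2 + \frac{1}{L^3}\|\gamma\|_{L^1}\|h\|_{L^\infty}^3 \les L^{-1/2} + L^{-3/2} \les L^{-1/2},
\]
so the mean value theorem produces $|\bar{\mathcal{E}}(h^\perp, t) - \bar{\mathcal{E}}(h^\perp, t')| \les L^{-1/2}\,|t - t'|$, which is much stronger than the claimed $O(L^{3/2+})$.

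The only mildly delicate point is that the Lipschitz seminorm \eqref{eqn: F-lip} is written as a supremum over all $t, t' \in \R$, whereas $\bar{\mathcal{E}}$ is cubic and quartic in $t$ and therefore not globally Lipschitz. The bound must be read as the local Lipschitz constant on the range of $t$ that is actually used in Lemma \ref{lem: replace}, namely $t \in S_{h^\perp} \subset \{|t| \les L^{1/2}\}$; exactly this restriction is what converts the two cubic/quartic integrals into $L^\infty$-controlled quantities of order $L$ and $L^{3/2}$. Once this interpretation is made explicit, the argument reduces to the one-line computation above and no analytical obstacle remains.
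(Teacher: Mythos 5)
Your proposal is correct and follows essentially the same route as the paper: the paper also expands $h=h^\perp+t\gamma$ and bounds the $t$-derivative of each cubic/quartic contribution using $|t|=O(L^{1/2})$ on $D_L$ (via \eqref{tL12}), $\|h^\perp\|_{L^\infty}\les L^{1/2}$ on $B_K$, and $\|\gamma\|_{L^p}=O(1)$, arriving at the same $O(L^{-1/2})$ derivative bound that you obtain by differentiating the combined expression directly. Your remark that the seminorm must be read as a local Lipschitz constant on the effective range $S_{h^\perp}\subset\{|t|\les L^{1/2}\}$ is the same (implicit) interpretation the paper uses, so no gap remains.
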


\begin{proof}

Recall that 
$\bar{\mathcal{E} }(h^\perp, t)=L^3 \mathcal{E}(Q^{\eta_L}_{x_0}, L^{-\frac 32}(h^\perp+t\g) )$ from \eqref{errht} and \eqref{Eerror}. Hence, it suffices to consider the cubic and quartic integrals in \eqref{Eerror}
in terms of the coordinate $(h^\perp, t)$.

Based on the decomposition $h(x)=h^\perp(x)+\g(x)t$ in  Lemma \ref{LEM:decomh}, where 
$t$ denotes the Gaussian random variable $\jb{Q^{\eta_L}_{x_0}, \Re h }$, we expand 
\begin{align}
\frac{1}{L^{\frac{3}{2}}}\int_{\T_{L^2}} Q^{\eta_L}_{x_0} |h|^2 \Re h = \sum_{u_i\in \{h^\perp, t \gamma\}} \frac{1}{L^{\frac{3}{2}}}\int_{\T_{L^2}} Q^{\eta_L}_{x_0} u_1 \bar{u}_2 \Re u_3\,\mathrm{d}x.
\label{EXPA}
\end{align}

\noi 
To check the Lipschitz assumption, it suffices to have a derivative bound.
The cubic term in $t$ in \eqref{EXPA} is
\begin{align*}
\frac{t^3}{L^{\frac{3}{2}}}\int_{\T_{L^2}} Q^{\eta_L}_{x_0}\gamma^3\,\mathrm{d}x= O\Big(\frac{t^3}{L^\frac 32}\Big).
\end{align*}

\noi 
The derivative in $t$ is $ O\Big(\frac{t^2}{L^\frac 32}\Big)=O(L^{-\frac 12 })$ since $t=O(L^\frac 12)$ on the set $D_L$ from \eqref{tL12}.  The quadratic term in $t$ in \eqref{EXPA}  is 
\begin{align*}
\frac{t^2}{L^\frac{3}{2}}\int_{\T_{L^2}} Q^{\eta_L}_{x_0}\gamma^2 |h^\perp|\,\mathrm{d}x= O\Big(\frac{t^2}{L}\Big).
\end{align*}

\noi
since  $\|  h^\perp\|_{L^\infty} \le M^2 \sqrt{L}$.
The derivative in $t$ is $ O\Big(\frac{t}{L}\Big)=O(L^{-\frac 12 })$
since $t=O(L^\frac 12)$ on the set $D_L$ from \eqref{tL12}.  The  linear term in $t$ in \eqref{EXPA} is 
\begin{align*}
\frac{t}{L^\frac 32} \int_{\T_{L^2}} Q^{\eta_L}_{x_0} \g |h^\perp|^2 \, \mathrm{d}x= O\Big(\frac{t}{L^\frac 12}\Big)
\end{align*}

\noi 
since  $\|  h^\perp\|_{L^\infty} \le M^2 \sqrt{L}$ on the set $B_K$.
The derivative in $t$ is $O(L^{-\frac 12})$. Therefore, the cubic integral in \eqref{EXPA} satisfies the Lipschitz assumption.

For the quartic integral, we expand 
\begin{align}
\frac{1}{L^3}\int_{\T_{L^2} } |h |^4 = \sum_{u_i\in \{h^\perp, t \gamma\}} \frac{1}{L^3}\int u_1 \bar{u}_2 u_3 \bar{u}_4\,\mathrm{d}x.
\label{EXPA1}
\end{align}

The quartic term in $t$ in \eqref{EXPA1} is
\begin{align*}
\frac{t^4}{L^4 }\int_{\T_{L^2}} \gamma^4\,\mathrm{d}x= O\Big(\frac{t^4}{L^3}\Big).
\end{align*}

\noi 
The derivative in $t$ is $ O\Big(\frac{t^3}{L^3}\Big)=O(L^{-\frac 32 })$ since $t=O(L^\frac 12)$ on the set $D_L$ from \eqref{tL12}. The cubic term in $t$ in \eqref{EXPA1} is
\begin{align*}
\frac{t^3}{L^3 }\int_{\T_{L^2}} h^\perp \gamma^3\,\mathrm{d}x= O\Big(\frac{t^3}{L^{\frac 52}}\Big).
\end{align*}

\noi 
since  $\|  h^\perp\|_{L^\infty} \le M^2 \sqrt{L}$ on the set $B_K$.
The derivative in $t$ is $ O\Big(\frac{t^2}{L^\frac 52}\Big)=O(L^{-\frac 32 })$ since $t=O(L^\frac 12)$ on the set $D_L$ from \eqref{tL12}. 
The quadratic term in $t$ in \eqref{EXPA1} is 
\begin{align*}
\frac{t^2}{L^3 }\int_{\T_{L^2}} |h^\perp|^2 \gamma^2\,\mathrm{d}x= O\Big(\frac{t^2}{L^{2}}\Big).
\end{align*}

\noi
since  $\|  h^\perp\|_{L^\infty} \le M^2 \sqrt{L}$ on the set $B_K$.
The derivative in $t$ is $ O\Big(\frac{t}{L^2}\Big)=O(L^{-\frac 32 })$ since $t=O(L^\frac 12)$ on the set $D_L$ from \eqref{tL12}. 
The linear term in $t$ in \eqref{EXPA1} is 
\begin{align*}
\frac{t}{L^3 }\int_{\T_{L^2}} |h^\perp|^3 \gamma\,\mathrm{d}x= O\Big(\frac{t}{L^{\frac 32}}\Big).
\end{align*}

\noi
The derivative in $t$ is $ O(L^{-\frac 32 })$.  Therefore, the quartic integral in \eqref{EXPA} satisfies the Lipschitz assumption.

\end{proof}

\section{Representation using the conditional density function}\label{SEC:REP}
In this section, we use the conditional density function in Lemma \ref{lem: replace} to study the conditional expectation  in \eqref{condssig}
\begin{align*}
\E_{\s_{x_0,\dr}^\perp }\bigg[F(h) \Big|  \jb{Q^{\eta_L}_{x_0}, \Re h }\approx -\frac 1{2L^\frac 32} \| h\|_{L^2}^2 \bigg]=\frac{\mathcal{I}(F) }{\mathcal{I}(1) }.
\end{align*}

\noi
Before we provide the precise expression using the density function (see Proposition \ref{PROP:main}), we decompose the main term and the error terms as follows
\begin{align}
\frac{\mathcal{I}(F)}{\mathcal{I}(1)}=\frac{\mathcal{I}(F,D_L\cap B_K)}{\mathcal{I}(1)} +\frac{\mathcal{I}(F,D_L\cap B_K^c)}{\mathcal{I}(1)} +\frac{\mathcal{I}(F,D_L^c)}{\mathcal{I}(1)}.
\label{sepmaer}
\end{align}

\noi
where $D_L$, $B_K$ are defined in \eqref{eqn: DL-def} and \eqref{eqn: BK-def}. Here,  $\mathcal{I}(F,D_L\cap B_K^c)$, $\mathcal{I}(F,D_L^c)$ are error terms as $L\to \infty$, while $\mathcal{I}(F,D_L\cap B_K)$ is the main contribution, yielding white noise in Theorem \ref{THM:1}.

We begin with the error estimates in the next subsection.

\subsection{Tail estimates for probability measures}
In this subsection, we provide tail estimates for the Gaussian measure
$ \nu_{Q,x_0}^\perp$ associated with Schr\"odinger operators in Lemma \ref{LEM:SCHOP}.

\begin{proposition}\label{prop: BK-prob}
There are constants $C$, $c>0$ such that, for $K$ large enough, we have 
\begin{align}
\nu^\perp_{Q, x_0}(B_K^c) \le C e^{-cK^2 \log L^2},
\label{Tail}
\end{align}

\noi
uniformly in $x_0 \in \T_{L^2}$ and $\dr \in [0,2\pi]$, where $B_K=\{\|h^\perp \|_{L^\infty(\T_{L^2})  } \le K \sqrt{\log L^2}  \}$ was introduced in \eqref{eqn: BK-def}.
\end{proposition}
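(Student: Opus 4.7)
The plan is to realize $h^\perp$ as a continuous Gaussian process on $\T_{L^2}$ whose pointwise variance and modulus of continuity are bounded uniformly in $x_0$ and $L$, and then invoke standard supremum bounds for Gaussian processes (Dudley's entropy estimate together with the Borell--TIS inequality) to control the $L^\infty$ norm.

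First, I would show that the diagonal of the covariance kernel of $\nu^\perp_{Q,x_0}$ is bounded uniformly: there exists $C$ such that
\[
\E_{\nu^\perp_{Q,x_0}}\!\big[|h^\perp(x)|^2\big] \le C \quad \text{for all } x\in \T_{L^2},\ x_0\in \T_{L^2},\ L\ge 1.
\]
By Lemma \ref{LEM:SCHOP}, the relevant covariances are $C^Q_{1,x_0}$ and $C^Q_{2,x_0}$ in \eqref{CovC}. By Lemma \ref{LEM:positive} we have $B^Q_i \ge \zeta > 0$ on $\cj V^{L^2,\mathrm{Re}}_{x_0,0}$, $\cj V^{L^2,\mathrm{Im}}_{x_0,0}$, and the operator inequality $B^Q_i \ge -\partial_x^2$ on these subspaces gives a kernel bound by the Green's function of $-\partial_x^2 + \Lambda/2$ on $\T_{L^2}$, whose diagonal is of order $1/\sqrt{\Lambda}$ in one dimension, uniformly in $L$. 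The rank-two projections in \eqref{eqn: project} only subtract a Schwartz contribution localized near $x_0$, preserving the uniform bound. Translation invariance $x\mapsto x-x_0$ (using that $Q$ is a fixed profile) removes the dependence on $x_0$.

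Second, I would check that the Gaussian process $h^\perp$ has a uniform local H\"older regularity: for $|x-y|\le 1$,
\[
\E_{\nu^\perp_{Q,x_0}}\!\big[|h^\perp(x)-h^\perp(y)|^2\big] \le C|x-y|,
\]
which is the standard one-dimensional regularity of a Gaussian field whose covariance is the Green's function of a second-order elliptic operator with a strictly positive mass gap. This estimate is again uniform in $x_0$ and $L$ by the same comparison with the free operator $-\partial_x^2 + \Lambda$.

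Third, I would apply Dudley's entropy bound, which in view of the previous two facts gives
\[
\E_{\nu^\perp_{Q,x_0}}\!\big[\|h^\perp\|_{L^\infty(\T_{L^2})}\big] \le C\sqrt{\log L^2},
\]
the factor $\sqrt{\log L^2}$ arising from the metric entropy of a $1$-Lipschitz cover of the torus of circumference $2L^2$. Then, the Borell--TIS concentration inequality, using the uniform variance bound $\sigma^2 \le C$ from the first step, yields
\[
\nu^\perp_{Q,x_0}\Big(\|h^\perp\|_{L^\infty(\T_{L^2})} \ge \E[\|h^\perp\|_{L^\infty}] + t\Big) \le \exp\!\Big(-\frac{t^2}{2C}\Big).
\]
Choosing $t = (K-C')\sqrt{\log L^2}$ with $K$ sufficiently large, so that $K\sqrt{\log L^2} \ge \E[\|h^\perp\|_{L^\infty}] + t$, produces the required estimate $\nu^\perp_{Q,x_0}(B_K^c) \le Ce^{-cK^2\log L^2}$ with a constant $c>0$ independent of $x_0$, $\dr$, and $L$.

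The main technical obstacle is the first step: verifying that the projections in \eqref{eqn: project} and the attractive potentials $-3(Q^{\eta_L}_{x_0})^2$, $-(Q^{\eta_L}_{x_0})^2$ do not spoil the uniform-in-$L$ diagonal bound for the covariance kernel. The quantitative positivity $\jb{B^Q_i w, w} \ge \zeta \|w\|_{L^2}^2$ from Lemma \ref{LEM:positive}, combined with the fact that the attractive potential is a bounded and localized Schwartz perturbation, makes this estimate tractable via a direct spectral comparison with the free Green's function.
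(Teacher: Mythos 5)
Your proposal is correct in spirit and ultimately works, but it takes a genuinely different route from the paper's. The paper does not invoke Dudley's entropy integral or Borell--TIS; instead it runs an elementary, hands-on chaining argument: using the H\"older-$\frac{1}{2}$ increment bound from Proposition \ref{PROP:regular} and Gaussian hypercontractivity, it bounds $\E[\max_{x\in[0,1]}|h^\perp(x)|^p]\lesssim C^p p^{p/2}$, optimizes over $p$ to get a sub-Gaussian tail for the sup over a unit interval, and concludes by a union bound over the $\sim L^2$ unit blocks of the torus. The two routes rely on the same two analytic inputs (the uniform diagonal covariance bound in Lemma \ref{lem: covariance bound} and the Lipschitz regularity of the Green's function in Proposition \ref{PROP:regular}); the paper's version is more self-contained, your version is shorter once you are willing to quote the Gaussian-process toolkit. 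In both approaches the $\sqrt{\log L^2}$ comes from the size of the torus, and the Gaussian tail comes from the uniformly bounded pointwise variance.

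One imprecision to flag in your first step: you write ``the operator inequality $B^Q_i \ge -\partial_x^2$'' as the source of the diagonal kernel bound. This is false as stated, since $B^Q_1 = -\partial_x^2 - 3(Q^{\eta_L}_{x_0})^2 + \Lambda$ has an \emph{attractive} (negative) potential, so $B^Q_1 \not\ge -\partial_x^2$ pointwise as a form on all of $L^2$. What actually holds, and what you want, is the coercivity of Lemma \ref{LEM:positive}, $\jb{B^Q_i w, w}\ge \zeta\|w\|^2_{L^2}$ on the constrained subspaces, combined with the observation that the potential is bounded to upgrade this to $H^1$-coercivity, $\jb{B^Q_i w, w}\ge c\|w\|^2_{H^1}$. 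Even then, passing from an operator inequality $C^Q_i \le c^{-1}\mathbf{P}(1-\partial_x^2)^{-1}\mathbf{P}$ to a pointwise bound on the diagonal of the kernel requires a Sobolev/elliptic-regularity argument (one cannot simply ``test against $\delta_x$''); this is exactly what the paper does in Lemma \ref{lem: covariance bound}, via the resolvent identity and elliptic regularity. You correctly identify this as the main technical obstacle, so the gap is acknowledged, but the comparison operator you name is the wrong one and the inference from operator inequality to diagonal bound needs a little more than you state. With those corrections, the Dudley--Borell--TIS route closes correctly.
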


\begin{proof}
Since $\nu^\perp_{Q,x_0}$ is the Gaussian measure in Lemma \ref{LEM:SCHOP},  we have 
\begin{align}
\E_{ \nu^\perp_{Q,x_0}} \big[ | h^\perp(x)- h^\perp(y)|^p \big] 
\le p^{\frac p2} \Big( \E_{\nu^\perp_{Q,x_0}} \big[ | h^\perp(x)- h^\perp(y)|^2 \big] \Big)^{\frac p2}.
\label{S0}
\end{align}

\noi
We expand
\begin{align}
\E_{\nu^\perp_{Q,x_0}} \big[ |h^\perp(x)-h^\perp(y)|^2 \big]&=\big(\E_{\nu^{\perp}_{Q, x_0}} \big[ h^\perp(x)  \cj{h^\perp}(x) \big]-\E_{\nu^{\perp}_{Q, x_0}} \big[ h^\perp(x)  \cj{h^\perp}(y) \big] \big) \notag \\
&\hphantom{X}+\big(\E_{\nu^{\perp}_{Q, x_0}} \big[ h^\perp(y)  \cj{h^\perp}(y) \big]-\E_{\nu^{\perp}_{Q, x_0}} \big[ h^\perp(y)  \cj{h^\perp}(x) \big] \big) \notag \\
&=\J_1+\J_2.
\label{CJ00}
\end{align}

\noi
Using the independence property $\nu^\perp_{Q,x_0}= \nu_{C^Q_{1,x_0} }^{\perp} \otimes  \nu_{C^Q_{2,x_0} }^{\perp}$ for the real and imaginary parts,  we have 
\begin{align}
\J_1&=\big(\E_{ \nu^{\perp}_{C^Q_{1,x_0} }} \big[ \Re  h^\perp(x)  \Re  h^\perp(x) \big]-\E_{ \nu^{\perp}_{C^Q_{1,x_0} } } \big[ \Re  h^\perp(x) \Re  h^\perp(y) \big] \big) \notag \\
&\hphantom{X}+\big(\E_{ \nu^{\perp}_{C^Q_{2,x_0} }  } \big[ \Im  h^\perp(x)  \Im  h^\perp(x) \big]-\E_{ \nu^{\perp}_{C^Q_{2,x_0} }} \big[ \Im  h^\perp(x) \Im  h^\perp(y) \big] \big) \notag \\
&= (G_{1,x_0}(x,x)-G_{1,x_0}(x,y) \big)+\big (G_{2,x_0}(x,x)-G_{2,x_0}(x,y) \big)
\label{CJ01}
\end{align}

\noi
and
\begin{align}
\J_2&=\big(\E_{ \nu^{\perp}_{C^Q_{1,x_0} }  } \big[ \Re  h^\perp(y) \Re  h^\perp(y) \big]-\E_{ \nu^{\perp}_{C^Q_{1,x_0} }} \big[ \Re  h^\perp(y) \Re  h^\perp(x) \big] \big) \notag \\
&\hphantom{X}+\big(\E_{ \nu^{\perp}_{C^Q_{2,x_0} }} \big[ \Im  h^\perp(y) \Im  h^\perp(y) \big]-\E_{ \nu^{\perp}_{C^Q_{2,x_0} }} \big[ \Im  h^\perp(y) \Im  h^\perp(x) \big] \big) \notag \\
&=(G_{1,x_0}(y,y)-G_{1,x_0}(y,x) \big)+(G_{2,x_0}(y,y)-G_{2,x_0}(y,x) \big),
\label{CJ02}
\end{align}

\noi
where $G_{i,x_0}(x,y)$ is the Green's function associated with the measure $ \nu_{C_i^Q,x_0}^{\perp}$,  defined in Lemma \ref{lem: covariance bound}. Combining \eqref{CJ00}, \eqref{CJ01}, \eqref{CJ02}, and Proposition \ref{PROP:regular} yields 
\begin{align}
\E_{ \nu^{\perp}_{Q, x_0}} \big[ | h^\perp(x)- h^\perp(y)|^2 \big] \le C|x-y|,
\label{S2}
\end{align}

\noi
uniformly in $x_0\in \T_{L^2}$ and $\dr \in [0,2\pi]$. From \eqref{S0} and \eqref{S2}, there exists a constant $C>0$ such that for any $p \ge 2$, we have 
\begin{align}
\E_{ \nu^{\perp}_{Q, x_0}} \big[ | h^\perp(x)- h^\perp(y)|^p \big] \le C p^{\frac p2}|x-y|^{\frac p2},
\label{S3}
\end{align}

\noi
uniformly in $x_0 \in \T_{L^2}$ and $\dr \in [0,2\pi]$.

Given $x\in [0,1]$ and $n\in \N$, we set $x_n:=2^{-n } \lfloor 2^n x \rfloor=\max \big\{\frac{k}{2^n}: \frac{k}{2^n} \le x  \big\} $, where $\big\{\frac{k}{2^n}: 0\le k \le 2^n\}$ represents equally $2^{-n}$-spaced points on $[0,1]$. Furthermore, we have $|x_n-x_{n-1}|\le 2^{-n}$. Then, for any $x\in [0,1]$, we have
\begin{align}
| h^\perp(x)|\le | h^\perp(0)|+\sum_{n=1}^\infty | h^\perp(x_n)- h^\perp(x_{n-1})|,
\label{S4}
\end{align}

\noi
where $x_0=0$. Note that
\begin{align}
\max_{x\in [0,1]}| h^\perp(x_n)- h^\perp(x_{n-1})| &\le \max_{1 \le \ell \le 2^n}\big| h^\perp(\ell 2^{-n})- h^\perp( (\ell-1)2^{-n} )\big|\notag \\
&\le  \bigg(\sum_{\ell=1}^{2^n}  \big| h^\perp(\ell 2^{-n})- h^\perp( (\ell-1)2^{-n} )\big|^p \bigg)^{\frac 1p}
\label{S5}
\end{align}

\noi 
Using \eqref{S4} and \eqref{S5}, along with the inequality $(a+b)^p \le 2^{p-1}(a^p+b^p)$ for $a,b>0$, we have 
\begin{align}
&\E_{ \nu^{\perp}_{Q, x_0}} \Big[ \max_{x\in [0,1]} | h^\perp(x)|^p \Big]^{\frac 1p} \notag \\
&\le 2^{1-\frac 1p} \E_{ \nu^{\perp}_{Q, x_0}} \big[ | h^\perp(0)|^p \big]^{\frac 1p}+2^{1-\frac 1p} \Bigg( \E_{ \nu^{\perp}_{Q, x_0}}\Bigg[ \bigg|\sum_{n=1}^\infty \bigg(\sum_{\ell=1}^{2^n}  \big| h^\perp(\ell 2^{-n})- h^\perp( (\ell-1)2^{-n} )\big|^p \bigg)^{\frac 1p}  \bigg|^p \Bigg] \Bigg)^{\frac 1p}\notag \\
&=\I_1+\I_2.
\label{S6}
\end{align}

\noi
From the Green function estimate in Lemma \ref{lem: covariance bound}, we have
\begin{align}
\I_1 \le  2^{1-\frac 1p}p^{\frac 12}\E_{ \nu^\perp_{Q, x_0}}\big[| h^\perp(0)|^2 \big]^\frac{1}{2}    \le  cp^{\frac 12},
\label{S7}
\end{align}

\noi
uniformly in $x_0 \in \T_{L^2}$ and $\dr \in [0,2\pi]$.
Regarding $\I_2$, from the Minkowski's inequality and \eqref{S3}, we obtain that for any $p > 2$
\begin{align}
\I_2 &\le \sum_{n=1}^\infty  \bigg( \sum_{\ell=1}^{2^n}  \E_{ \nu^{\perp}_{Q, x_0}}\Big[ \big| h^\perp(\ell 2^{-n})- h^\perp( (\ell-1)2^{-n} )\big|^p \Big]  \bigg)^{\frac 1p} \notag \\
&\le \sum_{n=1}^\infty \Big( 2^{n} C p^{\frac p2} 2^{-\frac{np}{2}} \Big)^{\frac 1p}\notag \\
&\le (Cp^{\frac p2})^{\frac 1p}\sum_{n=1}^\infty 2^{-n(\frac 12-\frac 1p)} \le (Cp^{\frac p2})^{\frac 1p},
\label{S8}
\end{align}

\noi
uniformly in $x_0 \in \T_{L^2}$ and $\dr \in [0,2\pi]$. Therefore, it follows form \eqref{S6}, \eqref{S7}, and \eqref{S8} that
\begin{align}
 \nu^{\perp}_{Q, x_0}\bigg(\Big\{ \max_{x\in [0,1]} | h^\perp(x)| \ge \rho  \Big\} \bigg) \le \rho^{-p} \E_{ \nu^{\perp}_{Q, x_0}} \Big[  \max_{x\in [0,1]} | h^\perp(x)|^p \Big] \le C \rho^{-p} p^{\frac p2},
\label{S9}
\end{align}

\noi
uniformly in $x_0 \in \T_{L^2}$ and $\dr \in [0,2\pi]$. Since the right hand side of \eqref{S9} holds for any $p > 2$, we can optimize with respect to $p$ to obtain
\begin{align}
 \nu^{\perp}_{Q, x_0}\bigg(\Big\{ \max_{x\in [0,1]} | h^\perp(x)| \ge \rho  \Big\} \bigg)  \le Ce^{-c \rho^2},
\label{S10}
\end{align}

\noi
uniformly in $x_0 \in \T_{L^2}$ and $\dr \in [0,2\pi]$. Using the union bound and \eqref{S10}, we obtain
\begin{align}
\nu^{\perp}_{Q, x_0}\bigg(\Big\{ \max_{x\in [-L^2,L^2]} | h^\perp(x)| \ge K \sqrt{\log L^2}  \Big\} \bigg) &\le 2C L^2 e^{-cK^2 \log L^2} \notag \\
&\le 2C e^{-\frac{c}{2} K^2 \log L^2 }
\end{align}

\noi
for sufficiently large $K \ge 1$. Hence, we obtain the desired result.

\end{proof}

\begin{remark}\rm
The arguments in \eqref{S2} and \eqref{S6} also show that there exists a constant $C>0$ such that 
\begin{align}
\E_{\nu^\perp_{Q, x_0}} \Big[ \| h^\perp \|^p_{L^\infty([-L^2,L^2]) } \Big]\le C p^{\frac p2} (\log L^2)^\frac{p}{2},
\label{LinfL}
\end{align}

\noi
uniformly in $x_0\in \T_{L^2}$ and $\dr \in [0,2\pi]$. To obtain the bound \eqref{LinfL}, set
\[a_n:=\sup_{x\in [n-1,n]}| h^\perp(x)|.\]

\noi 
Then, 
\[M_{L}:=\sup_{x\in [-L^2,L^2]}| h^\perp(x)|= \max_{n\in \{-L^2+1,\dots,L^2-1, L^2\} } a_n. \]

\noi
It follows from Jensen's inequality and \eqref{S9} that for $q>0$:
\begin{align*}
\mathbb{E}_{\nu^\perp_{Q, x_0}}\big[M_L^p\big]&\le \mathbb{E}_{\nu^\perp_{Q, x_0}}\Big[\big(\sum_n a_n^{pq}\big)^\frac{1}{q}\Big] \\ 
&\le \Big(\mathbb{E}_{\nu^\perp_{Q, x_0}}\Big[\sum_n a_n^{pq}\Big]\Big)^{\frac{1}{q}}\\
&\le C^p(pq)^{\frac{p}{2}}L^\frac{2}{q}.
\end{align*}

\noi 
Letting $q=\log L$ and using
\[L^{1/(\log L)}=e,\]

\noi
we have
\[\E_{ \nu^\perp_{Q, x_0}} \Big[ \|  h^\perp \|^p_{L^\infty([-L^2,L^2]) } \Big]\le C p^{\frac p2} (\log L^2)^\frac{p}{2}.\]

\noi
Hence, we obtain \eqref{LinfL}.

\end{remark}

\begin{proposition}\label{PROP: gaussian-conc}
There exists a constant $c>0$ such that for sufficiently small $\dl >0$
\begin{align}
\E_{\nu_{Q, x_0}^\perp }\Big[ e^{\dl \int_{-L^2}^{L^2} |h^\perp|^2 \,\mathrm{d}x } \Big] \les e^{c L^2},
\label{DLC}
\end{align}

\noi
uniformly in $x_0 \in \T_{L^2}$. In particular, we have the probability estimate
\begin{align*}
\nu^{\perp}_{Q, x_0}(D_L^c) \les  e^{- c D L^2},
\end{align*}

\noi
uniformly in $x_0 \in \T_{L^2}$, where $D_L=\big\{ \| h^\perp \|_{L^2(\T_{L^2})}^2 \le DL^2 \big\}$ and $D$ is chosen to be sufficiently large. 

\end{proposition}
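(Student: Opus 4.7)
My plan is to reduce the first inequality to a classical Gaussian exponential moment computation, then estimate the trace of the covariance operator on the normal space. By Lemma \ref{LEM:SCHOP}, $\nu^\perp_{Q,x_0}$ is the product of centered Gaussian measures on $\cj V^{L^2,\text{Re}}_{x_0,0}$ and $\cj V^{L^2,\text{Im}}_{x_0,0}$ with covariances $C_{1,x_0}^Q$ and $C_{2,x_0}^Q$, and $\|h^\perp\|_{L^2}^2 = \|\Re h^\perp\|_{L^2}^2 + \|\Im h^\perp\|_{L^2}^2$ splits accordingly, so the exponential moment factors. For a centered Gaussian $X$ with trace-class covariance $C$ having eigenvalues $\{\lambda_n\}$, I will use the diagonalization identity
\begin{align*}
\log \E[e^{\dl \|X\|^2}] = -\tfrac{1}{2}\sum_n \log(1 - 2\dl \lambda_n),
\end{align*}
valid when $\dl < 1/(2\|C\|)$. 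Applying the elementary bound $-\log(1-x) \le x + 2x^2$ for $x \in [0,1/2]$, the right-hand side is bounded by $\dl\,\mathrm{tr}(C) + 8\dl^2\,\mathrm{tr}(C^2)$.

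It then suffices to estimate $\|C_{i,x_0}^Q\|$ and $\mathrm{tr}(C_{i,x_0}^Q)$ uniformly in $x_0$. The operator norm bound $\|C_{i,x_0}^Q\| \le 1/\zeta$ follows immediately from Lemma \ref{LEM:positive}, which gives $\jb{B_i^Q w, w} \ge \zeta \|w\|_{L^2}^2$ on the appropriate normal subspace. For the trace, since the projection $\mathbf{P}$ in \eqref{CovC} is orthogonal, $\mathrm{tr}(C_{i,x_0}^Q) \le \mathrm{tr}((B_i^Q)^{-1})$. Writing $B_i^Q = A - V_i$ with $A = -\dx^2 + \Ld$ and $V_i = k(Q^{\eta_L}_{x_0})^2$ bounded and uniformly compactly supported, the resolvent identity $(B_i^Q)^{-1} = A^{-1} + A^{-1} V_i (B_i^Q)^{-1}$ combined with $\|(B_i^Q)^{-1}\|_{\text{op}} \le 1/\zeta$ and the uniformly bounded Hilbert--Schmidt norm of the kernel $A^{-1}(x,y)V_i(y)$ will yield $|\mathrm{tr}((B_i^Q)^{-1}) - \mathrm{tr}(A^{-1})| = O(1)$, uniformly in $x_0$ by translation invariance. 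A direct eigenvalue count on $\T_{L^2}$ gives $\mathrm{tr}(A^{-1}) = \sum_{n\in \Z}(\pi^2 n^2/L^4 + \Ld)^{-1} \les L^2$. Combining these with $\mathrm{tr}((C_{i,x_0}^Q)^2) \le \|C_{i,x_0}^Q\|\,\mathrm{tr}(C_{i,x_0}^Q) \les L^2$ and choosing $\dl < \zeta/4$ yields $\log \E_{\nu^\perp_{Q,x_0}}[e^{\dl \|h^\perp\|^2_{L^2}}] \les L^2$, proving \eqref{DLC}.

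The probability bound on $D_L^c$ is then immediate from Markov's inequality: for $D$ sufficiently large relative to $c/\dl$,
\begin{align*}
\nu^\perp_{Q,x_0}(D_L^c) \le e^{-\dl D L^2}\, \E_{\nu^\perp_{Q,x_0}}[e^{\dl \|h^\perp\|^2_{L^2}}] \le e^{-(\dl D - c)L^2} \les e^{-c' D L^2}.
\end{align*}
The main obstacle will be ensuring full uniformity in $x_0 \in \T_{L^2}$ of the trace estimate. This is handled by the translation covariance $B_i^Q$ at parameter $x_0$ is the conjugate of the one at $x_0 = 0$ by the translation $\tau_{x_0}$, and the projection onto $\cj V^{L^2}_{x_0,0}$ transforms analogously, so all spectral data of $C_{i,x_0}^Q$ coincide with those at $x_0 = 0$; this reduces the uniform trace bound to a single computation.
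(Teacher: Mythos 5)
Your overall route---computing the Gaussian exponential moment by diagonalizing the covariance and reducing everything to a trace bound, then concluding by Markov's inequality---is genuinely different from the paper's proof, which instead applies the Bou\'e--Dupuis variational formula \eqref{DUP} and absorbs the drift term by Young's inequality, so that only the single bound $\E\big[\|Y(1)\|_{L^2(\T_{L^2})}^2\big]=\int_{\T_{L^2}}\big(G_{1,x_0}+G_{2,x_0}\big)(x,x)\,\mathrm{d}x\les L^2$ from Lemma \ref{lem: covariance bound} is needed. Your log-determinant computation is a legitimate, arguably more elementary alternative, and the identification of the small-$\dl$ threshold through the spectral lower bound $\zeta$ of Lemma \ref{LEM:positive} is correct on the normal subspace.

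However, your trace estimate contains a genuine gap. You write $\mathrm{tr}(C^Q_{i,x_0})\le \mathrm{tr}\big((B_i^Q)^{-1}\big)$ ``since the projection is orthogonal,'' and in the resolvent step you invoke $\|(B_i^Q)^{-1}\|_{\mathrm{op}}\le 1/\zeta$---in both places $(B_i^Q)^{-1}$ is the inverse on all of $L^2(\T_{L^2})$. But $B_1^Q$ is not positive on the full space: by \eqref{Degeneracy} it is negative along $\mathrm{span}\{Q_{x_0}\}$ and, for large $L$, nearly degenerate along $\mathrm{span}\{\partial_{x_0}Q_{x_0}\}$; this is exactly why the paper projects onto $\cj V^{L^2,\text{Re}}_{x_0,0}$, and Lemma \ref{LEM:positive} furnishes the lower bound $\zeta$ only on that subspace. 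Consequently the full-space inverse may fail to exist or may carry a negative and an uncontrollably large eigenvalue, the monotonicity $\mathrm{tr}(\mathbf{P}A\mathbf{P})\le\mathrm{tr}(A)$ is unavailable (it requires $A\ge 0$), and the operator-norm input to your resolvent identity is unjustified. The repair is short and already in the paper: the covariance of $\nu^\perp_{Q,x_0}$ is by construction the inverse of $B_i^Q$ restricted to the normal subspace as in \eqref{CovC}, its kernel is the Green's function $G_{i,x_0}$, and Lemma \ref{lem: covariance bound} (whose proof carries out the projected resolvent expansion carefully) gives $G_{i,x_0}(x,x)\les 1$ uniformly in $x$ and $x_0$. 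Hence $\mathrm{tr}(C^Q_{i,x_0})=\int_{\T_{L^2}}G_{i,x_0}(x,x)\,\mathrm{d}x\les L^2$ and $\mathrm{tr}\big((C^Q_{i,x_0})^2\big)\le\|C^Q_{i,x_0}\|_{\mathrm{op}}\,\mathrm{tr}(C^Q_{i,x_0})\les L^2$, after which your diagonalization argument with $\dl<\zeta/4$ yields \eqref{DLC}, and your Markov-inequality step gives the probability bound.
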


\begin{proof}

In the following, we apply the Boué-Dupuis variational formula introduced in \eqref{DUP}. Let $W(t)$ be the cylindrical Wiener process  with respect to the underlying probability measure $\PP$, that is, 
\begin{align}
W(t)=(W_1(t), W_2(t)) = \bigg( \sum_{n \in \Z } B_{n,1} (t) e^{L}_{n,1}(x), \sum_{n \in \Z } B_{n,2} (t) e^{L}_{n,2}(x) \bigg),
\end{align}

\noi
where $\{ B_{n,1} \}_{n \in \Z}$ and $\{ B_{n,2} \}_{n \in \Z}$   are two sequences of mutually independent complex-valued Brownian motions, and $\{e^L_{n,1}\}_{n \in \Z}$ and $\{e^L_{n,2}\}_{n \in \Z}$ are sequences of normalized eigenfunctions of the operators $(C^Q_{1,x_0})^{-1}$ and $(C^Q_{2,x_0})^{-1}$, defined in \eqref{CovC}. 
We  then define a centered Gaussian process $Y(t)$
by 
\begin{align*}
Y(t)=  \big( (C_{1,x_0}^{Q})^\frac 12 W_1(t), (C_{2,x_0}^{Q})^\frac 12  W_2(t)  \big),
\end{align*}

\noi
where $C_{i,x_0}^{Q}$ is the covariance operator for
the measure $\nu_{C_i^Q, x_0}^{\perp}$, $i=1,2$, defined in \eqref{GFFSch}. Then, we have 
\begin{align}
\Law_{\PP} (Y(1)) = \nu^\perp_{Q,x_0},
\label{lawnu}
\end{align}

\noi 
as given in \eqref{GFFSch}.

We next recall the  Bou\'e-Dupuis variational formula \cite[Proposition A.2]{TW}, \cite[Theorem 7]{Ust}, and \cite[Theorem 2]{BG}. Let $F: L^2(\T_{L^2}) \to \R$ be a measurable function bounded from below. Then,
\begin{align}
\log \E\Big[e^{-F(Y(1)) }\Big]
&= \sup_{V \in \mathbb H_a^1}
\E\bigg[ F( Y(1)+V(1)) - \frac{1}{2} \int_0^1 \| \dot{V} (t) \|_{H^1_{C^Q}}^2 \, \mathrm{d}t \bigg] \notag \\
&\le\sup_{V \in \mathbb H_a^1}
\E\bigg[ F( Y(1)+V(1)) -\frac 12  \| V(1) \|_{H^1_{C^Q}}^2 \bigg],
\label{DUP}
\end{align}

\noi
where $\Ha^1$ denotes the space of $H^1_{C^Q}:=H^1_{C_1^Q}\times H^1_{C_2^Q}$-valued progressively measurable processes $V(t)=(V_1(t), V_2(t))$ such that $V(0)=0$ and
\begin{align*}
\E \bigg[\int_0^1 \| \dot{V} (t) \|_{H^1_{C^Q}}^2 \, \mathrm{d}t \bigg]<\infty.
\end{align*}

\noi
Here, the norm $\| \cdot \|_{H^1_{C^Q} }^2=\| \cdot \|_{H^1_{C^Q_1} }^2+\| \cdot \|_{H^1_{C^Q_2} }^2 $ is defined by
\begin{align*}
\| u \|_{H^1_{C^Q_i} }=\| (C^Q_{i,x_0})^{-\frac 12}  u \|_{L^2(\T_{L^2})}
\end{align*}

\noi
for $i=1,2$.

\noi
By using the Bou\'e-Dupuis formula \eqref{DUP} and Young's inequality, we have 
\begin{align}
\log \E_{ \nu_{Q, x_0}^\perp }\Big[ e^{\dl \int_{-L^2}^{L^2} | h^\perp|^2 \,\mathrm{d}x } \Big] &=\sup_{V\in \Ha^1 }\E\bigg[ \dl \int_{-L^2}^{L^2} |Y(1)+V(1)|^2\, \mathrm{d}x -\frac 12 \| V(1) \|_{H^1_{C^Q}}^2  \, \mathrm{d}t \bigg] \notag \\
&\le \sup_{V\in \Ha^1 } \E\bigg[ 2 \dl \int_{-L^2}^{L^2} |Y(1) |^2\, \mathrm{d}x \bigg]
\label{DUP1}
\end{align}

\noi
by choosing $\dl>0$ sufficiently small. Note that from \eqref{lawnu}
\begin{align}
\E\bigg[ \int_{-L^2}^{L^2} |Y(1) |^2 \, \mathrm{d}x \bigg]=\int_{-L^2}^{L^2}  \big(G_{1,x_0}(x,x)+G_{2,x_0}(x,x) \big) \,  \mathrm{d}x,
\label{DUP2}
\end{align}

\noi
where $G_{i,x_0}$ is the Green's function for the covariance operator $C^Q_{i,x_0}$, defined in Lemma \ref{lem: covariance bound}.
Then, from Lemma \ref{lem: covariance bound}, we have 
\begin{align}
|G_{i,x_0}(x,x) | \les 1,
\label{DUP3}
\end{align}

\noi
uniformly in $x_0, x \in \T_{L^2}$ for $i=1,2$. Combining \eqref{DUP1}, \eqref{DUP2} and \eqref{DUP3} yields that there exists a constant $c>0$ such that,  for sufficiently small $\dl>0$,  
\begin{align*}
\E_{ \nu_{Q, x_0}^\perp }\Big[ e^{\dl \int_{-L^2}^{L^2} | h^\perp|^2 \,\mathrm{d}x } \Big]  \les e^{ cL^2},
\end{align*}

\noi
uniformly in $x_0 \in \T_{L^2}$. This completes the proof of Proposition \ref{PROP: gaussian-conc}.

\end{proof}

\subsection{Partition function estimate}

In this subsection, we establish a uniform lower bound for the partition function $Z_{x_0,\dr}=\mathcal{I}(1)$ in \eqref{Idef}, which will be used later.
\begin{lemma}\label{LEM: CL-est}
There exists a constant $c>0$ such that 
\begin{align}
Z_{x_0,\dr}=\mathcal{I}(1) \ge c e^{-cL},
\end{align}

\noi
uniformly in $x_0\in \T_{L^2}$ and $\dr \in  [0,2\pi]$, where $\mathcal{I}(1)$ is as defined in \eqref{Idef}.

\end{lemma}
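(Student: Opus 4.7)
The plan is to restrict the integral defining $\mathcal{I}(1)$ to a "good set" of normal fields $h^\perp$ on which every factor in the integrand is under control, integrate out the scalar $t=\langle Q^{\eta_L}_{x_0},\Re h\rangle$ coordinate via the conditional density formula from Lemma \ref{lem: replace}, and then estimate the residual $h^\perp$-expectation from below using the Gaussian tail estimates of Section~5.1.

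First, using the orthogonal decomposition $h=\gamma t+h^\perp$ from Lemma \ref{LEM:decomh} and the product structure $\mathrm{d}\mu^\perp_{x_0,\Ld}(h)=(2\pi\sigma^2)^{-1/2}e^{-t^2/(2\sigma^2)}\mathrm{d}t\,\mathrm{d}\nu^\perp_{x_0}(h^\perp)$ from \eqref{measuredecom0}, rewrite $\mathcal{I}(1)$ as a double integral in $(h^\perp,t)$, exactly as in \eqref{eqn: plug-here}. I would then intersect the domain with the set $A_L:=D_L\cap B_K\cap\{|b(h^\perp)|\le B\}$ for suitable constants $D,K,B$, where $D_L,B_K$ are the high-probability events in \eqref{eqn: DL-def}, \eqref{eqn: BK-def} and $b(h^\perp)$ is the Gaussian linear functional in \eqref{bhper}. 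On $A_L$ we have $\|h^\perp\|_{L^2}^2\le DL^2$, $\|h^\perp\|_{L^\infty}\le K\sqrt{\log L^2}$, and $|b|\le B$, so by \eqref{eqn: t-oot}, $t^{+}=O(L^{1/2})$ exists and the interval $S_{h^\perp}=[t^-,t^+]$ is nonempty with $|t^+-t^-|\asymp L^{-3/2+}$.

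Next, on $A_L$ I would apply Lemma \ref{lem: replace} with $F\equiv 1$ and $H=\bar{\mathcal{E}}$ (whose Lipschitz constant in $t$ is $O(L^{3/2+})$ by Lemma \ref{LEM:EpsLip}, within the allowed range), producing, after the $t$-integration, the identity
\begin{equation*}
\int_{S_{h^\perp}}e^{\bar{\mathcal{E}}(h^\perp,t)}e^{\frac{\Ld}{2}\|h^\perp\|_{L^2}^2}e^{\Ld G_1(t)}e^{G_2(t,h^\perp)}e^{-\frac{t^2}{2\sigma^2}}\frac{\mathrm{d}t}{\sqrt{2\pi}\sigma}=\frac{e^{\bar{\mathcal{E}}(h^\perp,t^+)+c_0(t^+)^2+b(h^\perp)t^+}}{L^{3/2}\sqrt{2\pi}\sigma\,\Ld\bigl(1+gL^{-3/2}\bigr)}\bigl(1+o_L(1)\bigr).
\end{equation*}
On $A_L$, since $t^+=O(L^{1/2})$, $|c_0|=O(1)$, and $|b|\le B$, we have $|c_0(t^+)^2+b(h^\perp)t^+|=O(L)$. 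A direct expansion of \eqref{Eerror} using $\|h^\perp\|_{L^\infty}\le K\sqrt{\log L^2}$ and $\|h^\perp\|_{L^2}^2\le DL^2$ yields $|\bar{\mathcal{E}}(h^\perp,t^+)|=O(L)$ as well, and from \eqref{Det0} together with $\|L^{-3/2}h\|_{L^\infty}=o(1)$ we have $\mathrm{Det}_L(h)\ge 1/2$. Also $\frac12\langle(Q^{\eta_L}_{x_0})^2,3|\Re h^\perp|^2+|\Im h^\perp|^2\rangle\ge 0$ and is already incorporated in the measure $\nu^\perp_{Q,x_0}$, so this factor only helps. Combining these bounds yields a uniform lower bound on the $t$-integral of the form $c\,L^{-3/2}e^{-c_1 L}$ pointwise on $A_L$.

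Finally, by Propositions \ref{prop: BK-prob} and \ref{PROP: gaussian-conc} together with the fact that $b(h^\perp)$ is a Gaussian random variable under $\nu^\perp_{Q,x_0}$ with variance $O(1)$ uniform in $x_0$, one can choose $D,K,B$ large enough (independent of $L,x_0,\dr$) so that $\nu^\perp_{Q,x_0}(A_L)\ge\tfrac12$ uniformly. Integrating the pointwise bound over $A_L$ against $\nu^\perp_{Q,x_0}$ gives
\begin{equation*}
\mathcal{I}(1)\ge \tfrac12\cdot cL^{-3/2}e^{-c_1 L}\ge c'e^{-cL}
\end{equation*}
for a new constant $c>c_1$, absorbing the polynomial factor $L^{-3/2}$. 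The main obstacle is verifying that the exponential factor $e^{c_0(t^+)^2+b(h^\perp)t^+}$ does not destroy the lower bound: this requires showing that $|b(h^\perp)|$ is typically $O(1)$ (not $O(\|h^\perp\|_{L^2})$), which follows because $b$ is a bounded linear functional of $h^\perp$ tested against the localized Schwartz function $(Q^{\eta_L}_{x_0})^2\gamma$, so its variance under $\nu^\perp_{Q,x_0}$ is uniformly bounded in $L$ and $x_0$.
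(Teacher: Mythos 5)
Your proposal is correct and follows essentially the same route as the paper's proof: restrict to the high-probability good set, bound $\mathrm{Det}_L(h)\ge \tfrac12$ and the cubic/quartic error terms there, integrate out the $t$-coordinate via Lemma \ref{lem: replace}, bound the resulting exponent $c_0(t^+)^2+b(h^\perp)t^+$ (and $\bar{\mathcal{E}}$) by $O(L)$, and invoke Propositions \ref{prop: BK-prob} and \ref{PROP: gaussian-conc} to keep the good set of probability at least $\tfrac12$. Your extra restriction $\{|b(h^\perp)|\le B\}$ is harmless but unnecessary, since on $B_K$ one already has $|b(h^\perp)|\lesssim \|h^\perp\|_{L^\infty}=O(L^{1/2})$, which, combined with $t^+=O(L^{1/2})$, is exactly what the paper uses.
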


\begin{proof}
Since the integrands in $\mathcal{I}(1)$ are positive, 
\begin{align}
Z_{x_0,\dr}=\mathcal{I}(1)\ge \mathcal{I}(\ind_{D_L \cap B_K } ),
\label{partfun}
\end{align}

\noi
where $D_L$ and $B_K$ are defined in \eqref{eqn: DL-def} and \eqref{eqn: BK-def}.






Thanks to the orthogonal decomposition of the field  $h(x)=\g(x)\jb{Q^{\eta_L}_{x_0}, \Re h }+h^\perp(x)$ in \eqref{eqn: decomp}, 
\begin{align}
\|h\|_{L^\infty} &\le \| h^\perp\|_{L^\infty}+\| \g\|_{L^\infty} |\jb{Q^{\eta_L}_{x_0},  \Re h }| \notag \\
& \le  \| h^\perp \|_{L^\infty}+ c L^{\frac 12},
\label{LL1}
\end{align}

\noi
where in the last line we used the fact that on $D_L\cap S_{Q,x_0,\dr}$, $|\jb{Q^{\eta_L}_{x_0},  \Re h }|=O(L^\frac 12)$ from \eqref{tL12} and $\|\g \|_{L^\infty}=O(1)$. Therefore, \eqref{LL1} implies that on the set $B_K$ 
\begin{align}
\mathrm{Det}_L(h)&=1+O(L^{-3}\|h \|^2_{L^\infty}) \ge 1-cL^{-3} \| h^\perp\|_{L^\infty}^2-cL^{-2} \ge \frac 12
\label{LL2}
\end{align}

\noi
for sufficiently large $L \ge 1$, where we used  $\| h^\perp\|_{L^\infty}=O(L^\frac 12)$ on $B_K$. Here, $K$ is chosen as $K=M\frac{L^\frac 12}{\sqrt{\log L^2}}$ for some large $M \ge 1$, as explained in \eqref{eqn: BK-def} and Remark \ref{REM:choiK}.

Using the orthogonal decomposition of the field  $h(x)=\g(x)\jb{Q^{\eta_L}_{x_0}, \Re h }+h^\perp(x)$ in \eqref{eqn: decomp} and $L^{3}\mathcal{E}(Q^{\eta_L}_{x_0}, L^{-\frac 32}h):=\mathcal{E}_1(h)+\mathcal{E}_2(h) $ in \eqref{Eerror}, we expand
\begin{align}
\mathcal{E}_1(h)&:=\frac 1{L^{\frac 32}} \int_{\T_{L^2} }  Q^{\eta_L}_{x_0}  |h|^2 \Re h dx \notag \\
&= \sum_{\substack{u_i\in \{h^\perp, \gamma(x)\langle Q^{\eta_L}_{x_0}, \Re h\rangle\}\\i=1,2,3} } \frac{1}{L^{\frac{3}{2}}}\int_{\T_{L^2}} Q_{x_0}^{\eta_L}u_1\bar{u}_2\Re u_3\,\mathrm{d}x, \label{LL3}\\
\mathcal{E}_2(h):&=\frac{1}{L^3} \int_{\T_{L^2}}|h(x)|^4\, \mathrm{d}x \notag \\
&= \sum_{\substack{u_i\in \{h^\perp, \gamma(x)\langle Q^{\eta_L}_{x_0}, \Re h\rangle\}\\i=1,2,3,4}} \frac{1}{L^3}\int_{\T_{L^2}} u_1\bar{u}_2 u_3\bar{u}_4\,\mathrm{d}x. \label{LL4}
\end{align}

\noi 
From \eqref{LL1}, \eqref{LL3}, and \eqref{LL4}, we obtain that on $D_L$
\begin{align}
|\mathcal{E}_1(h)|&\le  C\sum_{k=0}^3 L^{-\frac{k}{2}}\int_{\mathbb{T}_{L^2}} Q^{\eta_L}_{x_0} |h^\perp|^k \gamma(x)^{3-k}\,\mathrm{d}x, \notag \\
&\le C_1L^{-\frac{3}{2}}\int_{\T_{L^2}} Q^{\eta_L}_{x_0}|h^\perp|^3\,\mathrm{d}x  +C_2, \label{LL5}\\
|\mathcal{E}_2(h)|&\le C\sum_{k=0}^3 L^{-\frac{k}{2}-1}\int_{\mathbb{T}_{L^2}} |h^\perp|^k \gamma(x)^{4-k}\,\mathrm{d}x \notag \\
&\le C_1L^{-3}\int_{\mathbb{T}_{L^2}} |h^\perp|^4\,\mathrm{d}x+L^{-1}C_2, \label{LL6}
\end{align}

\noi
where we used $|\jb{Q^{\eta_L}_{x_0}, \Re h }|=O(L^\frac 12) $ on $D_L$.
Since $\|h^\perp \|_{L^\infty}=O(L^\frac 12)$ on $B_K$, we have that on $D_L \cap B_K$
\begin{align}
\| h^\perp\|_{L^4(\T_{L^2})}^4 \le \|h^\perp \|_{L^2(\T_{L^2})}^2 \| h^\perp \|_{L^\infty(\T_{L^2})}^2 \les L^3.
\label{LLL6}
\end{align}

\noi 
Thanks to \eqref{LL5}, \eqref{LL6}, and \eqref{LLL6}, we obtain that on the set $D_L\cap B_K$ 
\begin{align}
|L^{3}\mathcal{E}(Q^{\eta_L}_{x_0}, L^{-\frac 32}h )| \le c
\label{LL7}
\end{align}

\noi
for some constant $c>0$.

Using the decomposition of the field
$h(x)=\g(x) \jb{Q^{\eta_L}_{x_0}, \Re h  }+h^\perp(x)$ in \eqref{eqn: decomp} and conditioning $D_L$, we have 
\begin{align}
\| h\|_{L^2} &\le \| h^\perp \|_{L^2}+|\jb{Q^{\eta_L}_{x_0}, \Re h }| \|\g \|_{L^2} \notag \\
&\le D^\frac 12 L+L^{\frac 12 } \| \g\|_{L^2},
\end{align}

\noi
where in the last line we used \eqref{eqn: DL-def} to obtain $|\jb{Q^{\eta_L}_{x_0}, \Re h }|=O(L^\frac  12)$. Therefore, on the set $S_{Q,x_0,\dr} \cap D_L$, we have $\| h\|_{L^2} \le \dl L^{\frac 32}$. This implies  
\begin{align}
S_{Q,x_0}\cap D_L \subset \mathcal{K}^L_\delta 
\label{LLL2}
\end{align}

\noi 
for large $L$.

Combining \eqref{partfun},  \eqref{LLL2}, \eqref{LL7}, and \eqref{LL2} yields
\begin{align}
Z_{x_0,\dr}&\ge \mathcal{I}(\ind_{D_L \cap B_K}) \notag \\  
&\ge c \int_{\mathcal{S}_{Q,x_0,\theta} }
e^{\frac \Ld2\|h \|_{L^2}^2+\Ld L^{\frac 32} \jb{Q^{\eta_L}_{x_0}, \Re h } } e^{\frac 12\jb{Q^{\eta_L}_{x_0}, 3|\Re h|^2+|\Im h|^2 } } \ind_{   D_L \cap B_K   }\, \mu^\perp_{x_0, \Ld}(\mathrm{d}h)
\label{LL8}
\end{align}

\noi
for sufficiently large $L \ge 1$ and some constant $c>0$. 
From \eqref{eqn: plug-here} and  Lemma \ref{lem: replace}
\begin{align}
\eqref{LL8}=\E_{\nu^\perp_{Q, x_0} }\bigg[ \frac{e^{c_0(t^+)^2+b(h^\perp)t^{+} } }{\sqrt{2\pi} \s L^{\frac 32}  }, \,D_L,\, B_K \bigg](1+o_L(1))
\label{LLL8}
\end{align}

\noi
as $L\to \infty$. Note that on $D_L \cap B_K$
\begin{align*}
t^{+}&=O(L^\frac 12) \\
|b(h^\perp)|&=\bigg| \int_{\T_{L^2}} (Q^{\eta_L}_{x_0})^2 \g h^\perp \,\mathrm{d}x \bigg| \les \| h^\perp \|_{L^\infty}\les L^\frac 12,
\end{align*}

\noi
where we used \eqref{toot}, $\| \g\|_{L^\infty}=O(1)$, and on the set $B_K$, $\|h^\perp \|_{L^\infty}=O(L^\frac 12) $. This implies that 
\begin{align}
\E_{\nu^\perp_{Q,x_0} }\bigg[ \frac{e^{c_0(t^+)^2+b(h^\perp)t^{+} }}{\sqrt{2\pi} \s L^{\frac 32}  }, \,D_L,\, B_K \bigg] \ge L^{-\frac 32} e^{-cL} \nu^\perp_{Q, x_0}(D_L \cap B_K) \ge e^{-2cL}
\label{LLLL8}
\end{align}

\noi
for some constant $c>0$ since $\nu^\perp_{Q, x_0}(D_L \cap B_K) \to 1$ as  $L\to \infty$ from Propositions \ref{prop: BK-prob} and \ref{PROP: gaussian-conc}, where $K$ is set to $K=M\sqrt{ \frac{L}{\log L^2}  }$, as explained above. Combining  \eqref{LL8}, \eqref{LLL8}, and \eqref{LLLL8} yields
\begin{align*}
Z_{x_0,\dr} \ge ce^{-cL}.
\end{align*}

\noi
This completes the proof of Lemma \ref{LEM: CL-est}.

\end{proof}

\subsection{Error estimate: Part 1}

In this subsection, we provide the following error estimate that appears in \eqref{sepmaer}.


\begin{proposition}\label{PROP:error1}
Let $F$ be a bounded and continuous function. Then, there exist constants $c>0$ such that 
\begin{align}
\bigg| \frac{\mathcal{I}(F \ind_{D_L\cap B_K^c})}{\mathcal{I}(1)} \bigg| \les e^{-cM^2 L},
\label{E00}
\end{align}

\noi 
uniformly in $x_0 \in \T_{L^2}$ and $\dr \in [0,2\pi]$, where the sets $D_L$ and $B_K$ are defined in  \eqref{eqn: DL-def} and \eqref{eqn: BK-def}, and $K=M\sqrt{\frac{L}{\log L^2}}$ for sufficiently large $M$.
\end{proposition}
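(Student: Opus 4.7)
My strategy is to reduce the ratio bound to a Gaussian tail estimate against $\nu^\perp_{Q,x_0}$ restricted to $B_K^c\cap D_L$. By Lemma \ref{LEM: CL-est} the denominator satisfies $\mathcal{I}(1)\ge ce^{-cL}$, so it suffices to prove
\[|\mathcal{I}(F\mathbf{1}_{D_L\cap B_K^c})|\le C\,e^{-c'M^2 L}\]
with $c'$ that, together with choosing $M$ large, satisfies $c'M^2L-cL>cM^2L$. I would start from the coordinate representation \eqref{eqn: plug-here}. On $S_{h^\perp}$ the defining inequality yields $\Lambda G_1(t)+\tfrac{\Lambda}{2}\|h^\perp\|_{L^2}^2\le \Lambda L^{0+}$ and, by \eqref{sizetint}, the Lebesgue length $|S_{h^\perp}|=O(L^{-3/2+})$; the Gaussian density factor is $e^{-t^2/(2\sigma^2)}\le 1$ and $\mathrm{Det}_L=1+o(1)$ is harmless. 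After bounding $|F|\le\|F\|_\infty$ and integrating out $t$, the task reduces to estimating
\[\int_{D_L\cap B_K^c}\sup_{t\in S_{h^\perp}}e^{\bar{\mathcal E}(h^\perp,t)+G_2(t,h^\perp)}\,d\nu^\perp_{Q,x_0}(h^\perp)\]
and comparing it with the claimed exponential decay.

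The next step is a sharp deterministic bound on $\bar{\mathcal E}+G_2$ on $D_L$. Exploiting $|t|=O(L^{1/2})$ on $D_L$ from \eqref{tL12}--\eqref{toot}, the bound $\|h^\perp\|_{L^2}^2\le DL^2$, and that $Q^{\eta_L}_{x_0}$, $\gamma$ are Schwartz with $L^p$-norms of order one, H\"older's inequality applied to \eqref{Eerror} and \eqref{eqn:G_2-def} yields
\[\int Q^{\eta_L}_{x_0}|h^\perp|^3\,dx\le CL^2\|h^\perp\|_{L^\infty},\qquad \int|h^\perp|^4\,dx\le DL^2\|h^\perp\|_{L^\infty}^2,\]
together with the refined observation that $|t|^3\|\gamma\|_{L^3}^3/L^{3/2}=O(1)$ on $D_L\cap S_{Q,x_0}$, which is sharper than the generic bound $\delta\|h^\perp\|_{L^2}^2$ of Lemma \ref{LEM:Error1}. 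Combining these and a parallel estimate for $G_2$, I obtain
\begin{equation}\label{eqn:plan-key}
\bar{\mathcal E}(h^\perp,t)+G_2(t,h^\perp)\le CL+C L^{1/2}\|h^\perp\|_{L^\infty}+CL^{-1}\|h^\perp\|_{L^\infty}^2,
\end{equation}
uniformly in $t\in S_{h^\perp}$ and in $x_0,\theta$.

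Finally, I slice $B_K^c$ into the annular shells
\[A_j:=\bigl\{(K+j)\sqrt{\log L^2}<\|h^\perp\|_{L^\infty}\le (K+j+1)\sqrt{\log L^2}\bigr\},\quad j\ge 0,\]
and apply Proposition \ref{prop: BK-prob} to get $\nu^\perp_{Q,x_0}(A_j)\le Ce^{-c(K+j)^2\log L^2}$. Inserting $K=M\sqrt{L/\log L^2}$, so that $K\sqrt{\log L^2}=M\sqrt L$ and $(K+j)^2\log L^2=M^2L+2Mj\sqrt{L\log L^2}+j^2\log L^2$, the exponent on $A_j$ becomes
\[CL+CML+CL^{1/2}(j+1)\sqrt{\log L^2}+CM^2-cM^2L-2cMj\sqrt{L\log L^2}-cj^2\log L^2.\]
For $M$ larger than a universal constant (depending only on $c,C$), the $j=0$ term is $\le -\tfrac{c}{2}M^2L$; for $j\ge 1$ the cross-term $-2cMj\sqrt{L\log L^2}$ dominates $CL^{1/2}(j+1)\sqrt{\log L^2}$, while the quadratic penalty $-cj^2\log L^2$ guarantees a summable geometric series. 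Thus the full integral is bounded by $Ce^{-cM^2L/2}$, and combining with the prefactors $\|F\|_\infty Z_{x_0,L}L^{-3/2+}e^{\Lambda L^{0+}}$ (with $Z_{x_0,L}=O(1)$ since $Q^2$ is a trace-class perturbation of $-\partial_x^2+\Lambda$) and dividing by $\mathcal{I}(1)\ge ce^{-cL}$ produces \eqref{E00} upon taking $M$ large enough.

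\emph{Main obstacle.} The crucial point is the correct choice of H\"older exponents in estimating the cubic integral $L^{-3/2}\int Q^{\eta_L}_{x_0}|h^\perp|^3\,dx$. The naïve $L^\infty$ bound would produce $CL^{-3/2}\|h^\perp\|_{L^\infty}^3$, whose cubic growth is not controlled by the Gaussian penalty $-c(K+j)^2\log L^2$ on the largest relevant shells. Exploiting the $L^2$-mass constraint on $D_L$ trades off one factor of $\|h^\perp\|_{L^\infty}$ for $\|h^\perp\|_{L^2}^2\le DL^2$, reducing the cubic to a linear contribution $CL^{1/2}\|h^\perp\|_{L^\infty}$ that lies safely within the quadratic Gaussian budget. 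Similarly, the generic error $\delta\|h^\perp\|_{L^2}^2=O(L^2)$ appearing in Lemma \ref{LEM:Error1} must be improved to $O(1)$ using the sharper size of $|t|$ on $D_L\cap S_{Q,x_0}$; without these two refinements the exponents would not balance to yield the sought bound of order $e^{-cM^2L}$.
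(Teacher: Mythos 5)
Your proof is correct, and it takes a genuinely more elementary route than the paper. The paper channels the estimate through Lemma \ref{lem: mu2-ests}, which in turn splits the dyadic shells $F_\ell$ at a threshold $\ell^*\asymp L^{3/2}/\sqrt{\log L^2}$ and invokes the moment-generating-function estimate Lemma \ref{LEM: mu-mgf} (a power-series/H\"older argument using the pointwise Green's function bound) for the shells $K\le \ell\le\ell^*$. You instead observe that because the sum only runs over $\ell \ge K = M\sqrt{L/\log L^2}$ and $M$ is taken large, the \emph{deterministic} H\"older bound obtained by trading $\|h^\perp\|_{L^\infty}$-factors for $\|h^\perp\|_{L^2}^2 \le DL^2$ already beats the Gaussian penalty $e^{-c(K+j)^2\log L^2}$ on every shell, so the MGF lemma is never needed. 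The balance is exactly as you compute: the cubic contribution $C L^{1/2}(K+j)\sqrt{\log L^2}$ is dominated by $c(K+j)^2\log L^2$ once $(K+j)\sqrt{\log L^2} \gtrsim L^{1/2}$, which holds since $K\sqrt{\log L^2} = M\sqrt L$, and the quartic contribution $CL^{-1}(K+j)^2\log L^2$ is dominated because $C/L \ll c$. Combined with $\mathcal{I}(1)\ge ce^{-cL}$ from Lemma \ref{LEM: CL-est} and the prefactor $|S_{h^\perp}| = O(L^{-3/2+})$, this gives \eqref{E00}. (Arithmetic shells $A_j$ vs.\ dyadic shells $F_\ell$ is cosmetic.) Your ``main obstacle'' paragraph correctly pinpoints the essential idea; the paper uses the same $L^2$-mass trade-off in its large-$\ell$ estimate \eqref{E10}, you simply show it covers the full range.

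One small imprecision: on $B_K^c$ the Weingarten determinant $\mathrm{Det}_L(h) = 1 + O(L^{-3}\|h\|_{L^\infty}^2)$ is not $1+o(1)$ uniformly, since $\|h^\perp\|_{L^\infty}$ is unbounded there. But the induced polynomial factor $1 + L^{-3}(K+j)^2\log L^2$ is absorbed by the quadratic Gaussian penalty $e^{-c(K+j)^2\log L^2}$, so the conclusion is unaffected; it would be cleaner to fold $\mathrm{Det}_L$ into the integrand $H(h^\perp)$ and bound it on each shell, as the paper does via \eqref{E2}--\eqref{EE3}.
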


\begin{remark}\rm \label{REM:choiK}
While the tail estimate \eqref{Tail} in Proposition \ref{prop: BK-prob} holds for sufficiently large  $K \ge 1$, we later set $K=M\sqrt{ \frac{L}{\log L^2} }$, where $M\ge 1$ is a large constant, to control the partition function $\mathcal{I}(1)$  in \eqref{E00}, with  $\mathcal{I}(1)\ge ce^{-cL}$ from Lemma \ref{LEM: CL-est}.  See \eqref{Beat} and Lemma \ref{lem: mu2-ests}.

\end{remark}


We present the proof of Proposition \ref{PROP:error1} at the end of this subsection. First, we simplify the numerator $\mathcal{I}(F \ind_{D_L\cap B_K^c})$ in \eqref{E00}
\begin{align}
\mathcal{I}(F \ind_{D_L \cap B_K^c })=\int_{S_{Q,x_0,\theta}\cap \mathcal{K}^L_\dl }  F(h)&e^{L^3\mathcal{E}(Q^{\eta_L}_{x_0}, L^{-\frac{3}{2}}h)} e^{\frac \Ld2\|h \|_{L^2}^2+\Ld L^{\frac 32} \jb{Q^{\eta_L}_{x_0}, \Re h } } \notag  \\
\cdot & e^{\frac 12\jb{Q^{\eta_L}_{x_0},\, 3|\Re h|^2+|\Im h|^2 }  }\mathrm{Det}_L(h)\ind_{  D_L\cap B_K^c   } \,\mathrm{d}\mu_{x_0,\Ld}^\perp(h).
\label{E0}
\end{align}

From the orthogonal decomposition of the field  $h(x)=\g(x)\jb{Q^{\eta_L}_{x_0}, \Re h }+h^\perp(x)$ in \eqref{eqn: decomp}, we have
\begin{align}
\|h\|_{L^\infty} &\le \| h^\perp\|_{L^\infty}+\| \g\|_{L^\infty} |\jb{Q^{\eta_L}_{x_0},  \Re h }| \notag \\
& \les  \| h^\perp \|_{L^\infty}+ L^{\frac 12},
\label{E1}
\end{align}

\noi
where in the last line we used the fact that on $D_L\cap S_{Q,x_0,\dr}$, $|\jb{Q^{\eta_L}_{x_0},  \Re h }|=O(L^\frac 12)$ from \eqref{tL12} and $\|\g \|_{L^\infty}=O(1)$. Therefore, \eqref{E1} implies that 
\begin{align}
\mathrm{Det}_L(h)=1+O(L^{-3}\|h \|^2_{L^\infty}) \les 1+L^{-3} \| h^\perp\|_{L^\infty}^2+L^{-2}.
\label{E2}
\end{align}

\noi
From \eqref{LL5} and \eqref{LL6}, we have that on the set $D_L$
\begin{align}
L^{3}\mathcal{E}(Q^{\eta_L}_{x_0}, L^{-\frac 32}h ) \les  \mathcal{E}_1(h^\perp)+\mathcal{E}_2(h^\perp)+1,
\label{E3}
\end{align}

\noi
where 
\begin{align*}
\mathcal{E}_1(h^\perp)&=\frac{1}{L^{\frac 32}} \int_{\T_{L^2}} Q^{\eta_L}_{x_0} |h^\perp|^3 \, \mathrm{d}x  \\
\mathcal{E}_2(h^\perp)&=\frac{1}{L^3} \int_{\T_{L^2}} |h^\perp|^4 \, \mathrm{d}x.
\end{align*}

\noi
It follows from \eqref{eqn: plug-here}, \eqref{E2}, \eqref{E3}, and Lemma \ref{lem: replace} that
\begin{align}
|\eqref{E0} | &\les  \E_{\nu^\perp_{Q, x_0} } \Bigg[ H(h^\perp) \bigg( \int_{S_{h^\perp}} e^{\frac {\Ld}2 \| h^\perp \|_{L^2}^2 +\Ld G_1(t)+G_2(t,h^\perp) } e^{-\frac{t^2}{2\s^2}} \frac{\mathrm{d}t}{\sqrt{2\pi}\s }  \bigg), \, D_L, \, B_k^c   \Bigg] \notag \\
&\les \E_{\nu^\perp_{Q, x_0} } \Bigg[ H(h^\perp)  \frac 1{L^\frac 32} \frac{e^{c_0(t^+)^2+b(h^\perp)t^+ } }{\sqrt{2\pi \s}}, \, D_L, \, B_K^c \Bigg](1+o_L(1)),
\label{P20} 
\end{align}

\noi
where 
\begin{align}
H(h^\perp):=e^{\frac{C}{L^\frac{3}{2}}\int_{-L^2}^{L^2} Q_{x_0}^{\eta_L}|h^\perp|^3\,\mathrm{d}x+\frac{C}{L^3}\int_{-L^2}^{L^2} |h^\perp|^4\,\mathrm{d}x  } 
(1+L^{-3}\|h^\perp\|_{L^\infty}^2).
\label{EE3}
\end{align}

\noi
for some constant $C>0$. Combining \eqref{P20}, Lemma \ref{LEM: CL-est}, and $t^{+}=O(L^{\frac 12})$ from \eqref{toot} yields
\begin{align}
\bigg|\frac{\mathcal{I}(F \ind_{D_L\cap B_K^c})}{\mathcal{I}(1)}\bigg|&\les e^{cL} \E_{\nu^\perp_{Q, x_0} } \Big[ 
H(h^\perp)  e^{c_0(t^+)^2+b(h^\perp)t^+ }, \, D_L, \, B_K^c \Big]    \notag \\
&\les e^{\wt cL}\E_{ \nu^\perp_{Q, x_0} } \Big[ H(h^\perp) e^{b(h^\perp) L^\frac 12 }, \, D_L, \,B_K^c  \Big]
\label{EE00}
\end{align}

\noi
for some constant $c, \wt c>0$. By Young's inequality, we have
\begin{align}
\E_{ \nu^\perp_{Q, x_0} } \Big[ H(h^\perp) e^{b(h^\perp) L^\frac 12 }, \, D_L, \,B_K^c  \Big] & \le \frac{1}{4}
\E_{\nu^\perp_{Q, x_0} } \Big[1+L^{-12} \|h^\perp \|_{L^\infty}^{8}, \, D_L, \, B_K^c \Big]    \notag \\
&\hphantom{X}+\frac{1}{4} \E_{\nu^\perp_{Q, x_0} } \bigg[ e^{\frac{4C}{L^3}\int_{-L^2}^{L^2} |h^\perp |^4\,\mathrm{d}x}, \, D_L, \, B_K^c \bigg]     \notag \\
&\hphantom{X}+\frac{1}{4}  \E_{\nu^\perp_{Q, x_0} } \bigg[ e^{\frac{4C}{L^{\frac{3}{2}}}\int_{-L^2}^{L^2} Q_{x_0}^{\eta_L}|h^\perp|^3\,\mathrm{d}x}, \, D_L, \, B_K^c  \bigg] \notag \\
&\hphantom{X}+\frac{1}{4}  \E_{\nu^\perp_{Q, x_0} } \bigg[ e^{4b(h^\perp)L^\frac 12}, \, D_L, \, B_K^c  \bigg].
\label{E4}
\end{align}

\noi
In the following lemma, we derive the estimates for the expectations that appear in \eqref{E4}.

\begin{lemma}\label{lem: mu2-ests}
Let $p\ge 1$ and $\zeta >0 $. Then, there are positive constants $c_1=c_1(\zeta,p)$ and  $c_2=c_2(\zeta,p)$ such that
\begin{align}
&\mathbb{E}_{\nu^\perp_{Q, x_0} }\Big[\|h^\perp\|^p_{L^\infty(\T_{L^2}) },\, B_K^c,\, D_L \Big]\le c_1 e^{-c_2 M^2 L },
\label{E11}\\
&\mathbb{E}_{\nu^\perp_{Q, x_0} }\Big[e^{\frac{\zeta}{L^3}\int_{-L^2}^{L^2} |h^\perp|^4\,\mathrm{d}x}, \,  B_K^c, \, D_L \Big]\le c_1e^{-c_2 M^2 L },
\label{E12}\\
&\mathbb{E}_{\nu^\perp_{Q, x_0} }\Big[e^{\frac{\zeta}{L^{3/2 } }\int_{-L^2}^{L^2} Q_{x_0}^{\eta_L}|h^\perp|^3\,\mathrm{d}x}, \, B_K^c, \,  D_L \Big]\le c_1 e^{-c_2 M^2 L }, \label{E13}\\
&\E_{\nu_{Q,x_0}^\perp } \bigg[ e^{\zeta b(h^\perp)L^\frac 12}, \,  B_K^c, \, D_L  \bigg] \le c_1 e^{-c_2 M^2 L},
\label{EE13}
\end{align}

\noi
uniformly in $x_0 \in \T_{L^2}$ and $\dr \in [0,2\pi]$, where $K=M\sqrt{\frac{L}{\log L^2}}$ for sufficiently large $M$.

\end{lemma}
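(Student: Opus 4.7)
The plan is to treat all four estimates by a single strategy. I will apply the Cauchy--Schwarz inequality to factor each expectation restricted to $B_K^c \cap D_L$ as the product of an unconditional exponential/polynomial moment and the probability $\nu^\perp_{Q,x_0}(B_K^c)^{1/2}$. By Proposition \ref{prop: BK-prob}, $\nu^\perp_{Q,x_0}(B_K^c)^{1/2} \le C\exp(-cK^2 \log L^2 /2)$, and with the choice $K = M\sqrt{L/\log L^2}$ this equals $C\exp(-cM^2 L/2)$. Since the remaining unconditional moments will only grow like $\exp(cL)$ with a constant $c$ independent of $M$, taking $M$ large enough will make the probability factor dominant and produce the bound $c_1 e^{-c_2 M^2 L}$ in each case. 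The main obstacle is to obtain the required unconditional exponential moment for the quartic and cubic terms in \eqref{E12}--\eqref{E13}; this will be carried out using the Bou\'e--Dupuis variational formula along the lines of the proof of Proposition \ref{PROP: gaussian-conc}.

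For \eqref{E11}, the remark following Proposition \ref{prop: BK-prob} gives $\mathbb{E}_{\nu^\perp_{Q,x_0}}[\|h^\perp\|_{L^\infty(\T_{L^2})}^{2p}] \le C p^p (\log L^2)^p$, uniformly in $x_0$. Combined with Cauchy--Schwarz and the probability bound on $B_K^c$, we get $\mathbb{E}_{\nu^\perp_{Q,x_0}}[\|h^\perp\|_{L^\infty}^p, B_K^c, D_L] \le C_p (\log L^2)^{p/2} e^{-cM^2 L/2}$, which absorbs the $(\log L^2)^{p/2}$ factor into a slightly smaller constant $c_2$. For \eqref{EE13}, the observation is that $b(h^\perp) = 3\int_{\T_{L^2}}(Q^{\eta_L}_{x_0})^2 \gamma h^\perp\,\mathrm{d}x$ is a centered real Gaussian random variable under $\nu^\perp_{Q,x_0}$ whose variance is $\jb{C_{1,x_0}^Q((Q^{\eta_L}_{x_0})^2\gamma), (Q^{\eta_L}_{x_0})^2\gamma}$. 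Because $(Q^{\eta_L}_{x_0})^2\gamma$ is a translate of a fixed Schwartz function with $L^2$ norm $O(1)$ uniformly in $x_0$, and the Green's function estimate in Lemma \ref{lem: covariance bound} gives uniform bounds, the variance is $O(1)$ uniformly in $x_0,\theta$. Hence $\mathbb{E}_{\nu^\perp_{Q,x_0}}[e^{2\zeta b(h^\perp) L^{1/2}}] \le e^{c\zeta^2 L}$, and applying Cauchy--Schwarz together with Proposition \ref{prop: BK-prob} yields \eqref{EE13} for $M$ large.

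The main work is in \eqref{E12} and \eqref{E13}. For the quartic estimate, I will first reduce to the unconditional bound
\begin{equation*}
\mathbb{E}_{\nu^\perp_{Q,x_0}}\Big[\exp\Big(\tfrac{2\zeta}{L^3}\int_{\T_{L^2}} |h^\perp|^4\,\mathrm{d}x\Big)\Big] \le C e^{cL}
\end{equation*}
via Cauchy--Schwarz. To prove this bound I will apply the Bou\'e--Dupuis formula as in \eqref{DUP} with $F(\cdot) = -\frac{2\zeta}{L^3}\int|\cdot|^4\,\mathrm{d}x$, split $|Y(1)+V(1)|^4 \lesssim |Y(1)|^4 + |V(1)|^4$, and absorb $\frac{C\zeta}{L^3}\int|V(1)|^4$ into $\frac14\|V(1)\|_{H^1_{C^Q}}^2$ by a one-dimensional Gagliardo--Nirenberg--type interpolation (here the factor $L^{-3}$ leaves ample room, since $\int|V(1)|^4 \le \|V(1)\|_{L^\infty}^2 \|V(1)\|_{L^2}^2 \lesssim \|V(1)\|_{H^1}^4$ and the volume factor from $\|V(1)\|_{L^2}$ is at most $L^2$). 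The residual term $\frac{C\zeta}{L^3}\mathbb{E}[\int|Y(1)|^4]$ is bounded by $CL^{-1}$ using the uniform diagonal bound for the Green's function from Lemma \ref{lem: covariance bound}, giving the claimed $e^{cL}$ (in fact $e^{o(L)}$). The cubic estimate \eqref{E13} is handled by the same variational argument, where the localizing weight $Q^{\eta_L}_{x_0}$ makes the drift term even easier: Young's inequality $\frac{\zeta}{L^{3/2}}Q^{\eta_L}_{x_0}|w|^3 \le \frac{\zeta'}{L^3}|w|^4 + C\zeta(Q^{\eta_L}_{x_0})^4$ reduces it to the quartic case plus the deterministic error $C\zeta\|Q^{\eta_L}_{x_0}\|_{L^4}^4 = O(1)$.
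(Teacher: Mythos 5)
Your strategy for \eqref{E11} coincides with the paper's argument, and your treatment of \eqref{EE13} via the Gaussian moment-generating function of $b(h^\perp)$ is a valid and arguably cleaner alternative to the paper's dyadic decomposition. However, there is a genuine gap in your approach to \eqref{E12} and \eqref{E13}.

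The issue is that the ``unconditional'' bound
\[
\mathbb{E}_{\nu^\perp_{Q,x_0}}\Big[\exp\Big(\tfrac{2\zeta}{L^3}\int_{\T_{L^2}}|h^\perp|^4\,\mathrm{d}x\Big)\Big]\le Ce^{cL}
\]
that you want to establish after Cauchy--Schwarz is \emph{false}: the left side is infinite. A Gaussian measure cannot integrate $e^{c|X|^4}$ for any $c>0$, and the volume factor $L^{-3}$ does not save you: for a fixed $L$, the event that $|h^\perp|\ge\rho$ on a unit interval has probability $\gtrsim e^{-c\rho^2}$ while the integrand on that event is $\gtrsim e^{\zeta\rho^4/L^3}$, and the quartic beats the quadratic as $\rho\to\infty$. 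You can also see the failure directly inside the Bou\'e--Dupuis argument you sketch: after the splitting, you need to absorb $\tfrac{C\zeta}{L^3}\int|V(1)|^4\lesssim\tfrac{C\zeta}{L^3}\|V(1)\|^4_{H^1}$ into $\tfrac14\|V(1)\|^2_{H^1}$, which only works when $\|V(1)\|^2_{H^1}\lesssim L^3/\zeta$; for larger drifts the quartic term wins and the variational supremum is $+\infty$. The same obstruction kills \eqref{E13} once you reduce it to the quartic case by Young's inequality.

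What the paper does instead is to keep the constraint $D_L=\{\|h^\perp\|_{L^2(\T_{L^2})}^2\le DL^2\}$ \emph{inside} the expectation and combine it with a dyadic decomposition of $B_K^c$ into shells $F_\ell=\{\ell\sqrt{\log L^2}\le\|h^\perp\|_{L^\infty}<2\ell\sqrt{\log L^2}\}$, $\ell\ge K$. On $F_\ell\cap D_L$ one has the deterministic bound $\frac{\zeta}{L^3}\int|h^\perp|^4\le\frac{\zeta}{L^3}\|h^\perp\|_{L^\infty}^2\|h^\perp\|_{L^2}^2\lesssim\frac{\zeta D}{L}\ell^2\log L^2$, which is dominated by the Gaussian cost $c\ell^2\log L^2$ of the shell because of the extra factor $L^{-1}$; summing over $\ell\ge K$ then yields $e^{-cK^2\log L^2}=e^{-cM^2L}$. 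The cubic estimate is handled similarly, with a separate argument via the moment-generating function of Lemma~\ref{LEM: mu-mgf} for the range $\ell\le\ell^*\sim L^{3/2}/\sqrt{\log L^2}$. The key structural point your proposal misses is that the $L^2$-constraint from $D_L$ must be used \emph{in tandem with} the pointwise bound on each shell; discarding it through Cauchy--Schwarz leaves an expectation that does not exist.
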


To prove Lemma \ref{lem: mu2-ests}, we first need to prove the following moment-generating function estimate.

\begin{lemma}\label{LEM: mu-mgf}
There exist constants $c_1, c_2>0$ such that for any $0<\ld\le c_2$, we have
\begin{align}
\mathbb{E}_{\nu^\perp_{Q, x_0} }\Big[e^{\lambda\int_{-L^2}^{L^2}Q^{\eta_L}_{x_0}|h^\perp|^2\,\mathrm{d}x} \Big] &\le\frac{1}{1-c_1 \lambda}, 
\label{E5}
\end{align}

\noi
uniformly in $x_0\in \T_{L^2}$, $\dr\in [0,2\pi]$, and $L \ge 1$, where $c_2$ ensures that the condition $0<c_1 \ld<1$ holds. 
\end{lemma}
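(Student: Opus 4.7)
\textbf{Plan for the proof of Lemma \ref{LEM: mu-mgf}.} The plan is to estimate the moment-generating functional of the quadratic form $\int Q^{\eta_L}_{x_0}|h^\perp|^2\,dx$ under the Schrödinger-type Gaussian $\nu_{Q,x_0}^\perp = \nu^\perp_{C_{1,x_0}^Q}\otimes\nu^\perp_{C_{2,x_0}^Q}$ via the Boué--Dupuis variational formula, exactly as in the proof of Proposition \ref{PROP: gaussian-conc}. Let $W(t)=(W_1(t),W_2(t))$ be a pair of cylindrical Wiener processes adapted to orthonormal bases of eigenfunctions of $(C_{i,x_0}^Q)^{-1}$ on the projected spaces $\cj V^{L^2,\mathrm{Re}}_{x_0,0}$ and $\cj V^{L^2,\mathrm{Im}}_{x_0,0}$, and set $Y(1):=((C_{1,x_0}^Q)^{1/2}W_1(1),(C_{2,x_0}^Q)^{1/2}W_2(1))$ so that $\mathrm{Law}(Y(1))=\nu^\perp_{Q,x_0}$. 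By \eqref{DUP},
\begin{equation*}
\log\mathbb{E}_{\nu^\perp_{Q,x_0}}\Bigl[e^{\lambda\int Q^{\eta_L}_{x_0}|h^\perp|^2\,dx}\Bigr]
=\sup_{V\in\mathbb{H}^1_a}\mathbb{E}\!\left[\lambda\int Q^{\eta_L}_{x_0}|Y(1)+V(1)|^2\,dx-\tfrac{1}{2}\|V(1)\|_{H^1_{C^Q}}^2\right].
\end{equation*}

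The core step is to absorb the $V$-part of the quadratic form into the Dirichlet energy. First apply the elementary bound $|Y+V|^2\le 2|Y|^2+2|V|^2$ to split the functional. For the $|V|^2$ term, use Lemma \ref{LEM:positive}: there exists $\zeta>0$, uniform in $x_0,\theta,L$, such that $\|V(1)\|_{H^1_{C^Q}}^2=\langle B_1^Q\,\Re V(1),\Re V(1)\rangle+\langle B_2^Q\,\Im V(1),\Im V(1)\rangle\ge\zeta\|V(1)\|_{L^2}^2$. Combined with the trivial estimate
\begin{equation*}
2\lambda\!\int Q^{\eta_L}_{x_0}|V(1)|^2\,dx\le 2\lambda\|Q\|_{L^\infty(\mathbb{R})}\|V(1)\|_{L^2}^2\le \tfrac{2\lambda\|Q\|_{L^\infty}}{\zeta}\|V(1)\|_{H^1_{C^Q}}^2,
\end{equation*}
this term is absorbed into $\tfrac{1}{2}\|V(1)\|_{H^1_{C^Q}}^2$ provided $\lambda\le c_2:=\zeta/(8\|Q\|_{L^\infty})$. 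The variational supremum is then controlled by the deterministic term $2\lambda\,\mathbb{E}[\int Q^{\eta_L}_{x_0}|Y(1)|^2\,dx]$.

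Finally, evaluate the remaining deterministic piece using the kernel of $C_{i,x_0}^Q$. As in \eqref{DUP2}--\eqref{DUP3},
\begin{equation*}
\mathbb{E}\!\left[\int Q^{\eta_L}_{x_0}|Y(1)|^2\,dx\right]=\int_{\mathbb{T}_{L^2}}Q^{\eta_L}_{x_0}(x)\bigl(G_{1,x_0}(x,x)+G_{2,x_0}(x,x)\bigr)dx,
\end{equation*}
and by Lemma \ref{lem: covariance bound} the diagonal Green's functions are uniformly bounded in $x,x_0,L$. Since $\|Q^{\eta_L}_{x_0}\|_{L^1(\mathbb{T}_{L^2})}\le \|Q\|_{L^1(\mathbb{R})}<\infty$ uniformly in $L$, we obtain $\log\mathbb{E}[\,\cdot\,]\le C\lambda$ for $\lambda\le c_2$, which gives $\mathbb{E}[\,\cdot\,]\le e^{C\lambda}\le (1-c_1\lambda)^{-1}$ on a (possibly smaller) interval $0<\lambda\le c_2'$ with $c_1:=2C$, using $e^x\le (1-2x)^{-1}$ for $x\in[0,\tfrac{1}{2}]$.

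\textbf{Main obstacle.} The only delicate point is the uniformity in $x_0\in\mathbb{T}_{L^2}$, $\theta\in[0,2\pi]$, and $L\ge 1$ of the constants $\zeta$ and the Green's function bound. The former is furnished by Lemma \ref{LEM:positive}, whose proof (sketched after the statement) ensures $\zeta$ does not degenerate as $L\to\infty$ because the projected operators remain uniformly positive on the normal space; the latter comes from Lemma \ref{lem: covariance bound}. Once these two uniform inputs are in hand, the remainder of the argument is a routine variational computation.
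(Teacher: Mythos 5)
Your proof is correct, but it takes a genuinely different route from the paper. The paper's proof is a direct power-series argument: expand $e^{\lambda\int Q|h^\perp|^2}$ into $\sum_k \frac{\lambda^k}{k!}\mathbb{E}\big[(\int Q|h^\perp|^2)^k\big]$, estimate the $k$-th moment via H\"older's inequality together with the Gaussian moment bound $\mathbb{E}[|h^\perp(x)|^{2k}]^{1/k}\le (2k-1)\mathbb{E}[|h^\perp(x)|^2]\le 2kC$ (the constant $C$ coming from the uniform diagonal bound on the Green's function in Lemma \ref{lem: covariance bound}), simplify $k^k/k!\sim e^k$ via Stirling, and sum the resulting geometric series $\sum(c_1\lambda)^k=(1-c_1\lambda)^{-1}$ directly. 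Your argument instead invokes the Bou\'e--Dupuis formula, mirroring the proof of Proposition \ref{PROP: gaussian-conc}: you split $|Y+V|^2\le 2|Y|^2+2|V|^2$, absorb the $|V|^2$ piece into $\frac12\|V(1)\|^2_{H^1_{C^Q}}$ using the uniform coercivity $\zeta\|V\|_{L^2}^2\le\|V\|^2_{H^1_{C^Q}}$ from Lemma \ref{LEM:positive}, and bound the $|Y|^2$ piece by the same Green's function diagonal and $\|Q\|_{L^1}<\infty$. Both proofs land on the same bound with the same uniform inputs (Lemmas \ref{LEM:positive} and \ref{lem: covariance bound}). The paper's route is more elementary and produces the stated $(1-c_1\lambda)^{-1}$ form automatically as a geometric series; yours reuses the variational machinery already in place and needs one extra elementary step ($e^{C\lambda}\le(1-2C\lambda)^{-1}$ for $C\lambda\le\frac12$) at the end.

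Two small caveats worth tightening: after passing from $\int_0^1\|\dot V(t)\|^2\,\mathrm{d}t$ to $\|V(1)\|^2$ you should write $\le$ rather than $=$ (as in \eqref{DUP}); this is harmless since only the upper bound is needed. And the Bou\'e--Dupuis formula \eqref{DUP} is stated for $F$ bounded from below, which $-\lambda\int Q|h^\perp|^2$ is not; strictly one should truncate $F$ and pass to the limit, though the paper itself uses the formula in the same unbounded regime in Proposition \ref{PROP: gaussian-conc}, so this is consistent with the paper's own level of rigor.
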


\begin{proof}
Expanding the exponential in \eqref{E5}, we have 
\begin{align}
&\mathbb{E}_{\nu^\perp_{Q, x_0 } }\Big[e^{\lambda\int_{-L^2}^{L^2} Q_{x_0}^{\eta_L} |h^\perp|^2\,\mathrm{d}x} \Big] \notag \\
=&\sum_{k=0}^\infty \frac{\lambda^k}{k!}\mathbb{E}_{\nu^\perp_{Q, x_0} }\Bigg[\bigg| \int_{-L^2}^{L^2} Q_{x_0}^{\eta_L}(x)|h^\perp(x)|^2\,\mathrm{d}x\bigg|^k\Bigg] \notag \\
=&\sum_{k=0}^\infty \frac{\lambda^k}{k!}\int_{-L^2}^{L^2}\cdots\int^{L^2}_{-L^2} Q_{x_0}^{\eta_L}(x_1)\cdots Q_{x_0}^{\eta_L}(x_k) \mathbb{E}_{\nu^\perp_{Q, x_0}}\big[|h^\perp(x_1)|^2\cdots |h^\perp(x_k)|^2 \big]\,\mathrm{d}x_1\cdots \mathrm{d}x_k.
\label{E6}
\end{align}

By using H\"older's inequality, we have 
\begin{align}
\eqref{E6} \le & \sum_{k=0}^\infty \frac{\lambda^k}{k!}\int\cdots\int_{-L^2}^{L^2} Q_{x_0}^{\eta_L}(x_1)\cdots Q_{x_0}^{\eta_L}(x_k) \prod_{i=1}^k\mathbb{E}_{\nu^\perp_{Q, x_0} }\big[|h^\perp(x_i)|^{2k}\big]^{\frac{1}{k}}\,\mathrm{d}x_1\cdots \mathrm{d}x_k. 
\label{E7}
\end{align}

\noi
By  the covariance estimate in Lemma \ref{lem: covariance bound}, there exists a constant $C>0$ such that 
\begin{align}
\mathbb{E}_{\nu^\perp_{Q, x_0} }\big[|h^\perp(x_i)|^{2k}\big]^{\frac{1}{k}} \le (2k-1) \mathbb{E}_{\nu^\perp_{Q, x_0} }\big[|h^\perp(x_i)|^{2}\big] \le 2k C,
\label{E8}
\end{align}

\noi
uniformly in $x_0\in \T_{L^2}$ and $\dr \in [0, 2\pi ]$.  By plugging \eqref{E8} into \eqref{E7}, there exists a constant $c_1, c_2>0$ such that for any $|\ld|\le c_2$, we have   
\begin{align}
\mathbb{E}_{\nu^\perp_{Q, x_0 } }\Big[e^{\lambda\int_{-L^2}^{L^2} Q_{x_0}^{\eta_L} |h^\perp|^2\,\mathrm{d}x} \Big] &\le \sum_{k=0}^\infty \frac{\lambda^k}{k!}\int_{-L^2}^{L^2}\cdots\int_{-L^2}^{L^2} Q_{x_0}^{\eta_L}(x_1)\cdots Q_{x_0}^{\eta_L}(x_k) (2C)^k k^k \,\mathrm{d}x_1\cdots \mathrm{d}x_k \notag \\
&\le \sum_{k=0}^\infty ( c_1 \ld)^k= \frac{1}{1-c_1\lambda},
\label{E55}
\end{align}

\noi
uniformly in $x_0\in \T_{L^2}$, $\dr \in [0,2\pi]$, and $L \ge 1$, where we used Stirling's formula to approximate $\frac{k^k}{k!}$ as $\frac{e^k}{\sqrt{2\pi k}}$. This completes the proof of Lemma \ref{LEM: mu-mgf}.

\end{proof}

Using Lemma \ref{LEM: mu-mgf}, we now prove Lemma \ref{lem: mu2-ests}.

\begin{proof}[Proof of Lemma \ref{lem: mu2-ests}]
We begin by proving \eqref{E11}. Thanks to \eqref{LinfL} and Proposition \ref{prop: BK-prob}, 
\begin{align*}
\mathbb{E}_{\nu^\perp_{Q, x_0}}\Big[\|h^\perp\|^p_{L^\infty}, \, B_K^c, \, D_L \Big] &\le \mathbb{E}_{\nu^\perp_{Q, x_0} }\Big[\|h^\perp\|^{2p}_{L^\infty} \Big]^{\frac{1}{2}}\nu^\perp_{Q, x_0}(B_K^c)^{\frac{1}{2}}\\
& \le C(\log L^2 )^{\frac p2} \nu^\perp_{Q, x_0}(B_K^c)^{\frac{1}{2}} \\
&\le Ce^{-c K^2\log L} \\
&\les e^{-cM^2 L},
\end{align*}

\noi 
uniformly in $x_0, \in \T_{L^2}$ and $\dr \in [0,2\pi]$, where
$K=M\sqrt{\frac{L}{\log L^2}}$ for sufficiently large $M$. This implies the estimate \eqref{E11}.

We now prove \eqref{E12}. We first perform the dyadic decomposition
\begin{align}
\mathbb{E}_{\nu^{\perp}_{Q, x_0} }\Big[e^{\frac{\zeta}{L^3}\int_{-L^2}^{L^2} |h^\perp|^4\,\mathrm{d}x}, \,  B_K^c, \, D_L \Big] &\le \sum _{\ell \ge  K} \mathbb{E}_{\nu^{\perp}_{Q, x_0} }\Big[e^{\frac{\zeta}{L^3}\int_{-L^2}^{L^2} |h^\perp|^4\,\mathrm{d}x}, \,  F_\ell, \,   \|h^\perp \|_{L^2}\le D^{\frac 12 }L \Big],
\label{E9}
\end{align}

\noi 
where the sum is taken over dyadic numbers $\ell=K, 2K, 4K,\dots $ and 
\begin{equation}\label{eqn: Fl}
F_\ell:=\big\{\ell\sqrt{\log L^2} \le  \|h^\perp\|_{L^\infty} < 2\ell \sqrt{\log L^2} \big\}.
\end{equation}

\noi
By the Cauchy-Schwarz inequality and Proposition \ref{prop: BK-prob}, we have
\begin{align}
\eqref{E9} &\le \sum_{\ell \ge K}  \mathbb{E}_{\nu^{\perp}_{Q, x_0} }\Big[e^{\frac{2\zeta}{L^3}\int_{-L^2}^{L^2} |h^\perp|^4\,\mathrm{d}x}, \,  F_\ell, \,   \|h^\perp \|_{L^2}\le D^{\frac 12 }L \Big]^\frac 12 \nu^\perp_{Q, x_0}(F_\ell)^{\frac 12 } \notag \\
& \les  \sum_{\ell \ge K} e^{\frac{4D \zeta}{L}\ell^2 \log L^2} e^{-c\ell^2 \log L^2 }   \notag \\
&\les e^{-\wt c K^2 \log L^2 } \notag \\
&\les e^{-c M^2 L} \notag ,
\end{align}

\noi
uniformly in $x_0\in \T_{L^2}$ and $\dr \in [0,2\pi]$, where
$K=M\sqrt{\frac{L}{\log L^2}}$ for sufficiently large $M$. This shows the estimate \eqref{E12}.

We now prove \eqref{E13}. As before, we take the dyadic decomposition and use Proposition \ref{prop: BK-prob}
\begin{align}
&\mathbb{E}_{\nu^\perp_{Q, x_0} }\Big[e^{\frac{\zeta}{L^{\frac{3}{2}}}\int_{-L^2}^{L^2} Q_{x_0}^{\eta_L}|h^\perp|^3\,\mathrm{d}x}, \, B_K^c, \, D_L \Big]\notag \\
&=\sum_{\ell \ge K} \mathbb{E}_{\nu^\perp_{Q, x_0} } \Big[e^{\frac{\zeta}{L^{\frac{3}{2}}}\int_{-L^2}^{L^2} Q_{x_0}^{\eta_L}|h^\perp|^3\,\mathrm{d}x}, \, F_\ell, \, D_L \Big] \notag \\
&\le \sum_{\ell \ge K} \mathbb{E}_{\nu^\perp_{Q, x_0} }\Big[e^{\frac{2\zeta}{L^{\frac{3}{2}}}\int_{-L^2}^{L^2} Q_{x_0}^{
\eta_L}|h^\perp|^3\,\mathrm{d}x},\,  F_\ell, \,  D_L \Big]^{\frac{1}{2}} \nu^\perp_{Q, x_0}(F_\ell)^{\frac{1}{2}} \notag \\
&\le   \sum_{\ell \ge K} \mathbb{E}_{\nu^\perp_{Q, x_0} }\Big[e^{\frac{2\zeta}{L^{\frac{3}{2}}}\int_{-L^2}^{L^2} Q_{x_0}^{
\eta_L}|h^\perp|^3\,\mathrm{d}x},\,  F_\ell, \,  D_L \Big]^{\frac{1}{2}} e^{-2c \ell^2 \log L^2},
\label{E90}
\end{align}

\noi
where the sum is taken over dyadic numbers $\ell=K, 2K, 4K,\dots $, and $F_\ell$ is given in \eqref{eqn: Fl}. To prove the desired estimate \eqref{E13}, it suffices to show 
\begin{align}
\mathbb{E}_{\nu^\perp_{Q, x_0} }\Big[e^{\frac{2\zeta}{L^{\frac{3}{2}}}\int_{-L^2}^{L^2} Q_{x_0}^{
\eta_L}|h^\perp|^3\,\mathrm{d}x},\,  F_\ell, \,  D_L \Big] \les 1
\label{EE0}
\end{align}

\noi
uniformly in $x_0 \in \T_{L^2}$ and $\dr \in [0,2\pi]$.

For $\ell > \ell^*$, where $\ell^*$ is to be determined below, we proceed with the following estimate. Thanks to the conditions $F_\ell$, $\| h^\perp \|_{L^2}\le DL^{\frac 12}$, and Proposition \ref{prop: BK-prob}, we have 
\begin{align}
\mathbb{E}_{\nu^\perp_{Q, x_0}}\Big[e^{\frac{2\zeta}{L^{\frac{3}{2}}}\int_{-L^2}^{L^2} Q_{x_0}^{\eta_L} |h^\perp|^3\,\mathrm{d}x},\, F_\ell,\,  \|h^\perp\|_{L^2}\le D^{\frac{1}{2}} L \Big] &\le e^{4\zeta \ell D L^{\frac{1}{2}}\sqrt{\log L^2}}\nu^\perp_{x_0, \dr}(F_\ell) \notag \\
&\les e^{4\zeta \ell D L^{\frac{1}{2}} \sqrt{\log L^2}}e^{-c\ell^2\log L^2} \notag \\ 
&\les 1,
\label{E10}
\end{align}

\noi 
uniformly in $x_0 \in \T_{L^2}$ and $\dr \in [0,2\pi]$, where 
that the bound \eqref{E10} holds provided
\[\ell>\frac{4\zeta D}{c}\frac{L^{\frac{1}{2}}}{\sqrt{\log L^2}}.\]

\noi
For $\ell \le \ell^*$, thanks to Lemma \ref{LEM: mu-mgf}, we obtain
\begin{align}
\mathbb{E}_{\nu_{Q,x_0}^\perp} \Big[e^{\frac{2\zeta}{L^{\frac{3}{2}}}\int_{-L^2}^{L^2} Q_{x_0}^{\eta_L} |h^\perp|^3\,\mathrm{d}x}, \, F_\ell,\,  \|h^\perp \|_{L^2} \le D L^{\frac 12} \Big]&\le \mathbb{E}_{\nu^\perp_{Q, x_0}}\Big[e^{\frac{4\zeta \ell\sqrt{\log L^2}}{L^{\frac{3}{2}}}\int_{-L^2}^{L^2} Q_{x_0}^{\eta_L} |h^\perp|^2\,\mathrm{d}x} \Big] \notag \\
&\le \frac{1}{1-c_1 \lambda},
\label{EE1}
\end{align}

\noi
uniformly in $x_0\in \T_{L^2}$ and $\dr \in [0,2\pi]$, with
\[\lambda = \frac{4 \zeta \ell \sqrt{\log L^2}}{L^{\frac{3}{2}}}. \]

\noi 
Note that the bound \eqref{EE1} is valid if $c_1 \lambda <1$, where the condition ensures the convergence of the geometric series in \eqref{E55}, or equivalently
\begin{align}
\ell \le \frac{1}{4\zeta  c_1} \frac{L^\frac{3}{2} }{\sqrt{\log L^2}} .
\end{align}

\noi 
By choosing $\ell^*= \frac{1}{4 \zeta c_1} \frac{L^\frac{3}{2} }{\sqrt{\log L^2}}$, we obtain \eqref{E10} and \eqref{EE1}. It follows from \eqref{E90} and \eqref{EE0} that
\begin{align*}
\mathbb{E}_{\nu^\perp_{Q, x_0} }\Big[e^{\frac{\zeta}{L^{\frac{3}{2}}}\int_{-L^2}^{L^2} Q_{x_0}^{\eta_L}|h^\perp|^3\,\mathrm{d}x}, \, B_K^c, \, D_L \Big] &\le  \sum_{\ell \ge K} \mathbb{E}_{\nu^\perp_{Q, x_0} }\Big[e^{\frac{2\zeta}{L^{\frac{3}{2}}}\int_{-L^2}^{L^2} Q_{x_0}^{
\eta_L}|h^\perp|^3\,\mathrm{d}x},\,  F_\ell, \,  D_L \Big]^{\frac{1}{2}} e^{-2c \ell^2 \log L^2}\\
&\les  e^{-\wt c K^2\log L}\\
&\les e^{-cM^2 L},
\end{align*}

\noi
uniformly in $x_0 \in \T_{L^2}$ and $\dr \in [0,2\pi]$, where
$K=M\sqrt{\frac{L}{\log L^2}}$ for sufficiently large $M$. This shows the estimate \eqref{E13}.

Finally, we prove \eqref{EE13}. By taking the dyadic decomposition and using Proposition \ref{prop: BK-prob}, 
\begin{align}
\E_{\nu^\perp_{Q, x_0} } \bigg[ e^{\zeta b(h^\perp)L^\frac 12}, \,  B_K^c, \, D_L  \bigg]&=\sum_{\ell \ge K}\E_{\nu^\perp_{Q, x_0} } \bigg[ e^{\zeta b(h^\perp)L^\frac 12}, \,  F_\ell, \, \| h^\perp \|_{L^2} \le D^{\frac 12}L  \bigg] \notag \\
&\le \sum_{\ell \ge K}\E_{\nu^\perp_{Q, x_0} } \bigg[ e^{2\zeta b(h^\perp)L^\frac 12}, \,  F_\ell, \, \| h^\perp \|_{L^2} \le D^{\frac 12}L  \bigg]^\frac 12 \nu^\perp_{Q, x_0}(F_\ell)^\frac 12 \notag \\
&\le  \sum_{\ell \ge K}\E_{\nu^\perp_{Q, x_0} } \bigg[ e^{2\zeta b(h^\perp)L^\frac 12}, \,  F_\ell, \, \| h^\perp \|_{L^2} \le D^{\frac 12}L  \bigg]^\frac 12 e^{-c\ell^2 \log L^2},
\label{EEE1}
\end{align}

\noi 
where the sum is taken over dyadic numbers $\ell=K, 2K, 4K,\dots $, and $F_\ell$ is given in \eqref{eqn: Fl}. Note that on the set $F_\ell$, we have 
\begin{align}
|\zeta b(h^\perp)|=\bigg|3\zeta \int_{\T_{L^2}}(Q^{\eta_L}_{x_0})^2\g(x)h^\perp(x)\, \mathrm{d}x\bigg|\le c\zeta \ell \sqrt{\log L^2}
\label{EEEE1}
\end{align}

\noi
for some constant $c>0$, where $b(h^\perp)$ is given in \eqref{bhper}. From \eqref{EEE1} and \eqref{EEEE1}, we obtain 
\begin{align*}
\E_{\nu^\perp_{Q, x_0} } \bigg[ e^{\zeta b(h^\perp)L^\frac 12}, \,  B_K^c, \, D_L  \bigg] &\les \sum_{\ell \ge K} e^{c\zeta L^\frac 12 \ell \sqrt{\log L^2}} e^{-c\ell^2 \log L^2}\\
&\les \sum_{\ell \ge K} e^{-c\ell(\ell \log L^2- \zeta L^\frac 12  \sqrt{\log L^2})}, 
\end{align*}

\noi 
which is summable if 
\begin{align*}
\ell>\frac{\zeta L^{\frac{1}{2}}}{\sqrt{\log L^2}}.
\end{align*}

\noi
By choosing $K=M\sqrt{\frac{L}{\log L^2}}$ for sufficiently large $M=M(\zeta)$, we have
\begin{align*}
\E_{\nu^\perp_{Q, x_0} } \bigg[ e^{\zeta b(h^\perp)L^\frac 12}, \,  B_K^c, \, D_L  \bigg] \les e^{-cK^2 \log L^2}\les e^{-cM^2 L},
\end{align*}

\noi 
uniformly in $x_0 \in \T_{L^2}$ and $\dr \in [0,2\pi]$.
This completes the proof of Lemma \ref{lem: mu2-ests}.

\end{proof}

We are now ready to prove Proposition \ref{PROP:error1}.   
\begin{proof}[Proof of Proposition \ref{PROP:error1} ]
It follows from \eqref{EE00}, \eqref{E4}, and Lemma \ref{lem: mu2-ests} that
\begin{align}
\bigg|\frac{\mathcal{I}(F \ind_{D_L\cap B_K^c})}{\mathcal{I}(1)} \bigg| \les e^{\wt cL}\E_{ \nu^\perp_{Q, x_0} } \Big[ H(h^\perp) e^{b(h^\perp) L^\frac 12 }, \, D_L, \,B_K^c  \Big] \les e^{\wt cL} e^{-cM^2 L}\les e^{-\frac{c}{2}M^2L}.
\label{Beat}
\end{align}

\noi
This completes the proof of Proposition \ref{PROP:error1}.
\end{proof}

\subsection{Error estimate: Part 2}

In this subsection, we prove the following error estimate in \eqref{sepmaer}.


\begin{proposition}\label{PROP:error2}
Let $F$ be a bounded and continuous function. Then, there exists a constant $c>0$ such that  
\begin{align}
\bigg| \frac{\mathcal{I}(F \ind_{D_L^c})}{\mathcal{I}(1)} \bigg| \les e^{-c DL^2 },
\label{Y1}
\end{align}

\noi 
uniformly in $x_0 \in \T_{L^2}$ and $\dr \in [0,2\pi]$, where the sets $D_L$ is given in  \eqref{eqn: BK-def} and $D$ is a sufficiently large constant.
\end{proposition}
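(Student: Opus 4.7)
The proof will follow the structure of Proposition \ref{PROP:error1}, but since the hypothesis of Lemma \ref{lem: replace} can fail on $D_L^c$ (the bound $|g|\le L^{3/2-\eps}$ is part of the definition of $D_L$), I cannot integrate out the $t$-variable there. Instead, I will exploit the concentration of measure in Proposition \ref{PROP: gaussian-conc} together with the key pointwise bound
\[
e^{\Ld L^{3/2}\jb{Q^{\eta_L}_{x_0}, \Re h} + \tfrac{\Ld}{2}\|h\|_{L^2}^2} \le 1 \quad\text{on } S_{Q,x_0},
\]
which follows directly from the definition of $S_{Q,x_0}$ in \eqref{SQK1}; in the orthogonal coordinates this reads $e^{\Ld G_1(t) + \tfrac{\Ld}{2}\|h^\perp\|_{L^2}^2}\le 1$ on $S_{h^\perp}$. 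After using this bound together with $e^{-t^2/(2\s^2)} \le 1$ and $\mathrm{Det}_L(h) \lesssim 1$, the representation \eqref{eqn: plug-here} reduces the problem to controlling
\[
\|F\|_\infty Z_{x_0,L}\,\E_{\nu^\perp_{Q,x_0}}\!\left[\ind_{D_L^c}\int_{S_{h^\perp}} e^{\bar{\mathcal{E} }(h^\perp,t)+G_2(t,h^\perp)}\,\mathrm{d}t\right].
\]

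I will split $D_L^c = A_1\cup A_2$ with $A_1=\{\|h^\perp\|_{L^2}^2 > DL^2\}$ and $A_2 = \{|g|>L^{3/2-\eps}\}$. The contribution on $A_2$ is dispatched immediately: since $g=\jb{h^\perp,\g}$ is Gaussian under $\nu^\perp_{Q,x_0}$ with variance of order $1$ (using $\|\g\|_{L^2}=O(1)$ and the pointwise covariance bound in Lemma \ref{lem: covariance bound}), one has $\nu^\perp_{Q,x_0}(A_2)\lesssim e^{-cL^{3-2\eps}}$, which overwhelms any exponential-in-$L^2$ loss in the remaining factors when combined with a crude Cauchy--Schwarz estimate along the lines of Lemma \ref{lem: mu2-ests}.

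For the dominant contribution on $A_1$, I will apply Cauchy--Schwarz in $\nu^\perp_{Q,x_0}$, using $\nu^\perp_{Q,x_0}(A_1)^{1/2}\lesssim e^{-cDL^2/2}$ from Proposition \ref{PROP: gaussian-conc}. The $t$-integral over $S_{h^\perp}$, whose length is $\lesssim L^{-3/2+}$ on the complement of $A_2$ (see the argument following \eqref{sizetint}), will be bounded by the length times the value of the integrand at $t=t^+$, and the second moment of this value will be estimated via a dyadic decomposition on $\|h^\perp\|_{L^2}$ and $\|h^\perp\|_{L^\infty}$ analogous to that in Lemma \ref{lem: mu2-ests}. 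On the dyadic slice $H_k=\{2^kL\le \|h^\perp\|_{L^2}<2^{k+1}L\}$, the coupling $|t^+|\lesssim \|h^\perp\|_{L^2}^2/L^{3/2}$ coming from $G_1(t^+)=-\tfrac12\|h^\perp\|_{L^2}^2$ (cf.\ \eqref{eqn: t-oot}), together with the $\mathcal{K}^L_\dl$ constraint from Lemma \ref{LEM:Error0}, keeps the exponent under control, while the Gaussian tail $\nu^\perp_{Q,x_0}(H_k)\lesssim e^{-c4^kL^2}$ makes the dyadic series summable. Combined with the partition-function lower bound $\mathcal{I}(1)\ge ce^{-cL}$ from Lemma \ref{LEM: CL-est}, this yields the claimed bound $e^{-c'DL^2}$ for $D$ sufficiently large.

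The main technical obstacle will be the uniform second-moment bound on $e^{2\bar{\mathcal{E} }+2G_2}$ (evaluated at $t=t^+$) without the \emph{a priori} restriction to $D_L$. The delicate point is that the quadratic term $\tfrac32(t^+)^2\int(Q^{\eta_L}_{x_0}\g)^2$ in $G_2$ and the quartic term $L^{-3}\int|h^\perp|^4$ in $\bar{\mathcal{E} }$ can each produce an exponent of order $L^3$ on large-norm configurations, and they must be balanced precisely against the Gaussian concentration of $\nu^\perp_{Q,x_0}$ at each dyadic scale $H_k$. This is where the interplay between the conditioning on $S_{Q,x_0}$, the constraint $\mathcal{K}^L_\dl$, and the tail estimates of Proposition \ref{PROP: gaussian-conc} becomes essential, ensuring that the polynomially-growing integrand on each slice is strictly dominated by the doubly-exponential probability decay.
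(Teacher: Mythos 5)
Your strategy is sound and, at its core, coincides with the paper's: the key inputs are the same sign bound $\frac{\Ld}{2}\|h\|_{L^2}^2+\Ld L^{3/2}\jb{Q^{\eta_L}_{x_0},\Re h}\le 0$ on $S_{Q,x_0}$ (the paper's \eqref{SQ0}), the bound $\|h^\perp\|_{L^2}\le \dl L^{3/2}$ forced by $S_{Q,x_0}\cap\mathcal{K}^L_\dl$ (Lemma \ref{LEM:Error0}), the Gaussian concentration of Proposition \ref{PROP: gaussian-conc}, dyadic/moment-generating-function estimates for the cubic and quartic exponentials, and the partition-function lower bound $\mathcal{I}(1)\ges e^{-cL}$ of Lemma \ref{LEM: CL-est}. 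Where you diverge is in the technical routing. The paper does not split $D_L^c$, does not retain the interval-length factor $|t^+-t^-|\les L^{-3/2+}$, and never decomposes dyadically in $\|h^\perp\|_{L^2}$: it simply bounds the $t$-marginal Gaussian integral by $1$, converts every error term into a function of $h^\perp$ alone via Lemmas \ref{LEM:Error0}--\ref{LEM:Error1}, and then applies Cauchy--Schwarz twice, once to peel off $\E_{\nu^\perp_{Q,x_0}}[e^{2\dl\|h^\perp\|_{L^2}^2}]^{1/2}\les e^{cL^2}$ and once to extract $\nu^\perp_{Q,x_0}(D_L^c)^{1/4}\les e^{-cDL^2/4}$; the residual factor is the fourth moment of $H(h^\perp)$ restricted to $\{\|h^\perp\|_{L^2}\le\dl L^{3/2}\}$, which Lemma \ref{lem: H-est} shows is bounded uniformly in $L$ by a dyadic decomposition in $\|h^\perp\|_{L^\infty}$ only, with $\dl$ small. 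This is exactly what dissolves the "order $L^3$ exponent" balancing act you worry about in your last paragraph: once the moment estimate is taken on $\{\|h^\perp\|_{L^2}\le\dl L^{3/2}\}$ and the indicator of $D_L^c$ is pulled out by Cauchy--Schwarz, no scale-by-scale competition between $c_0(t^+)^2$, the quartic term, and the Gaussian tail is needed, and the crude $e^{cL}$ loss from $\mathcal{I}(1)$ is absorbed by $e^{-cDL^2}$ for $D$ large. Your route can be made to work for $\dl$ small, but two details need tightening: (i) $\mathrm{Det}_L(h)\les 1$ is not a pointwise bound; you must carry the factor $1+L^{-3}\|h^\perp\|_{L^\infty}^2$ as in \eqref{Y4} (harmless, but it has to enter your moment estimates); and (ii) bounding the $t$-integral by the interval length times the value at $t^+$ is not justified on $D_L^c$, where the Lipschitz estimates of Lemma \ref{LEM:EpsLip} are unavailable -- you should bound by length times the supremum over $[t^-,t^+]$ (and note that the derivative bound \eqref{eqn: derivative-lb} underlying the length estimate survives off $D_L$ only after your split removes $\{|g|>L^{3/2-\eps}\}$ and after using $\|h^\perp\|_{L^2}\le\dl L^{3/2}$ with $\dl$ small); alternatively, drop the length factor altogether, as the paper does, since it buys nothing against $e^{-cDL^2}$. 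Your explicit treatment of the event $\{|g|>L^{3/2-\eps}\}$ via the Gaussian tail of $g$ is a reasonable addition; the paper leaves this component of $D_L^c$ implicit in its appeal to Proposition \ref{PROP: gaussian-conc}.
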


The proof of Proposition \ref{PROP:error2} is provided at the end of this subsection. To begin, we simplify the numerator in
\eqref{Y1}
\begin{align}
\mathcal{I}(F \ind_{D_L^c})=\int\limits_{S_{Q,x_0}\cap \mathcal{K}^L_\dl \cap D_L^c}  F(h)e^{L^3\mathcal{E}(Q^{\eta_L}_{x_0}, L^{-\frac{3}{2}}h)} &e^{\frac \Ld2\|h \|_{L^2}^2+\Ld L^{\frac 32} \jb{Q^{\eta_L}_{x_0}, \Re h } } \notag  \\
\cdot & e^{\frac 12\jb{Q^{\eta_L}_{x_0}, 3|\Re h|^2+|\Im h|^2 }  }\mathrm{Det}_L(h) \,\mathrm{d}\mu_{x_0,\Ld}^\perp(h).
\label{YY1}
\end{align}



From Lemma \ref{LEM:Error1}
\begin{align}
&\mathcal{E}(Q^{\eta_L}_{x_0}, L^{-\frac{3}{2}} h) \notag \\
&\les  \frac 1{L^{\frac 32}} \int_{-L^2}^{L^2} Q^{\eta_L}_{x_0} |h^\perp|^3\, \mathrm{d}x+\frac 1{L^{3}} \int_{-L^2}^{L^2}  |h^\perp|^4 \, \mathrm{d}x+ \dl \int_{-L^2}^{L^2} |h^\perp| \, \mathrm{d}x.
\label{SQQQ0}
\end{align}

\noi
for some small $\dl>0$.

\noi
On the set $S_{Q,x_0}$ in \eqref{SQK}
\begin{align}
\frac \Ld 2 \| h\|_{L^2}^2 + \Ld L^{\frac 32} \jb{Q^{\eta_L}_{x_0}, \Re h} \le 0.
\label{SQ0}
\end{align}

\noi
By using Young's inequality, along with \eqref{eqn: decomp} and \eqref{hperpL21}, we have 
\begin{align}
|\Im h(x)|&=|\Im h^\perp(x)| \notag \\
|\Re h(x)|^2&\le (1+\dl)|\Re h^\perp(x)|^2+(1+c(\dl))\dl_1|\g(x)|^2 \|h^\perp \|_{L^2}^2 
\label{SQ02}
\end{align}

\noi
for some small $\dl, \dl_1>0$ and a large constant $c(\dl)>0$.

\noi 
From \eqref{Det}, \eqref{Det0}, along with \eqref{hperpLinf} and \eqref{hperpL2} in Lemma \ref{LEM:Error0},  
\begin{align}
\mathrm{Det}_L(h)=1+O(L^{-3}\|h \|^2_{L^\infty})\les 1+O(L^{-3}\|h^\perp \|^2_{L^\infty})+\dl^2.
\label{Y4}
\end{align}



\noi 
Combining \eqref{SQQQ0}, \eqref{SQ0}, \eqref{SQ02}, \eqref{Y4}, and 
$S_{Q,x_0}\cap \mathcal{K}^L_\dl \subset \| h^\perp\|_{L^2} \le \dl L^\frac 32$ in \eqref{hperpL2} yields
\begin{align}
|\eqref{YY1} | &\les \| F\|_{L^\infty}
\E_{\nu^\perp_{Q, x_0} } \Big[ e^{2\dl \int_{-L^2}^{L^2} |h^\perp|^2\,\mathrm{d}x}  \Big]^\frac 12
\E_{\nu^\perp_{Q, x_0} } \Big[ H(h^\perp)^2,\,   \|h^\perp \|_{L^2}\le  \dl L^{\frac 32}, \, D_L^c    \Big]^\frac 12,
\label{Y8}
\end{align}

\noi
where $h=h^\perp+\g \jb{Q^{\eta_L}_{x_0}, \Re h  }$ in \eqref{eqn: decomp}, $\nu^\perp_{Q, x_0}$ is the measure in \eqref{GFFSch}, and 
\begin{align}
H(h^\perp):=e^{\frac{C_1}{L^3}\int_{-L^2}^{L^2} |h^\perp|^4\,\mathrm{d}x+\frac{C_2}{L^\frac{3}{2}}\int_{-L^2}^{L^2} Q_{x_0}^{\eta_L}|h^\perp|^3\,\mathrm{d}x \,\mathrm{d}x} 
(1+L^{-3}\|h^\perp\|_{L^\infty}^2).
\label{Y7}
\end{align}

By the Cauchy-Schwarz inequality and Proposition \ref{PROP: gaussian-conc}, 
\begin{equation}\label{eqn: IC-est}
\begin{split}
\eqref{Y8}&\les \|F\|_{L^\infty}e^{c L^2}\mathbb{E}_{\nu^\perp_{Q, x_0}}\Big[H(h^\perp)^4,\,\|h^\perp\|_{L^2}\le \delta L^{\frac{3}{2}}\Big]^{\frac{1}{4}}\nu^\perp_{Q, x_0} (D^c_L)^{\frac{1}{4}}\\
&\les \|F\|_{L^\infty}\mathbb{E}_{\nu_{Q,x_0}^\perp}\Big[H(h^\perp)^4, \,\|h^\perp \|_{L^2}\le \delta L^{\frac{3}{2}}\Big]^{\frac{1}{4}} e^{-c DL^2},
\end{split}
\end{equation}

\noi 
where $D$ is chosen to be sufficiently large.  From Lemma \ref{LEM: CL-est}, the partition function is bounded by 
\begin{align}
\mathcal{I}(1)& \ges e^{-cL}.
\label{Y9}
\end{align}

\noi
By combining \eqref{eqn: IC-est} and \eqref{Y9}, we have
\begin{align}
\bigg| \frac{\mathcal{I}(F \ind_{D_L^c})}{\mathcal{I}(1)} \bigg| & \les e^{cL} \mathbb{E}_{\nu^\perp_{Q, x_0}}\Big[H(h^\perp)^4, \,\|h^\perp\|_{L^2}\le \delta L^{\frac{3}{2}}\Big]^{\frac{1}{4}} e^{-c DL^2} \notag \\
&\les \mathbb{E}_{\nu^\perp_{Q, x_0} }\Big[H(h^\perp)^4, \,\|h^\perp\|_{L^2}\le \delta L^{\frac{3}{2}}\Big]^{\frac{1}{4}}  e^{-\frac{c}{2}D L^2}.
\label{YYYY9}
\end{align}

\noi
Hence,  to prove Proposition \ref{PROP:error2}, it is sufficient to obtain the following lemma. 


\begin{lemma} \label{lem: H-est}
There exists a constant $C$ such that, for sufficiently small $\delta$,
\begin{align}
\mathbb{E}_{\nu^\perp_{Q, x_0}}\Big[H(h^\perp)^4, \,\|h^\perp\|_{L^2}\le \delta L^{\frac{3}{2}}\Big] \le C,
\end{align}

\noi 
uniformly in $x_0\in \T_{L^2}$, $\dr \in [0,2\pi]$, and $L\ge 1$.
\end{lemma}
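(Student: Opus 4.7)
The proof will proceed by a dyadic decomposition of $h^\perp$ according to its $L^\infty$-norm, combined with the moment generating function estimate of Lemma~\ref{LEM: mu-mgf} and the tail probability bound of Proposition~\ref{prop: BK-prob}. First I would apply the elementary inequality $(1+x)^4\le 8(1+x^4)$ together with Cauchy--Schwarz to factor out the polynomial contribution $(1+L^{-3}\|h^\perp\|_{L^\infty}^2)^4$. By the $L^\infty$ moment estimate \eqref{LinfL}, $\mathbb{E}_{\nu^\perp_{Q,x_0}}[\|h^\perp\|_{L^\infty}^{16}]\lesssim (\log L^2)^8$, so $L^{-24}\mathbb{E}[\|h^\perp\|_{L^\infty}^{16}]$ is vanishingly small and this factor contributes a uniformly bounded term. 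The core task is then to bound
\[
\mathbb{E}_{\nu^\perp_{Q,x_0}}\Big[e^{\frac{8C_1}{L^3}\int_{\T_{L^2}}|h^\perp|^4\,\mathrm{d}x+\frac{8C_2}{L^{3/2}}\int_{\T_{L^2}} Q^{\eta_L}_{x_0}|h^\perp|^3\,\mathrm{d}x},\; \|h^\perp\|_{L^2}\le \delta L^{3/2}\Big]
\]
uniformly in $L$.

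Next I would dyadically decompose in $\|h^\perp\|_{L^\infty}$, setting $F_0=\{\|h^\perp\|_{L^\infty}\le 1\}$ and $F_\ell=\{\ell\le \|h^\perp\|_{L^\infty}<2\ell\}$ for dyadic $\ell\ge 1$. On each $F_\ell\cap\{\|h^\perp\|_{L^2}\le \delta L^{3/2}\}$, the deterministic interpolation bounds
\[
\frac{1}{L^3}\int_{\T_{L^2}}|h^\perp|^4\,\mathrm{d}x\le \frac{\|h^\perp\|_{L^\infty}^2\|h^\perp\|_{L^2}^2}{L^3}\le 4\delta^2\ell^2,\qquad \frac{1}{L^{3/2}}\int_{\T_{L^2}}Q^{\eta_L}_{x_0}|h^\perp|^3\,\mathrm{d}x\le \frac{2\ell}{L^{3/2}}\int_{\T_{L^2}}Q^{\eta_L}_{x_0}|h^\perp|^2\,\mathrm{d}x
\]
reduce the contribution on that slice to $e^{32 C_1\delta^2\ell^2}\cdot\mathbb{E}[e^{\lambda_\ell\int_{\T_{L^2}} Q^{\eta_L}_{x_0}|h^\perp|^2},F_\ell]$ with $\lambda_\ell=O(\ell/L^{3/2})$. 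Applying Cauchy--Schwarz and invoking Lemma~\ref{LEM: mu-mgf}, which is valid precisely when $\ell\le cL^{3/2}$, the remaining expectation is bounded by a universal constant; one then pairs with the tail probability $\nu^\perp_{Q,x_0}(F_\ell)$ from Proposition~\ref{prop: BK-prob}.

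To sum the dyadic pieces, I would split at $\ell\asymp \sqrt{\log L^2}$. In the tail regime $\ell\gtrsim \sqrt{\log L^2}$, the sub-Gaussian estimate $\nu^\perp_{Q,x_0}(F_\ell)^{1/2}\lesssim L\,e^{-c\ell^2/2}$ combined with the quartic growth $e^{16 C_1\delta^2\ell^2}$ yields a geometrically convergent dyadic series provided $\delta^2<c/(64C_1)$. In the moderate regime $\ell\le \sqrt{\log L^2}$ the tail probability is no longer useful; here one relies on the Cauchy--Schwarz split with the cubic term and the moment generating function estimate of Lemma~\ref{LEM: mu-mgf}, exploiting the localization of $Q^{\eta_L}_{x_0}$ and the smallness of $\delta$ to keep the sum bounded independently of $L$.

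The main obstacle will be uniformity in $L$ of the small-$\ell$ regime: since the na\"ive interpolation $\frac{1}{L^3}\int|h^\perp|^4\le \delta^2\|h^\perp\|_{L^\infty}^2$ is lossy by a factor of $L$ for typical realizations (where the true value is $O(1/L)$), controlling $\mathbb{E}[e^{c\delta^2\|h^\perp\|_{L^\infty}^2}]$ directly yields only a growth like $L^{O(\delta^2)}$. To extract a truly uniform bound, I would need to bootstrap against the explicit quadratic form generated by Lemma~\ref{LEM: mu-mgf}, leveraging the Schwartz-type decay of $Q^{\eta_L}_{x_0}$ so that the cubic contributes an effective quadratic weight that can be combined with the quartic bound, and taking $\delta$ strictly below the critical threshold set by the tail decay constant of $\|h^\perp\|_{L^\infty}^2$ coming from Proposition~\ref{prop: BK-prob}.
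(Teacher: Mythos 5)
There is a genuine gap, and it is exactly the one you flag in your last paragraph: the quartic exponential $e^{\frac{C}{L^3}\int_{\T_{L^2}}|h^\perp|^4\,\mathrm{d}x}$ on the moderate region $\{\|h^\perp\|_{L^\infty}\lesssim \sqrt{\log L^2}\}$. On that region the tail bound of Proposition \ref{prop: BK-prob} gives nothing (the probability is order one), your interpolation $\frac{1}{L^3}\int|h^\perp|^4\le L^{-3}\|h^\perp\|_{L^\infty}^2\|h^\perp\|_{L^2}^2\le C\delta^2\|h^\perp\|_{L^\infty}^2$ only yields a factor $L^{O(\delta^2)}$, and the repair you sketch — bootstrapping against the quadratic form of Lemma \ref{LEM: mu-mgf} — cannot work: that lemma controls only the $Q^{\eta_L}_{x_0}$-weighted functional $\int Q^{\eta_L}_{x_0}|h^\perp|^2$, which is localized near $Lx_0$, whereas the quartic integral carries no weight and lives on the whole torus of length $2L^2$; the Schwartz decay of $Q^{\eta_L}_{x_0}$ gives no leverage over it. So as written, the small-$\ell$ part of your dyadic sum is not bounded uniformly in $L$, and no amount of shrinking $\delta$ fixes a bound of the form $L^{c\delta^2}$ into a constant.

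The missing idea (which is how the paper closes this) is to abandon the $L^2$-constraint interpolation on the moderate region and use the trivial volume bound instead: on $\{\|h^\perp\|_{L^\infty}\le \sqrt{\log L^2}\}$ one has $\frac{1}{L^3}\int_{\T_{L^2}}|h^\perp|^4\,\mathrm{d}x\le \frac{2L^2}{L^3}(\log L^2)^2=O(L^{-1+})$, and similarly $\frac{1}{L^{3/2}}\int Q^{\eta_L}_{x_0}|h^\perp|^3\,\mathrm{d}x\le \frac{C}{L^{3/2}}(\log L^2)^{3/2}=O(L^{-3/2+})$, so both exponentials are $1+o_L(1)$ there with no probabilistic input at all. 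The $\delta$-dependent interpolation plus the sub-Gaussian tail $e^{-c\ell^2\log L^2}$ is only needed on the dyadic shells $\|h^\perp\|_{L^\infty}\asymp \ell\sqrt{\log L^2}$ with $\ell\ge 1$, where the tail dominates $e^{c\delta^2\ell^2\log L^2}$ once $\delta$ is small; and for the cubic term the MGF estimate of Lemma \ref{LEM: mu-mgf} is invoked only for $\ell$ below the threshold $\asymp L^{3/2}/\sqrt{\log L^2}$, with the crude $\|h^\perp\|_{L^2}^2\le\delta^2L^3$ bound plus the tail taking over above it. Apart from this moderate-regime issue (and the cosmetic difference that your shells are not rescaled by $\sqrt{\log L^2}$), your architecture — separating the polynomial prefactor via \eqref{LinfL}, dyadic decomposition in $\|h^\perp\|_{L^\infty}$, MGF for the cubic, sub-Gaussian tails for large shells — matches the paper's.
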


\begin{proof}
By Young's inequality, we have
\begin{align}
\E_{\nu^\perp_{Q, x_0} } \Big[ 
H(h^\perp)^4, \, \| h^\perp \|_{L^2} \le \dl L^{\frac 32}  \Big] &\le \frac{1}{3}
\E_{\nu^\perp_{Q, x_0} } \Big[1+L^{-36} \|h^\perp \|_{L^\infty}^{24}, \, \| h^\perp \|_{L^2} \le \dl L^{\frac 32} \Big]    \notag \\
&\hphantom{X}+\frac{1}{3} \E_{\nu^\perp_{Q, x_0} } \bigg[ e^{\frac{12C_1}{L^3}\int_{-L^2}^{L^2} |h^\perp |^4\,\mathrm{d}x}, \, \| h^\perp\|_{L^2} \le \dl L^{\frac 32} \bigg]     \notag \\
&\hphantom{X}+\frac{1}{3}  \E_{\nu^\perp_{Q, x_0} } \bigg[ e^{\frac{12C_2}{L^{\frac{3}{2}}}\int_{-L^2}^{L^2} Q_{x_0}^{\eta_L}|h^\perp|^3\,\mathrm{d}x}, \, \| h^\perp\|_{L^2} \le \dl L^{\frac 32}  \bigg].
\end{align}

\noi
From \eqref{LinfL}, we have
\begin{align}
\E_{\nu^\perp_{Q, x_0} } \Big[ L^{-36} \| h\|^{24}_{L^\infty([-L^2,L^2]) } \Big]\le C L^{-36}  (\log L^2)^{12}\les 1,
\end{align}

\noi
uniformly in $x_0\in \T_{L^2}$ and $\dr \in [0,2\pi]$. Hence, it suffices to show that for every $C_1'$, $C_2'>0$
\begin{align}
\mathbb{E}_{\nu^\perp_{Q, x_0}} \bigg[e^{\frac{C_1'}{L^3}\int_{-L^2}^{L^2} |h^\perp |^4\,\mathrm{d}x},\ \|h^\perp \|_{L^2}\le \delta L^{\frac{3}{2}} \bigg]&\le M_1, \label{YY9}\\
\mathbb{E}_{\nu^\perp_{Q, x_0}} \bigg[e^{\frac{C_2'}{L^\frac{3}{2}}\int_{-L^2}^{L^2} Q_{x_0}^{\eta_L}|h^\perp|^3\,\mathrm{d}x},\ \|h^\perp\|_{L^2}\le \delta L^{\frac{3}{2}} \bigg]&\le M_2, \label{YYY9}
\end{align}

\noi 
where the constants $M_1$ and $M_2$ are independent of $x_0, \dr$, and $L$.

We begin by performing the dyadic decomposition over the dyadic numbers $\ell=1, 2,4 \dots$ 
\begin{align}
\mathbb{E}_{\nu^\perp_{Q, x_0}} \bigg[e^{\frac{C_1'}{L^3}\int_{-L^2}^{L^2} |h^\perp|^4\,\mathrm{d}x},\, \|h^\perp \|_{L^2}\le \delta L^{\frac{3}{2}} \bigg]&=\mathbb{E}_{\nu^\perp_{Q, x_0}} \bigg[e^{\frac{C_1'}{L^3}\int_{-L^2}^{L^2} |h^\perp|^4\,\mathrm{d}x},\,  F_0,  \,\|h^\perp \|_{L^2}\le \delta L^{\frac{3}{2}} \bigg] \notag \\
&\hphantom{X}+\sum_{\ell\ge 1} \mathbb{E}_{\nu^\perp_{Q, x_0}} \bigg[e^{\frac{C_1'}{L^3}\int_{-L^2}^{L^2} |h^\perp|^4\,\mathrm{d}x},\, F_\ell, \, \|h^\perp\|_{L^2}\le \delta L^{\frac{3}{2}} \bigg]
\label{Y10}
\end{align}

\noi
where $F_0:=\big\{   \|h^\perp\|_{L^\infty} \le \sqrt{\log L^2}   \big\} $ and $F_\ell:=\big\{\ell\sqrt{\log L^2} \le  \|h^\perp \|_{L^\infty} < 2\ell \sqrt{\log L^2} \big\}$. Then, thanks to Proposition \ref{prop: BK-prob}, 
\begin{align*}
\eqref{Y10} &\le e^{c L^{-1+}} + \sum_{\ell\ge 1} e^{c\ell^2\delta^2\log L^2}\nu^\perp_{Q, x_0}(F_\ell)\\
&\le e^{c L^{-1+}}+ \sum_{\ell\ge 1} e^{c'\ell^2\delta^2\log L^2}e^{-c\ell^2\log L^2}\\
&\le M_1<\infty,
\end{align*}

\noi
uniformly in $x_0 \in \T_{L^2}, \dr\in [0,2\pi]$, and $L\ge 1$, if $\delta$ is sufficiently small. This completes the proof of \eqref{YY9}.

We now prove \eqref{YYY9}. As before, we take the dyadic decomposition over the dyadic numbers $\ell=1,2,4,\dots$ and use the Cauchy–Schwarz inequality and Proposition \ref{prop: BK-prob}
\begin{align}
&\mathbb{E}_{\nu^\perp_{Q, x_0}} \bigg[e^{\frac{C_2'}{L^\frac{3}{2}}\int_{-L^2}^{L^2} Q_{x_0}^{\eta_L}|h^\perp|^3\,\mathrm{d}x},\ \|h^\perp \|_{L^2}
\le \delta L^{\frac{3}{2}} \bigg] \notag \\
&=\mathbb{E}_{\nu^\perp_{Q, x_0} } \bigg[e^{\frac{C_2'}{L^\frac{3}{2}}\int_{-L^2}^{L^2} Q_{x_0}^{\eta_L}|h^\perp|^3\,\mathrm{d}x},\ F_0, \, \|h^\perp \|_{L^2}\le \delta L^{\frac{3}{2}} \bigg] \notag \\
&\hphantom{X}+\sum_{\ell \ge 1}\mathbb{E}_{\nu^\perp_{Q, x_0}} \bigg[e^{\frac{C_2'}{L^\frac{3}{2}}\int_{-L^2}^{L^2} Q_{x_0}^{\eta_L} |h^\perp|^3\,\mathrm{d}x},\ F_\ell, \, \|h^\perp\|_{L^2}\le \delta L^{\frac{3}{2}} \bigg] \notag \\
&\le  e^{cL^{-\frac 32+} } +\sum_{\ell \ge 1}\mathbb{E}_{\nu^\perp_{Q, x_0}} \bigg[e^{\frac{2C_2'}{L^\frac{3}{2}}\int_{-L^2}^{L^2} Q_{x_0}^{\eta_L}|h^\perp|^3\,\mathrm{d}x},\ F_\ell, \, \|h^\perp \|_{L^2}\le \delta L^{\frac{3}{2}} \bigg]^{\frac 12}e^{-c\ell^2 \log L^2}.
\label{YY10}
\end{align}

\noi

For $\ell > \ell^*$, with $\ell^*$ to be determined subsequently,
we proceed with the following estimate. Based on the conditions $F_\ell$, $\| h^\perp \|_{L^2}\le \dl L^{\frac 32}$, and Proposition \ref{prop: BK-prob}, we have 
\begin{align}
\mathbb{E}_{\nu^\perp_{Q, x_0} }\Big[e^{\frac{C_2'}{L^{\frac{3}{2}}}\int_{-L^2}^{L^2} Q_{x_0}^{\eta_L} |h^\perp|^3\,\mathrm{d}x},\, F_\ell,\,  \|h^\perp\|_{L^2}\le \dl L^{\frac 32} \Big] &\le e^{2 C_2' \ell \dl^2 L^{\frac{3}{2}}\sqrt{\log L^2}}\nu_{x_0, \dr}^\perp(F_\ell) \notag \\
&\les e^{ 2 C_2' \ell \dl^2 L^{\frac{3}{2}}\sqrt{\log L^2} }e^{-c\ell^2\log L^2} \notag \\
& \les 1,
\label{Y11}
\end{align}

\noi
uniformly in $x_0 \in \T_{L^2}$ and $\dr \in [0,2\pi]$, where we need the following condition  
\begin{align}
\ell>\frac{2\sqrt{2} C_2' \dl^2}{c}\frac{L^{\frac{3}{2}}}{\sqrt{\log L^2}}.
\label{Y12}
\end{align}

\noi

\noi
For $\ell \le \ell^*$, using the moment-generating function estimate in  Lemma \ref{LEM: mu-mgf}, we obtain
\begin{align}
\mathbb{E}_{\nu_{Q , x_0}^\perp} \Big[e^{\frac{2}{L^{\frac{3}{2}}}\int_{-L^2}^{L^2} Q_{x_0}^{\eta_L} |h^\perp |^3\,\mathrm{d}x}, \, F_\ell,\,  \|h^\perp \|_{L^2} \le D L^{\frac 12} \Big]&\le \mathbb{E}_{\nu^\perp_{Q, x_0} }\Big[e^{\frac{2\ell\sqrt{\log L^2}}{L^{\frac{3}{2}}}\int_{-L^2}^{L^2} Q_{x_0}^{\eta_L} |h^\perp|^2\,\mathrm{d}x} \Big] \notag \\
&\le \frac{1}{1-c_1 \lambda},
\label{Y13}
\end{align}

\noi
uniformly in $x_0\in \T_{L^2}$ and $\dr \in [0,2\pi]$, with
\[\lambda = \frac{2\ell \sqrt{\log L^2}}{L^{\frac{3}{2}}}. \]

\noi 
Here the uniform bound \eqref{Y13} holds if $c_1 \lambda <1$, ensuring the convergence of the geometric series in \eqref{E55}, or equivalently
\begin{align}
\ell \le \frac{1}{2 c_1} \frac{L^\frac{3}{2} }{\sqrt{\log L^2}} .
\label{Y14}
\end{align}

\noi
By comparing \eqref{Y12} with \eqref{Y14} and choosing $\dl>0$ sufficiently small, we have \eqref{Y11} and \eqref{Y13}. This implies that 
\begin{align*}
\eqref{YY10} \les e^{c L^{-\frac {3}{2}+} } +\sum_{\ell \ge 1} e^{-c\ell^2 \log L^2} \les 1.
\end{align*}

\noi
This completes the proof of \eqref{YYY9}.

\end{proof}

We are now ready to prove \ref{PROP:error2}.
\begin{proof}[Proof of Proposition \ref{PROP:error2}].
It follows from \eqref{YYYY9} and Lemma \ref{lem: H-est} that
\begin{align*}
\bigg| \frac{\mathcal{I}(F \ind_{D_L^c})}{\mathcal{I}(1)} \bigg| &\les e^{-\frac{c}{2} DL^2},
\end{align*}

\noi
uniformly in $x_0 \in \T_{L^2}$ and $\dr \in [0,2\pi]$. Hence, we obtain the desired result. 
\end{proof}

\subsection{Main term}
In this subsection, we simplify the main term in \eqref{sepmaer} using the conditional density function.
\begin{proposition}\label{PROP:main}
For $F=F(h^\perp,t)$ which is bounded, Lipschitz in the sense of \eqref{eqn: F-lip}, we have 
\begin{align}
\frac{\mathcal{I}(F,D_L\cap B_K)}{\mathcal{I}(1)}=\frac{
\E_{\nu^\perp_{Q, x_0}}  \Big[  F(h^\perp,t^+)  e^{\bar {\mathcal{E}}(h^\perp, t^{+}) } e^{c_0(t^+)^2+b(h^\perp)t^+}  , \,D_L, \, B_K  \Big]}{ \E_{\nu^\perp_{Q, x_0}}  \Big[ e^{\bar {\mathcal{E}}(h^\perp, t^{+}) } e^{c_0(t^+)^2+b(h^\perp)t^+} , \,D_L, \, B_K  \Big] }(1+o_L(1))
\label{parta0}
\end{align}

\noi
as $L\to \infty$, where $t^{+}, c_0, $ and $b(h^\perp)$  are given in Lemma \ref{lem: replace}.  Here, the sets $D_L$ and $B_K$ are defined in  \eqref{eqn: DL-def} and \eqref{eqn: BK-def}.
\end{proposition}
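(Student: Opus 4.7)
The plan is to apply Lemma \ref{lem: replace} separately to the numerator $\mathcal{I}(F,D_L\cap B_K)$ and the denominator $\mathcal{I}(1)$, then take the ratio. The setup from Section \ref{SEC:conOU} — the orthogonal decomposition \eqref{eqn: decomp}, the disintegration \eqref{OUperp}, and the identities \eqref{R2}--\eqref{Shperp} — already rewrites $\mathcal{I}(F)$ in the exact form to which Lemma \ref{lem: replace} applies, with $\mathrm{Det}_L(h)$ and $F$ absorbed into the bounded factor, and $\bar{\mathcal{E}}(h^\perp,t)$ playing the role of the Lipschitz weight $H$.

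For the numerator, I would first observe that on $D_L\cap S_{Q,x_0}$ we have $\|h\|_{L^2}\le \delta L^{3/2}$ (as verified in \eqref{LLL2}), so the $\mathcal{K}^L_\delta$ cutoff is redundant on $D_L\cap B_K$. Substituting the decomposition $h(x)=h^\perp(x)+t\gamma(x)$ and applying \eqref{eqn: plug-here} expresses $\mathcal{I}(F,D_L\cap B_K)$ as an iterated integral. The integrand $\widetilde{F}(h^\perp,t):=F(h^\perp,t)\,\mathrm{Det}_L(h^\perp+t\gamma)$ is bounded on $D_L\cap B_K$ by \eqref{E2}, and an easy calculation (as in the proof of Lemma \ref{LEM:EpsLip}) shows its Lipschitz norm in $t$ is at most $O(L^{3/2-\eps})$, matching the hypothesis of Lemma \ref{lem: replace}. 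The Lipschitz bound on $\bar{\mathcal{E}}$ follows from Lemma \ref{LEM:EpsLip}. Applying Lemma \ref{lem: replace} to $\widetilde F$ and $\bar{\mathcal{E}}$ and then using $\mathrm{Det}_L(h^\perp+t^+\gamma)=1+O(L^{-2})$ uniformly on $D_L\cap B_K$ (since $t^+=O(L^{1/2})$ by \eqref{toot} and $\|h^\perp\|_{L^\infty}\le M L^{1/2}$ on $B_K$), I would extract this factor from the expectation as a $(1+o_L(1))$ multiplier, obtaining
\[
\mathcal{I}(F,D_L\cap B_K)=\frac{Z_{x_0,L}}{\sqrt{2\pi}\sigma L^{3/2}}\,\E_{\nu^\perp_{Q,x_0}}\!\Big[F(h^\perp,t^+)\,e^{\bar{\mathcal{E}}(h^\perp,t^+)}e^{c_0(t^+)^2+b(h^\perp)t^+},\,D_L,\,B_K\Big](1+o_L(1)).
\]

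For the denominator, Propositions \ref{PROP:error1} and \ref{PROP:error2} (with $F\equiv 1$) give $\mathcal{I}(1,D_L^c\cup B_K^c)=o_L(1)\cdot \mathcal{I}(1)$, so $\mathcal{I}(1)=\mathcal{I}(1,D_L\cap B_K)(1+o_L(1))$. Applying Lemma \ref{lem: replace} verbatim (with $F\equiv 1$, $H=\bar{\mathcal{E}}$) to $\mathcal{I}(1,D_L\cap B_K)$ yields exactly the same prefactor $Z_{x_0,L}/(\sqrt{2\pi}\sigma L^{3/2})$ multiplied by the expectation appearing in the denominator of \eqref{parta0}. Dividing the two expressions, the common prefactor cancels and the claimed identity follows.

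The only nontrivial checkpoint is uniformity of the $(1+O(L^{-\eps}))$ error in Lemma \ref{lem: replace} across $h^\perp\in D_L\cap B_K$, so that it can be pulled outside the expectation; this is guaranteed because the Lipschitz bounds on $\widetilde F$, $G_2$, and $\bar{\mathcal{E}}$ established in Lemmas \ref{LEM:EpsLip} and in the proof of Lemma \ref{lem: replace} are all controlled uniformly in $h^\perp$ on the event $D_L\cap B_K$ by $O(L^{3/2-\eps})$. The main technical point to verify carefully is that the same $(1+o_L(1))$ multiplier appears in numerator and denominator after substitution, which requires tracking the $h^\perp$-dependent constants in the Lemma \ref{lem: replace} expansion; once this is done, the ratio formula \eqref{parta0} follows directly.
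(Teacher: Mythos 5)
Your proposal is correct and follows essentially the same route as the paper: discard $\mathcal{K}^L_\delta$ via $S_{Q,x_0}\cap D_L\subset\mathcal{K}^L_\delta$, apply Lemma \ref{lem: replace} (with the Lipschitz control of Lemma \ref{LEM:EpsLip}) to integrate out the $t$-variable in both numerator and denominator, treat $\mathrm{Det}_L(h)=1+O(L^{-2})$ on $D_L\cap B_K$, use Propositions \ref{PROP:error1} and \ref{PROP:error2} to reduce $\mathcal{I}(1)$ to $\mathcal{I}(1,D_L\cap B_K)$, and cancel the common $L^{-3/2}$ prefactor. The only differences are cosmetic — you absorb the Weingarten determinant into the test function rather than splitting $\mathcal{I}=\mathcal{I}_1+\mathcal{I}_2$ as the paper does, and you state the denominator reduction directly as $\mathcal{I}(1)=\mathcal{I}(1,D_L\cap B_K)(1+o_L(1))$ — and your flagged checkpoint (uniformity in $h^\perp$ of the $1+O(L^{-\eps})$ factor on $D_L\cap B_K$) is exactly what the paper's argument relies on as well.
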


We present the proof of Proposition \ref{PROP:main} at the end of this subsection. First, we separate the main contribution and the error term from $\mathcal{I}(F, D_L\cap B_K)$. Note that

\begin{align}
\mathcal{I}(F,D_L\cap B_K)=\mathcal{I}_1(F, D_L\cap B_K)+\mathcal{I}_2(F, D_L\cap B_K),
\label{Ma1}
\end{align}

\noi
where
\begin{align}
\mathcal{I}_1(F, D_L\cap B_K)=\int_{S_{Q,x_0,\theta}\cap \mathcal{K}^L_\dl }  F(h)&e^{L^3\mathcal{E}(Q^{\eta_L}_{x_0}, L^{-\frac{3}{2}}h)} e^{\frac \Ld2\|h \|_{L^2}^2+\Ld L^{\frac 32} \jb{Q^{\eta_L}_{x_0}, \Re h } } \notag \\
\cdot & e^{\frac 12\jb{Q^{\eta_L}_{x_0},\, 3|\Re h|^2+|\Im h|^2 }  }\ind_{  D_L\cap B_K   } \,\mathrm{d}\mu_{x_0,\Ld}^\perp(h)
\label{IM1}
\end{align}

\noi
and
\begin{align*}
\mathcal{I}_2(F, D_L\cap B_K)=\int_{S_{Q,x_0,\theta}\cap \mathcal{K}^L_\dl }  F(h)&e^{L^3\mathcal{E}(Q^{\eta_L}_{x_0}, L^{-\frac{3}{2}}h)} e^{\frac \Ld2\|h \|_{L^2}^2+\Ld L^{\frac 32} \jb{Q^{\eta_L}_{x_0}, \Re h } } \notag  \\
\cdot & (\mathrm{Det}_L(h)-1)e^{\frac 12\jb{Q^{\eta_L}_{x_0},\, 3|\Re h|^2+|\Im h|^2 }  }\ind_{  D_L\cap B_K   } \,\mathrm{d}\mu_{x_0,\Ld}^\perp(h).
\end{align*}

\noi
Here, $\mathcal{I}_1$ plays a role as the main term, while $\mathcal{I}_2$ represents the error term. In the following lemma,   we first simplify the main term $\mathcal{I}_1$.

\begin{lemma}\label{LEM:Main0}
For $F=F(h^\perp,t)$ which is bounded, Lipschitz in the sense of \eqref{eqn: F-lip}, we have 
\begin{align*}
&\mathcal{I}_1(F, D_L\cap B_K)\\
&=\frac{1}{L^{\frac 32}(1+o_L(1))}  \E_{\nu^\perp_{Q, x_0}}  \Big[  F(h^\perp,t^+)  e^{\bar {\mathcal{E}}(h^\perp, t^{+}) } e^{c_0(t^+)^2+b(h^\perp)t^+}  , \,D_L, \, B_K  \Big]
(1+o_L(1)).
\end{align*}

\noi
as $L\to \infty$, where $M$ is a large constant from Lemma \ref{lem: mu2-ests}.

\end{lemma}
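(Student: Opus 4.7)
The plan is to reduce $\mathcal{I}_1(F, D_L\cap B_K)$ to the Schr\"odinger--Gaussian representation \eqref{eqn: plug-here} and then to integrate out the one-dimensional variable $t$ via Lemma~\ref{lem: replace}. First I would apply the orthogonal decomposition $h = h^\perp + t\g$ of Lemma~\ref{LEM:decomh}, together with the product-measure identity \eqref{measuredecom0} that factors $\mu^\perp_{x_0,\Ld}$ into the Gaussian law of $t=\langle Q^{\eta_L}_{x_0}, \Re h\rangle$ and the marginal Ornstein--Uhlenbeck measure on $h^\perp$. The expansion \eqref{R2} turns $\tfrac{\Ld}{2}\|h\|_{L^2}^2 + \Ld L^{3/2}\langle Q^{\eta_L}_{x_0}, \Re h\rangle$ into $\tfrac{\Ld}{2}\|h^\perp\|_{L^2}^2 + \Ld G_1(t)$; the quadratic factor $e^{\frac12\langle (Q^{\eta_L}_{x_0})^2,\,3|\Re h|^2+|\Im h|^2\rangle}$ splits into the $h^\perp$-Gaussian weight that converts the marginal into $\nu^\perp_{Q, x_0}$ (with covariance operators $C^Q_{1,x_0}$, $C^Q_{2, x_0}$ from Lemma~\ref{LEM:SCHOP}) together with the cross term $G_2(t,h^\perp)$ of \eqref{eqn:G_2-def}; and the nonlinear error $L^3\mathcal{E}(Q^{\eta_L}_{x_0}, L^{-3/2}h)$ is precisely $\bar{\mathcal{E}}(h^\perp, t)$ by the definition \eqref{errht}. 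Crucially, $D_L \cap B_K$ is measurable with respect to $h^\perp$ alone, so its indicator can be pulled outside the inner $t$-integral.

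After these identifications, $\mathcal{I}_1(F, D_L \cap B_K)$ coincides with the right-hand side of \eqref{eqn: plug-here} with the additional indicator $\ind_{D_L\cap B_K}(h^\perp)$ inserted. I would then apply Lemma~\ref{lem: replace} with $H(h^\perp, t) = \bar{\mathcal{E}}(h^\perp, t)$: the Lipschitz bound on $F$ is an assumption, and the corresponding bound on $\bar{\mathcal{E}}(h^\perp, \cdot)$ on $D_L \cap B_K$ is exactly what Lemma~\ref{LEM:EpsLip} provides. The lemma evaluates the inner $t$-integral over the sliver $S_{h^\perp}$, yielding
\begin{equation*}
F(h^\perp, t^+)\,\frac{1}{L^{3/2}}\,\frac{e^{\bar{\mathcal{E}}(h^\perp, t^+)}\,e^{c_0(t^+)^2+b(h^\perp)\,t^+}}{\sqrt{2\pi}\,\s}\,\bigl(1+O(L^{-\eps})\bigr).
\end{equation*}
The two algebraic simplifications driving this are the endpoint cancellation $e^{\frac{\Ld}{2}\|h^\perp\|_{L^2}^2}\cdot e^{\Ld G_1(t^+)} = 1$ and the absorption of the Gaussian density $e^{-t^2/(2\s^2)}$ together with the quadratic-in-$t$ part of $G_2$ into the exponent $c_0(t^+)^2 + b(h^\perp)\, t^+$ via the definitions \eqref{bhper}--\eqref{c0}.

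Integrating this pointwise identity against $\nu^\perp_{Q, x_0}$ restricted to $D_L \cap B_K$ then produces the formula \eqref{parta0}; the prefactor $Z_{x_0, L}$ appearing in \eqref{eqn: plug-here} is independent of $x_0$ by translation invariance and will be absorbed when we pass to the ratio with $\mathcal{I}(1)$ later, so it may be treated as $1$ here. The main obstacle is that the error factor $(1+O(L^{-\eps}))$ from Lemma~\ref{lem: replace} is pointwise in $h^\perp$, so promoting it to a uniform $(1+o_L(1))$ multiplicative factor outside the expectation requires verifying that the integrand remains uniformly integrable against $\nu^\perp_{Q, x_0}$ on the good set. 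This is where the restriction to $D_L \cap B_K$ becomes essential: on that set, $t^+ = O(L^{1/2})$ by \eqref{toot}, $|b(h^\perp)| = O(L^{1/2})$ because $\|h^\perp\|_{L^\infty} = O(L^{1/2})$, and $|\bar{\mathcal{E}}(h^\perp, t^+)| = O(1)$ as in the proof of Lemma~\ref{LEM: CL-est}, so the moment-generating bounds of Lemmas~\ref{LEM: mu-mgf} and \ref{lem: mu2-ests} deliver the required uniform control.
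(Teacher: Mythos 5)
Your argument is essentially the paper's proof: the same passage to the $(h^\perp,t)$ coordinates and the representation \eqref{eqn: plug-here}, followed by Lemma \ref{lem: replace} with $H=\bar{\mathcal{E}}$, whose Lipschitz hypothesis on $D_L\cap B_K$ is exactly Lemma \ref{LEM:EpsLip}, together with the observation that $\ind_{D_L\cap B_K}$ depends only on $h^\perp$ and so passes outside the inner $t$-integral. The one point you pass over is that the domain of $\mathcal{I}_1$ in \eqref{IM1} carries the extra restriction to $\mathcal{K}^L_\dl$, whereas \eqref{eqn: plug-here} represents \eqref{R1}, i.e.\ the integral over $S_{Q,x_0}$ alone; your claimed coincidence with ``\eqref{eqn: plug-here} plus the indicator'' therefore needs the inclusion $S_{Q,x_0}\cap D_L\subset \mathcal{K}^L_\dl$ (this is \eqref{LLL2}, coming from $h=h^\perp+\g\,\jb{Q^{\eta_L}_{x_0},\Re h}$ and $|\jb{Q^{\eta_L}_{x_0},\Re h}|=O(L^{1/2})$ on $D_L$), which is precisely the first line of the paper's proof and lets one discard the $\mathcal{K}^L_\dl$ constraint. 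With that one-line justification added, your proof matches the paper's.
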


\begin{proof}

In the definition of $\mathcal{I}_1(F, D_L \cap B_K)$ in \eqref{IM1}, being on the set $D_L$  implies
\begin{align*}
S_{Q,x_0}\cap D_L \subset \mathcal{K}^L_\delta 
\end{align*}

\noi 
from  \eqref{LLL2}. Hence, we can ignore the condition $\mathcal{K}^L_\dl$. It follows  from \eqref{R1}, \eqref{eqn: plug-here}, Lemma \ref{LEM:EpsLip}, and Lemma \ref{lem: replace} that  
\begin{align}
&\mathcal{I}_1(F,D_L\cap B_K)\notag \\
&=\int_{D_L\cap B_K} \int_{S_{h^\perp}}F(h^\perp, t)e^{\bar{\mathcal{E}}(h^\perp, t) }e^{\frac{\Ld}{2}\|h^\perp\|_2^2}e^{\Ld G_1(t)}e^{G_2(t,h^\perp)} e^{-\frac{t^2}{2\sigma^2}}\frac{\mathrm{d}t}{\sqrt{2\pi} \sigma} \,\nu^{\perp}_{Q, x_0}(\mathrm{d}h^\perp) \notag \\
&=\frac{1}{L^{\frac 32}(1+o_L(1))}  \E_{\nu^\perp_{Q, x_0}}  \Big[  F(h^\perp,t^+)  e^{\bar {\mathcal{E}}(h^\perp, t^{+}) } e^{c_0(t^+)^2+b(h^\perp)t^+}  , \,D_L, \,B_K  \Big]
(1+o_L(1))
\label{IM2}
\end{align}

\noi
as $L\to \infty$.

\end{proof}

In the following lemma, we present an error estimate for the term $\mathcal{I}_2(F, D_L\cap B_K)$ in \eqref{Ma1}.

\begin{lemma}\label{LEM:Main1}
For $F=F(h^\perp,t)$ which is bounded, Lipschitz in the sense of \eqref{eqn: F-lip}, we have 
\begin{equation}\label{eqn: I-denom}
\begin{split}
&|\mathcal{I}_2(F, D_L\cap B_K)|\\
&=\frac{1}{L^{\frac 32}(1+o_L(1))}   \E_{\nu^\perp_{Q, x_0}}  \Big[  F(h^\perp,t^+)  e^{\bar {\mathcal{E}}(h^\perp, t^{+}) } e^{c_0(t^+)^2+b(h^\perp)t^+}  , \,D_L, \, B_K  \Big] O(L^{-2})
\end{split}
\end{equation}

\noi
as $L\to \infty$.
\end{lemma}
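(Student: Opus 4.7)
The plan is to extract a uniform bound on the factor $\mathrm{Det}_L(h) - 1$ on the set $D_L \cap B_K$, reducing the estimate to Lemma~\ref{LEM:Main0} applied to $|F|$. The crucial observation is the expansion \eqref{Det0}, applied to the rescaled argument $L^{-3/2}h$ in \eqref{Det}: this yields $\mathrm{Det}_L(h) - 1 = O(L^{-3}\|h\|_{L^\infty(\T_{L^2})}^2)$ with an implicit constant that is $O(1)$ uniformly in $L$, $x_0$, and $\theta$, since the Weingarten map \eqref{eqn: WM} is built from the tangent vectors $\partial_{x_0}Q^{\eta_L}_{x_0}$ and $\partial_\theta(e^{i\theta}Q^{\eta_L}_{x_0})$, all of which have size $O(1)$ because they are derivatives of the translation-invariant, compactly supported approximate soliton profile.

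To make this estimate quantitative on $D_L \cap B_K$, I would use the orthogonal decomposition $h = h^\perp + \gamma \langle Q^{\eta_L}_{x_0}, \Re h\rangle$ from \eqref{eqn: decomp}. On $D_L$, the analysis in \eqref{tL12} gives $|\langle Q^{\eta_L}_{x_0}, \Re h\rangle| = O(L^{1/2})$, while $\|\gamma\|_{L^\infty} = O(1)$ by Lemma~\ref{LEM:decomh}; on $B_K$ with the choice $K = M\sqrt{L/\log L^2}$ one has $\|h^\perp\|_{L^\infty} \le M\sqrt{L}$. Combining these three facts yields $\|h\|_{L^\infty} = O(L^{1/2})$ and hence
\begin{equation*}
|\mathrm{Det}_L(h) - 1| \le C L^{-2}
\end{equation*}
uniformly on $D_L \cap B_K$.

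With this uniform bound in hand, I would factor it out of the integral defining $\mathcal{I}_2(F, D_L \cap B_K)$ to obtain
\begin{equation*}
|\mathcal{I}_2(F, D_L \cap B_K)| \le C L^{-2} \, \mathcal{I}_1(|F|, D_L \cap B_K),
\end{equation*}
where the integrand on the right is exactly that of $\mathcal{I}_1$ with $F$ replaced by $|F|$. Since $|F|$ is also bounded and Lipschitz in $t$ with the same Lipschitz constant as $F$ (so $\bar{\mathcal{E}}(h^\perp, t)$ retains the Lipschitz property from Lemma~\ref{LEM:EpsLip}), Lemma~\ref{LEM:Main0} applies verbatim to $|F|$ and produces the asymptotic identity for $\mathcal{I}_1(|F|, D_L \cap B_K)$. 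Substituting this into the bound above yields exactly the stated $O(L^{-2})$ error in \eqref{eqn: I-denom}.

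The only mildly delicate point is verifying that the $O(1)$ implicit constant in \eqref{Det0} is genuinely uniform in $L$ and the tangential parameters $(x_0, \theta)$, which amounts to checking that every entry of $W_{x_0,\theta, L^{-3/2}h}$ in \eqref{eqn: WM} remains $L$-independent. This follows from the translation invariance of the soliton family and the fact that all quantities involved are inner products in $H^1(\T_{L^2})$ of Schwartz-class derivatives of $e^{i\theta}Q^{\eta_L}_{x_0}$, whose norms are of size $O(1)$ uniformly in $L$; the remainder of the argument is then a direct application of Lemma~\ref{LEM:Main0}.
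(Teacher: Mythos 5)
Your proposal is correct and follows essentially the same route as the paper: the paper also derives $\mathrm{Det}_L(h) = 1 + O(L^{-2})$ on $D_L \cap B_K$ via the pointwise bound $\|h\|_{L^\infty} \lesssim \|h^\perp\|_{L^\infty} + L^{1/2}$ (from \eqref{E1}, the decomposition \eqref{eqn: decomp}, and the choice $K = M\sqrt{L/\log L^2}$), and then repeats the argument of Lemma~\ref{LEM:Main0} with this extra $O(L^{-2})$ factor carried along. Your only deviation is to pull the factor $|\mathrm{Det}_L(h)-1| \le CL^{-2}$ outside and apply Lemma~\ref{LEM:Main0} to $|F|$ rather than $F$, which is a clean and entirely sufficient variant: it makes the inequality version of \eqref{eqn: I-denom} manifest, and since $|F|$ is bounded, the resulting expectation is still comparable to the denominator used in Proposition~\ref{PROP:main}, which is all the lemma is asked to deliver.
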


\begin{proof}
From \eqref{E2}, on the set $D_L \cap B_K$, we have 
\begin{align}
\mathrm{Det}_L(h)=1+O(L^{-2}),
\label{Ma3}
\end{align}

\noi
where $K$ is chosen as $K=M\frac{L^\frac 12}{\sqrt{\log L^2}}$ for some large $M \ge 1$, based on Lemma \ref{lem: mu2-ests}.
By proceeding as in \eqref{IM2} with \eqref{Ma3}, we have
\begin{align}
&\mathcal{I}_2(F,D_L\cap B_K)\notag \\
&=\frac{1}{L^{\frac 32}(1+o_L(1))}  \E_{\nu^\perp_{Q, x_0}}  \Big[  F(h^\perp,t^+)  e^{\bar {\mathcal{E}}(h^\perp, t^{+}) } e^{c_0(t^+)^2+b(h^\perp)t^+}  , \,D_L, \,B_K  \Big]
O(L^{-2}).
\label{IM5}
\end{align}

\noi 
This completes the proof of Lemma \ref{LEM:Main1}.

\end{proof}

We are now ready to prove Proposition \ref{PROP:main}.

\begin{proof}[Proof of Proposition \ref{PROP:main}].
We first consider the denominator in \eqref{sepmaer}. Note that 
\begin{align}
Z_{x_0,\dr}=\mathcal{I}(1)=\mathcal{I}(1, D_L \cap B_K)+\mathcal{I}(1, D_L \cap B_K^c)+\mathcal{I}(1, D_L^c).
\label{MA777}
\end{align}

From \eqref{Ma1}, Lemma \ref{LEM:Main0}, and Lemma \ref{LEM:Main1},
\begin{align}
&\mathcal{I}(1, D_L\cap B_K) \notag \\
&=\frac{1}{L^{\frac 32}(1+o_L(1))}  \E_{\nu^\perp_{Q, x_0}}  \Big[   e^{\bar {\mathcal{E}}(h^\perp, t^{+}) } e^{c_0(t^+)^2+b(h^\perp)t^+}  , \,D_L, \, B_K  \Big]
(1+o_L(1))
\label{MA77}
\end{align}

\noi
From \eqref{EE00}, \eqref{E4}, Lemma \ref{lem: mu2-ests}, \eqref{LL7} and \eqref{LLLL8},
\begin{align}
&\mathcal{I}(1, D_L \cap B_K^c)=O(e^{- M^2 L})\notag \\
&=\frac{1}{L^{\frac 32}(1+o_L(1))}  \E_{\nu^\perp_{Q, x_0}}  \Big[   e^{\bar {\mathcal{E}}(h^\perp, t^{+}) } e^{c_0(t^+)^2+b(h^\perp)t^+}  , \,D_L, \, B_K  \Big]
(1+o_L(1))O(e^{- \frac{M^2}{2} L}).
\label{MA7}
\end{align}

\noi

From \eqref{YYYY9}, Lemma \ref{lem: H-est}, \eqref{LL7}, and \eqref{LLLL8},
\begin{align}
&\mathcal{I}(1, D_L^c)=O(e^{-DL^2})\notag \\
&=\frac{1}{L^{\frac 32}(1+o_L(1))}  \E_{\nu^\perp_{Q, x_0}}  \Big[   e^{\bar {\mathcal{E}}(h^\perp, t^{+}) } e^{c_0(t^+)^2+b(h^\perp)t^+}  , \,D_L, \, B_K  \Big]
(1+o_L(1))O(e^{- \frac{D}{2} L^2}),
\label{MA8}
\end{align}

\noi
\noi
where $D$ is chosen to be sufficiently large. By combining \eqref{MA777}, \eqref{MA77}, \eqref{MA7}, and \eqref{MA8},  we obtain
\begin{align}
Z_{x_0,\dr}=\mathcal{I}(1)=\frac{1}{L^{\frac 32}(1+o_L(1))}  \E_{\nu^\perp_{Q, x_0}}  \Big[   e^{\bar {\mathcal{E}}(h^\perp, t^{+}) } e^{c_0(t^+)^2+b(h^\perp)t^+}, \,D_L, \, B_K  \Big]
(1+o_L(1))
\label{MA9}
\end{align}

\noi
as $L\to \infty$.

Therefore, it follows from \eqref{Ma1}, Lemma \ref{LEM:Main0}, Lemma \ref{LEM:Main1}, and \eqref{MA9} that 
\begin{align*}
\frac{\mathcal{I}(F,D_L\cap B_K)}{\mathcal{I}(1)}
&=\frac{\E_{\nu^\perp_{Q, x_0}}  \Big[ F(h^\perp, t^{+})   e^{\bar {\mathcal{E}}(h^\perp, t^{+}) } e^{c_0(t^+)^2+b(h^\perp)t^+}  , \,D_L, \, B_K  \Big] (1+o_L(1))}{\E_{\nu^\perp_{Q, x_0}}  \Big[   e^{\bar {\mathcal{E}}(h^\perp, t^{+}) } e^{c_0(t^+)^2+b(h^\perp)t^+}  , \,D_L , \, B_K \Big] (1+o_L(1)) }
\end{align*}

\noi
as $L\to \infty$, where in the last line we used \eqref{LL7} and \eqref{LLLL8} to get $O(e^{-\frac {M^2}2 })$. This completes the proof of Proposition \ref{PROP:main}.

\end{proof}

\section{White noise limit on normal space}\label{sec: Gaussian-limit}\label{SEC:WHNOR}

In this section, we analyze characteristic functions and prove their convergence to white noise on the normal space.

From Proposition \ref{PROP:main}, we consider
\begin{align*}
\frac{
\E_{\nu^\perp_{Q, x_0}}  \Big[  F(h_L^\perp,t^+)  e^{\bar {\mathcal{E}}(h^\perp, t^{+}) } e^{c_0(t^+)^2+b(h^\perp)t^+}  , \,D_L, \, B_K  \Big]}{ \E_{\nu^\perp_{Q, x_0}}  \Big[ e^{\bar {\mathcal{E}}(h^\perp, t^{+}) } e^{c_0(t^+)^2+b(h^\perp)t^+}  , \,D_L, \, B_K  \Big] },
\end{align*}

\noi
where the scaling $h_L^\perp(x)=L^{\frac 12}h^\perp(Lx)$ comes from \
Proposition \ref{PROP:REDUC}. To obtain the Gaussian limit, we study the characteristic function by choosing $F$
\begin{align*}
F(h_L)&=e^{i\jb{\Re h_L,g}}=e^{i\jb{ \Re h^\perp_L,g  }+i t^{+}\jb{\g_L, g} }\\
F(h_L)&=e^{i\jb{\Im h_L,g}}=e^{i\jb{ \Im h^\perp_L,g  }},
\end{align*}

\noi 
where $h_L(x)=h^\perp_L(x)+t^{+} \g_L(x) $, $\g_L(x)=L^{\frac 12} \g(Lx)$, and $\Im h=\Im h^\perp$ from the orthogonal decompositions \eqref{eqn: decomp}. Here, $g$ denotes a real-valued test function.

\begin{remark}\rm
As explained in Remark \ref{REM:Lx_0}, from now on, we may assume that the approximate soliton $Q^{\eta_L}_{x_0}$ is localized at $Lx_0$, as expressed in \eqref{appground0}. Consequently, the tangential component
$x_0$-integral in \eqref{REDUCT} is performed over $\T_{L}$.
\end{remark}

In the following, the goal is to prove the convergence of the characteristic function under the condition $\text{dist}(\supp g, x_0) > 0$, where $g$ is a test function whose support lies away from the tangential direction $x_0 \in \T_L$.

\begin{proposition}\label{PROP:gglimit}
Let $g$ be a real-valued, smooth, compactly supported function with $\textup{dist}(\supp g, x_0)>0$, where $x_0\in \T_L$. Then, we have
\begin{align}
&\frac{ \E_{\nu^\perp_{Q, x_0}}  \Big[  e^{i\jb{\Re h^\perp, g_L}+it^{+}\jb{\g_L,g}  }  e^{\bar {\mathcal{E}}(h^\perp, t^{+}) } e^{c_0(t^+)^2+b(h^\perp)t^+}  , \,D_L, \, B_K  \Big]}{ \E_{\nu^\perp_{Q, x_0}}  \Big[ e^{\bar {\mathcal{E}}(h^\perp, t^{+}) } e^{c_0(t^+)^2+b(h^\perp)t^+}  , \,D_L, \, B_K  \Big] }=e^{-\frac{1}{2}\|g\|^2_{L^2}}(1+o_L(1)) \label{CHARE}\\
&\frac{ \E_{\nu^\perp_{Q, x_0}}  \Big[  e^{i\jb{\Im h^\perp, g_L} }  e^{\bar {\mathcal{E}}(h^\perp, t^{+}) } e^{c_0(t^+)^2+b(h^\perp)t^+}    , \,D_L, \, B_K  \Big]}{ \E_{\nu^\perp_{Q, x_0}}  \Big[ e^{\bar {\mathcal{E}}(h^\perp, t^{+}) } e^{c_0(t^+)^2+b(h^\perp)t^+}, \,D_L, \, B_K  \Big] }=e^{-\frac{1}{2}\|g\|^2_{L^2}}(1+o_L(1)),\label{CHAIM}
\end{align}

\noi
as $L\to \infty $, uniformly in $x_0 \in \T_{L}$ and $\dr \in [0,2\pi]$, where $g_L(x)=L^{-1/2}g(L^{-1}x)$ and $t^{+}$ is given in \eqref{eqn: t-oot}.
\end{proposition}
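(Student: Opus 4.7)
The plan is to establish an almost-independence (in the Gaussian sense) between the linear functional $\jb{\Re h^\perp, g_L}$ and the weight factor $e^{\bar{\mathcal{E}}(h^\perp, t^+)+c_0(t^+)^2+b(h^\perp)t^+}$ in the numerator of \eqref{CHARE}, so that the ratio factorizes into a Gaussian characteristic function. The separation is made possible by the hypothesis $\textup{dist}(\supp g, x_0)>0$: after the identification in Remark~\ref{REM:Lx_0}, the approximate soliton $Q^{\eta_L}_{Lx_0}$ and the profile $\g$ live near $Lx_0\in\T_{L^2}$, whereas $g_L$ is supported in $L\cdot\supp g$, which sits at distance $\ges L$ from $Lx_0$; the Green's function of $B_1^Q$ and the Schwartz tail of $\g$ both decay exponentially across this gap.

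\emph{Gaussian split.} Following Step~3 of Subsection~\ref{SUBSEC:strucpf}, I will set $T:=\jb{\Re h^\perp, g_L}_{L^2(\T_{L^2})}$, $\sigma_L^2:=\jb{C^Q_{1,x_0} g_L, g_L}$, $\g_0:=\sigma_L^{-2}C^Q_{1,x_0} g_L$, and write $\Re h^\perp=\g_0 T+R$. Under $\nu^\perp_{Q,x_0}$, the variable $T$ is centered Gaussian with variance $\sigma_L^2$, independent of the residual Gaussian field $R$. The variance limit $\sigma_L^2\to\|g\|_{L^2}^2$ is exactly Lemma~\ref{LEM:varest}. The essential point is that $\g_0$ is exponentially small on $\supp Q^{\eta_L}_{Lx_0}$: since the Green's function of $B_1^Q=-\dx^2-3(Q^{\eta_L}_{Lx_0})^2+\Ld$ decays at rate $O(\sqrt{\Ld})$ by Lemma~\ref{lem: covariance bound} and $g_L$ is supported at distance $\ges L$ from $Lx_0$, one has $\sup_{|y-Lx_0|\le\log L}|\g_0(y)|\le Ce^{-cL}$. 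The same mechanism gives $|\jb{\g_L, g}|\les e^{-cL}$, so that $e^{it^+\jb{\g_L,g}}=1+o_L(1)$ on $D_L$ since $t^+=O(L^{1/2})$ by \eqref{toot}.

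\emph{Weight decouples from $T$.} Substituting $\Re h^\perp=R+\g_0 T$ into each weight factor, the bilinear $b(h^\perp)$ and the cubic term in $\bar{\mathcal{E}}$ are pairings against $Q^{\eta_L}_{Lx_0}$ or $(Q^{\eta_L}_{Lx_0})^2\g$; every insertion of $\g_0 T$ in place of a factor of $\Re h^\perp$ costs a factor $\sup_{\supp Q^{\eta_L}_{Lx_0}}|\g_0|\cdot|T|=O(e^{-cL})$ since $T$ has bounded variance under $\nu^\perp_{Q,x_0}$. For the quartic term $L^{-3}\int|h^\perp|^4\,dx$ and for $\|h^\perp\|_{L^2}^2=\|R\|^2+2T\jb{R,\g_0}+T^2\|\g_0\|^2$, the relevant bounds $\|\g_0\|_{L^2}=O(1)$ and $\jb{R,\g_0}=O_p(1)$ give $\|h^\perp\|_{L^2}^2-\|R\|_{L^2}^2=O_p(1)$ and hence, by \eqref{eqn: t-oot}, $t^+-t^+_R=O(L^{-3/2})$, where $t^+_R$ is $t^+$ computed from $\|R\|_{L^2}^2$. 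The Lipschitz control of Lemma~\ref{LEM:EpsLip} then bounds the shift in the weight exponent by $o_L(1)$, giving
\begin{equation*}
W(h^\perp, t^+) \, = \, W(R+i\Im h^\perp,\, t^+_R)\,(1+o_L(1))\quad\text{uniformly on } D_L\cap B_K,
\end{equation*}
where $W$ denotes the product $e^{\bar{\mathcal{E}}+c_0(t^+)^2+b(h^\perp)t^+}$.

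\emph{Conclusion and imaginary case.} Inserting this factorization into the numerator of \eqref{CHARE} and using the independence of $T$ from $(R,\Im h^\perp)$ yields
\begin{equation*}
\E\bigl[e^{iT}e^{it^+\jb{\g_L,g}}W(h^\perp,t^+)\ind_{D_L\cap B_K}\bigr]=(1+o_L(1))\,\E[e^{iT}]\,\E\bigl[W(R+i\Im h^\perp,t^+_R)\ind_{D'_L\cap B'_K}\bigr],
\end{equation*}
where $D'_L, B'_K$ are the events for $(R,\Im h^\perp)$ that differ from $D_L, B_K$ only by $O_p(1)$-corrections, absorbed in the $o_L(1)$ using the tail estimates of Propositions~\ref{prop: BK-prob} and \ref{PROP: gaussian-conc}. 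The denominator of \eqref{CHARE} has the same representation without the factor $\E[e^{iT}]=e^{-\sigma_L^2/2}$, so the ratio tends to $e^{-\|g\|_{L^2}^2/2}$ by Lemma~\ref{LEM:varest}. The proof of \eqref{CHAIM} is the same with $\Re h^\perp,C^Q_{1,x_0},B_1^Q$ replaced by $\Im h^\perp,C^Q_{2,x_0},B_2^Q$; since $b(h^\perp)$ depends only on $\Re h^\perp$, this case is in fact slightly simpler. The hardest part will be establishing the uniformity in $x_0\in\T_L$ and $\dr\in[0,2\pi]$ of the $o_L(1)$ controls in the decoupling step, which rests on uniform exponential decay of the Green's functions of $B_1^Q,B_2^Q$ and uniform Gaussian tail bounds—both already prepared in Sections~\ref{SEC:conOU} and \ref{SEC:REP}.
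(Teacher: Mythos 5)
Your route is the paper's own: the split $\Re h^\perp=\g_0 T+R$ is exactly the orthogonal decomposition \eqref{K1} with $\g_0=\al$, the variance limit is Lemma \ref{LEM:varest}, the removal of $e^{it^{+}\jb{\g_L,g}}$ by exponential localization of $\g$ near $Lx_0$ is Lemma \ref{LEM:remvit}, and the decouple-then-cancel strategy for the weight is the content of Proposition \ref{PROP:glim}. The genuine gap is in your decoupling step: the claim that $W(h^\perp,t^+)=W(R+i\Im h^\perp,t^+_R)\,(1+o_L(1))$ \emph{uniformly on $D_L\cap B_K$} is false. On that event the only bound on $T=\jb{\Re h^\perp,g_L}$ is $|T|\le \|h^\perp\|_{L^2}\|g_L\|_{L^2}\les L$, and $\g_0$ is not globally small (it is exponentially small only near $Lx_0$, while $\|\g_0\|_{L^2}=O(1)$). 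Hence $\|h^\perp\|_{L^2}^2-\|R\|_{L^2}^2=2T\jb{R,\g_0}+T^2\|\g_0\|_{L^2}^2$ can be of order $L^2$, so by \eqref{eqn: t-oot} the shift $t^+-t^+_R$ can be of order $L^{1/2}$, and the resulting change in $c_0(t^+)^2+b(h^\perp)t^+$ is of order $L$, not $o_L(1)$. Your ``$O_p(1)$'' bounds on $T$ and $\jb{R,\g_0}$ hold only on a high-probability region in $T$ and cannot be used pointwise inside an expectation whose weight may be as large as $e^{cL}$; bounded variance of $T$ does not give a pointwise bound.

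The repair is exactly the splitting the paper performs in \eqref{GGlim1}: decompose according to $|T|\le L^\eps$, $L^\eps\le |T|\le ML^{1/2}$, and $|T|\ge ML^{1/2}$. On the first region your replacement argument is sound ($|t^+-t^+_R|\les L^{-3/2+2\eps}$, so the shifts in $c_0(t^+)^2$ and $b\,t^+$ are $O(L^{-1+2\eps})$, and the exponentially suppressed insertions of $\g_0T$ into $b$ and $\bar{\mathcal E}$ are harmless even for $|T|\les L$). On the other two regions one must play the Gaussian tail of $T$ (of size $e^{-cL^{2\eps}}$, resp.\ $e^{-cM^2L}$) against the crude bound $e^{cL}$ on the weight, and crucially this only becomes $o_L(1)$ \emph{of the ratio} after a lower bound on the denominator, obtained in the paper through the retained factor $\E_{\nu^{\perp\perp}_{Q,x_0}}[e^{V_1}]$ and the $O(1)$ comparison \eqref{Glim13} (equivalently Lemma \ref{LEM: CL-est}); your write-up never normalizes these error terms. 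The same normalization is needed to absorb the mismatch between $D_L\cap B_K$ and your modified events $D_L'\cap B_K'$, which you currently dismiss as ``$O_p(1)$-corrections'' with only a pointer to Propositions \ref{prop: BK-prob} and \ref{PROP: gaussian-conc}.
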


We present the proof of Proposition \ref{PROP:glim} in the next subsection.  In the following, we prove \eqref{CHARE} only, as the proof of \eqref{CHAIM} is identical.

\begin{remark}\rm 

In Proposition \ref{PROP:glim}, the mean-zero white noise limit holds only for test functions $g$ whose support is separated from the parameter $x_0 \in \T_{L}$, that is, $\text{dist}(\supp g,x_0)>0$. As explained in Subsection \ref{SUBSEC:strucpf} (in particular, Step 5), taking an average over both tangential directions $x_0$ and $\dr$, causes the region where the mean-zero white noise limit fails to become relatively small as the size of the circle grows. For the argument establishing the white noise limit over the entire space, see the proof of Proposition \ref{PROP:CHAa}.

\end{remark}


\subsection{Scaling limit of covariance}\label{SUBSEC:scalim}
Before proving Proposition \ref{PROP:gglimit}, we study the scaling behavior of the variance in this subsection.

By ignoring the high probability events $D_L$ and $B_K$ under $\nu^\perp_{Q,x_0}$ for the explanation, the main strategy in proving Proposition \ref{PROP:gglimit} is to show that $\jb{\Re h^\perp ,g_L}$ and $t^{+}$  are weakly correlated, after which we can apply the almost independent structure as follows
\begin{align}
&\frac{ \E_{\nu^\perp_{Q, x_0}}  \Big[  e^{i\jb{\Re h^\perp, g_L} }  e^{\bar {\mathcal{E}}(h^\perp, t^{+}) } e^{c_0(t^+)^2+b(h^\perp)t^+}     \Big]}{ \E_{\nu^\perp_{Q, x_0}}  \Big[ e^{\bar {\mathcal{E}}(h^\perp, t^{+}) } e^{c_0(t^+)^2+b(h^\perp)t^+}   \Big] } \notag \\
&\approx \frac{\E_{\nu^\perp_{Q, x_0}}\Big[ e^{i\langle \Re  h^\perp, g_L\rangle} \Big]  \E_{\nu^\perp_{Q, x_0}}\Big[
e^{\bar {\mathcal{E}}(h^\perp, t^{+}) } e^{c_0(t^+)^2+b(h^\perp)t^+}(1+o_L(1))   \Big]}{ \E_{\nu^\perp_{Q, x_0}}  \Big[ e^{\bar {\mathcal{E}}(h^\perp, t^{+}) } e^{c_0(t^+)^2+b(h^\perp)t^+} \Big] } \notag \\
&\approx \E_{\nu^\perp_{Q, x_0}}\Big[ e^{i\langle \Re h^\perp, g_L\rangle} \Big] 
\label{ALINDEP}
\end{align}

\noi
as $L\to \infty$. Hence, we can reduce the analysis of the conditional expectation to that of the expectation $\E_{\nu^\perp_{Q, x_0}}\Big[ e^{i\langle \Re h^\perp, g_L\rangle} \Big] $.

\begin{remark}\rm\label{REM:hperpRe}
For the imaginary part, we can follow the same procedure \eqref{ALINDEP} by showing that $\jb{\Im h^\perp,g_L}$ and $t^{+}$  are weakly correlated. Therefore, in the following, we may assume $h^\perp$ is real-valued, and unless otherwise specified,
the inner product $\jb{h^\perp, g_L }$ refers to $\jb{\Re h^\perp, g_L}$, that is,
\begin{align}
\jb{h^\perp, g_L }=\jb{\Re h^\perp, g_L}.
\label{hperpRe}
\end{align}
\end{remark}

\noi 
To achieve \eqref{ALINDEP}, under the measure $\nu^\perp_{Q, x_0}$, we make the following decomposition
\begin{align}
h^\perp(x)&= h^{\perp\perp}(x)+\frac{\E_{\nu^\perp_{Q, x_0}}\big[ \jb{h^\perp, g_L} h^\perp(x) \big]  }{\E_{\nu^\perp_{Q, x_0}} \big[  |\jb{h^\perp, g_L}|^2 \big]  }\jb{h^\perp, g_L} \notag \\
&=h^{\perp\perp}(x)+\al(x)\jb{h^\perp, g_L},
\label{K1}
\end{align}

\noi 
where $\al(x)$ denote the projection of $h^\perp$ onto $\jb{h^\perp, g_L}$
\begin{align}
\alpha(x):=\frac{\E_{\nu^\perp_{Q, x_0}}\big[ \jb{h^\perp, g_L} h^\perp(x) \big]  }{\E_{\nu^\perp_{Q, x_0}} \big[  |\jb{h^\perp, g_L}|^2 \big]  }.
\label{KK2}
\end{align}

\noi
This orthogonal decomposition \eqref{K1} implies that $h^{\perp \perp}$ and $\jb{h^\perp, g_L}$ are independent Gaussian. Therefore, the measure $\nu^\perp_{Q, x_0}$ can be decomposed as follows
\begin{align}
\mathrm{d}\nu^{\perp }_{Q, x_0}(h^{\perp})=\frac{1}{\sqrt{2\pi }\s_g }e^{-\frac{t^2}{2\s_g}}\, \mathrm{d}t \, \mathrm{d}\nu_{Q,x_0}^{\perp \perp}(h^{\perp \perp}),
\label{K2}
\end{align}

\noi
where
\begin{align}
\s_g^2:=\E_{\nu^\perp_{Q, x_0} }\big[ |\jb{h^\perp, g_L} |^2\big].
\label{K3}
\end{align}

In the following lemma, we first control the projection coefficient $\al(x)$ in \eqref{KK2} and the variance $\s_g^2$ in \eqref{K3}.

\begin{lemma}\label{LEM:varest}
Under the condition $\textup{dist}(\supp g, x_0)=M>0$, where $g$ is a real-valued, smooth, compactly supported function, we have 
\begin{align}
\s_g^2&=\E_{\nu^\perp_{Q, x_0}}\big[ |\jb{h^\perp, g_L}|^2 \big]=\|g \|_{L^2}^2(1+o_L(1))
\label{L80}
\end{align}

\noi 
as $L\to \infty$, uniformly in $x_0 \in \T_{L}, \dr\in [0,2 \pi]$, where $g_L(x)=L^{-\frac 12}g(L^{-1}x)$. Moreover, the projection coefficient $\al(x)$ satisfies
\begin{align}
\|  \al \|_{L^2(\T_{L^2})}&=O(1)
\label{L81}
\end{align}

\noi
as $L\to \infty$.

\end{lemma}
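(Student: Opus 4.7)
Since under $\nu^\perp_{Q,x_0}$ the linear functional $\jb{h^\perp, g_L}$ is a centered Gaussian (with $h^\perp$ treated as real by Remark \ref{REM:hperpRe}), Lemma \ref{LEM:SCHOP} gives
\[
\s_g^2 = \jb{C_{1,x_0}^Q g_L, g_L},\qquad \alpha = \frac{C_{1,x_0}^Q g_L}{\s_g^2},
\]
where $C_{1,x_0}^Q = \mathbf{P}_{\cj V^{L^2, \text{Re}}_{x_0,0}}(B_1^Q)^{-1}\mathbf{P}_{\cj V^{L^2, \text{Re}}_{x_0,0}}$ is the projected Green operator of the Schr\"odinger operator $B_1^Q = -\dx^2 - 3(Q_{x_0}^{\eta_L})^2 + \Ld$. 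The plan is to compare $C_{1,x_0}^Q$ to the free Ornstein--Uhlenbeck covariance $A := (-\dx^2+\Ld)^{-1}$, showing the difference is exponentially small when tested against $g_L$, and then to compute the limit using the explicit kernel of $A$ together with the scaling $g_L(x) = L^{-\frac{1}{2}} g(L^{-1}x)$.

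The crucial observation is that, by Remark \ref{REM:Lx_0}, $Q_{x_0}^{\eta_L}$ is concentrated near $Lx_0$ with exponential decay at rate $\sqrt{\Ld}$, while by hypothesis $\supp g_L$ lies at distance at least $LM$ from $Lx_0$ in $\T_{L^2}$. First, the rank-two projection $\mathbf{P}_{\cj V^{L^2, \text{Re}}_{x_0,0}}$ is removed at the cost of an $O(e^{-c L M})$ error, because $\jb{Q^{\eta_L}_{1,x_0}, g_L}$ and $\jb{Q^{\eta_L}_{2,x_0}, g_L}$ (cf.\ \eqref{TQF1}--\eqref{TQF3}) are of that size. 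Next, by the resolvent identity
\[
(B_1^Q)^{-1} - A = 3\, A\, (Q_{x_0}^{\eta_L})^2 (B_1^Q)^{-1},
\]
combined with Combes--Thomas/Agmon exponential decay of the Green's functions of both $A$ and (thanks to the uniform spectral gap in Lemma \ref{LEM:positive}) of $B_1^Q$ on $\cj V^{L^2, \text{Re}}_{x_0,0}$, one shows that $(B_1^Q)^{-1} g_L - A g_L = O_{L^2}(e^{-c LM})$: both $((B_1^Q)^{-1}g_L)(y)$ and $(Ag_L)(y)$ are exponentially small where $(Q_{x_0}^{\eta_L})^2$ is essentially supported, and the extra factor of $A$ is a bounded operator.

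The problem is thus reduced to studying $\jb{A g_L, g_L}$ and $\|Ag_L\|_{L^2(\T_{L^2})}$ with the explicit periodized kernel $A(x,y) = \frac{1}{2\sqrt{\Ld}}\,e^{-\sqrt{\Ld}|x-y|}$ on $\T_{L^2}$ (up to $O(e^{-c L^2})$ boundary corrections). Rescaling $u = x/L$, $v = y/L$,
\[
\jb{A g_L, g_L} \;=\; \iint g(u)g(v)\,\frac{L}{2\sqrt{\Ld}}\, e^{-\sqrt{\Ld} L |u-v|}\, du\, dv,
\]
and the family $\frac{L}{2\sqrt{\Ld}}\, e^{-\sqrt{\Ld}L|\cdot|}$ is an approximate identity (of total mass $\Ld^{-1}$) as $L\to\infty$, which yields $\s_g^2 = \|g\|_{L^2}^2(1+o_L(1))$ under the normalization adopted in the paper. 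The same change of variables and Young's convolution inequality give
\[
\|A g_L\|_{L^2(\T_{L^2})}^2 \;\le\; L^2\,\Big\|\tfrac{L}{2\sqrt{\Ld}}\,e^{-\sqrt{\Ld}L|\cdot|}\ast g\Big\|_{L^2}^2 \;\le\; \tfrac{1}{\Ld^2}\|g\|_{L^2}^2,
\]
so $\|\alpha\|_{L^2(\T_{L^2})} = \|C_{1,x_0}^Q g_L\|_{L^2}/\s_g^2 = O(1)$.

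The main obstacle is establishing, uniformly in $x_0 \in \T_L$ and in $L$, the exponential off-diagonal decay of the Green's function of the \emph{projected} Schr\"odinger operator $\mathbf{P}_{\cj V^{L^2, \text{Re}}_{x_0,0}}(B_1^Q)^{-1}\mathbf{P}_{\cj V^{L^2, \text{Re}}_{x_0,0}}$: on the unprojected space $L^2$ the operator $-\dx^2 - 3Q^2 + \Ld$ has a negative eigenvalue and a zero mode, and only the codimension-two restriction is positive. Combining the uniform spectral gap from Lemma \ref{LEM:positive} with a Combes--Thomas weighted-energy argument, and bounding the rank-two projection corrections via the exponential localization of $Q_{x_0}^{\eta_L}$, is the technical heart of the proof; once this uniform exponential decay is in hand, the remaining resolvent expansion and change-of-variable computations are routine.
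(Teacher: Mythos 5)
Your proposal is correct and follows essentially the same route as the paper: express $\sigma_g^2=\jb{C_{1,x_0}^Q g_L, g_L}$, use the resolvent identity to compare $(B_1^Q)^{-1}$ with the free $(-\dx^2+\Lambda)^{-1}$, remove the rank-two projections by exponential localization of $Q^{\eta_L}_{x_0}$ around $Lx_0$ (giving $O(e^{-cLM})$ errors), use exponential off-diagonal decay of both Green's kernels to kill the perturbation term tested against $g_L$, and finish with the rescaled approximate-identity computation. The $\|\alpha\|_{L^2}$ bound via Young's convolution inequality after rescaling is also exactly the paper's argument (the $O(1/L)$ from $\|K_L\|_{L^1}$ exactly compensates the $L$ from the Jacobian).

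The one place where you diverge is in how the ``technical heart''---uniform exponential off-diagonal decay of the \emph{projected} Green's function $G_{1,x_0}$---would be established. You invoke Combes--Thomas/Agmon on the restricted, uniformly gapped operator; the paper instead proves this in Lemma~\ref{lem: covariance bound} by a more elementary bootstrap: splitting into the regimes $|y-Lx_0|\lessgtr \frac12\mathrm{dist}(x-y,2L^2\Z)$, using the two forms of the resolvent identity, and absorbing a weighted tail integral of $G_{1,x_0}$ via elliptic regularity and Sobolev embedding, together with direct computation of the rank-two projector contributions. Both routes are viable; yours is more standard, the paper's is more self-contained. In practice you would simply cite Lemma~\ref{lem: covariance bound} as a black box rather than re-prove it, after which the remaining steps are exactly as you sketch.

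One thing worth making explicit: the limit of the rescaled OU quadratic form is $\Lambda^{-1}\|g\|_{L^2}^2$, not $\|g\|_{L^2}^2$, since the kernel $\frac{1}{2\sqrt\Lambda}e^{-\sqrt\Lambda|\cdot|}$ has total mass $\Lambda^{-1}$. You flag this implicitly (``under the normalization adopted in the paper''), and indeed the paper normalizes $\Lambda=1$ in the proof of Lemma~\ref{lem: covariance bound} (``for simplicity of notation''). If you keep $\Lambda$ general, the constant $\Lambda^{-1}$ persists and should be stated; the final statement of Lemma~\ref{LEM:varest} as written is consistent with the $\Lambda=1$ normalization used throughout the appendix.
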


\begin{proof}

From the definition of the measure $\nu_{Q,x_0}^\perp$ in Lemma \ref{LEM:SCHOP} and Remark \ref{REM:hperpRe}, we can write 
\begin{align}
\E_{\nu^\perp_{Q, x_0}}\big[ |\jb{h^\perp , g_L}|^2 \big]&=\E_{\nu^\perp_{C_1^Q, x_0}}\big[ |\jb{\Re h^\perp , g_L}|^2 \big] \notag \\
&=\jb{g_L, G_{1,x_0} g_L}.
\label{VVV0}
\end{align}

\noi
Here, $G_{1,  x_0 }(x,y)$ is the Green's function  for the covariance operator $C_{1,x_0}^{Q}= \mathbf{P}_{\cj V^{L^2, \text{Re}}_{x_0,0}} (B_1^Q)^{-1}\mathbf{P}_{\cj V^{L^2, \text{Re}}_{x_0,0}} $, as defined in \eqref{CovC}, where $B_1^Q=-\dx^2-3(Q^{\eta_L}_{x_0})^2+\Ld$.

\noi 
From the resolvent identity, the Schr\"odinger operator  $C_{1, x_0}^Q$
can be viewed as a perturbation of the projected Ornstein–Uhlenbeck operator $\mathbf{P}_{\cj V^{L^2, \text{Re}}_{x_0,0}} (-\dx^2+\Ld)^{-1}\mathbf{P}_{\cj V^{L^2, \text{Re}}_{x_0,0}} $ as follows 
\begin{align}
G_{1,x_0}(x,y)=G_{\text{OU}}(x,y)-3(G_{\text{OU} }(Q_{x_0}^{\eta_L})^2G_{1,x_0})(x,y),
\label{SM00}
\end{align}

\noi
where 
\begin{align}
G_{\text{OU} }(x,y)=\big(\mathbf{P}_{\cj V^{L^2, \text{Re}}_{x_0,0}} (-\dx^2+\Ld)^{-1}\mathbf{P}_{\cj V^{L^2, \text{Re}}_{x_0,0}} \big)(x,y)
\label{GOUdef}
\end{align}

\noi 
is the Green’s function for the operator $\mathbf{P}_{\cj V^{L^2, \text{Re}}_{x_0,0}} (-\dx^2+\Ld)^{-1}\mathbf{P}_{\cj V^{L^2, \text{Re}}_{x_0,0}} $.

We first consider the first term $\jb{g_L, G_{\text{OU} } g_L }$ in \eqref{SM00}
Recall the definition of the projector $\mathbf{P}_{\cj V^{L^2, \text{Re}}_{x_0,0}} $ in \eqref{eqn: project} 
\begin{align}
\mathbf{P}_{\cj V^{L^2, \text{Re}}_{x_0,0}} =\Id-\P_{1,x_0}^{L}-\P_{2,x_0}^{L},
\label{SM0}
\end{align}

\noi
where each projection is 
\begin{align}
\P_{1,x_0}^{L}&= \langle Q_{1,x_0}^{\eta_L},\cdot\,\rangle Q_{1,x_0}^{\eta_L} \label{SM1}\\
\P_{2,x_0}^{L}&= \langle Q_{2,x_0}^{\eta_L},\cdot\,\rangle Q_{2,x_0}^{\eta_L}, \label{SM2}
\end{align}

\noi
where $Q_{1,x_0}^{\eta_L}$ and $Q_{2,x_0}^{\eta_L}$ are defined in \eqref{TQF1} and \eqref{TQF3}.  From \eqref{SM0}, we expand
\begin{align}
\mathbf{P}_{\cj V^{L^2, \text{Re}}_{x_0,0}} (-\dx^2+\Ld)^{-1}\mathbf{P}_{\cj V^{L^2, \text{Re}}_{x_0,0}} &=(-\dx^2+\Ld )^{-1}   +\sum_{1 \le i,j \le 2} \P_{i,x_0}^L(-\dx^2+\Ld )^{-1}\P_{j,x_0}^L \notag \\
&\hphantom{X}-\sum_{1\le j\le 2} (-\dx^2+\Ld )^{-1}  \P_{j,x_0}^L \notag \\
&\hphantom{X}-\sum_{1\le i\le 2}  \P_{i,x_0}^L (-\dx^2+\Ld )^{-1}. 
\label{SM4}
\end{align}

\noi
From \eqref{SM1} and \eqref{SM2},  we have 
\begin{align}
\jb {g_L, \P_{i,x_0}^L  (-\dx^2+\Ld)^{-1}   \P_{j,x_0}^L g_L} &=\jb{Q_{i,x_0}^{\eta_L}, g_L} \jb{Q_{j,x_0}^{\eta_L}, g_L} \jb{ Q_{i,x_0}^{\eta_L},  (-\dx^2+\Ld)^{-1}  Q_{j,x_0}^{\eta_L} } \notag \\
\jb {g_L,   (-\dx^2+\Ld)^{-1}   \P_{j,x_0}^L g_L}&= \jb {g_L,   (-\dx^2+\Ld)^{-1}  Q_{j,x_0}^{\eta_L} }  \jb{Q_{j,x_0}^{\eta_L}, g_L}  \notag \\
\jb {g_L, \P_{i,x_0}^L  (-\dx^2+\Ld)^{-1}  g_L}&= \jb {Q_{i,x_0}^{\eta_L},   (-\dx^2+\Ld)^{-1} g_L }  \jb{Q_{i,x_0}^{\eta_L}, g_L} .
\label{V4}
\end{align}

\noi
Recall that $Q^{\eta_L}_{i,x_0}$ for $i=1,2$ is localized around $Lx_0$ (see Remak \ref{REM:Lx_0}) and has an exponential tail.  By a slight abuse of the notation, we write $Q^{\eta_L}_{i,x_0}(\cdot-Lx_0)$. Then, for $i=1,2$, recalling $g_L(x)=L^{-\frac 12}g(L^{-1}x)$, we have 
\begin{align}
\jb{Q_{i,x_0}^{\eta_L}, g_L}&=L^\frac 12\int_{\supp g} Q_{i,x_0}^{\eta_L}(L(x-x_0))g(x)\, \mathrm{d}x \notag \\
& \les L^\frac 12 \int_{\supp g} e^{-L|x-x_0|}g(x)\, \mathrm{d}x\notag \\
&\les L^\frac 12 e^{-LM }\|g \|_{L^1},
\label{V5}
\end{align}

\noi
where $M=\text{dist}(\supp g, x_0) >0$. From \eqref{GOUdef}, \eqref{SM4}, and \eqref{V4}, we obtain 
\begin{align}
\jb{g_L, G_{\text{OU}}  g_L}=\jb{g_L,  (-\dx^2+\Ld)^{-1} g_L }+O(e^{-\frac{M}{2}L}).
\label{VVV5}
\end{align}

\noi
Due to the exponential decay of correlations for the operator $(-\dx^2+\Ld)^{-1}$ in \eqref{OUdec}, we have
\begin{align}
\jb{g_L, G_{\text{OU}} g_L }&= \int_{\supp g} \int_{\supp g} L e^{-c\text{dist}(L(x-y), 2L^2 \Z ) } g(x) g(y) dx dy+O(e^{-\frac{M}{2}L}) \notag \\
&= \| g\|_{L^2}^2(1+O(L^{-2}))+O(e^{-\frac{M}{2}L}) 
\label{VVVV5}
\end{align}

\noi
as $L\to \infty$.

\noi 
Next, we consider the second term  $\jb{g_L, G_{\text{OU} } (Q^{\eta_L}_{x_0})^2 G_{1,x_0} g_L }$ on the right-hand side of \eqref{SM00}. By using the exponential correlation decay of $G_{\text{OU}}$ and $G_{1,x_0}$ in \eqref{OUDEC0} and  Lemma \ref{lem: covariance bound}, along with the fact that $Q^{\eta_L}_{i,x_0} $ is localized at $Lx_0$ with an exponential tail, we have 
\begin{align}
&|\jb{g_L, G_{\text{OU} } (Q^{\eta_L}_{x_0})^2 G_{1,x_0} g_L }| \notag \\
&=\bigg|\int_{-L^2}^{L^2} g_L(x) \int_{-L^2}^{L^2}  G_{\text{OU}}(x,z)(Q^{\eta_L}_{x_0})^2(z) \int_{-L^2}^{L^2} G_{1,x_0}(z,y)g_L(y)\, \mathrm{d}y \, \mathrm{d}z\, \mathrm{d}x\bigg| \notag \\
&\les\bigg| \int_{-L^2}^{L^2} g_L(x) \int_{-L^2}^{L^2} \int_{-L^2}^{L^2} e^{-\text{dist}(x-z, 2L^2 \Z) } e^{-|z-Lx_0|} e^{-\text{dist}(z-y, 2L^2 \Z) } g_L(y) \, \mathrm{d}y \, \mathrm{d}z \, \mathrm{d}x \bigg| \notag \\
&\les\bigg| L^2\int_{-L^2}^{L^2} \int_{-L^2}^{L^2} e^{-\frac 14\text{dist}(x-Lx_0), 2L^2 \Z) } e^{-\frac 14\text{dist}(y-Lx_0), 2L^2 \Z) } g_L(y) g_L(x)\, \mathrm{d}y\, \mathrm{d}x \bigg| \notag \\
&\les \bigg| L^2 \int_{\supp g} \int_{\supp g} e^{-\frac 14\text{dist}(L(x-x_0), 2L^2 \Z) } e^{-\frac 14\text{dist}(L(y-x_0), 2L^2 \Z) } g(y) g(x)\, \mathrm{d}y\, \mathrm{d}x\bigg|.
\label{V6}
\end{align}

\noi 
Combining  \eqref{V6} and $\text{dist}(\supp g, x_0)=M>0$, we obtain
\begin{align}
|\jb{g_L, G_{\text{OU} } (Q^{\eta_L}_{x_0})^2 G_{1,x_0} g_L }| \les L^2e^{-cLM} \|g \|_{L^1} \|g \|_{L^1}
\label{VV6}
\end{align}

\noi
for some constant $c>0$. Therefore, it follows from   \eqref{SM00} \eqref{VVVV5}, and \eqref{VV6} that  
\begin{align}
\jb {g_L, G_{1,x_0} g_L}&=\jb{g_L, G_{\text{OU}}g_L}-3\jb{g_L, G_{\text{OU} }(Q_{x_0}^{\eta_L})^2G_{1,x_0}g_L} \notag \\
&=\|g \|_{L^2}^2(1+o_L(1))
\label{SM5}
\end{align}

\noi
as $L\to \infty$. Therefore, \eqref{VVV0} and \eqref{SM5} imply that 
\begin{align*}
\E_{\nu^\perp_{Q, x_0}}\big[ |\jb{h^\perp , g_L}|^2 \big]&=\jb{g_L, G_{1,x_0} g_L}\\
&=\|g \|_{L^2}^2(1+o_L(1))
\end{align*}

\noi
as $L\to \infty$. This completes the proof of \eqref{L80}.

It remains to prove \eqref{L81}. Recall the definition of $\al(x)$ in \eqref{KK2}
\begin{align}
\alpha(x):=\frac{\E_{\nu^\perp_{Q, x_0}}\big[ \jb{h^\perp, g_L} h^\perp(x) \big]  }{\E_{\nu^\perp_{Q, x_0}} \big[  |\jb{h^\perp, g_L}|^2 \big]  }.
\label{V10}
\end{align}

\noi
Thanks to \eqref{L80}, we can control the denominator in \eqref{V10}. Hence, it is enough to show that $\|\E_{\nu^\perp_{Q, x_0}}\big[ \jb{h^\perp, g_L} h^\perp(x) \big]\|_{L^2}=O(1)$ as $L\to \infty$. Note that
\begin{align}
\E_{\nu^\perp_{Q, x_0}}\big[ \jb{h^\perp, g_L} h^\perp(x) \big]=\int_{-L^2}^{L^2} \E_{\nu^\perp_{Q, x_0}}\big[ h^\perp(x) \overline{ h^\perp}(y) \big] g_L(y)\, \mathrm{d}y.
\label{VV11}
\end{align}

\noi 
By applying the exponential correlation decay in Lemma \ref{lem: covariance bound}, we have
\begin{align}
\int_{-L^2}^{L^2}\bigg|\int_{-L^2}^{L^2} \E_{\nu^\perp_{Q, x_0}}\big[ h^\perp(x) \overline{ h^\perp}(y) \big] g_L(y)\, \mathrm{d}y \bigg|^2 \, \mathrm{d}x  &\les \int_{-L}^{L}\bigg|\int_{\supp g}  Le^{-\text{dist}(L(x-y), 2L^2 \Z) } g(y)\, \mathrm{d}y \bigg|^2 \, \mathrm{d}x   \notag \\
&\too \int_{\R} |g(x)|^2 \, \mathrm{d}x=\| g\|_{L^2}^2
\label{V12}
\end{align}

\noi
as $L\to \infty$. Then, \eqref{VV11} and \eqref{V12} imply that $\|\al \|_{L^2_x(\T_{L^2})}=O(1)$ as $L\to \infty$. This completes the proof of Lemma \ref{LEM:varest}.

\end{proof}

\begin{remark}\rm
In Lemma \ref{LEM:varest}, the correlation analysis shows that if the test function $g$ has support at a  distance from the tangential component  $x_0 \in \T_{L}$, then we have
\begin{align*}
\E_{\nu^\perp_{C_1^Q, x_0}}\big[ |\jb{\Re h^\perp , g_L}|^2 \big]&=\jb{g_L, G_{1,x_0} g_L}\approx  \jb{g_L, G_{\text{OU}} g_L }\\
\E_{\nu^\perp_{C_2^Q, x_0}}\big[ |\jb{\Im h^\perp , g_L}|^2 \big]&=\jb{g_L, G_{2,x_0} g_L}\approx  \jb{g_L, G_{\text{OU}} g_L }
\end{align*}

\noi
as $L\to \infty$.  In other words, for the Schrödinger operators $B_1^Q$, $B_2^Q$ in \eqref{SCHOP}, the effect of the ground states
$-3(Q^{\eta_L}_{x_0})^2$ and $-(Q^{\eta_L}_{x_0})^2$ is negligible as there is almost no interaction between the ground state and the scaled test function $g_L$. In particular, the scaling $g_L(x)=L^{-\frac 12}g(L^{-1}x)$, which arises from $h_L(x)=L^\frac 12 h(Lx)$ in Proposition \ref{PROP:REDUC}, 
also ensures the convergence of the Ornstein-Uhlenbeck operator $(-\dx^2+\Ld)^{-1}$ to white noise.

\end{remark}

Following the arguments in the proof of Lemma \ref{LEM:varest}, we can establish the following lemma, which will be used later.

\begin{lemma}\label{LEM:bhperp}
Under the condition $\textup{dist}(\supp g, x_0)=c_1>0$, where $g$ is smooth with compact support and $x_0 \in \T_L$, $b(h^{\perp })$ in \eqref{bhper} can be expressed in the coordinate $(h^{\perp \perp}, s)$ in \eqref{K1}
\begin{align*}
b(h^{\perp })=3\int_{\T_{L^2}} (Q^{\eta_L}_{x_0} )^2 \g(x) h^{\perp \perp}(x)\, \mathrm{d}x+O(e^{-\frac{c_1}{2}L })s
\end{align*}

\noi
as $L\to \infty$, where $s$ is the symbol for the Gaussian random variable $\jb{h^\perp, g_L }$ and $g_L(x)=L^{-\frac 12}g(L^{-1}x)$.

\end{lemma}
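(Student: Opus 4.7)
\medskip

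\noindent\textbf{Proof plan for Lemma \ref{LEM:bhperp}.} The strategy is to substitute the orthogonal decomposition \eqref{K1} directly into the definition of $b(h^\perp)$ in \eqref{bhper} and show that the coefficient multiplying $s = \langle h^\perp, g_L\rangle$ is exponentially small. Writing $h^\perp(x) = h^{\perp\perp}(x) + \alpha(x) s$ in \eqref{bhper} yields
\begin{equation*}
b(h^\perp) = 3\int_{\T_{L^2}}(Q^{\eta_L}_{x_0})^2 \gamma(x) h^{\perp\perp}(x)\,\mathrm{d}x + 3s \int_{\T_{L^2}}(Q^{\eta_L}_{x_0})^2\gamma(x)\alpha(x)\,\mathrm{d}x,
\end{equation*}
so the claim reduces to showing that the second integral is $O(e^{-c_1 L/2})$.

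The second step is to express the projection coefficient $\alpha(x)$ from \eqref{KK2} in Green's function form. Using that $\Re h^\perp$ under $\nu^\perp_{Q,x_0}$ has covariance $C^Q_{1,x_0}$ with kernel $G_{1,x_0}(x,y)$ (and following the convention \eqref{hperpRe}), we have
\begin{equation*}
\alpha(x) = \frac{\int_{\T_{L^2}} G_{1,x_0}(x,y) g_L(y)\,\mathrm{d}y}{\E_{\nu^\perp_{Q,x_0}}[|\langle h^\perp,g_L\rangle|^2]}.
\end{equation*}
By Lemma \ref{LEM:varest}, the denominator equals $\|g\|_{L^2}^2(1+o_L(1))$, so it suffices to estimate
\begin{equation*}
\int_{\T_{L^2}}\int_{\T_{L^2}} (Q^{\eta_L}_{x_0})^2(x)\gamma(x) G_{1,x_0}(x,y) g_L(y)\,\mathrm{d}y\,\mathrm{d}x.
\end{equation*}

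The third step is the decay estimate, which parallels \eqref{V6}--\eqref{VV6} in the proof of Lemma \ref{LEM:varest}. By \eqref{ga0} and \eqref{QetaL}, both $\gamma(x)$ and $(Q^{\eta_L}_{x_0})^2(x)$ are localized near $Lx_0$ with exponentially decaying tails (using Remark \ref{REM:Lx_0}). Meanwhile $g_L$ is supported in an $L$-dilate of $\supp g$, hence at distance at least $c_1 L$ from $Lx_0$. Using the exponential correlation decay of $G_{1,x_0}$ from Lemma \ref{lem: covariance bound} together with these two localization properties, the $y$-integral contributes $O(e^{-c L\,\mathrm{dist}(x,\supp g_L)})$ modulated by the decay of $g$, and pairing this with the exponential localization of $(Q^{\eta_L}_{x_0})^2 \gamma$ near $Lx_0$ yields a bound of order $e^{-c_1 L(1-o(1))}$, which absorbs into $O(e^{-c_1 L/2})$.

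The only delicate point is keeping the scalings and the two different coordinate systems (the one on $\T_L$ in which $g$ lives and the one on $\T_{L^2}$ in which $h^\perp$ and $Q^{\eta_L}_{x_0}$ live) straight, so that the factor $L^{1/2}$ from $g_L = L^{-1/2}g(L^{-1}\cdot)$ and the $L^2$-Jacobian combine correctly. Once this bookkeeping is done, both the Green's function bound and the soliton localization are already established in Lemma \ref{lem: covariance bound} and the computations following \eqref{V5}--\eqref{VV6}, and the main obstacle is simply to repeat that estimate with the additional $(Q^{\eta_L}_{x_0})^2\gamma$ weight, which only improves the decay since this weight is itself concentrated near $Lx_0$.
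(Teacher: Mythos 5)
Your proposal follows the paper's proof essentially step for step: same orthogonal decomposition $h^\perp = h^{\perp\perp} + \alpha\,s$ inserted into \eqref{bhper}, same reduction to bounding $\int (Q^{\eta_L}_{x_0})^2\gamma\,\alpha$, same Green's-function representation of $\alpha$ via Lemma \ref{LEM:varest}, and the same use of the exponential decay of $G_{1,x_0}$ together with the localization of $(Q^{\eta_L}_{x_0})^2\gamma$ near $Lx_0$ and the separation $\textup{dist}(\supp g, x_0)=c_1$. The remark about tracking the two scaling regimes ($\T_L$ vs.\ $\T_{L^2}$) is exactly the bookkeeping done in \eqref{lemhperp1}, so the argument is correct and coincides with the paper's.
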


\begin{proof}
Under the orthogonal decomposition $h^\perp(x)=h^{\perp \perp}(x)+\al(x)\jb{h^\perp, g_L}$ from \eqref{K1}, we write 
\begin{align}
b(h^{\perp })&=3\int_{\T_{L^2}} (Q^{\eta_L}_{x_0} )^2 \g(x) h^{\perp }(x)\, \mathrm{d}x \notag \\
&=3\int_{\T_{L^2}} (Q^{\eta_L}_{x_0} )^2 \g(x) h^{\perp \perp }(x)\, \mathrm{d}x+3\jb{h^\perp, g_L}\int_{\T_{L^2}} (Q^{\eta_L}_{x_0} )^2 \g(x) \al(x)\, \mathrm{d}x.
\label{lemhperp0}
\end{align}

\noi
Note that from the definition of \eqref{KK2} and Lemma \ref{LEM:varest},
\begin{align*}
\al(x)\sim \E_{\nu^\perp_{Q, x_0}}\big[ \jb{h^\perp, g_L} h^\perp(x) \big]=\int_{\T_{L^2}} G(x,y)g_L(y)\, \mathrm{d}y=L^{\frac 12}\int_{\T_L} G(x,Ly)g(y)\,\mathrm{d}y.
\end{align*}

\noi
Since $Q^{\eta_L}_{x_0}$ and $\g(x)$ in \eqref{ga0} are exponentially localized around $Lx_0$, we can write $(Q^{\eta_L}_{x_0} )^2\g(x)=\psi(x-Lx_0)$ for some Schwartz function $\psi$. This, together with Lemma \ref{lem: covariance bound}, implies
\begin{align}
\int_{\T_{L^2}} (Q^{\eta_L}_{x_0} )^2 \g(x) \al(x)\, \mathrm{d}x&=\int_{\T_{L^2}} \psi(x-Lx_{0}) \int_{\T_L}L^{\frac 12} G(x,Ly)g(y)\,\mathrm{d}y\, \mathrm{d}x\notag \\
&=L^{\frac 32}\int_{\T_{L}} \psi(L(x-x_{0})) \int_{\T_L}G(Lx,Ly)g(y)\,\mathrm{d}y\, \mathrm{d}x \notag \\
&\les L^{\frac 32} \int_{\T_L } e^{-L|x-x_0|} \int_{\T_L} e^{-L|x-y|} g(y)\, \mathrm{d}y\, \mathrm{d}x \notag \\
&\les L^{\frac 32} \int_{\T_L} \int_{\T_L} e^{-L|y-x_0|} g(y)\, \mathrm{d}y \, \mathrm{d}x \notag \\
&\les \| g\|_{L^2} e^{-\frac{c_1}{2}L },
\label{lemhperp1}
\end{align}

\noi
where $c_1=\text{dist}(\supp g, x_0)>0$. By combining \eqref{lemhperp0} and \eqref{lemhperp1}, we complete the proof of Lemma \ref{LEM:bhperp}.

\end{proof}

\subsection{Proof of the Gaussian limit}\label{SUBSEC:Gaussianlimit}
In this subsection, by combining the ingredients in the previous and this subsection, we present the proof of Proposition \ref{PROP:glim}.

The main ingredient in this subsection is the following Gaussian limit.
Before we start the proof, recall from Remark \ref{REM:hperpRe} that we are considering the case  $h^\perp=\Re h^\perp$.

\begin{proposition}\label{PROP:glim}
Let $g$ be a smooth, compactly supported function with $\textup{dist}(\supp g, x_0)>0$, where $x_0\in \T_{L}$. Then, we have
\begin{align}
\frac{ \E_{\nu^\perp_{Q, x_0}}  \Big[  e^{i\jb{h^\perp, g_L} }  e^{\bar {\mathcal{E}}(h^\perp, t^{+}) } e^{c_0(t^+)^2+b(h^\perp)t^+}, \, D_L, \, B_K     \Big]}{ \E_{\nu^\perp_{Q, x_0}}  \Big[ e^{\bar {\mathcal{E}}(h^\perp, t^{+}) } e^{c_0(t^+)^2+b(h^\perp)t^+}, \, D_L, \, B_K   \Big] }=e^{-\frac{1}{2}\|g\|^2_{L^2}}(1+o_L(1))
\label{Glim0}
\end{align}

\noi
as $L\to \infty $, uniformly in $x_0 \in \T_{L}$ and $\dr \in [0,2\pi]$, where $g_L(x)=L^{-1/2}g(L^{-1}x)$. 
\end{proposition}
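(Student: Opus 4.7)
\medskip

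\textbf{Proof proposal for Proposition \ref{PROP:glim}.} The plan is to execute rigorously the almost-independence heuristic sketched in \eqref{ALINDEP}, using the orthogonal decomposition $h^\perp(x) = h^{\perp\perp}(x) + \alpha(x)\, s$ from \eqref{K1}--\eqref{K2}, where $s$ is the symbol for the real Gaussian $\langle h^\perp, g_L\rangle$ with variance $\sigma_g^2$, and $(s, h^{\perp\perp})$ are \emph{independent} under $\nu^\perp_{Q,x_0}$. The whole point of the hypothesis $\mathrm{dist}(\mathrm{supp}\,g, x_0) > 0$ is that $\alpha(x)$ is bounded in $L^2(\T_{L^2})$ by Lemma \ref{LEM:varest}, while the localized quantities appearing in $\bar{\mathcal E}(h^\perp,t^+)$, $b(h^\perp)t^+$, $c_0(t^+)^2$, and the indicators of $D_L\cap B_K$ only see $\alpha\, s$ through terms that are $o_L(1)$ on the typical event.

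First, using \eqref{K2}, write both the numerator and denominator of \eqref{Glim0} as double integrals $\int_{\R}\!\!\int e^{is} \cdots\, d\nu^{\perp\perp}_{Q,x_0}(h^{\perp\perp})\, \tfrac{e^{-s^2/2\sigma_g^2}}{\sqrt{2\pi}\sigma_g}\, ds$ (resp.\ without the $e^{is}$ in the denominator). Next, substitute $h^\perp = h^{\perp\perp} + \alpha s$ into every occurrence of $h^\perp$ in the integrand: in $t^+$ via the formula \eqref{eqn: t-oot}, in $b(h^\perp)$ via Lemma \ref{LEM:bhperp}, in $\bar{\mathcal E}(h^\perp,t^+)$ via \eqref{Eerror}, and in the indicators $\mathbf{1}_{D_L}$, $\mathbf{1}_{B_K}$. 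The goal is to show that on the relevant region, each of these quantities equals its value at $s = 0$ (so at $h^{\perp\perp}$ in place of $h^\perp$) up to multiplicative factor $1 + o_L(1)$, uniformly in $s$ over the Gaussian-typical window $|s|\le L^{0+}$, which carries $1 - o_L(1)$ of the Gaussian mass in $s$.

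The key quantitative inputs are the following expansions, all driven by $\|\alpha\|_{L^2}=O(1)$ and $\sigma_g^2 = \|g\|_{L^2}^2(1+o_L(1))$:
\begin{align*}
\|h^\perp\|_{L^2}^2 &= \|h^{\perp\perp}\|_{L^2}^2 + 2s\langle h^{\perp\perp},\alpha\rangle + s^2\|\alpha\|_{L^2}^2 = \|h^{\perp\perp}\|_{L^2}^2 + O(|s|\cdot L)+ O(s^2),\\
t^+ &= t^+(h^{\perp\perp}) + O(|s|/L^{3/2}),\\
c_0(t^+)^2 + b(h^\perp) t^+ &= c_0 (t^+(h^{\perp\perp}))^2 + b(h^{\perp\perp})\, t^+(h^{\perp\perp}) + O(|s|\cdot L^{-1/2}) + O(e^{-cL}|s|L^{1/2}),\\
\bar{\mathcal E}(h^\perp,t^+) &= \bar{\mathcal E}(h^{\perp\perp}, t^+(h^{\perp\perp})) + O(L^{-1/2+}|s|)
\end{align*}
on $D_L\cap B_K$, where the last identity uses the Lipschitz estimate from Lemma \ref{LEM:EpsLip} together with the fact that $\alpha s$ perturbs $\|h^\perp\|_{L^\infty}$ only by $O(|s|)$ (since $\|\alpha\|_{L^\infty}$ is $O(1)$ from the Green function bound in Lemma \ref{lem: covariance bound}). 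Similarly, $\mathbf{1}_{D_L\cap B_K}(h^\perp)$ can be replaced by $\mathbf{1}_{D_L\cap B_K}(h^{\perp\perp})$ up to an error set of probability $o_L(1)$, by straightforward perturbation of the defining inequalities.

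With these expansions in hand, factor the integrand as $F(h^{\perp\perp})\cdot(1+\varepsilon(h^{\perp\perp},s))$ where $\sup_{|s|\le L^{0+}}|\varepsilon|\to 0$. The $s$-integral in the numerator then becomes $\int_{\R} e^{is}(1+\varepsilon)\, \tfrac{e^{-s^2/2\sigma_g^2}}{\sqrt{2\pi}\sigma_g}\, ds = e^{-\sigma_g^2/2}(1+o_L(1))$, while the denominator yields $1 + o_L(1)$; the cumbersome $h^{\perp\perp}$-integrals cancel between numerator and denominator, and Lemma \ref{LEM:varest} converts $e^{-\sigma_g^2/2}$ into $e^{-\|g\|_{L^2}^2/2}(1+o_L(1))$. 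The main obstacle is the third bullet above: checking that the \emph{error} coming from the $s$-dependence of $b(h^\perp)t^+$ is genuinely small requires the exponential factor $e^{-cL/2}$ from Lemma \ref{LEM:bhperp} to defeat the $L^{1/2}$ growth of $t^+$ and the possible largeness of $s$ coming from the tails of the Gaussian — this is precisely where the hypothesis $\mathrm{dist}(\mathrm{supp}\,g, x_0) > 0$ is used a second time (beyond its use in Lemma \ref{LEM:varest}). Once this is verified, truncating the $s$-integral to $|s|\le L^{0+}$ and bounding the outer tail using the $O(1)$ Gaussian variance closes the argument.
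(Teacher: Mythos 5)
Your skeleton is the same as the paper's: decompose $h^\perp = h^{\perp\perp}+\alpha\,s$ with $s$ the Gaussian $\jb{h^\perp,g_L}$, use independence to factor the $s$-integral, cancel the $h^{\perp\perp}$-only part between numerator and denominator, and convert $e^{-\sigma_g^2/2}$ into $e^{-\frac12\|g\|_{L^2}^2}$ via Lemma \ref{LEM:varest}. However, the central expansion as you state it does not hold with the inputs you invoke. Your first bullet gives $\|h^\perp\|_{L^2}^2-\|h^{\perp\perp}\|_{L^2}^2 = 2s\jb{h^{\perp\perp},\alpha}+s^2\|\alpha\|_{L^2}^2 = O(|s|L)$, because on $D_L\cap B_K$ the only available deterministic bound is $|\jb{h^{\perp\perp},\alpha}|\le \|h^{\perp\perp}\|_{L^2}\|\alpha\|_{L^2}=O(L)$. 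Since $t^+\approx -\|h^\perp\|_{L^2}^2/(2L^{3/2})$, this yields $t^+ - t^+(h^{\perp\perp}) = O(|s|L^{-1/2})$, not your claimed $O(|s|/L^{3/2})$, and consequently the shift in the exponent $c_0(t^+)^2+b(h^\perp)t^+$ is $2c_0 t^+\,\delta t^+ + b\,\delta t^+ + t^+\delta b = O(|s|)$, which is \emph{not} $o_L(1)$ uniformly on your window $|s|\le L^{0+}$. Repairing this requires an extra ingredient you have not supplied: either an additional high-probability restriction on the $O(1)$-variance Gaussian $\jb{h^{\perp\perp},\alpha}$ (so that the correction becomes $O(|s|L^{0+}/L^{3/2})$), or the paper's device of not expanding at all the large pieces: the paper isolates the $h^{\perp\perp}$-measurable quantity $V(h^{\perp\perp})=V_1+V_3+V_4$ of size up to $O(L)$ in the exponent, keeps it exactly in both numerator and denominator so it cancels in the ratio, and only treats the genuinely small $s$-dependent remainder ($O(L^{-\zeta})$, plus the $O(e^{-cL})s$ term from Lemma \ref{LEM:bhperp}) perturbatively.

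The second gap is the tail and the indicator replacement. On $D_L\cap B_K$ the weight $e^{\bar{\mathcal{E}}}e^{c_0(t^+)^2+b t^+}$ can be as large as $e^{cL}$, so comparing error sets to probability $o_L(1)$, or truncating at $|s|\le L^{0+}$ and appealing to the ``$O(1)$ Gaussian variance'' (which only gives a stretched-exponential tail $e^{-cL^{0+}}$), cannot close the argument. The paper needs three regimes in $s$: the region $|s|\ge ML^{1/2}$, where the Gaussian tail $e^{-cM^2L}$ with $M$ large beats $e^{cL}$; the intermediate region $L^{\eps}\le|s|\le ML^{1/2}$, where the $e^{cL}$-sized part is canceled in the ratio and only the bounded ratio $\E[e^{V_1}]/\E[e^{V}]=O(1)$ must be controlled against $e^{-cL^{2\eps}}$; and the core $|s|\le L^{\eps}$. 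It likewise replaces $\ind_{D_L\cap B_K}(h^\perp)$ by indicators of the enlarged sets $\cj D_L,\cj B_K$ in $h^{\perp\perp}$ and bounds the symmetric-difference contributions against the exponentially small probabilities of Propositions \ref{prop: BK-prob} and \ref{PROP: gaussian-conc}, together with the lower bound on the denominator (Lemma \ref{LEM: CL-est} and the $L^{-3/2}$ normalization from Lemma \ref{lem: replace}). Your final paragraph anticipates a tail obstacle but attributes it only to the $t^+\delta b$ term tamed by $e^{-cL/2}$ from Lemma \ref{LEM:bhperp}; the dominant issues are the $b\,\delta t^+$ and $t^+\delta t^+$ terms and the $e^{cL}$-sized weight in the tail, so as written the proposal does not yield \eqref{Glim0}.
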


\begin{proof}

We first separate the main term and the error term in the numerator in \eqref{Glim0}
\begin{align}
\E_{\nu^\perp_{Q, x_0}}  \Big[  e^{i\jb{h^\perp, g_L} }  e^{\bar {\mathcal{E}}(h^\perp, t^{+}) } e^{c_0(t^+)^2+b(h^\perp)t^+}, \, D_L, \, B_K     \Big].
\label{Glim1}
\end{align}

\noi
Note that 
\begin{align}
\eqref{Glim1}&=\E_{\nu^\perp_{Q, x_0}}  \Big[  e^{i\jb{h^\perp, g_L} }  e^{\bar {\mathcal{E}}(h^\perp, t^{+}) } e^{c_0(t^+)^2+b(h^\perp)t^+}, \, D_L, \, B_K, \, |\jb{h^\perp, g_L }| \le L^\eps      \Big] \notag \\
&\hphantom{X}+\E_{\nu^\perp_{Q, x_0}}  \Big[  e^{i\jb{h^\perp, g_L} }  e^{\bar {\mathcal{E}}(h^\perp, t^{+}) } e^{c_0(t^+)^2+b(h^\perp)t^+}, \, D_L, \, B_K, \,  L^\eps \le |\jb{h^\perp, g_L }| \le M L^\frac 12      \Big] \notag \\
&\hphantom{X}+\E_{\nu^\perp_{Q, x_0}}  \Big[  e^{i\jb{h^\perp, g_L} }  e^{\bar {\mathcal{E}}(h^\perp, t^{+}) } e^{c_0(t^+)^2+b(h^\perp)t^+}, \, D_L, \, B_K, \,   |\jb{h^\perp, g_L }| \ge M L^\frac 12      \Big] \notag \\
&=\I_1+\I_2+\I_3. 
\label{GGlim1}
\end{align}

\noi
Here, $\I_1$ is the main contribution, while $\I_2, \I_3$ are treated as error terms.

We first study the error term $\I_3$. From \eqref{errht}, \eqref{LL7}, \eqref{toot}, and \eqref{bhper}, we have that on the set $D_L\cap B_K$
\begin{align}    
|\bar {\mathcal{E}}(h^\perp, t^{+})|&=O(1)\label{GGGlim1}\\
|t^{+}|&=O(L^\frac 12)\label{GGGGlim1}\\
|b(h^\perp)|&=O(L^\frac 12).
\end{align}

\noi
This implies that on the set $D_L \cap B_K$
\begin{align}
e^{\bar {\mathcal{E}}(h^\perp, t^{+}) } e^{c_0(t^+)^2+b(h^\perp)t^+} \les e^{cL}
\label{Glim2}
\end{align}

\noi
for some constant $c>0$. Recall the orthogonal decomposition $h^\perp(x)=h^{\perp \perp}(x)+\al(x)\jb{h^\perp, g_L}$ from \eqref{K1}, where $h^{\perp \perp}$ and $\jb{h^\perp, g_L }$ are independent Gaussian. 
Using independence together with \eqref{Glim2}, we have
\begin{align}
|\I_3| \les e^{cL} \int  \int_{|s| \ge ML^\frac 12} e^{-\frac{s^2}{2\s_g^2}} \, \frac{\mathrm{d}s}{\sqrt{2\pi}\s_g}\,  d\nu^{\perp \perp}_{Q, x_0}(h^{\perp \perp}) \les  e^{cL} e^{-M^2 L}\les e^{-\frac{M^2}{2} L} 
\label{Glim4}
\end{align}

\noi
uniformly in $x_0 \in \T_{L}$ and $\dr \in [0,2\pi]$, where $\s_g^2:=\E_{\nu^\perp_{Q, x_0} }\big[ |\jb{h^\perp, g_L} |^2\big]$  and $M$ is chosen to be sufficiently large.

Regarding $\I_1$ and $\I_2$ in \eqref{GGlim1}, we express them in terms of the independent Gaussian fields $\jb{h^\perp, g_L}$ and $h^{\perp \perp}$. Define  
\begin{align*}
\cj D_L&=\big\{  \| h^{\perp \perp} \|_{L^2(\T_{L^2})}^2 \le 4DL^2 \big\}\\
\cj B_K&=\big\{  \| h^{\perp \perp} \|_{L^2(\T_{L^2})} \le 2K\sqrt{\log L^2} \big\},
\end{align*}

\noi
where $K=M\sqrt{\frac{L}{\log L^2}}$. Then, under $h^\perp(x)=h^{\perp \perp}(x)+\al(x)\jb{h^\perp, g_L}$ in \eqref{K1} and $|\jb{h^\perp, g_L}|\le ML^\frac 12$, we have  $D_L \subset \cj D_L$ and $B_K \subset \cj B_K$. This implies that 
$\ind_{D_L}=\ind_{\cj D_L}-\ind_{\cj D_L \setminus D_L}$ and $\ind_{B_K}=\ind_{\cj B_K}-\ind_{\cj B_K \setminus B_K}$.
Hence, we can write 
\begin{align}
\ind_{D_L}\ind_{B_K}=\ind_{\cj B_K} \ind_{\cj D_L}-\ind_{\cj D_L} \ind_{\cj B_K \setminus B_K}-\ind_{\cj B_K} \ind_{\cj D_L \setminus D_L}+\ind_{\cj B_K \setminus B_K} \ind_{\cj D_L \setminus D_L}.
\end{align}

\noi
Let $I_1(A,C)$ denote the expectation in \eqref{GGlim1} over the sets $A$ and $C$. From  \eqref{Glim2},  Proposition \ref{prop: BK-prob}, and Proposition \ref{PROP: gaussian-conc} 
\begin{align}
&|\I_1(\cj D_L, \cj B_K \setminus B_K ) +\I_1(\cj B_K, \cj D_L \setminus D_L ) +\I_1( \cj B_K \setminus B_K  ,\cj D_L \setminus D_L ) | \notag \\
&\les e^{cL}(e^{-M^2 L}+e^{-D L^2}) \notag \\
&\les e^{-\frac{M^2}{2}L},
\label{GGlim5}
\end{align}

\noi
uniformly in $x_0 \in \T_{L}$ and $\dr \in [0,2\pi]$, where $M$ is chosen to be sufficiently large.

Hence, it suffices to study the main part $\cj D_L \cap \cj B_K$
\begin{align}
\I_1(\cj D_L, \cj B_K)=\int_{\cj D_L \cap \cj B_K} \int_{|s|\le L^\eps } e^{is }  e^{\cj {\mathcal{E}}_1(h^{\perp \perp}, s) } e^{\cj {\mathcal{E}}_2(h^{\perp \perp}, s) } e^{-\frac{s^2}{2\s_g^2}} \, \frac{\mathrm{d}s}{\sqrt{2\pi} \s_g}\, \mathrm{d}\nu^{\perp \perp}_{Q, x_0}(h^{\perp \perp}),
\label{GGlim3}
\end{align}

\noi
where the coordinate $(h^{\perp \perp}, s)$ with  $h^\perp(x)=h^{\perp \perp}(x)+\al(x)s$ in \eqref{K1}, and $t^{+}=t^{+}(h^{+})$ are used to denote
\begin{align}
\cj {\mathcal{E}}_1(h^{\perp \perp}, s):&=\bar {\mathcal{E}}(h^\perp, t^{+}) \notag  \\
\cj {\mathcal{E}}_2(h^{\perp \perp}, s):&=c_0(t^+)^2+b(h^\perp)t^+.
\label{cjE2}
\end{align}

\noi
From \eqref{errht} and \eqref{Eerror}, we can write 
\begin{align}
\bar {\mathcal{E}}_1(h^{\perp \perp}, s)&=\bar {\mathcal{E}}_{3}(h^{\perp \perp}, s)+\bar {\mathcal{E}}_{4}(h^{\perp \perp}, s),
\label{GGGlim4}
\end{align}

where
\begin{align*}
\bar {\mathcal{E}}_{4}(h^{\perp \perp}, s)&=\frac 1{L^3} \int_{\T_{L^2}} |h^{\perp \perp}(x)+\al(x) s +t^{+}\g(x)|^4 \, \mathrm{d}x\\
\bar {\mathcal{E}}_{3}(h^{\perp \perp}, s)&=\frac 1{L^{\frac 32}} \int_{\T_{L^2}} |h^{\perp \perp}(x)+\al(x) s +t^{+}\g(x)|^2(h^{\perp \perp}(x)+\al(x)s +t^{+}\g(x))Q^{\eta_L}_{x_0}(x) \, \mathrm{d}x.
\end{align*}

\noi
Note that from $h^\perp(x)=h^{\perp \perp}(x)+\al(x)s$ in \eqref{K1},
\begin{align}
\|h^\perp\|^2_{L^2}= \|h^{\perp\perp}\|_{L^2}^2+2\Re \langle h^{\perp\perp},  \cj \alpha\rangle  s+|s|^2 \|\alpha\|_{L^2}^2.
\label{GGGglim4}
\end{align}

\noi 
From \eqref{eqn: t-oot} and \eqref{GGGglim4}, we obtain that on the set $\cj D_L \cap \cj B_K$ and $|s| \le M L^\frac 12$,
\begin{align}
t^{+}=- \frac{\| h^{\perp \perp} \|_{L^2}^2}{2L^\frac 32}+O(L^{-\frac 12})=O(L^\frac 12).
\label{GGlim4}
\end{align}

Under $\cj D_L \cap \cj B_K$,  $|s| \le ML^\frac 12 $, and \eqref{GGlim4}, the only term in the quartic expression $\bar {\mathcal{E}}_{4}(h^{\perp \perp}, s)$ that does not go to zero as $L \to\infty$ is 
\begin{align}
V_4(h^{\perp \perp}):=\frac{1}{L^3} \int_{\T_{L^2}} |h^{\perp \perp}|^4 \, \mathrm{d}x=O(1)
\label{Glim5}
\end{align}

\noi
since $\| h^{\perp \perp} \|_{L^2}^4 \le \|  h^{\perp \perp}\|_{L^2}^2 \|  h^{\perp \perp}\|_{L^\infty}^2=O(L^3)$, and $t^{+}=O(L^\frac 12)$ from \eqref{GGGGlim1}. 
Hence, we write 
\begin{align}
\cj {\mathcal{E}}_4(h^{\perp \perp}, s)=V_4(h^{\perp \perp })+O(L^{-\frac 12}),
\label{Glim6}
\end{align}

\noi
where the remaining terms going to zero as $L\to \infty$ are included in $O(L^{-\frac 12})$.

On the one hand, under $\cj D_L \cap \cj B_K$ and  $|s| \le L^\eps $,  the terms in the cubic expression $\bar {\mathcal{E}}_{3}(h^{\perp \perp}, s)$ that do not go to zero as $L \to\infty$ are  
\begin{align}
V_3(h^{\perp \perp}):&=\frac{1}{L^\frac 32 }  \int_{\T_{L^2}} |h^{\perp \perp}(x)|^2 h^{\perp \perp}(x) Q^{\eta_{L}}_{x_0}(x) \, \mathrm{d}x+ \frac{1}{L^\frac 32 }  \int_{\T_{L^2}} |h^{\perp \perp}(x)|^2 t^{+} \g(x) Q^{\eta_L}_{x_0}(x)\, \mathrm{d}x \notag \\
&+\hphantom{X} \frac{1}{L^\frac 32 }  \int_{\T_{L^2}} h^{\perp \perp}(x) (t^{+} \g(x))^2 Q^{\eta_L}_{x_0}(x)\, \mathrm{d}x+\frac{1}{L^\frac 32 }  \int_{\T_{L^2}}  (t^{+} \g(x))^3 Q^{\eta_L}_{x_0}(x)\, \mathrm{d}x \notag\\
&=O(1)
\label{Glim6}
\end{align}

\noi
since $\| h^{\perp \perp} \|_{L^\infty}=O( \sqrt{L})$ and $t^{+}=O(L^\frac 12)$ from \eqref{GGGGlim1}. Hence, we write 
\begin{align}
\cj {\mathcal{E}}_3(h^{\perp \perp}, s)=V_3(h^{\perp \perp })+O(L^{-\zeta})
\end{align}

\noi
for some $\zeta>0$, where
the remaining terms going to zero as $L\to \infty$ are included in $O(L^{-\zeta})$.

From Lemma \ref{LEM:bhperp}, under $\cj D_L \cap \cj B_K$,  the terms in  $\bar {\mathcal{E}}_{2}(h^{\perp \perp}, s)$ in \eqref{cjE2} that do not vanish as $L \to\infty$ are  
\begin{align}
V_1(h^{\perp \perp})=3t^{+}\int_{\T_{L^2}} (Q^{\eta_L}_{x_0})^2 \g(x) h^{\perp \perp}(x)\, \mathrm{d}x+c_0 (t^{+})^2=O(L)
\label{GGlim6}
\end{align}

\noi
since $\| h^{\perp \perp} \|_{L^\infty}=O( \sqrt{L})$ and $t^{+}=O(L^\frac 12)$ from \eqref{GGGGlim1}. Therefore, we write 
\begin{align}
\cj {\mathcal{E}}_2(h^{\perp \perp}, s) =V_1(h^{\perp \perp})+O(e^{-cL})s,
\label{GGGlim6}
\end{align}

\noi
where $O(e^{-cL})s$ comes from Lemma \ref{LEM:bhperp}.

\noi
Hence, combining \eqref{GGGlim4}, \eqref{Glim5}, \eqref{Glim6}, and \eqref{GGlim6} shows that under $\cj D_L \cap \cj B_K$ and  $|s| \le L^\eps $,
\begin{align}
\cj {\mathcal{E}}_1(h^{\perp \perp}, s)+\cj {\mathcal{E}}_2(h^{\perp \perp}, s)=V(h^{\perp \perp})+O(L^{-\zeta})
\label{Glim7}
\end{align}

\noi
for some small $\zeta>0$, where
\begin{align*}
V(h^{\perp \perp}):=V_3(h^{\perp \perp})+V_4(h^{\perp \perp})+V_1(h^{\perp \perp})
\end{align*}

\noi
with  $V_3(h^{\perp \perp})=O(1)$, $V_4(h^{\perp \perp})=O(1)$, and $V_1(h^{\perp \perp})=O(L)$.



We are now ready to study $\I_1(\cj D_L, \cj B_K)$ in \eqref{GGlim3}.
In $\I_1(\cj D_L, \cj B_K)$, we also separate the main contribution and the error terms as follows
\begin{align}
&\I_1(\cj D_L, \cj B_K)\notag \\
&=\int\limits_{\cj D_L \cap \cj B_K} \int_{|s|\le L^\eps } e^{is }  e^{-\frac{s^2}{2\s_g^2}} \, \frac{\mathrm{d}s}{\sqrt{2\pi} \s_g}\,  e^{V(h^{\perp \perp})} \mathrm{d}\nu^{\perp \perp}_{Q, x_0}(h^{\perp \perp}) \notag \\
&+ \int\limits_{\cj D_L \cap \cj B_K} \int_{|s|\le L^\eps } e^{is } ( e^{\cj {\mathcal{E}}_1(h^{\perp \perp}, s) +\cj {\mathcal{E}}_2(h^{\perp \perp}, s) }-e^{V(h^{\perp \perp}) }) e^{-\frac{s^2}{2\s_g^2}} \, \frac{\mathrm{d}s}{\sqrt{2\pi} \s_g} \, \mathrm{d}\nu^{\perp \perp}_{Q, x_0}(h^{\perp \perp})\notag \\
&=\I_{11}(\cj D_L, \cj B_K)+\I_{12}(\cj D_L, \cj B_K).
\label{Glim8}
\end{align}

\noi 
From \eqref{Glim8} and \eqref{Glim7}, we have 
\begin{align}
&|\I_{12}(\cj D_L, \cj B_K)|\notag \\
&=\bigg| \int\limits_{\cj D_L \cap \cj B_K} \int_{|s|\le L^\eps } e^{is } e^{V(h^{\perp \perp}) } ( e^{O(L^{-\zeta}) }-1) e^{-\frac{s^2}{2\s_g^2}}  \, \frac{\mathrm{d}s}{\sqrt{2\pi} \s_g} \,  \mathrm{d}\nu^{\perp \perp}_{Q, x_0}(h^{\perp \perp})\bigg| \notag \\
&=O(L^{-\zeta}) \E_{\nu^{\perp \perp}_{x_0}}\Big[ e^{V(h^{\perp \perp}) }, \, \cj D_L, \, \cj B_K  \Big].
\label{GGlim8}
\end{align}

\noi
We now study the main term $\I_{11}(\cj D_L, \cj B_K)$ in \eqref{Glim8}. Note that 
\begin{align}
\int_{|s|\le L^\eps } e^{is }  e^{-\frac{s^2}{2\s_g^2}} \, \frac{\mathrm{d}s}{\sqrt{2\pi} \s_g}=\int_{\R} e^{is }  e^{-\frac{s^2}{2\s_g^2}} \, \frac{\mathrm{d}s}{\sqrt{2\pi} \s_g}+O(e^{-L^{2\eps}}).
\label{Glim9}
\end{align}

\noi 
Thanks to Lemma \ref{LEM:varest}, we have
\begin{align}
\int_{\R} e^{is} e^{-\frac{s^2}{2\s_g^2}}\, \frac{\mathrm{d}x}{\sqrt{2\pi}\s_g}=e^{-\frac {\s_g^2}2}=e^{-\frac 12 \| g\|_{L^2}^2(1+o_L(1))}
\label{Glim10}
\end{align}

\noi
as $L\to \infty$. Hence, combining \eqref{Glim9} and \eqref{Glim10} shows 
\begin{align}
\I_{11}(\cj D_L, \cj B_K)=\big(e^{-\frac{1}{2} \|g \|_{L^2}^2(1+o_L(1)) }+O(e^{-L^{2\eps }) } \big) \E_{\nu^{\perp \perp}_{Q, x_0}}\Big[ e^{V(h^{\perp \perp}) }, \, \cj D_L, \, \cj B_K  \Big].
\label{Glim11}
\end{align}

\noi
Combining \eqref{Glim8}, \eqref{GGlim8}, and \eqref{Glim11} yields 
\begin{align}
\I_1(\cj D_L, \cj B_K)=\big(e^{-\frac{1}{2} \|g \|_{L^2}^2(1+o_L(1)) }+O(L^{-\zeta}) \big) \E_{\nu^{\perp \perp}_{Q, x_0}}\Big[ e^{V(h^{\perp \perp}) }, \, \cj D_L, \, \cj B_K  \Big].
\label{Glim17}
\end{align}

\noi
Hence, we obtain the desired result for the term $\I_1$.

We now study the term $\I_2$ in \eqref{GGlim1}.
By proceeding as in \eqref{GGlim5}, it suffices to study the part $\cj D_L \cap \cj B_K$ as follows
\begin{align}
\I_2(\cj D_L, \cj B_K)=\int_{\cj D_L \cap \cj B_K} \int_{L^\eps \le|s|\le ML^\frac 12 } e^{is }  e^{\cj {\mathcal{E}}_1(h^{\perp \perp}, s) } e^{\cj {\mathcal{E}}_2(h^{\perp \perp}, s) } e^{-\frac{s^2}{2\s_g^2}} \, \frac{\mathrm{d}s}{\sqrt{2\pi} \s_g}\, \mathrm{d}\nu^{\perp \perp}_{Q, x_0}(h^{\perp \perp}).
\end{align}

\noi

By using $\cj {\mathcal{E}}_1(h^{\perp \perp}, s)=O(1) $ from \eqref{GGGlim1},   $\cj {\mathcal{E}}_2(h^{\perp \perp}, s) =V_1(h^{\perp \perp})+O(e^{-cL})s$ from \eqref{GGGlim6}, and 
\begin{align*}
\int_{ L^\eps \le |s| \le M L^\frac 12 }  e^{-\frac{s^2}{2\s_g^2}} \, \frac{\mathrm{d}s}{\sqrt{2\pi} \s_g}=O(e^{-L^{2\eps}}),
\end{align*}

\noi 
we have 
\begin{align}
\I_{2}(\cj D_L, \cj B_K)=O(e^{-L^{2\eps}})\E_{\nu^{\perp \perp}_{Q, x_0}}\Big[ e^{V_1(h^{\perp \perp}) }, \, \cj D_L, \, \cj B_K  \Big].
\label{Glim18}
\end{align}

In summary, from \eqref{GGlim1}, \eqref{Glim4}, \eqref{GGlim5}, \eqref{Glim17} and \eqref{Glim18}, we obtain 
\begin{align}
&\E_{\nu^\perp_{Q, x_0}}  \Big[  e^{i\jb{h^\perp, g_L} }  e^{\bar {\mathcal{E}}(h^\perp, t^{+}) } e^{c_0(t^+)^2+b(h^\perp)t^+}, \, D_L, \, B_K     \Big]=\I_1+\I_2+\I_3 \notag \\
&=\big(e^{-\frac{1}{2} \|g \|_{L^2}^2 }+o_L(1) \big) \E_{\nu^{\perp \perp}_{Q, x_0}}\Big[ e^{V(h^{\perp \perp}) }, \, \cj D_L, \, \cj B_K  \Big]+o_L(1)\E_{\nu^{\perp \perp}_{Q, x_0}}\Big[ e^{V_1(h^{\perp \perp}) }, \, \cj D_L, \, \cj B_K  \Big]+O(e^{-M^2 L}).
\label{Glim14}
\end{align}

We now look into the denominator in \eqref{Glim0}. Compared to the numerator in \eqref{Glim0}, the only difference arises from \eqref{Glim9} and \eqref{Glim10}, where $e^{is}$ is replaced by $1$.
Thus, we obtain
\begin{align}
&\E_{\nu^\perp_{Q, x_0}}  \Big[    e^{\bar {\mathcal{E}}(h^\perp, t^{+}) } e^{c_0(t^+)^2+b(h^\perp)t^+}, \, D_L, \, B_K     \Big] \notag \\
&=\big(1+o_L(1) \big) \E_{\nu^{\perp \perp}_{Q, x_0}}\Big[ e^{V(h^{\perp \perp}) }, \, \cj D_L, \, \cj B_K  \Big]+o_L(1)\E_{\nu^{\perp \perp}_{Q, x_0}}\Big[ e^{V_1(h^{\perp \perp}) }, \, \cj D_L, \, \cj B_K  \Big]+O(e^{-M^2 L}).
\label{Glim15}
\end{align}

Since $V_3(h^{\perp \perp})=O(1)$, $V_4(h^{\perp \perp})=O(1)$, and $V_1(h^{\perp \perp})=O(L)$ on the set $\cj D_L \cap \cj B_K$, and $V=V_3+V_4+V_1 $ in \eqref{Glim7}, we have 
\begin{align}
\frac{e^{-M^2 L}}{\E_{\nu^{\perp \perp}_{Q, x_0}}\Big[ e^{V(h^{\perp \perp}) }, \, \cj D_L, \, \cj B_K  \Big]} \les  \frac{e^{-M^2 L}}{\E_{\nu^{\perp \perp}_{Q, x_0}}\Big[ e^{V_1(h^{\perp \perp}) }, \, \cj D_L, \, \cj B_K  \Big]} \les e^{cL} e^{-M^2 L}\les e^{-\frac{M^2}{2} L},
\label{Glim16}
\end{align}

\noi
where $M$ is chosen to be sufficiently large. By combining \eqref{Glim14}, \eqref{Glim15} and \eqref{Glim16}, we obtain 
\begin{align}
&\frac{ \E_{\nu^\perp_{Q, x_0}}  \Big[  e^{i\jb{h^\perp, g_L} }  e^{\bar {\mathcal{E}}(h^\perp, t^{+}) } e^{c_0(t^+)^2+b(h^\perp)t^+}, \, D_L, \, B_K     \Big]}{ \E_{\nu^\perp_{Q, x_0}}  \Big[ e^{\bar {\mathcal{E}}(h^\perp, t^{+}) } e^{c_0(t^+)^2+b(h^\perp)t^+}, \, D_L, \, B_K   \Big] } \notag \\
&=     \frac{  e^{-\frac{1}{2} \|g \|_{L^2}^2 }+o_L(1)  +o_L(1)\ld(\cj D_L, \cj B_K)   }{1+o_L(1) +o_L(1)\ld(\cj D_L, \cj B_K)}+ O(e^{-\frac{M^2}{2} L})
\label{Glim12}
\end{align}

\noi
as $L\to \infty$, where 
\begin{align*}
\ld(\cj D_L, \cj B_K)=\frac{  \E_{\nu^{\perp \perp}_{Q, x_0}}\Big[ e^{V_1(h^{\perp \perp}) }, \, \cj D_L, \, \cj B_K  \Big]   }{ \E_{\nu^{\perp \perp}_{Q, x_0}}\Big[ e^{V(h^{\perp \perp}) }, \, \cj D_L, \, \cj B_K  \Big]   }.
\end{align*}

\noi
Since $V_3(h^{\perp \perp})=O(1)$, $V_4(h^{\perp \perp})=O(1)$, and $V_1(h^{\perp \perp})=O(L)$ on the set $\cj D_L \cap \cj B_K$, we have 
\begin{align}
\ld(\cj D_L, \cj B_K)=O(1).
\label{Glim13}
\end{align}

\noi
Therefore, combining \eqref{Glim12} and \eqref{Glim13} shows 
\begin{align*}
\frac{ \E_{\nu^\perp_{Q, x_0}}  \Big[  e^{i\jb{h^\perp, g_L} }  e^{\bar {\mathcal{E}}(h^\perp, t^{+}) } e^{c_0(t^+)^2+b(h^\perp)t^+}, \, D_L, \, B_K     \Big]}{ \E_{\nu^\perp_{Q, x_0}}  \Big[ e^{\bar {\mathcal{E}}(h^\perp, t^{+}) } e^{c_0(t^+)^2+b(h^\perp)t^+}, \, D_L, \, B_K   \Big] }&= \frac{  e^{-\frac{1}{2} \|g \|_{L^2}^2 }+o_L(1)     }{1+o_L(1)}+ O(e^{-cM^2 L})\\
&=e^{-\frac{1}{2}\|g\|^2_{L^2}}(1+o_L(1))
\end{align*}

\noi
as $L\to \infty$. This completes the proof of Proposition \ref{PROP:glim}.

\end{proof}

In the following lemma, we remove the term $i t^{+}\jb{\g_L, g}$ appearing in Proposition \ref{PROP:gglimit}.

\begin{lemma}\label{LEM:remvit}
Let $g$ be a smooth, compactly supported function with $\textup{dist}(\supp g, x_0)=c_1>0$, where $x_0\in \T_{L}$. Then, we have
\begin{align}
&\frac{\E_{\nu^\perp_{Q, x_0}}  \Big[  e^{i\jb{h^\perp, g_L}}(e^{i t^{+}\jb{\g_L, g} }-1)    e^{\bar {\mathcal{E}}(h^\perp, t^{+}) } e^{c_0(t^+)^2+b(h^\perp)t^+}  , \,D_L, \, B_K  \Big] }{ \E_{\nu^\perp_{Q, x_0}}  \Big[ e^{\bar {\mathcal{E}}(h^\perp, t^{+}) } e^{c_0(t^+)^2+b(h^\perp)t^+}, \, D_L, \, B_K   \Big] } \label{remit} \\
&=O(e^{-\frac{c_1}{2} L} ),\notag 
\end{align}

\noi 
uniformly in $x_0\in \T_{L}$ and $\dr \in [0,2\pi]$, where $g_L(x)=L^{-\frac 12}g(L^{-1}x)$.

\end{lemma}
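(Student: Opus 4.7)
The plan is to exploit the spatial localization of $\gamma_L$ around $x_0$ together with the separation $\textup{dist}(\supp g, x_0) = c_1 > 0$, which will force the phase $t^{+}\langle \gamma_L, g\rangle$ to be exponentially small in $L$. Using the elementary inequality $|e^{i\theta}-1|\le |\theta|$, the entire task reduces to an $L^\infty$-type bound on this phase, after which the ratio in \eqref{remit} is controlled by cancelling the common factor $e^{\bar{\mathcal{E}}(h^\perp,t^{+})}e^{c_0(t^{+})^2+b(h^\perp)t^{+}}\,\ind_{D_L\cap B_K}$ between numerator and (the absolute value of the) denominator.

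The first main step is to bound $|t^{+}|$. Under the change of coordinates from Lemma \ref{LEM:decomh} and on the restriction set $D_L$ defined in \eqref{eqn: DL-def}, the formula \eqref{eqn: t-oot} (equivalently \eqref{toot}) gives $|t^{+}|=O(L^{1/2})$. The second, and main, step is to estimate $|\langle \gamma_L,g\rangle|$ exponentially. Recalling from \eqref{ga0} that
\[
\gamma(x)=\frac{C_{1}Q^{\eta_L}_{x_0}(x)}{\|C_{1}^{1/2}Q^{\eta_L}_{x_0}\|_{L^2}^2},
\]
the function $\gamma$ on $\T_{L^2}$ is the convolution of the exponentially decaying Green's function of $B_1=-\partial_x^2+\Lambda$ (projected onto the normal space, but this projection changes the kernel only by Schwartz-localized corrections localized at $Lx_0$, as in \eqref{SM4}) with the Schwartz-localized $Q^{\eta_L}_{x_0}$ centered at $Lx_0$ (using the convention of Remark \ref{REM:Lx_0}). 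Hence $|\gamma(y)|\le Ce^{-c\,\textup{dist}(y-Lx_0,\,2L^2\Z)}$ uniformly in $L$, and by the definition $\gamma_L(x)=L^{1/2}\gamma(Lx)$, the rescaled function $\gamma_L$ is exponentially concentrated near $x_0\in\T_L$ with width $O(L^{-1})$.

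Combining this with $\textup{dist}(\supp g, x_0)=c_1>0$ yields
\[
|\langle \gamma_L, g\rangle|\le L^{1/2}\int_{\supp g}|\gamma(Lx)|\,|g(x)|\,\mathrm{d}x\le C\,L^{1/2}e^{-cc_1 L}\|g\|_{L^1}\le e^{-\tfrac{3c_1}{4}L}
\]
for $L$ sufficiently large. Together with the $D_L$-bound on $|t^{+}|$, we obtain
\[
\bigl|e^{it^{+}\langle \gamma_L,g\rangle}-1\bigr|\le |t^{+}|\,|\langle \gamma_L,g\rangle|\le C L^{1/2}e^{-\tfrac{3c_1}{4}L}\le e^{-\tfrac{c_1}{2}L}
\]
uniformly on $D_L\cap B_K$. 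Pulling this deterministic bound out of the numerator of \eqref{remit} and using $|e^{i\langle h^\perp,g_L\rangle}|=1$ together with the positivity of $e^{\bar{\mathcal{E}}(h^\perp,t^{+})}e^{c_0(t^{+})^2+b(h^\perp)t^{+}}$ leaves the ratio identical to the denominator in both numerator and denominator, which cancels and gives the claimed $O(e^{-c_1L/2})$ bound uniformly in $x_0\in\T_L$ and $\dr\in[0,2\pi]$. No real obstacle is expected here; the only mild technical point is to confirm that the projection operators in \eqref{eqn: project} preserve the exponential localization of $\gamma$ at $Lx_0$, which follows from the Schwartz class nature of $Q^{\eta_L}_{x_0}$, $\partial_{x_0}Q^{\eta_L}_{x_0}$ and the exponential decay of $(-\partial_x^2+\Lambda)^{-1}$ recorded in Lemma \ref{lem: covariance bound}.
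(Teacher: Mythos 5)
Your proposal matches the paper's proof almost line for line: both use the elementary bound $|e^{i\theta}-1|\le|\theta|$, the deterministic estimates $|t^{+}|=O(L^{1/2})$ on $D_L$ and $|\langle\gamma_L,g\rangle|=O(L^{1/2}e^{-cL})$ from the exponential localization of $\gamma$ around $Lx_0$ combined with $\mathrm{dist}(\supp g,x_0)=c_1>0$, and then cancel the common weight $e^{\bar{\mathcal{E}}}e^{c_0(t^{+})^2+b(h^\perp)t^{+}}\ind_{D_L\cap B_K}$ between numerator and denominator. The only detail you handle slightly more explicitly than the paper is verifying that the projections in $C_1$ preserve the exponential localization, which the paper subsumes in Remark \ref{REM:Lx_0} and Lemma \ref{lem: covariance bound}; your reasoning for that point is correct.
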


\begin{proof}
Recall that $\g$ in \eqref{ga0} is localized around $Lx_0$ (see Remark \ref{REM:Lx_0}), allowing us  to write $\g_L(x)=L^{\frac 12}\wt \g(L(x-x_0))$, where $\wt \g$ has an exponential tail.  By using  $t^{+}=O(L^\frac 12)$ and $|\cj {\mathcal{E} }(h^\perp, t^{+}) |=O(1)$ from \eqref{GGGGlim1} and \eqref{GGGlim1} on the set $D_L \cap B_K$,
and the condition $\text{dist}(\supp g, x_0)=c_1>0$, we have 
\begin{align*}
\eqref{remit} &\le  \frac{ \E_{\nu^\perp_{Q, x_0}}  \Big[ |\jb{\g, g_L}| |t^{+}|   e^{\bar {\mathcal{E}}(h^\perp, t^{+}) } e^{c_0(t^+)^2+b(h^\perp)t^+}  , \,D_L, \, B_K  \Big] }{\E_{\nu^\perp_{Q, x_0}}  \Big[ e^{\bar {\mathcal{E}}(h^\perp, t^{+}) } e^{c_0(t^+)^2+b(h^\perp)t^+}, \, D_L, \, B_K   \Big]} \notag \\
&\le C \int L\wt \g(L(x-x_0))g(x)\, \mathrm{d}x \notag \\
&\les  L\int_{\supp g} e^{-L|x-x_0 |} g(x) \, \mathrm{d}x  \notag \\
&=O(e^{-\frac{c_1}{2} L}) \| g\|_{L^1}
\end{align*}

\noi
as $L\to \infty$. Hence, we obtain the desired result.

\end{proof}

We are now ready to prove Proposition \ref{PROP:gglimit}.
\begin{proof}[Proof of Proposition \ref{PROP:gglimit}]
Combining Proposition \eqref{PROP:glim} and Lemma \ref{LEM:remvit} yields 
\begin{align*}
&\frac{ \E_{\nu^\perp_{Q, x_0}}  \Big[  e^{i\jb{h^\perp, g_L}+i t^{+}\jb{\g_L, g}  }  e^{\bar {\mathcal{E}}(h^\perp, t^{+}) } e^{c_0(t^+)^2+b(h^\perp)t^+}  , \,D_L, \, B_K  \Big]}{ \E_{\nu^\perp_{Q, x_0}}  \Big[ e^{\bar {\mathcal{E}}(h^\perp, t^{+}) } e^{c_0(t^+)^2+b(h^\perp)t^+} , \,D_L, \, B_K  \Big] }\\
&=O(e^{-\frac{c_1}{2}L }) +\frac{ \E_{\nu^\perp_{Q, x_0}}  \Big[  e^{i\jb{h^\perp, g_L} }  e^{\bar {\mathcal{E}}(h^\perp, t^{+}) } e^{c_0(t^+)^2+b(h^\perp)t^+}, \, D_L, \, B_K     \Big]}{ \E_{\nu^\perp_{Q, x_0}}  \Big[ e^{\bar {\mathcal{E}}(h^\perp, t^{+}) } e^{c_0(t^+)^2+b(h^\perp)t^+}, \, D_L, \, B_K   \Big] }\\
&=o_L(1)+e^{-\frac{1}{2}\|g\|^2_{L^2}}(1+o_L(1))
\end{align*}

\noi
as $L\to \infty$. This completes the proof of Proposition \ref{PROP:gglimit}.

\end{proof}

\section{Tightness in the infinite volume limit}\label{SEC:Tight}
In this section, we establish the tightness of the family $\{(T_L)_{\#}\rho_L\}_{L \ge 1}$ of measures in the infinite volume limit as $L\to \infty$
\begin{align*}
\int F(L(\phi-\pi_L(\phi) ) ) \rho_L(d\phi)=\int F(\phi ) (T_L)_{\#}\rho_L(d\phi),
\end{align*}

\noi
where $T_L(\phi)=L(\phi-\pi_L(\phi))$ and $\pi_L$ is the projection onto the soliton manifold $\M_L$, defined in \eqref{project}.


Before proving the tightness of the family $\{(T_L)_{\#}\rho_L\}_{L \ge 1}$, we present moment estimates for the Ornstein-Uhlenbeck measures $\nu^\perp_{Q,x_0}$ associated with Schrödinger operators, defined in Lemma \ref{LEM:SCHOP}.

\begin{lemma}\label{LEM:moment}
Let $1\le r, q<\infty$. Then, for any finite $p \ge 1$, there exists a constant $C=C(p)>0$, depending on $p$, such that  
\begin{align}
\sup_{L \ge 1}\E_{\nu^\perp_{Q, x_0 } }\Big[   \| h^\perp_L \|_{B^{-s,\mu}_{r,q} }^{p} \Big] &\le C \label{TTI0},
\end{align}

\noi
uniformly in $x_0 \in \T_{L^2}$ and $\dr \in [0,2\pi]$, where $h^\perp_L(x)=L^{\frac 12}h^\perp(Lx)$.
\end{lemma}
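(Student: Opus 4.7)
The plan is to combine Gaussian hypercontractivity with the Littlewood--Paley characterization of the weighted Besov norm, reducing the problem to a uniform pointwise variance bound on each dyadic block of the scaled field.

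First I would use that $\Delta_k h^\perp_L(x)$ is a centered Gaussian under $\nu^\perp_{Q,x_0}$. Gaussian hypercontractivity gives
\begin{align*}
\E\big[|\Delta_k h^\perp_L(x)|^p\big]^{1/p}\le C_p\,\E\big[|\Delta_k h^\perp_L(x)|^2\big]^{1/2}
\end{align*}
for every $x\in\R$, $k\ge 0$ and $p\ge 1$. Combined with Minkowski's integral inequality (first for $p\ge \max(r,q)$, and then extended to smaller $p$ by H\"older, using that $w_\mu$ is a finite measure on $\R$), this reduces matters to the uniform variance bound
\begin{align}\label{eqn:key-variance}
\sup_{L,\,x_0,\,\dr,\,x\in\R}\E\big[|\Delta_k h^\perp_L(x)|^2\big]\le C\,2^k,\qquad k\ge 0.
\end{align}

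Second, I would establish \eqref{eqn:key-variance} by a Plancherel computation. From the scaling $h^\perp_L(\cdot)=L^{1/2}h^\perp(L\cdot)$ and Lemma \ref{LEM:SCHOP},
\begin{align*}
\E\big[h^\perp_L(x)\,h^\perp_L(y)\big]=L\,G_{Q,x_0}(Lx,Ly),
\end{align*}
where $G_{Q,x_0}$ is the sum of the real and imaginary Green's functions projected onto the normal space. The resolvent identity used in Lemma \ref{LEM:varest} yields $G_{Q,x_0}=G_{\mathrm{OU}}+R_{x_0,L}$, with $G_{\mathrm{OU}}(u,v)=(-\dx^2+\Ld)^{-1}(u,v)$ on $\T_{L^2}$ and $R_{x_0,L}$ a rank-two correction plus a term generated by the exponentially localized potential $(Q^{\eta_L}_{x_0})^2$. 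For the leading OU piece, a Plancherel computation gives
\begin{align*}
\int\chi_k(\xi)^2\cdot\frac{L^2}{\Ld L^2+|\xi|^2}\,\frac{\mathrm{d}\xi}{2\pi}\;\lesssim\;2^k\wedge\frac{L^2}{2^k}\;\le\;2^k
\end{align*}
uniformly in $L\ge 1$. The corrections coming from $R_{x_0,L}$ are uniformly $O(1)$ thanks to Lemma \ref{lem: covariance bound} (exponential Green's function decay), together with the Schwartz decay and $L$-independent normalization of $Q^{\eta_L}_{x_0}$ and $\dd_{x_0}Q^{\eta_L}_{x_0}$; they are easily absorbed into the $2^k$ bound.

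Combining \eqref{eqn:key-variance} with the $\ell^q$ triangle inequality and using that $w_\mu\in L^1(\R)$ (since $\mu>0$ and $\dl>0$),
\begin{align*}
\E\big[\|h^\perp_L\|_{B^{-s,\mu}_{r,q}}^p\big]^{1/p}\;\lesssim_p\;\|w_\mu\|_{L^1(\R)}^{1/r}\bigg(\sum_{k\ge 0}2^{-skq}\,2^{kq/2}\bigg)^{1/q},
\end{align*}
which is finite and $L$-independent as soon as $s>1/2$---the natural regularity threshold for white noise, consistent with the range $s<-1/2$ appearing in Theorem \ref{THM:1}. The main obstacle is the uniformity in $L$, $x_0$ and $\dr$ in the variance estimate \eqref{eqn:key-variance}: the covariance depends on $L$ through the torus $\T_{L^2}$, through the localized potential $(Q^{\eta_L}_{x_0})^2$, and through the rank-two projection onto the tangent space of $\bar\M_L$. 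All three effects are controlled by tools already available in Section \ref{SEC:WHNOR}---namely Lemma \ref{lem: covariance bound}, the resolvent expansion from Lemma \ref{LEM:varest}, and the Schwartz decay of the soliton profile---so I do not expect any genuinely new technical ingredients beyond assembling these pieces.
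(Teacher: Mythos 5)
Your proposal takes essentially the same route as the paper: both reduce, via Minkowski and Gaussianity, to the uniform variance bound $\E[|\Delta_k h^\perp_L(x)|^2]\lesssim 2^k$, and then sum the geometric series for $s>1/2$. The only cosmetic difference is that you propose a Plancherel/Fourier computation of the Ornstein--Uhlenbeck variance with the potential and projection handled as perturbations via the resolvent identity, whereas the paper performs the variance estimate directly in physical space using the exponential decay of the Green's function from Lemma~\ref{lem: covariance bound}.
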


\begin{proof}
We may assume $p \ge  \max\{r,q\}$.
By using the Minkowski inequality and the fact that $2^k \varphi(2^k\cdot) * h^\perp_L (x) $ is a Gaussian random variable,
\begin{align}
\E_{\nu_{Q, x_0}^\perp }\Big[   \| h_L^\perp \|_{B^{-s, \mu}_{r,q} }^{p} \Big]&=\E_{\nu_{Q, x_0}^\perp }\bigg| \sum_{k \ge 0} 2^{-skq} \| \Dl_k h_L^\perp \|_{L^r_\mu}^q  \bigg|^p
\notag \\
&\le \Bigg(\sum_{k \ge 0} 2^{-sk q} \bigg( \int_{\R} \E_{\nu_{Q,x_0}^\perp } \Big[ \big|2^k \varphi(2^k\cdot) * h_L^\perp (x)  \big|^p \Big]^{\frac rp} w_\mu(x) dx\bigg)^{\frac qr} \Bigg)^p \notag \\
&\les \Bigg(\sum_{k \ge 0} 2^{-skq} \bigg( \int_{\R} \E_{\nu_{Q,x_0}^\perp} \Big[ \big|2^k \varphi(2^k\cdot) * h_L^\perp (x)  \big|^2 \Big]^{\frac r2}   w_\mu(x) dx\bigg)^{\frac qr} \Bigg)^p.
\label{st1}
\end{align}

\noi
From Lemma \ref{lem: covariance bound} and the change of variables,
\begin{align}
&\E_{\nu_{Q,x_0}^\perp } \Big[ \big|2^k \varphi(2^k\cdot) * h_L^\perp (x)  \big|^2 \Big] \notag \\
&=L 2^{2k} \int_{|z-x|\les 2^{-k}} \int_{|y-x|\les 2^{-k}} \varphi(2^k(x-y)) \varphi(2^k(x-z)) \E_{\nu_{Q,x_0}^\perp}\big[ h^\perp(Ly) h^\perp(Lz) \big] dy dz \notag \\
&\les L 2^{2k} \int_{|z-x|\les 2^{-k}} \int_{|y-x|\les 2^{-k}}  \varphi(2^k(x-y)) \varphi(2^k(x-z)) e^{-L|z-y|} dy dz \notag \\
&\les 2^{2k} \int_{\R} |\varphi(2^k y) |^2 \int_{\R} Le^{-L|y-z|} dzdy+2^{2k} \int_{\R} \varphi(2^ky) \int_{\R} (\varphi(2^k z)-\varphi(2^ky) )Le^{-L|y-z|}dz dy \notag \\
&\les 2^{k}+ \int_{\R} \varphi(y) \int_{\R} Le^{-L|2^{-k}z|}dz dy \notag \\
&\les 2^k+1.
\label{st2}
\end{align}

\noi
as $L\to \infty$. From \eqref{st1} and \eqref{st2}, we have that if $s >\frac 12$
\begin{align}
\sup_{L \ge 1}\E_{\nu_{Q,x_0}^\perp }\Big[   \| h_L^\perp \|_{B^{-s, \mu}_{r,q} }^{p} \Big] &\les \Big(\sum_{k \ge 0} 2^{-skq} 2^{\frac {k q}2} \Big)^p \notag \\
&\les \Big(\sum_{k \ge 0} 2^{-kq(s-\frac 12)} \Big)^p \les 1,
\label{TI3}
\end{align}

\noi
uniformly in $x_0 \in \T_{L^2}$ and $\dr \in [0,2\pi]$.

\end{proof}

We now present the proof of the tightness of the family $\{(T_L)_{\#}\rho_L\}_{L \ge 1}$.

\begin{proposition}\label{PROP:Tight}
Let $r,q \in [1,\infty]$ with $r \neq \infty$, $s>\frac 12$ and $\mu>0$. As probability measures on $B^{-s,\mu}_{r,q}$, the family  $\{(T_L)_{\#}\rho_L\}_{L \ge 1}$ is tight. 
\end{proposition}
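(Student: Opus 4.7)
The plan is to establish tightness by the standard route of proving uniform moment bounds in a strictly better Besov space, then invoking Chebyshev's inequality together with the compact embedding from Lemma \ref{LEM:compact}. Concretely, I will fix parameters $s'\in(\tfrac12,s)$ and $\mu'\in(0,\mu)$ so that $B^{-s',\mu'}_{r,q}\hookrightarrow B^{-s,\mu}_{r,q}$ compactly, and reduce the statement to showing that for some finite $p\ge 1$,
\begin{align*}
\sup_{L\ge 1}\int \|L(\phi-\pi_L(\phi))\|_{B^{-s',\mu'}_{r,q}}^{p}\,\rho_L(d\phi)<\infty.
\end{align*}

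To bound this expectation, I will run the scheme developed in Sections \ref{SEC:INSIDE}--\ref{SUBSEC:Gaussianlimit}. First, using Proposition \ref{PROP:REDUC}, the contribution from $\{\text{dist}(\phi,\mathcal{M}_L)\ge\delta L^{1/2}\}$ and from $\{M_L(\phi)\le L(D-\varepsilon)\}$ is $O(e^{-cL^{0+}})$, on which set $\pi_L$ is defined to vanish and the Besov norm is controlled by the polynomial bound $\|\phi\|_{L^2(\T_L)}^p\lesssim L^{p}$; this is negligible. On the remaining set, the change of variables of Proposition \ref{PROP:REDUC} converts $L(\phi-\pi_L(\phi))$ into $L^{1/2}h(L\cdot)$, and after the orthogonal decomposition $h=h^\perp+t\gamma$ of Lemma \ref{LEM:decomh} together with the conditional density representation of Proposition \ref{PROP:main}, the expectation reduces, up to controlled weights and the high-probability events $D_L\cap B_K$, to
\begin{align*}
\sup_{x_0,\theta}\,\mathbb{E}_{\nu^\perp_{Q,x_0}}\!\left[\|h_L^\perp+t^+\gamma_L\|_{B^{-s',\mu'}_{r,q}}^{p},\,D_L,\,B_K\right],
\end{align*}
where $h_L^\perp(x)=L^{1/2}h^\perp(Lx)$ and $\gamma_L(x)=L^{1/2}\gamma(Lx)$. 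Since Propositions \ref{PROP:REDUC} and \ref{PROP:main} are stated for bounded continuous $F$, I will apply them to the truncations $F_n=\min(\|\cdot\|_{B^{-s',\mu'}_{r,q}}^p,n)$ and pass $n\to\infty$ by monotone convergence once the uniform bound is in hand.

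By the triangle inequality the two pieces split. The first term is controlled directly by Lemma \ref{LEM:moment}, which furnishes a uniform bound on $\mathbb{E}_{\nu^\perp_{Q,x_0}}\|h_L^\perp\|_{B^{-s',\mu'}_{r,q}}^p$ in $L,x_0,\theta$. For the second, \eqref{toot} gives $|t^+|=O(L^{1/2})$ on $D_L$, and since $\gamma(x)=C_{1,x_0}^QQ_{x_0}^{\eta_L}(x)/\|C_{1,x_0}^{Q,1/2}Q_{x_0}^{\eta_L}\|_{L^2}^2$ is a Schwartz profile localized at $Lx_0$ with norms uniform in $x_0$, a Fourier/Littlewood--Paley computation using the scaling $\gamma_L(x)=L^{1/2}\gamma(Lx)$ yields $\|\gamma_L\|_{B^{-s',\mu'}_{r,q}}=O(L^{-1/2})$ whenever $s'>\tfrac12$: the low-frequency block $\Delta_0\gamma_L$ contributes $L^{1/2-1/r}\cdot L^{-1}\lesssim L^{-1/2}$ in $L^r_{\mu'}$ after using that $\gamma_L$ is localized on a set on which $w_{\mu'}$ is bounded, while the high-frequency blocks are absorbed by the factor $2^{-s'k}$ since $s'>\tfrac12$. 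Combining, $|t^+|\cdot\|\gamma_L\|_{B^{-s',\mu'}_{r,q}}=O(1)$ uniformly.

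Finally, the contributions from $D_L^c$ and $D_L\cap B_K^c$ are handled exactly as in Propositions \ref{PROP:error2} and \ref{PROP:error1}: on $\mathcal{K}_\delta^L$ we have the crude polynomial bound $\|h_L^\perp+t^+\gamma_L\|_{B^{-s',\mu'}_{r,q}}\lesssim L^{A}$ for some $A>0$, which is dominated by the exponentially small probabilities $e^{-cDL^2}$ and $e^{-cM^2 L}$ respectively. The main obstacle in executing the plan is the Besov-scaling estimate $\|\gamma_L\|_{B^{-s',\mu'}_{r,q}}=O(L^{-1/2})$, uniformly in $x_0\in\T_L$: one must verify that the spatial weight $w_{\mu'}$, evaluated on the translated profile $\gamma(L(\cdot-x_0))$, does not introduce $x_0$-dependent losses; this is where the exponential weight and the compact support in scale of $\Delta_k$ are critical. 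Combining all the above yields the desired uniform moment bound and hence, via Chebyshev and compactness of the embedding $B^{-s',\mu'}_{r,q}\hookrightarrow B^{-s,\mu}_{r,q}$, the tightness of $\{(T_L)_{\#}\rho_L\}_{L\ge 1}$.
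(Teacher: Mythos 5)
Your overall skeleton mirrors the paper's proof (uniform moments in a strictly better weighted Besov space, Chebyshev plus the compact embedding of Lemma \ref{LEM:compact} and Prokhorov; splitting off the far-from-manifold region via Proposition \ref{PROP:con}; error sets $D_L^c$ and $D_L\cap B_K^c$ as in Propositions \ref{PROP:error1} and \ref{PROP:error2}), but the main term contains a genuine gap. After Proposition \ref{PROP:main} the quantity to control is \emph{not} the plain expectation $\E_{\nu^\perp_{Q,x_0}}\big[\|h_L^\perp+t^+\gamma_L\|_{B^{-s',\mu'}_{r,q}}^{p},\,D_L,\,B_K\big]$, but a ratio of expectations carrying the weight $e^{\bar{\mathcal{E}}(h^\perp,t^+)}e^{c_0(t^+)^2+b(h^\perp)t^+}$. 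On $D_L\cap B_K$ one only knows $(t^+)^2=O(L)$ and $b(h^\perp)t^+=O(L)$, so this weight can be as large as $e^{cL}$, while the normalizing denominator is only bounded below by $e^{-cL}$ (Lemma \ref{LEM: CL-est}). Calling these weights ``controlled'' and then invoking the unweighted moment bound of Lemma \ref{LEM:moment} therefore does not bound the ratio: any crude separation of the weight from the Besov norm (H\"older, Cauchy--Schwarz) loses a factor $e^{CL}$ that cannot be absorbed, and there is no reverse Jensen inequality to rescue it. The paper's proof of the moment bound is built precisely around this difficulty: after Minkowski brings the expectation inside the Besov norm blockwise (see \eqref{SM8}--\eqref{SM19}), for each Littlewood--Paley block and each point $x$ one decomposes $h^\perp$ orthogonally against the Gaussian variable $2^k\varphi(2^k\cdot)*h^\perp_L(x)$, exactly as in the proof of Proposition \ref{PROP:glim}, so that the weight approximately factorizes and cancels between numerator and denominator, leaving a pure Gaussian moment $\sigma_{k,x}^p\,p^{p/2}$ with $\sigma_{k,x}^2\lesssim 2^k$ from the Green's function bounds; the sum over $k$ then converges precisely because $s>\tfrac12$. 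This decoupling step is the heart of the tightness argument and is absent from your proposal.

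Two further points. First, your scaling claim $\|\gamma_L\|_{B^{-s',\mu'}_{r,q}}=O(L^{-1/2})$ ``whenever $s'>\tfrac12$'' is not correct in the stated generality: $\gamma_L$ is $L^{-1/2}$ times an approximate delta of width $L^{-1}$, and the intermediate blocks $1\lesssim 2^k\lesssim L$ contribute $2^{-s'k}\,L^{-1/2}\,2^{k(1-1/r)}$, so for $r>2$ and $\tfrac12<s'<1-\tfrac1r$ the norm is of size $L^{1/2-1/r-s'}$ and $|t^+|\,\|\gamma_L\|_{B^{-s',\mu'}_{r,q}}$ can grow like $L^{1-1/r-s'}$; your estimate holds only when $s'\ge 1-\tfrac1r$ (in particular for $r\le 2$, which covers $H^s_\mu$ but not the full range of the proposition). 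Second, the truncation device $F_n=\min(\|\cdot\|^p,n)$ interacts badly with the error estimates you plan to cite: the bounds in Propositions \ref{PROP:REDUC}, \ref{PROP:error1}, \ref{PROP:error2} carry a factor of $\|F\|_{L^\infty}$, hence degrade like $n$, so monotone convergence does not close the argument as written; the paper instead treats the unbounded integrand directly on the error sets via H\"older together with Lemma \ref{LEM:moment}, Lemma \ref{lem: H-est} and Proposition \ref{PROP: gaussian-conc}, which is the route you would need as well.
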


\begin{proof}

By assuming the following moment bound 
\begin{align}
\sup_{L \ge 1}\E_{(T_L)_{\#}\rho_L}\big[\| \phi \|^p_{B^{-s, \mu}_{r,q}} \big]<\infty
\label{momrho}
\end{align}

\noi
for any $1\le p<\infty$, $s>\frac 12$, $\mu>0$, and $r,q\in [1,\infty]$ with $r\neq \infty$, we first prove the tightness of the family $\{(T_L)_{\#}\rho_L\}_{L \ge 1}$. 

Let $0<\mu' \ll \mu$. Given $M \ge 1$, which will be chosen in \eqref{chosen}, it follows from Lemma \ref{LEM:compact} and $\|f \|_{B^{s,\mu}_{r,q_1} } \le \| f \|_{B^{s,\mu}_{r,q_2} } $ with $q_1 \ge q_2$ that 
\begin{align*}
K_M:=\big\{\phi \in B^{-s,\mu}_{r,q} : \| \phi\|_{B^{-s+\eps, \mu'}_{r,q} } \le M  \big\}  
\end{align*}

\noi 
is compact in $B^{-s,\mu}_{r,q}$. Combining Chebyshev’s inequality and \eqref{momrho} shows that given $\eta>0$, there exists $M \ge 1 $ such that 
\begin{align}
(T_L)_{\#}\rho_L(K_M^c)=(T_L)_{\#}\rho_L\Big( \big\{  \| \phi\|_{B^{-s+\eps, \mu'}_{r,q} } >M  \big\}  \Big)\les M^{-p}<\eta,
\label{chosen}
\end{align}

\noi
uniformly in $L \ge 1$. Therefore, by Prokhorov’s theorem, the family
$\{(T_L)_{\#}\rho_L\}_{L \ge 1} $ is tight.

\noi 
It suffices to prove the moment bound \eqref{momrho}. From $T_L(\phi)=L(\phi-\pi_L(\phi) )$, where $\pi_L$ is the projection onto the soliton manifold $\M_L$, defined in $\eqref{project}$, we note that 
\begin{align}
\E_{(T_L)_{\#}\rho_L}\big[\| \phi \|^p_{B^{-s,\mu}_{r,q}} \big]&=\int \| L(\phi-\pi_L(\phi)) \|_{B^{-s,\mu}_{r,q} }^p \rho_L(d\phi) \notag \\
&=\int_{\{  \text{dist}(\phi,\M_L)    \ge \dl L^{\frac 12} \} } \| L(\phi-\pi_L(\phi)) \|_{B^{-s, \mu}_{r,q} }^p \rho_L(d\phi)  \notag \\
&\hphantom{X} + \int_{\{  \text{dist}(\phi,\M_L)   < \dl L^{\frac 12} \} } \| L(\phi-\pi_L(\phi)) \|_{B^{-s, \mu}_{r,q} }^p \rho_L(d\phi) \notag \\
&=\I_1+\I_2.
\label{I1I2}
\end{align}

\noi 
Thanks to Proposition \ref{PROP:con},
\begin{align}
\I_1 &\les \E_{\rho_L}\Big[ \| L(\phi-\pi_L(\phi)) \|_{B^{-s,\mu}_{r,q} }^{2p}\Big]^{\frac 12} e^{-\frac{c(\dl)}2 L^3} \notag \\
&\les \E_{\rho_L}\big[ \|\phi \|_{B^{-s,\mu}_{r,q} }^{2p} \big]^{\frac 12} e^{-\frac{c(\dl)}4 L^3}.
\label{PAS0} 
\end{align}

From Minkowski and Bernstein inequality in Lemma \ref{LEM:Ber},
\begin{align}
\E_{\rho_L}\big[ \|\phi \|_{B^{-s,\mu}_{r,q} }^{2p} \big]&\le \bigg(\sum_{k\ge 0} 2^{-skq}  \E_{\rho_L} \big[ \| \Dl_k \phi \|_{L^r_\mu}^{2p} \big]^{\frac q{2p}} \bigg)^\frac {2p}q \notag \\
&\les \bigg(\sum_{k\ge 0} 2^{-kq(s-1/2+1/r)}  \E_{\rho_L} \big[ \| \Dl_k \phi \|_{L^2_{2\mu /r }}^{2p} \big]^{\frac q{2p}} \bigg)^\frac {2p}q
\label{TTI10}
\end{align}

\noi
Thanks to the condition $\M_L(\phi)=\| \phi \|_{L^2(\T_L)}^2\le LD$ under the $2L$-periodic Gibbs measure $\rho_L$,
\begin{align}
\| \Dl_k \phi \|_{L^2_{2\mu /r }}^{2p}&=\bigg(\sum_{n\in \Z} \int_{[(2n-3)L, (2n-1)L]} |\Dl_k \phi(x)|^2 e^{-\frac{2\mu}{r}\jb{x}^\dl } dx  \bigg)^{p} \notag \\
&\les (LD)^p   \Big( \sum_{n \in \Z} e^{-c\mu  L^\dl n^\dl} \Big)^p \notag \\
&\les (LD)^p.
\label{TTI11}
\end{align}

\noi
From \eqref{TTI10} and \eqref{TTI11}, 
\begin{align}
\E_{\rho_L}\big[ \|\phi \|_{B^{-s,\mu}_{r,q} }^{2p} \big] \les (LD)^p.
\label{TTI12}
\end{align}

\noi
By combining  \eqref{PAS0} and \eqref{TTI12}, we have  
\begin{align}
\I_1 &\les \E_{\rho_L}\big[ \|\phi \|_{B^{-s,\mu}_{r,q} }^{2p} \big]^{\frac 12} e^{-\frac{c(\dl)}4 L^3} \les e^{-\frac{c(\dl)}8 L^3} \les 1,
\label{uniI10}
\end{align}

\noi
uniformly in $L \ge 1$. This completes the estimate for $\I_1$.

We next study $\I_2$ in \eqref{I1I2}. To apply Lemma \ref{LEM:chan}, we need the integrand to be both bounded and continuous. However, the integrand
\begin{align*}
\| L(\phi-\pi_L(\phi)) \|_{B^{-s, \mu}_{r,q} }^p e^{\frac 14 \int_{-L}^L |\phi|^4  dx} \ind_{ \{ M_L(\phi) \le LD \} }
\end{align*}

\noi 
is neither bounded nor continuous. The discontinuity arises from the sharp cutoff $\ind_{\{ M_L(\phi) \le LD \} }$. However, since the set of discontinuities has $\mu$-measure zero, we can initially use a smooth cutoff and then take the limit. Alternatively, we can replace the integrand with a bounded one, provided the bounds are uniform, employing a straightforward approximation argument that we omit here.

By following the arguments in \eqref{CHA00}, we have
\begin{align}
\I_2=\int_{x_0 \in \T_{L^2} } \int_{\dr \in [0,2\pi ]} \mathcal{I}(\| h_L\|_{B^{-s,\mu}_{r,q} }^p)\, \mathrm{d} \dr  \, \mathrm{d}x_0 \bigg/ \int_{x_0 \in \T_{L^2} } \int_{\dr \in [0,2\pi ]} \mathcal{I}(1)\, \mathrm{d} \dr  \, \mathrm{d}x_0, 
\label{TII0}
\end{align}

\noi
where $\mathcal{I}(F)$ is defined in \eqref{Idef}. Note that 
\begin{align}
\mathcal{I}(\| h_L\|_{B^{-s, \mu}_{r,q} }^p)=\mathcal{I}(\| h_L\|_{B^{-s,\mu}_{r,q} }^p, D_L)+\mathcal{I}(\| h_L\|_{B^{-s, \mu}_{r,q} }^p ,D_L^c).  
\label{SM7}
\end{align}

\noi
We first study $\mathcal{I}(\| h_L\|_{B^{-s, \mu}_{r,q} }^p ,D_L^c)$.
From \eqref{SQ0}, \eqref{Y4} and \eqref{Y7}, we write  
\begin{align}
\mathcal{I}(\| h_L^\perp \|_{B^{-s, \mu}_{r,q } }^p ,D_L^c) \les  \mathbb{E}_{\nu_{Q,x_0}^\perp }\Big[ \| h^\perp_L\|_{B^{-s}_{r,q,\mu} }^p H(h^\perp),\,\|h^\perp\|_{L^2}\le \delta L^{\frac{3}{2}}, \, D_L^c \Big].
\label{TI0} 
\end{align}

\noi 
By the H\"older's inequality and Proposition \ref{PROP: gaussian-conc}, 
\begin{equation}\label{TI1}
\begin{split}
|\eqref{TI0}|&\les \E_{\nu_{Q, x_0 }^\perp }\Big[   \| h_L^\perp\|_{B^{-s ,\mu}_{r,q} }^{3p} \Big]^{\frac 13} \mathbb{E}_{\nu_{Q,x_0}^\perp }\Big[H(h^\perp)^3,\,\|h^\perp \|_{L^2}\le \delta L^{\frac{3}{2}}\Big]^{\frac{1}{3}}\nu_{Q, x_0}^\perp (D^c_L)^{\frac{1}{3}}\\
&\les \E_{\nu_{Q, x_0 }^\perp }\Big[   \| h_L^\perp \|_{B^{-s,\mu}_{r,q} }^{3p} \Big]^{\frac 13}   \mathbb{E}_{\nu_{Q, x_0}^\perp}\Big[H(h^\perp)^3, \,\|h^\perp \|_{L^2}\le \delta L^{\frac{3}{2}}\Big]^{\frac{1}{3}} e^{-c DL^2}.
\end{split}
\end{equation}

By combining \eqref{TI1}, Lemma \ref{LEM: CL-est}, Lemma \ref{lem: H-est}, and Lemma \ref{LEM:moment}, we have
\begin{align}
\Bigg| \frac{\mathcal{I}(\| h_L\|_{B^{-s,\mu}_{r,q} }^p ,D_L^c)}{\mathcal{I}(1)} \Bigg| &\les  e^{cL} \E_{\nu_{Q, x_0 }^\perp }\Big[   \| h_L^\perp \|_{B^{-s, \mu}_{r,q} }^{3p} \Big]^{\frac 13}   \mathbb{E}_{\nu_{Q, x_0}^\perp }\Big[H(h^\perp)^3, \,\|h^\perp \|_{L^2}\le \delta L^{\frac{3}{2}}\Big]^{\frac{1}{3}} e^{-c DL^2}.
\notag \\
&\les   e^{-\frac{c}{2}D L^2},
\label{TI2}
\end{align}

\noi
uniformly in $x_0\in \T_{L^2}$ and $\dr \in [0,2\pi]$, where $D$ is chosen to be sufficiently large.

We next consider $\mathcal{I}(\| h_L\|_{B^{-s, \mu}_{p,q} }^p, D_L)$ in \eqref{SM7}. From Lemma \ref{lem: replace}, we have 
\begin{align}
&\mathcal{I}(\| h_L \|_{B^{-s, \mu}_{r,q} }^p, D_L)\\
&\les  \mathbb{E}_{\nu_{Q, x_0}^\perp }\bigg[ \| h_L^\perp \|_{B^{-s, \mu}_{q,r} }^p  \int_{S_{h^\perp} }e^{\bar{\mathcal{E} }(h^\perp, t) }e^{\frac{\Ld}{2}\|h^\perp\|_2^2}e^{\Ld G_1(t)}e^{G_2(t,h^\perp)} e^{-\frac{t^2}{2\sigma^2}}\frac{\mathrm{d}t}{\sqrt{2\pi} \sigma}, \, D_L \bigg], \notag \\
&\les  \E_{\nu^\perp_{Q, x_0} } \bigg[  \| h_L^\perp \|_{B^{-s,\mu}_{q,r} }^p e^{\bar{\mathcal{E} }(h^\perp, t^+) }e^{c_0(t^+)^2+b(h^\perp)t^+}, \, D_L \bigg](1+o_L(1)).
\label{TI4}
\end{align}

\noi

We may assume $p \ge \max\{q,r\}$. By using the Minkowski inequality, we have  
\begin{align}
&\E_{\nu^\perp_{Q,x_0}}\Big[ \| h^\perp_L\|_{B^{-s,\mu}_{q,r} }^p e^{\mathcal{E}(h^\perp, t^{+}) } e^{c_0 (t^+)^2 +b(h^\perp) t^{+}  }, \, D_L \Big] \notag \\
&=\E_{\nu^\perp_{Q,x_0}}\bigg[  \Big(\sum_{k \ge 0 } 2^{-skq} \| \Dl_k h^\perp_L \|_{L^r_\mu}^q \Big)^\frac pq  e^{\mathcal{E}(h^\perp, t^{+}) } e^{c_0 (t^+)^2 +b(h^\perp) t^{+}  }, \, D_L\bigg] \notag \\
&\le \Bigg( \sum_{k \ge 0} 2^{-skq} \bigg(\int_{\R} \E_{\nu^\perp_{Q,x_0}}\Big[ \big|2^k \varphi(2^k\cdot)*h^\perp_L(x) \big|^p e^{\mathcal{E}(h^\perp, t^{+}) } e^{c_0 (t^+)^2 +b(h^\perp) t^{+}  }, \, D_L
\Big]^{\frac rp} w_\mu(x)\, \mathrm{d}x \bigg)^\frac qr \Bigg)^\frac pq. 
\label{SM8}
\end{align}

\noi
By following the proof of Proposition \ref{PROP:error1}, in particular, \eqref{EE00} and Lemma \ref{lem: mu2-ests}, putting $B_K^c $ into \eqref{SM8} is an error term, and so it suffices to consider the expectation
\begin{align}
\E_{\nu^\perp_{Q,x_0}}\Big[ \big| 2^k \varphi(2^k\cdot)*h^\perp_L(x) \big|^p e^{\mathcal{E}(h^\perp, t^{+}) } e^{c_0 (t^+)^2 +b(h^\perp) t^{+}  }, \, D_L, \, B_K\Big].
\label{SM88}
\end{align}

\noi 
Also, using Lemma \ref{lem: replace} gives the lower bound for the denominator 
\begin{align}
\mathcal{I}(1)\ges \E_{\nu^\perp_{Q,x_0}}\Big[ e^{\mathcal{E}(h^\perp, t^{+}) } e^{c_0 (t^+)^2 +b(h^\perp) t^{+}  }, \, D_L, \, B_K\Big].
\label{SM9}
\end{align}

\noi
Hence, \eqref{SM8}, \eqref{SM88}, and \eqref{SM9} imply that we can reduce our focus to studying
\begin{align}
\frac{\mathcal{I}(\| h_L \|_{B^{-s, \mu}_{r,q} }^p, D_L)}{\mathcal{I}(1) } \les  \Bigg( \sum_{k \ge 0} 2^{-sk q} \bigg( \int_{\R}  \mathcal{M}(k,L,x,x_0)^\frac rp w_\mu(x)\, \mathrm{d}x \bigg)^\frac qr \Bigg)^\frac pq,
\label{SM13}
\end{align}

\noi
where 
\begin{align*}
 \mathcal{M}(k,L,x,x_0):=\frac{\E_{\nu^\perp_{Q,x_0}}\Big[ \big| 2^k \varphi(2^k\cdot)*h^\perp_L(x) \big|^p e^{\mathcal{E}(h^\perp, t^{+}) } e^{c_0 (t^+)^2 +b(h^\perp) t^{+}  }, \, D_L, \, B_K\Big] }{\E_{\nu^\perp_{Q,x_0}}\Big[ e^{\mathcal{E}(h^\perp, t^{+}) } e^{c_0 (t^+)^2 +b(h^\perp) t^{+}  }, \, D_L, \, B_K\Big]}.
\end{align*}

\noi 
Note that $2^k\varphi(2^k\cdot)*h^\perp_L(x)=\jb{2^k \varphi(2^k(x-\cdot)), h^\perp_L }$ is a Gaussian random variable with variance $\s_{k,x}^2$.  Compared to \eqref{Glim0}, the only difference is the Gaussian term $\jb{2^k \varphi(2^k(x-\cdot)), h^\perp_L }$ replacing  $e^{i\jb{h^\perp,g_L  }}$, where  $\jb{h^\perp,g_L  }$ is also Gaussin random variable. To follow the same arguments in the proof of Proposition \ref{PROP:glim}, we proceed as in \eqref{K1} by defining an orthogonal projection under the measure $\nu^\perp_{Q,x_0}$ onto the direction $\jb{2^k \varphi(2^k(x-\cdot)), h^\perp_L }$ 
\begin{align}
h^\perp(x)=\beta(x) 
\jb{2^k \varphi(2^k(x-\cdot)), h^\perp_L }+ h^{\perp \perp}(x),
\label{SM10}
\end{align}

\noi
where $\jb{2^k \varphi(2^k(x-\cdot)), h^\perp_L }$ and $h^{\perp \perp}$ are independent Gaussian random fields and 
\begin{align*}
\beta(x)=\frac{\E_{\nu^\perp_{Q,x_0}} \big[  \jb{2^k \varphi(2^k(x-\cdot)), h^\perp_L } h^\perp(x)  \big]   }{\E_{\nu^\perp_{Q,x_0}} \big[  |\jb{2^k \varphi(2^k(x-\cdot)), h^\perp_L }|^2   \big]  }.
\end{align*}

\noi
Then, by following the proof of \eqref{Glim12},  we can cancel terms in the numerator and denominator related to $h^{\perp \perp}$ to obtain
\begin{align}
&\frac{\E_{\nu^\perp_{Q,x_0}}\Big[ \big| 2^k \varphi(2^k\cdot)*h^\perp_L(x) \big|^p e^{\mathcal{E}(h^\perp, t^{+}) } e^{c_0 (t^+)^2 +b(h^\perp) t^{+}  }, \, D_L, \, B_K\Big] }{\E_{\nu^\perp_{Q,x_0}}\Big[ e^{\mathcal{E}(h^\perp, t^{+}) } e^{c_0 (t^+)^2 +b(h^\perp) t^{+}  }, \, D_L, \, B_K\Big]} \notag \\
&=\frac{(\s_{k,x}^p p^{\frac p{2}}+o_L(1)) \E_{\wt \nu^{\perp \perp}_{Q,x_0} }\Big[e^{\cj V_1(h^{\perp \perp})},\, \cj D_L, \, \cj B_K  \Big]  }{(1+o_L(1))\E_{\wt \nu^{\perp \perp}_{Q,x_0} }\Big[e^{\cj V_2(h^{\perp \perp})}, \, \cj D_L, \, \cj B_K  \Big]  } +o_L(1) \notag \\
&\approx \frac{(\s_{k,x}^p p^{\frac p{2}}+o_L(1))   }{(1+o_L(1)) } +o_L(1)
\les \s_{k,x}^p p^{\frac p2},
\label{SM11}
\end{align}

\noi
independent of $x\in \R$, where $\wt \nu^{\perp \perp}_{Q,x_0}$ is the underlying measure for the field $h^{\perp \perp}$ in the decomposition in \eqref{SM10}, $\cj V_1$, $\cj V_2$ are some functions in $h^{\perp \perp}$, and the ratio of their expected values has the size $O(1)$ as in \eqref{Glim13}. In particular,
the term $\s_{k,x}^p p^{\frac p{2}}$, which replace $e^{-\frac 12\| g\|_{L^2}^2}$ in \eqref{Glim12} and arises from \eqref{Glim10}, comes from 
\begin{align*}
\int_{\R} |y|^p e^{-\frac{s^2}{2\s_{k,x}^2}} \, \frac{\mathrm{d}s}{\sqrt{2\pi} \s_{k,x}}\les \s_{k,x}^p p^{\frac p2}.
\end{align*}

\noi
Therefore, combining \eqref{SM13}, and \eqref{SM11} yields 
\begin{align}
\frac{\mathcal{I}(\| h_L \|_{B^{-s, \mu}_{r,q} }^p, D_L)}{\mathcal{I}(1) }\les \Bigg( \sum_{k \ge 0} 2^{-sk q} \bigg( \int_{\R}  (\s_{k,x}^p)^{  \frac rp} w_\mu(x)\, \mathrm{d}x \bigg)^\frac qr \Bigg)^\frac pq,
\label{SM14}
\end{align}

\noi
uniformly in $x_0 \in \T_{L^2}$ and $\dr \in [0,2\pi]$.

\noi 
By undoing the scaling in the field $h^\perp_L(x)=L^\frac 12 h(Lx)$,
the variance of the Gaussian random variable $2^k\varphi(2^k\cdot)*h^\perp_L(x)$ is given by
\begin{align}
\s_{k,x}^2&=\E_{\nu^\perp_{Q,x_0} }\big[ |\jb{ 2^k\varphi(2^k(x-\cdot )), h^\perp_L   }|^2   \big] \notag \\
&=L^{-1}  \E_{\nu^\perp_{Q,x_0} }\big[ |\jb{ 2^k\varphi(2^k(x-\frac{\cdot}{L} )), h^\perp   }|^2   \big] \notag \\
&=L^{-1}  \jb{  C_{1, x_0}^Q  2^k\varphi(2^k(x-L^{-1}\cdot ))    ,2^k\varphi(2^k(x-L^{-1}\cdot ))} \notag \\
&\hphantom{X}+L^{-1}\jb{  C_{2, x_0}^Q  2^k\varphi(2^k(x-L^{-1}\cdot ))    ,2^k\varphi(2^k(x-L^{-1}\cdot ))} \notag \\
&=I_1(k,x)+I_2(k,x).
\label{SM15}
\end{align}

\noi
Since $I_1(k,x)$ and $I_2(k,x)$  are of the same nature, it suffices to estimate $I_1(k,x)$. Note that 
\begin{align*}
I_1(k,x)=&L^{-1}  \jb{  C_{1, x_0}^Q  2^k\varphi(2^k(x-L^{-1}\cdot ))    ,2^k\varphi(2^k(x-L^{-1}\cdot ))}\\
&=L^{-1}\int_{\T_{L^2}} \int_{\T_{L^2}} G_{1,x_0}(z,y)2^k\varphi(2^k(x-L^{-1}y ))\, \mathrm{d}y  \, 2^k\varphi(2^k(x-L^{-1}z ))\, \mathrm{d}z\\
&=\int_{\T_{L}} \int_{\T_{L}} LG_{1,x_0}(Lz,Ly)2^k\varphi(2^k(x-y ))\, \mathrm{d}y  \, 2^k\varphi(2^k(x-z ))\, \mathrm{d}z\\
&\les  \int_{\T_{L}} \int_{\T_{L}} L e^{-\text{dist}(L(z-y), 2L^2 \Z)  } 2^k\varphi(2^k(x-y ))\, \mathrm{d}y  \, 2^k\varphi(2^k(x-z ))\, \mathrm{d}z,
\end{align*}

\noi
where $G_{1,x_0}$ is the Green’s function for the covariance operator $C^Q_{1,x_0}$, as given in \eqref{eqn: G-est}.

We first consider the case $L \gg 2^k$. Due to the exponential decay of correlations, $z$ and $y$ are effectively forced to be equal, which implies
\begin{align}
I_1(k,x) \les \| 2^k \varphi(2^k(x-\cdot)  )\|_{L^2}^2 \les 2^k \|\varphi \|_{L^2}^2.
\label{SM16}
\end{align}

Next, we consider the case $L \les 2^k$. Then, from Lemma \ref{lem: covariance bound},  
\begin{align*}
G_{1,x_0}(x,y) \les 1,
\end{align*}

\noi
uniformly in $x_0,x,y \in \T_{L^2} $. This implies that
\begin{align}
\I_1(k,x)&\les  L\int_{\T_L} \int_{\T_L} 2^k\varphi(2^k(x-y ))\, \mathrm{d}y  \, 2^k\varphi(2^k(x-z ))\, \mathrm{d}z \notag \\
&\les L \| \varphi \|_{L^1} \| \varphi\|_{L^1}\les 2^k \|\varphi \|_{L^1}^2.
\label{SM17}
\end{align}

\noi
Hence, by combining \eqref{SM15}, \eqref{SM16}, and \eqref{SM17}, we have 
\begin{align}
\s_{k,x}^2 \les 2^k,
\label{SM18}
\end{align}

\noi
uniformly in  $x_0,x\in \T_{L^2}$. By plugging \eqref{SM18} into \eqref{SM14}, we obtain 
\begin{align}
\frac{\mathcal{I}(\| h_L \|_{B^{-s, \mu}_{r,q} }^p, D_L)}{\mathcal{I}(1) }&\les \Bigg( \sum_{k \ge 0} 2^{-sk q} \bigg( \int_{\R}  2^{ \frac{kr}2 } w_\mu(x)\, \mathrm{d}x \bigg)^\frac qr \Bigg)^\frac pq \notag \\
&\les \Big( \sum_{k \ge 0} 2^{-sk q} 2^{\frac {kq}2 } \Big)^\frac pq \les 1,
\label{SM19}
\end{align}

\noi
uniformly in $x_0 \in \T_{L^2}$ and $\dr \in [0,2\pi]$, where in the last line, we used the condition $s>\frac 12$.

Then, \eqref{TII0}, \eqref{TI2}, and \eqref{SM19} imply 
\begin{align}
|\I_2|&=\int_{x_0 \in \T_{L^2} } \int_{\dr \in [0,2\pi ]} \frac{\mathcal{I}(\| h_L\|_{B^{-s, \mu}_{r,q} }^p)}{\mathcal{I}(1) } \mathcal{I}(1) \, \mathrm{d} \dr  \, \mathrm{d}x_0 \bigg/ \int_{x_0 \in \T_{L^2} } \int_{\dr \in [0,2\pi ]} \mathcal{I}(1)\, \mathrm{d} \dr  \, \mathrm{d}x_0  \notag \\
&\les 1,
\label{TIITT}
\end{align}

\noi
uniformly in $L \ge 1$. By combining \eqref{I1I2}, \eqref{uniI10}, and \eqref{TIITT}, we obtain the result \eqref{momrho}.

\end{proof}

\section{Proof of Theorem \ref{THM:1}}\label{SEC:MAIN}
In this section, we present the proof of the main theorem.

\subsection{Characteristic function}
To prove Theorem \ref{THM:1}, one key ingredient is tightness, already established in Proposition \ref{PROP:Tight}. In this subsection, we provide the other main ingredient, which is the convergence of characteristic functions.

\begin{proposition}\label{PROP:CHAa}
Let $g$ be a real-valued, smooth, compactly supported function on $\R$. Then, we have
\begin{align}
\lim_{L\to \infty }\int e^{i\jb{\Re \phi,g}}  (T_L)_{\#}\rho_L(d\phi)&=e^{-\frac 12 \|g \|_{L^2}^2} \label{real0}\\
\lim_{L\to \infty }\int e^{i\jb{\Im \phi,g}}  (T_L)_{\#}\rho_L(d\phi)&=e^{-\frac 12 \|g \|_{L^2}^2}, \label{imag0}
\end{align}

\noi
where $T_L(\phi)=L(\phi-\pi_L(\phi))$ and $\pi_L$ is the projection onto the soliton manifold $\M_L$, defined in \eqref{project}.
\end{proposition}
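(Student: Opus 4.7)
The plan is to deduce Proposition \ref{PROP:CHAa} by combining the reduction formula of Proposition \ref{PROP:REDUC} with the pointwise Gaussian limit of Proposition \ref{PROP:gglimit} and then averaging over the translation parameter $x_0$ along the soliton manifold. I treat the real case \eqref{real0} in detail; the imaginary case \eqref{imag0} is identical upon replacing \eqref{CHARE} in Proposition \ref{PROP:gglimit} by \eqref{CHAIM}.

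First, I apply Proposition \ref{PROP:REDUC} to the bounded continuous test function $F(\psi) = e^{i\langle \Re\psi, g\rangle}$. After the substitution $x_0 \mapsto Lx_0$ of Remark \ref{REM:Lx_0} (which sends the tangential integral from $\T_{L^2}$ to $\T_L$, with a Jacobian common to numerator and denominator) and the scaling identity $\langle L^{1/2}\Re h(L\cdot), g\rangle = \langle \Re h, g_L\rangle$ with $g_L(y) := L^{-1/2}g(L^{-1}y)$, the reduction formula yields
\begin{align*}
\int e^{i\langle \Re\phi, g\rangle}\, (T_L)_{\#}\rho_L(d\phi)
= \frac{\iint_{\T_L \times [0,2\pi]} \mathcal{I}\bigl(e^{i\langle \Re h, g_L\rangle}\bigr)\, d\dr\, dx_0}{\iint_{\T_L \times [0,2\pi]} \mathcal{I}(1)\, d\dr\, dx_0} + O(e^{-cL^{0+}}),
\end{align*}
with $\mathcal{I}$ as in \eqref{Idef}. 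Combining Proposition \ref{PROP:main} (together with the tail estimates of Propositions \ref{PROP:error1} and \ref{PROP:error2}) with Proposition \ref{PROP:gglimit}, for every fixed $\delta_0 > 0$ I obtain
\begin{align*}
\frac{\mathcal{I}\bigl(e^{i\langle \Re h, g_L\rangle}\bigr)}{\mathcal{I}(1)} = e^{-\frac12 \|g\|_{L^2}^2}(1 + o_L(1)),
\end{align*}
uniformly in $(x_0,\dr)$ with $\text{dist}(\supp g, x_0) \ge \delta_0$. The Lipschitz hypothesis of Proposition \ref{PROP:main} is readily checked: expressed in the coordinates $(h^\perp,t)$, the Lipschitz norm of the integrand in $t$ equals $|\langle \gamma_L, g\rangle| \les L^{1/2}\|g\|_{L^1}$, which is well below the allowed threshold $L^{3/2-\eps}$.

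Second, I split the tangential integral into good and bad parts. Fix $\delta_0 > 0$ and set $G := \{x_0 \in \T_L : \text{dist}(\supp g, x_0) > \delta_0\}$ and $B := \T_L \setminus G$. Since $g$ has fixed compact support, $|B| \le |\supp g| + 2\delta_0 = O(1)$, while $|\T_L| = 2L$. By the translation invariance in $x_0$ and $\dr$ of the Ornstein--Uhlenbeck-type measure $\mu^\perp_{x_0,\Ld}$ and of the approximate soliton $Q^{\eta_L}_{x_0}$ (already noted after \eqref{eqn: plug-here}), the partition function $\mathcal{I}(1) = Z_{x_0,\dr}$ is independent of $(x_0,\dr)$, so $\iint_G \mathcal{I}(1)/\iint_{\T_L} \mathcal{I}(1) = |G|/|\T_L| \to 1$ as $L \to \infty$. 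On $B$, the trivial bound $|e^{i\langle \Re h, g_L\rangle}| \le 1$ gives $|\mathcal{I}(e^{i\langle \Re h, g_L\rangle})| \le \mathcal{I}(1)$, so this region contributes at most $|B|/|\T_L| = O(L^{-1})$ to the ratio.

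Putting these together,
\begin{align*}
\int e^{i\langle \Re\phi, g\rangle}\, (T_L)_{\#}\rho_L(d\phi)
= e^{-\frac12 \|g\|_{L^2}^2}(1 + o_L(1)) \cdot \frac{|G|}{|\T_L|} + O(L^{-1}) + O(e^{-cL^{0+}}),
\end{align*}
which converges to $e^{-\frac12 \|g\|_{L^2}^2}$ as $L \to \infty$. The principal obstacle, entirely dissolved by the preceding machinery, is that Proposition \ref{PROP:gglimit} requires a strictly positive separation between $\supp g$ and $x_0$; this is handled by fixing $\delta_0 > 0$ independent of $L$ and exploiting the volume growth of $\T_L$ to make the complementary region negligible.
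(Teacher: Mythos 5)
Your argument is correct and follows essentially the same route as the paper's proof: reduce to the ratio of $\mathcal{I}(e^{i\langle h, g_L\rangle})$ against $\mathcal{I}(1)$ via Proposition \ref{PROP:REDUC}, invoke Propositions \ref{PROP:main}, \ref{PROP:error1}, \ref{PROP:error2}, and \ref{PROP:gglimit} on the tangential region away from $\supp g$, and absorb the complementary region of bounded measure into $o_L(1)$ using translation invariance of $\mathcal{I}(1)$ in $(x_0,\dr)$. The one genuine refinement you make is to fix a threshold $\delta_0>0$ independent of $L$ and work with $G=\{\mathrm{dist}(\supp g,x_0)>\delta_0\}$ rather than with $\T_L\setminus\supp g$ as the paper does; this is actually cleaner, since the exponential error bounds in Lemmas \ref{LEM:varest} and \ref{LEM:bhperp} degrade as $\mathrm{dist}(\supp g,x_0)\to 0$, so a strictly positive lower bound is what the uniformity claim in Proposition \ref{PROP:gglimit} really requires.
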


\begin{proof}
We prove only the real part \eqref{real0}, as the imaginary part \eqref{imag0} can be proven similarly. For notational convenience, we assume that the field $\phi$ under the Gibbs measure $\rho_L$ is real-valued, that is, $\Re \phi=\phi$.

Since $\supp(\rho_L) \subset \{M_L(\phi)\le LD\} $ where $M_L(\phi)$ is in \eqref{L2mass}, we have 
\begin{align}
\int  e^{i\jb{\phi, g}} (T_L)_{\#}\rho_L(d\phi) &=\int_{A_1} e^{i\jb{T_L(\phi), g}} \rho_L(d\phi)   +\int_{A_2 }  e^{i\jb{T_L(\phi) g}} \rho_L(d\phi)+\int_{A_3 }  e^{i\jb{T_L(\phi), g}} \rho_L(d\phi) \notag \\
&=\I_1+\I_2+\I_3,
\label{MAT0}
\end{align}

\noi
where
\begin{align*}
A_1:&=\{  \text{dist}(\phi,\M_L)    \ge \dl L^{\frac 12} \} \\
A_2:&=\{  \text{dist}(\phi,\M_L)   < \dl L^{\frac 12} \}\cap  \{ M_L(\phi)\le L(D-\eps) \} \\
A_3:&=\{  \text{dist}(\phi,\M_L)   < \dl L^{\frac 12} \}\cap \{ L(D-\eps)  \le M_L(\phi)\le LD \}.
\end{align*}

\noi
Thanks to Proposition \ref{PROP:con}, \eqref{III} and \eqref{III0}, we have 
\begin{align}
\I_1&=O(e^{-c(\dl)L^3}) \label{MATT0}\\
\I_2&=O(e^{-cL^3}). \label{MATT1}
\end{align}

\noi
Hence, it suffices to consider $\I_3$. Based on the sets $A_1, A_2$, and $A_3$, note that 
\begin{align}
Z_L=Z_L[A_1]+Z_L[A_2]+Z_L[A_3],
\label{MATT2}
\end{align}

\noi
where $Z_L$ is the partition function for $\rho_L$ and 
\begin{align*}
Z_L[A_i]:=\int_{A_i} e^{\frac 14\int_{\T_{L}} |\phi|^4 \, \mathrm{d}x} \ind_{ \{ M_L(\phi) \le LD \} }\mu_L(d\phi). 
\end{align*}

\noi
From \eqref{MATT2}, Proposition \ref{PROP:con}, \eqref{III}, and \eqref{III0}, we have $1=O(e^{-cL^3})+Z_L[A_3]/ Z_L$ and so
\begin{align}
Z_L[A_3]=Z_L(1+O(e^{-cL^3})).
\label{MATT3}
\end{align}

\noi
By proceeding as in \eqref{CHA00}, along with Lemma \ref{LEM:chan}, Remark \ref{REM:Lx_0}, and using \eqref{MATT2}, \eqref{MATT3}, we have 
\begin{align}
\I_3&=\int_{ \{\text{dist}(\phi, \M_L)<\dl L^{\frac 12} \} }  e^{i\jb{T_L(\phi), g}}  \ind_{ \{ L(D-\eps)  \le M_L(\phi)\le LD \} } \rho_L(d\phi)  \notag \\
&=Z_{L}^{-1}\int_{\T_{L} } \int_0^{2\pi} \mathcal{I}(e^{i\jb{h_L,g }}) |\g_L(x_0,\dr)| \, \mathrm{d} \dr \, \mathrm{d}x_0 \notag \\
&=Z_{L}[A_3]^{-1}\int_{\T_{L} } \int_0^{2\pi} \mathcal{I}(e^{i\jb{h_L,g }}) |\g_L(x_0,\dr)| \, \mathrm{d} \dr \, \mathrm{d}x_0\notag \\
&\hphantom{XXX}\times \bigg(1+\frac{Z_L[A_1]}{Z_L(1+O(e^{-cL^3}))}+\frac{Z_L[A_2]}{Z_L(1+O(e^{-cL^3}))} \bigg)^{-1},
\label{MATTT3}
\end{align}

\noi
where $\mathcal{I}$ is defined in \eqref{Idef} and $h_L(x)=L^{\frac  12} h(Lx)$. Here, we consider only the case where $h$ is real-valued, as explained at the beginning of the proof. Since $\frac{Z_L[A_1]}{Z_L}=O(e^{-cL^3})$ and $\frac{Z_L[A_2]}{Z_L}=O(e^{-cL^3})$ from Proposition \ref{PROP:con}, \eqref{III}, and \eqref{III0}, \eqref{MATTT3} implies that
\begin{align}
\I_3= Z_{L}[A_3]^{-1}\int_{\T_{L} } \int_0^{2\pi} \mathcal{I}(e^{i\jb{h_L,g }}) |\g_L(x_0,\dr)| \, \mathrm{d} \dr \, \mathrm{d}x_0 \cdot (1+o_L(1))^{-1}.
\label{MATT4}
\end{align}


Note that
\begin{align}
\eqref{MATT4}&=Z_{L}[A_3]^{-1}\int_{\T_{L}\setminus \supp g } \int_0^{2\pi} \mathcal{I}(e^{i\jb{h_L,g }}) |\g_L(x_0,\dr)| \, \mathrm{d} \dr \, \mathrm{d}x_0 \cdot (1+o_L(1))^{-1} \label{MATTT4}\\
&\hphantom{X}+Z_{L}[A_3]^{-1}\int_{ \supp g } \int_0^{2\pi} \mathcal{I}(e^{i\jb{h_L,g }}) |\g_L(x_0,\dr)| \, \mathrm{d} \dr \, \mathrm{d}x_0 \cdot (1+o_L(1))^{-1},
\label{MAT2}
\end{align}

\noi
where
\begin{align}
Z_{L}[A_3]=\int_{\T_{L} } \int_0^{2\pi} \mathcal{I}(1) |\g_L(x_0,\dr)| \, \mathrm{d} \dr \, \mathrm{d}x_0. 
\label{MATT5}
\end{align}

\noi
In obtaining \eqref{MATT5}, we proceed as in \eqref{CHA00} with Lemma \ref{LEM:chan}. Based on the sets $D_L$ and $B_K$ in \eqref{eqn: DL-def} and \eqref{eqn: BK-def}, we write 
\begin{align*}
\mathcal{I}(e^{i\jb{h_L,g }})=\mathcal{I}(e^{i\jb{h_L,g }}, D_L \cap B_K)+\mathcal{I}(e^{i\jb{h_L,g }}, D_L \cap B_K^c)+\mathcal{I}(e^{i\jb{h_L,g }}, D_L^c).
\end{align*}

\noi 
Thanks to Proposition \ref{PROP:main} and Proposition \ref{PROP:gglimit}, under the condition $\text{dist}(\supp g, x_0)>0$ we have 
\begin{align}
\frac{\mathcal{I}(e^{i\jb{h_L,g }}, D_L \cap B_K)}{\mathcal{I}(1) }&=\frac{
\E_{\nu^\perp_{Q, x_0}}  \Big[  e^{i\jb{h^\perp, g_L}+i t^{+}\jb{\g_L, g}  }  e^{\bar {\mathcal{E}}(h^\perp, t^{+}) } e^{c_0(t^+)^2+b(h^\perp)t^+}, \,D_L  \Big]}{ \E_{\nu^\perp_{Q, x_0}}  \Big[ e^{\bar {\mathcal{E}}(h^\perp, t^{+}) } e^{c_0(t^+)^2+b(h^\perp)t^+}, \,D_L  \Big] }+o_L(1) \notag \\
&=e^{-\frac{1}{2}\|g\|^2_{L^2}}(1+o_L(1))
\label{MATT6}
\end{align}

\noi
as $L\to \infty $, uniformly in $x_0 \in \T_{L}$ and $\dr \in [0,2\pi]$. From Proposition \ref{PROP:error1} and \ref{PROP:error2}, 
\begin{align}
\bigg| \frac{\mathcal{I}(e^{i\jb{h_L,g }} \ind_{D_L\cap B_K^c})}{\mathcal{I}(1)} \bigg| &\les e^{-cM^2 L} \label{MATT7}\\
\bigg| \frac{\mathcal{I}(e^{i\jb{h_L,g }} \ind_{D_L^c})}{\mathcal{I}(1)} \bigg| &\les e^{-c DL^2 },
\label{MATT8}
\end{align}

\noi 
uniformly in $x_0 \in \T_{L}$ and $\dr \in [0,2\pi]$. It follows from  $|\g_L(x_0,\dr)|=|\g(0,0)|$ in \eqref{gasurf}, \eqref{MATT6}, \eqref{MATT7}, and \eqref{MATT8} that 
\begin{align}
\eqref{MATTT4}&=\frac{\int_{\T_{L}\setminus \supp g } \int_0^{2\pi} \mathcal{I}(e^{i\jb{h_L,g }}) |\g_L(x_0,\dr)| \, \mathrm{d} \dr \, \mathrm{d}x_0 }  {\int_{\T_{L} } \int_0^{2\pi} \mathcal{I}(1) |\g_L(x_0,\dr)| \, \mathrm{d} \dr \, \mathrm{d}x_0 } \notag \\
&=e^{-\frac 12 \|g\|_{L^2}^2}(1+o_L(1)) \frac{\int_{\T_{L}\setminus \supp g } \int_0^{2\pi} \mathcal{I}(1)  \, \mathrm{d} \dr \, \mathrm{d}x_0 }  {\int_{\T_{L} } \int_0^{2\pi} \mathcal{I}(1) \, \mathrm{d} \dr \, \mathrm{d}x_0 }+O(e^{-cM^2 L})+O(e^{-cDL^2}).
\label{MAT3}
\end{align}

\noi
Note that when we apply the change of variables $f \to f(\cdot+  Lx_0)$, $Q^{\eta_L}_{x_0}$ in \eqref{appground0}  no longer depends on
$Lx_0$. Under the measure $\mu_{Lx_0,\Ld}^\perp $, the field $h(\cdot+Lx_0)$ has the distribution $\mu_{0,\Ld}^\perp$, that is, $\Law_{\mu_{Lx_0,\Ld}^\perp }(h(\cdot+Lx_0))=\mu_{0,\Ld}^\perp$ since
\begin{align*}
\E_{\mu^\perp_{Lx_0, \Ld}}\big[h(x+Lx_0) h(y+Lx_0)  \big]=\E_{\mu^\perp_{0, \Ld}}\big[h(x) h(y)  \big].
\end{align*}

\noi 
This implies that the integral 
$\mathcal{I}(1)$ in \eqref{Idef} does not depend on $x_0$.  Therefore, by taking
the $\dr$-integral outside $x_0$-integral, we have 
\begin{align}
\frac{\int_{\T_{L}\setminus \supp g } \int_0^{2\pi} \mathcal{I}(1)  \, \mathrm{d} \dr \, \mathrm{d}x_0 }  {\int_{\T_{L} } \int_0^{2\pi} \mathcal{I}(1) \, \mathrm{d} \dr \, \mathrm{d}x_0 }=\frac{L-|\supp g|}{L}=1+O(L^{-1})
\label{MAT4}
\end{align}

\noi
as $L\to \infty$ since $g$ has a compact support. Hence, from \eqref{MAT3} and \eqref{MAT4},
\begin{align}
\eqref{MATTT4}&=\frac{\int_{\T_{L}\setminus \supp g } \int_0^{2\pi} \mathcal{I}(e^{i\jb{h_L,g }}) |\g_L(x_0,\dr)| \, \mathrm{d} \dr \, \mathrm{d}x_0 }  {\int_{\T_{L} } \int_0^{2\pi} \mathcal{I}(1) |\g_L(x_0,\dr)| \, \mathrm{d} \dr \, \mathrm{d}x_0 } \notag \\
&=e^{-\frac 12 \|g \|_{L^2}^2}(1+o_L(1))+O(e^{-cM^2 L})+O(e^{-cDL^2})
\label{MAT6}
\end{align}

\noi
as $L\to \infty$.  From $|\g_L(x_0,\dr)|=|\g(0,0)|$ in \eqref{gasurf}, we have
\begin{align}
\eqref{MAT2}=\bigg|\frac{\int_{ \supp g } \int_0^{2\pi} \mathcal{I}(e^{i\jb{h_L,g }}) |\g_L(x_0,\dr)| \, \mathrm{d} \dr \, \mathrm{d}x_0 }  {\int_{\T_{L} } \int_0^{2\pi} \mathcal{I}(1) |\g_L(x_0,\dr)| \, \mathrm{d} \dr \, \mathrm{d}x_0 } \bigg|  \le \frac{ |\supp g| }{L} \to 0
\label{MAT7}
\end{align}

\noi
as $L\to \infty$ since $g$ has a compact support. Combining \eqref{MATT4}, \eqref{MATTT4}, \eqref{MAT2}, \eqref{MAT6}, and \eqref{MAT7} yields 
\begin{align}
\I_3=e^{-\frac 12 \|g \|_{L^2}^2}(1+o_L(1))+o_L(1)
\label{MAT8}
\end{align}

\noi
as $L\to \infty$.

It follows from \eqref{MAT0}, \eqref{MATT0}, \eqref{MATT1}, and \eqref{MAT8} that 
\begin{align*}
\int  e^{i\jb{\phi, g}} (T_L)_{\#}\rho_L(d\phi)&=\I_1+\I_2+\I_3 \notag \\
&=e^{-\frac 12 \|g \|_{L^2}^2}(1+o_L(1))+o_L(1)
\end{align*}

\noi
as $L\to \infty$. This completes the proof of Proposition \ref{PROP:CHAa}.

\end{proof}

\subsection{Proof of the main theorem}

In this subsection, we present the proof of the main theorem by combining Proposition \ref{PROP:Tight} and Proposition \ref{PROP:CHAa}.  

\begin{proof}[Proof of Theorem \ref{THM:1}]

It follows from Proposition \ref{PROP:Tight} and Prohorov’s theorem that for any sequence $\{L_j\}_{j=1}^\infty $ of positive numbers $L\ge 1$  tending to $\infty$, the sequence $\{(T_{L_j})_{\#}\rho_L\}_{j \ge 1} $ is compact. Furthermore, thanks to Proposition \ref{PROP:CHAa}, $\{(T_{L_j})_{\#}\rho_L\}_{j \ge 1}$ converges weakly to the white noise measure $\nu$ on $\R$
\begin{align*}
\nu(d\phi)=Z^{-1} e^{-\frac{1}{2} \int_{\R} |\phi|^2 dx } \prod_{x\in \R} d\phi(x)
\end{align*}

\noi
satisfying $\E_{\nu}\big[e^{i\jb{\phi,g}}\big]=e^{-\frac 12\|g \|_{L^2}^2}$, where $g$ is a test function on $\R$. This implies the uniqueness of the limit point for the family $\{(T_{L})_{\#}\rho_L\}_{L \ge 1} \}$. Hence, we obtain
\begin{align*}
\int F(L(\phi-\pi_L(\phi) ) ) \rho_L(d\phi)=\int F(\phi ) (T_L)_{\#}\rho_L(d\phi) \too \int F(\phi) \nu(d\phi) 
\end{align*}

\noi
as $L\to \infty$. This completes the proof of Theorem \ref{THM:1}.

\end{proof}

\appendix

\section{Analysis of Green's function}

In this section, we study the two-point correlation function, specifically Green’s function, for the Gaussian measures associated with the Schr\"odinger operators in \eqref{SCHOP}. The following two subsections provide decay of correlation and regularity estimates for Green’s functions.

\subsection{Correlation decay}
In this subsection, we study the decay of the correlation functions for the Gaussian measure $\nu^\perp_{Q,x_0}$ given in Lemma \ref{LEM:SCHOP}.

Recall from Lemma \ref{LEM:SCHOP} the definitions of the covariance operators 
\begin{align}
C_{1,x_0}^{Q}&= \mathbf{P}_{\cj V^{L^2, \text{Re}}_{x_0,0}} (B_1^Q)^{-1}\mathbf{P}_{\cj V^{L^2, \text{Re}}_{x_0,0}} \notag \\
C_{2,x_0}^{Q}&= \mathbf{P}_{\cj V^{L^2, \text{Im}}_{x_0,0}} (B_2^Q)^{-1}\mathbf{P}_{\cj V^{L^2, \text{Im}}_{x_0,0}},
\end{align}

\noi 
projecting out the directions that cause the degeneracies.
Then, each operator $C_{i,x_0}^{Q}$ is invertible on the space $\cj V^{L^2, \text{Re}}_{x_0,0}$ and $\cj V^{L^2, \text{Im}}_{x_0,0}$, respectively. Hence, we can take its inverse to study the following Green functions.


\begin{lemma}
\label{lem: covariance bound}
For $i=1,2$, we have 
\begin{equation}\label{eqn: G-est}
G_{i,  x_0 }(x,y):=\E_{ \nu_{C_i^Q, x_0}^\perp }\Big[\phi(x)\cj \phi(y)\Big] \les e^{-\mathrm{dist}(x-y, 2L^2\mathbb{Z})},
\end{equation}


\noi
uniformly in $x_0\in \T_{L^2}$, where $\mathrm{dist}(x, 2L^2\mathbb{Z})$ denotes the distance from $x\in \R$ to the nearest point in $2L^2\mathbb{Z}$.

\end{lemma}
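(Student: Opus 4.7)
The plan is to reduce the bound on $G_{i,x_0}(x,y)$ to two ingredients: (i) an exponential decay estimate for the unprojected resolvent kernel of the Schrödinger operator $B_i^Q = -\dx^2 - k_i (Q^{\eta_L}_{x_0})^2 + \Ld$ (with $k_1=3$, $k_2=1$) on $\T_{L^2}$, and (ii) control of the correction coming from the rank-two finite-dimensional projections in \eqref{eqn: project} using the exponential localization of the functions $Q_{j,x_0}^{\eta_L}$. From \eqref{CovC}, the kernel $G_{i,x_0}(x,y)$ equals $(B_i^Q)^{-1}(x,y)$ plus finitely many terms of the shape $[(B_i^Q)^{-1} Q_{j,x_0}^{\eta_L}](x)\,\cj{Q_{k,x_0}^{\eta_L}(y)}$ and symmetric analogues, so it suffices to prove exponential decay of $(B_i^Q)^{-1}(x,y)$ and then bound each correction term separately.

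For the first step I would use a Combes--Thomas (Agmon) type argument. Lemma \ref{LEM:positive}, together with a short perturbation argument that accounts for the finite-dimensional near-kernel of $B_i^Q$ on $\T_{L^2}$ (whose directions of possible degeneracy are precisely among $\{Q_{x_0}^{\eta_L}, \dx Q_{x_0}^{\eta_L}\}$), yields a spectral gap $\zeta > 0$ above zero that is uniform in $L$ and $x_0$. Conjugating by $e^{\eta \psi}$, where $\psi$ is a smooth $1$-Lipschitz approximation of $\textup{dist}(\cdot-y, 2L^2\Z)$, produces the operator $B_i^Q - 2\eta \psi' \dx - \eta \psi'' - \eta^2 (\psi')^2$, which for $\eta$ small enough relative to $\sqrt{\zeta}$ remains invertible with operator norm $O(1)$ uniformly in $L$. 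Pairing with the Dirac masses $\dl_x$, $\dl_y$ and undoing the conjugation then yields $|(B_i^Q)^{-1}(x,y)| \les e^{-c\,\textup{dist}(x-y,2L^2\Z)}$.

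For the second step, each $Q_{j,x_0}^{\eta_L}$ appearing in \eqref{TQF1}--\eqref{TQF2} is, up to normalization, a Schwartz function localized near $x_0$ on $\T_{L^2}$, with at most $O(e^{-cL})$ error arising from the cutoff $\eta_L$. Combining the Step 1 kernel bound with a standard convolution estimate gives $|(B_i^Q)^{-1} Q_{j,x_0}^{\eta_L}(x)| \les e^{-c\,\textup{dist}(x-x_0, 2L^2\Z)}$. The triangle inequality $\textup{dist}(x-x_0, 2L^2\Z) + \textup{dist}(y-x_0, 2L^2\Z) \ge \textup{dist}(x-y, 2L^2\Z)$ then upgrades this into the claimed bound for each correction term, which combined with the bound on the main term $(B_i^Q)^{-1}(x,y)$ gives \eqref{eqn: G-est}.

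The main obstacle will be ensuring uniformity of the spectral gap $\zeta$ (and hence of the Combes--Thomas constants) in $L$ and $x_0$. The risk is that low-lying eigenvalues of $B_i^Q$ on $\T_{L^2}$ might drift toward zero as $L \to \infty$, and Lemma \ref{LEM:positive} only provides a qualitative uniform lower bound on $\cj V_{x_0,0}^{L^2,\textup{Re/Im}}$. This is handled by comparing the operator on $\T_{L^2}$ with its $\R$-counterpart, for which the spectral picture \eqref{Degeneracy}--\eqref{DEGN} is classical (see \cite{NAK}), and by using that the discrepancy coming from the cutoff $\eta_L$ is $O(e^{-cL})$. Translation invariance of the construction under $x_0 \mapsto x_0 + c$ also ensures that all estimates are automatically uniform in $x_0 \in \T_{L^2}$, so the only genuine issue is the $L$-uniformity, which the comparison argument resolves.
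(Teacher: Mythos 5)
There is a genuine gap at the heart of your argument: you invert the \emph{unprojected} operator $B_i^Q=-\dx^2-k_i(Q^{\eta_L}_{x_0})^2+\Ld$ and run Combes--Thomas on it, but this operator has no uniform spectral gap above zero. By \eqref{Degeneracy}, in the limit $L\to\infty$ the operator $B_1^Q$ has a strictly negative eigenvalue (direction close to $Q_{x_0}$) and a near-zero eigenvalue (translation mode $\partial_{x_0}Q_{x_0}$), while $B_2^Q$ has an eigenvalue that is exponentially close to zero (phase mode $Q_{x_0}$). Consequently $(B_i^Q)^{-1}$ on all of $L^2(\T_{L^2})$ either fails to exist or has operator norm growing like $e^{cL}$, so no "short perturbation argument" can produce a gap $\zeta>0$ uniform in $L$ for $B_i^Q$ itself; Lemma \ref{LEM:positive} gives positivity only \emph{after} restriction to $\cj V^{L^2,\textup{Re}}_{x_0,0}$, $\cj V^{L^2,\textup{Im}}_{x_0,0}$, i.e.\ precisely because the bad directions have been projected out. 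Your conjugated operator $B_i^Q-2\eta\psi'\dx-\eta\psi''-\eta^2(\psi')^2$ is therefore not invertible with $O(1)$ norm uniformly in $L$, and the Step 1 kernel bound $|(B_i^Q)^{-1}(x,y)|\les e^{-c\,\mathrm{dist}(x-y,2L^2\Z)}$ is unavailable. The same problem resurfaces, amplified, in Step 2: your correction terms require $(B_i^Q)^{-1}Q^{\eta_L}_{j,x_0}$, i.e.\ the ill-behaved inverse applied to (near-)kernel/negative directions, which is of size $e^{cL}$ rather than an $O(1)$-normalized exponentially localized profile; moreover the covariance in Lemma \ref{LEM:SCHOP} is the inverse of the operator \emph{restricted} to the subspace, which does not equal "unprojected resolvent plus rank-two localized corrections", so the decomposition you start from is not the right object either.

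For comparison, the paper never inverts $B_i^Q$ on the full space. It first proves decay for the projected Ornstein--Uhlenbeck Green's function by expanding the projectors in \eqref{eqn: project} around $(-\dx^2+\Ld)^{-1}$, which \emph{is} uniformly positive (its periodic kernel decays by \eqref{OUdec}), with the projector corrections harmless because $Q^{\eta_L}_{j,x_0}$ is exponentially localized. It then transfers the decay to $G_{i,x_0}$ through the resolvent identity \eqref{Res1}--\eqref{Res2}, treating $-k_i(Q^{\eta_L}_{x_0})^2$ as a localized perturbation, splitting into the cases $|y-Lx_0|\lessgtr\tfrac12\,\mathrm{dist}(x-y,2L^2\Z)$, and using an a priori $O(1)$ bound (elliptic regularity plus Sobolev embedding on the projected subspace) for integrals of $G_{i,x_0}$ against exponentially localized weights. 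If you want to salvage a Combes--Thomas route, you would have to conjugate the \emph{projected} operator $\mathbf{P}B_i^Q\mathbf{P}$, controlling the conjugated projectors $e^{\eta\psi}\mathbf{P}e^{-\eta\psi}$ via the localization of $Q^{\eta_L}_{j,x_0}$ and invoking Lemma \ref{LEM:positive} on the subspace; as written, your argument does not do this and the stated uniformity claim is false.
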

\begin{proof}

We prove only the case $i=1$, as the argument is exactly the same for $i=2$. For simplicity of notation, we set  $\Ld=1$ in the Schr\"odinger operator $B_1^Q$ defined in \eqref{SCHOP}.

We first note that the Green's function $(1-\dx^2)^{-1}$ on $\mathbb{R}$ is given by 
\[(1-\dx^2)^{-1}_{\R}(x,y)=\frac{1}{2\pi}\int_{\R} \frac{e^{i(x-y)\xi}}{1+\xi^2}\,\mathrm{d}\xi= \frac 12 e^{-|x-y|}.\]

\noi 
By the Poisson summation formula, the periodic Green's function on $\T_{L^2}$ satisfies
\begin{align}
(1-\dx^2)^{-1}_{\T_{L^2}}(x,y)=\frac{1}{2L^2}\sum_{m\in \mathbb{Z}}\frac{e^{i\pi \frac{m}{2 L^2}(x-y)}}{\left(\frac{m}{2 L^2}\right)^2+1}=\frac 12 \sum_{n\in \mathbb{Z}}e^{-|x-y-2L^2 n|}\lesssim e^{-\mathrm{dist}(x-y,2L^2\mathbb{Z})}.
\label{OUdec}
\end{align}

\noi
Therefore, we obtain the correlation decay for the covariance operator $(1-\dx^2)^{-1}$ on ${\T_{L^2}}$.

We now prove the correlation decay for the operator $\mathbf{P}_{\cj V^{L^2, \text{Re}}_{x_0,0}} (1-\dx^2)^{-1}\mathbf{P}_{\cj V^{L^2, \text{Re}}_{x_0,0}} $. Recall the definition of the projector $\mathbf{P}_{\cj V^{L^2, \text{Re}}_{x_0,0}} $ in \eqref{eqn: project} 
\begin{align*}
\mathbf{P}_{\cj V^{L^2, \text{Re}}_{x_0,0}} =\Id-\P_{1,x_0}^{L}-\P_{2,x_0}^{L},
\end{align*}

\noi
where each projection is 
\begin{align}
\P_{1,x_0}^{L}&= \langle Q_{1,x_0}^{\eta_L},\cdot\,\rangle Q_{1,x_0}^{\eta_L}, \label{Z6} \\
\P_{2,x_0}^{L}&= \langle Q_{2,x_0}^{\eta_L},\cdot\,\rangle Q_{2,x_0}^{\eta_L},
\label{Z7}
\end{align}

\noi
where $Q_{1,x_0}^{\eta_L}$ and $Q_{2,x_0}^{\eta_L}$ are defined in \eqref{TQF1} and \eqref{TQF3}.  From using projectors  $\P_{1,x_0}^{L}$ and $\P_{2,x_0}^{L}$ in \eqref{Z6} and \eqref{Z7}, we expand
\begin{align}
\mathbf{P}_{\cj V^{L^2, \text{Re}}_{x_0,0}} (1-\dx^2)^{-1}\mathbf{P}_{\cj V^{L^2, \text{Re}}_{x_0,0}}&=(-\dx^2+1 )^{-1}   +\sum_{1 \le i,j \le 2} \P_{i,x_0}^L(-\dx^2+1 )^{-1}\P_{j,x_0}^L \notag \\
&\hphantom{X}-\sum_{1\le j\le 2} (-\dx^2+1 )^{-1}  \P_{j,x_0}^L \notag \\
&\hphantom{X}-\sum_{1\le i\le 2}  \P_{i,x_0}^L (-\dx^2+1 )^{-1}. 
\label{Z11}
\end{align}

\noi
From \eqref{OUdec}, the first term on the right-hand side of  \eqref{Z11} shows the exponential correlation decay \eqref{eqn: G-est}. Hence, it suffices to consider the remaining terms.  Note that for $1\le i,j\le 2$,
\begin{align}
&\big\langle \dl_x , \P^L_{i,x_0} (-\dx^2+1 )^{-1} \P^L_{j,x_0}  \dl_y \big\rangle \notag \\
&=\big\langle  Q^{\eta_L}_{i, x_0},  (-\dx^2+1 )^{-1}  Q^{\eta_L}_{j,x_0}  \big\rangle Q^{\eta_L}_{j,x_0}(y)  Q^{\eta_L}_{j,x_0}(x).
\label{Z12}
\end{align}


\noi
Since $Q^{\eta_L}_{i, x_0}$ and $Q^{\eta_L}_{j,x_0}$ are exponentially localized around $Lx_0$ (see Remark \ref{REM:Lx_0}), we have 
\begin{align}
|\eqref{Z12}| \les e^{-c|x-Lx_0|   }e^{-c|y-Lx_0|} &\lesssim e^{-c|x-y|} \notag \\
&\les e^{-c\mathrm{dist}(x-y, 2L^2 \Z) }.
\label{ZZ12}
\end{align}

\noi
Next, we consider the third term on the right-hand side of \eqref{Z11}. Note that for $1\le i,j\le 2$,
\begin{align}
\big\langle \dl_x ,  (-\dx^2+1 )^{-1} \P^L_{j,x_0}  \dl_y \big\rangle 
&=\big\langle \dl_x ,  (-\dx^2+1 )^{-1}  Q^{\eta_L}_{j,x_0}  \big\rangle Q^{\eta_L}_{j,x_0}(y) \notag \\
&= \bigg(\int_{-L^2}^{L^2} G(x,z) Q^{\eta_L}_{j,x_0} (z)\, \mathrm{d}z \bigg) Q^{\eta_L}_{j,x_0}(y), 
\label{Z13}
\end{align}

\noi
where $G$ is the Green function for the operator $(-\dx^2+1)$.
From \eqref{OUdec} and the fact that $Q^{\eta_L}_{j,x_0}$ is exponentially localized around $Lx_0$, we have
\begin{align}
|\eqref{Z13} | \les e^{-c\mathrm{dist}(x-y, 2L^2 \Z) }.
\label{ZZ13}
\end{align}

\noi
Regarding the last term on the right-hand side of \eqref{Z11}, we can proceed with the above argument to obtain
\begin{align}
|\big\langle \dl_x ,  \P^L_{j,x_0}  (-\dx^2+1 )^{-1}  \dl_y \big\rangle | \les e^{-c\mathrm{dist}(x-y, 2L^2 \Z) }.
\label{Z14}
\end{align}

\noi
Hence, by combining \eqref{Z11}, \eqref{OUdec}, \eqref{Z12}, \eqref{ZZ12}, \eqref{Z13}, \eqref{ZZ13}, and \eqref{Z14}, we obtain the correlation decay
\begin{align}
G_{\text{OU} }(x,y) \les e^{-c\mathrm{dist}(x-y, 2L^2 \Z) },
\label{OUDEC0}
\end{align}

\noi
where $G_{\text{OU} }(x,y)$ is the Green function for the operator $\mathbf{P}_{\cj V^{L^2, \text{Re}}_{x_0,0}} (1-\dx^2)^{-1}\mathbf{P}_{\cj V^{L^2, \text{Re}}_{x_0,0}}$.

\noi 
We now study the Green's function $G_{1,  x_0 }(x,y)$ for the operator $C_{1,x_0}^{Q}= \mathbf{P}_{\cj V^{L^2, \text{Re}}_{x_0,0}} (B_1^Q)^{-1}\mathbf{P}_{\cj V^{L^2, \text{Re}}_{x_0,0}}$. From Lemma \ref{LEM:positive}, $C_{1,x_0}^{Q}$ is a  positive operate on the space $\cj V^{L^2, \text{Re}}_{x_0,0}$ and therefore invertible on $\cj V^{L^2, \text{Re}}_{x_0,0}$. Consequently, we can define the corresponding Green’s function
\begin{align*}
G_{1,  x_0 }(x,y):=\E_{ \nu_{C_1^Q, x_0}^\perp }\Big[\phi(x)\cj \phi(y)\Big].
\end{align*}

\noi
The resolvent identity allows us to treat the Schr\"odinger operator  $C_{1, x_0}^Q$  as a perturbation of $(-\dx^2+1)^{-1}$ as follows 
\begin{align}
G_{1,  x_0 }(x,y)&=G_{\text{OU} }(x,y)-3\int_{\T_{L^2 }} G_{\text{OU} }(x,z) (Q^{\eta_L}_{x_0})^2(z) G_{1,  x_0 }(z,y)\, \mathrm{d}z \label{Res1}\\ 
&=G_{\text{OU} }(x,y)-3\int_{\T_{L^2 }} G_{1,  x_0 }(x,z)  (Q^{\eta_L}_{x_0})^2(z) G_{\text{OU} }(z,y)\, \mathrm{d}z \label{Res2},
\end{align}

\noi
where $G_{\text{OU} }(x,y)$ is the Green’s function for the operator $\mathbf{P}_{\cj V^{L^2, \text{Re}}_{x_0,0}} (1-\dx^2)^{-1}\mathbf{P}_{\cj V^{L^2, \text{Re}}_{x_0,0}}$. Note that, from \eqref{OUDEC0} and the exponential decay of the soliton $Q^{\eta_L}_{x_0}$ localized at $Lx_0$ (see Remark \ref{REM:Lx_0}), we can select $\mu > 0$ such that
 \begin{align}
G_{\text{OU} }(x,y) &\le e^{-\mu \text{dist}(x-y, 2L^2  \Z) }, \label{OUDECMU0}\\
Q^{\eta_L}_{x_0}(z) & \le  e^{-\mu |z-Lx_0|}. \label{OUDECMU1}
\end{align}

\noi 
If $|y-Lx_0|\le \frac 12 \text{dist}(x-y, 2L^2 \Z) $, then we use \eqref{Res1}, \eqref{OUDECMU0}, and \eqref{OUDECMU1} to obtain
\begin{align}
&|G_{1,  x_0 }(x,y)| \notag \\
&\le e^{-\mu \text{dist}(x-y, 2L^2 \Z)  }+ \int_{\T_{L^2}}  e^{-\mu \text{dist}(x-z, 2L^2 \Z)  } e^{-\mu |z-Lx_0|} G_{1,  x_0 }(z,y) \, \mathrm{d}z \notag \\
&\le e^{-\mu \text{dist}(x-y, 2L^2 \Z)  }+ \int_{\T_{L^2}} 
e^{-\frac{\mu}{2} (\text{dist}(x-y, 2L^2 \Z) -|y-Lx_0|-|z-Lx_0|)  }  e^{-\mu |z-Lx_0|} G_{1,  x_0 }(z,y) \, \mathrm{d}z \notag  \\
&\le e^{-\mu \text{dist}(x-y, 2L^2 \Z)  }+e^{-\frac{\mu}{4} (\text{dist}(x-y, 2L^2 \Z)} \int_{\T_{L^2}} e^{-\frac \mu2 |z-Lx_0|}G_{1,  x_0 }(z,y) \, \mathrm{d}z 
\label{expcor00}
\end{align} 

\noi
Since $e^{-\frac \mu2 |z-Lx_0|} \in \cj V^{L^2, \text{Re}}_{x_0,0} $ defined in \eqref{Vbar}, by the elliptic regularity theorem and Sobolev embedding, we obtain
\begin{align}
 \sup_{y}  \bigg| \int_{\T_{L^2}} e^{-\frac \mu2 |z-Lx_0|}G_{1,  x_0 }(z,y) \, \mathrm{d}z  \bigg| \les 1. 
 \label{expcor000}
\end{align}

\noi
Hence, \eqref{expcor00} and \eqref{expcor000} imply that under $|y-Lx_0|\le \frac 12 \text{dist}(x-y, 2L^2 \Z) $,
\begin{align}
|G_{1,  x_0 }(x,y)| \les e^{-\frac{\mu}{4} (\text{dist}(x-y, 2L^2 \Z)}. 
\label{expcor0}
\end{align}

\noi
If $|y-Lx_0|> \frac 12 \text{dist}(x-y, 2L^2 \Z) $, then we use \eqref{Res2}, \eqref{OUDECMU0}, and \eqref{OUDECMU1} to obtain
\begin{align}
&|G_{1,  x_0 }(x,y)| \notag \\
&\le e^{-\mu \text{dist}(x-y, 2L^2 \Z)  }+ 3\int_{\T_{L^2}} G_{1,  x_0 }(x,z)  e^{-\mu |z-Lx_0|}  e^{-\mu \text{dist}(z-y, 2L^2 \Z)  }  \, \mathrm{d}z \notag \\
&\le e^{-\mu \text{dist}(x-y, 2L^2 \Z)  }+ 3\int_{\T_{L^2}} G_{1,  x_0 }(x,z)  e^{-\mu |z-Lx_0|}  e^{-\frac {\mu}2 (|y-Lx_0|- \text{dist}(z-Lx_0, 2L^2 \Z) ) }  \, \mathrm{d}z \notag \\
&\le e^{-\mu \text{dist}(x-y, 2L^2 \Z)  }+3e^{-\frac{\mu}{4} \text{dist}(x-y, 2L^2 \Z)  }\int_{\T_{L^2}} G_{1,  x_0 }(x,z)  e^{-\mu |z-Lx_0|}  e^{\frac \mu2 \text{dist}(z-Lx_0, 2L^2 \Z) )}  \, \mathrm{d}z.
\label{expcor4}
\end{align}

\noi
As before, since the integrand in \eqref{expcor4} is in the space $\cj V^{L^2, \text{Re}}_{x_0,0} $ defined in \eqref{Vbar}, by the elliptic regularity theorem and Sobolev embedding, the integral in \eqref{expcor4} is bounded uniformly in $x$.
This implies that  under $|y-Lx_0|> \frac 12 \text{dist}(x-y, 2L^2 \Z) $,
\begin{align}
|G_{1,  x_0 }(x,y)| \les 2e^{-\frac{\mu}{4} (\text{dist}(x-y, 2L^2 \Z)}. 
\label{expcor1}
\end{align}

\noi 
By combining \eqref{expcor0} and \eqref{expcor1}, we obtain the decay of correlations \eqref{eqn: G-est}.

\end{proof}

\subsection{Regularity of Green's function}

In this subsection, we present the regularity properties of the Green's function associated with the Schr\"odinger operators in \eqref{SCHOP}
\begin{align*}
G_{i,  x_0 }(x,y):=\E_{ \nu_{C_i^Q, x_0}^\perp }\Big[\phi(x)\cj \phi(y)\Big] 
\end{align*}

\noi 
for $i=1,2$. These regularity estimates for the Green's function are used to study the regularity of the Gaussian field $h^\perp$ under the measure $\nu^\perp_{Q, x_0}$ defined in \eqref{GFFSch} associated with the Schr\"odinger operators in \eqref{SCHOP}.

\begin{proposition}\label{PROP:regular}
Let $L \ge 1$. There exists a constant $C>0$, independent of $L$, such that for $i=1,2$,
\begin{align}
\sup_{x\in \T_{L^2}} |G_{i,x_0}(x,y)-G_{i,x_0}(x,z)|&\le C|y-z| \notag \\
\sup_{y\in \T_{L^2}} |G_{i,x_0}(x,y)-G_{i,x_0}(z,y)|&\le C|x-z|,
\label{reguGreen}
\end{align}

\noi
uniformly in $x_0\in\T_{L^2}$ and $\dr \in [0,2\pi]$.
\end{proposition}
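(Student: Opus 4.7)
The plan is to reduce the Lipschitz regularity of $G_{i,x_0}$ to that of the Ornstein--Uhlenbeck Green's function $G_{\textup{OU}}$ (defined in \eqref{GOUdef}) via the resolvent identity already used in the proof of Lemma \ref{lem: covariance bound}. I will focus on the case $i=1$ (the case $i=2$ is identical), and establish the estimate in the $y$-variable; the estimate in $x$ then follows from the self-adjointness of $C_{1,x_0}^Q$, equivalently the symmetry $G_{1,x_0}(x,y)=G_{1,x_0}(y,x)$.

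The first step is to verify that $G_{\textup{OU}}$ satisfies the analogous Lipschitz bound uniformly in $L$ and $x_0$. By the decomposition \eqref{Z11}, $G_{\textup{OU}}$ is the sum of the periodic kernel $\frac{1}{2}\sum_{n\in\Z}e^{-|x-y-2L^2 n|}$ and a finite number of rank-one corrections whose factors are either $Q^{\eta_L}_{i,x_0}$ or $(-\dx^2+\Ld)^{-1} Q^{\eta_L}_{j,x_0}$. The first kernel has $y$-derivative bounded in absolute value by $\frac{1}{2}\sum_{n\in\Z}e^{-|x-y-2L^2 n|}$, which is uniformly bounded for $L\ge 1$. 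The rank-one corrections are smooth and, thanks to Remark \ref{REM:SOL}, the normalizing constants $\|Q^{\eta_L}_{x_0}\|_{L^2}$ and $\|\dd_{x_0}Q^{\eta_L}_{x_0}\|_{H^1_\Ld}$ in \eqref{TQF1}--\eqref{TQF3} converge to the corresponding strictly positive constants for $Q$, so these terms are uniformly bounded in $C^1(\T_{L^2}\times \T_{L^2})$ as well.

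The second step is to use the companion of \eqref{Res2},
\begin{equation*}
G_{1,x_0}(x,y)=G_{\textup{OU}}(x,y)-3\int_{\T_{L^2}}G_{1,x_0}(x,w)(Q^{\eta_L}_{x_0})^2(w)\,G_{\textup{OU}}(w,y)\,\mathrm{d}w,
\end{equation*}
which follows from $B_1^Q=(-\dx^2+\Ld)-3(Q^{\eta_L}_{x_0})^2$ and the projection structure. Taking the difference in $y$ yields
\begin{equation*}
G_{1,x_0}(x,y)-G_{1,x_0}(x,z)=[G_{\textup{OU}}(x,y)-G_{\textup{OU}}(x,z)]-3\!\int_{\T_{L^2}}\!G_{1,x_0}(x,w)(Q^{\eta_L}_{x_0})^2(w)[G_{\textup{OU}}(w,y)-G_{\textup{OU}}(w,z)]\,\mathrm{d}w.
\end{equation*}
The first term is bounded by $C|y-z|$ by Step~1. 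For the integral, Step 1 again gives $|G_{\textup{OU}}(w,y)-G_{\textup{OU}}(w,z)|\le C|y-z|$, while the pointwise bounds $|G_{1,x_0}(x,w)|\les e^{-\mathrm{dist}(x-w,2L^2\Z)}$ from Lemma \ref{lem: covariance bound} and $(Q^{\eta_L}_{x_0})^2(w)\les e^{-c|w-Lx_0|}$ from the soliton decay force
\begin{equation*}
\int_{\T_{L^2}}|G_{1,x_0}(x,w)|(Q^{\eta_L}_{x_0})^2(w)\,\mathrm{d}w\les \int_{\T_{L^2}} e^{-c|w-Lx_0|}\,\mathrm{d}w\les 1,
\end{equation*}
uniformly in $x,x_0\in\T_{L^2}$ and $L\ge 1$. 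Combining these gives the desired estimate \eqref{reguGreen} with a constant independent of $L$.

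The only delicate point is the uniform $C^1$-boundedness of the projection corrections in Step~1, since the operator norms of the projectors $\P^L_{j,x_0}$ involve the normalizations in \eqref{TQF1}--\eqref{TQF3}; this is precisely where Remark \ref{REM:SOL} is used. Once Step~1 is in place, the remaining argument is the straightforward resolvent computation above.
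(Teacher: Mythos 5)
Your argument is correct, but it proceeds along a genuinely different route from the paper. The paper also starts from the resolvent identity $G_{1,x_0}=G_{\textup{OU}}-3\,G_{1,x_0}(Q^{\eta_L}_{x_0})^2G_{\textup{OU}}$ and dismisses the regularity of $G_{\textup{OU}}$ as "easy to verify", but it then bounds the $x$-derivative of the perturbation kernel by a duality/operator-norm argument: it factors $\partial_x G_{1,x_0}(Q^{\eta_L}_{x_0})^2 G_{\textup{OU}}(1-\dx^2)^{\frac12+\eps}$ as a map $L^2\to L^\infty$, controls $\|\partial_x G_{1,x_0}\|_{L^2\to L^\infty}$ through the elliptic $H^2$-estimate for $B_1^Q$ (whose uniform-in-$L$ constant rests on the positivity in Lemma \ref{LEM:positive}) together with Sobolev embedding, and closes with the $L$-uniform bound $\|(1-\dx^2)^{-\frac12-\eps}f\|_{L^2}\lesssim\|f\|_{L^1}$ proved by Riemann-sum approximation. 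You instead difference the resolvent identity directly in the $y$-variable — choosing the orientation so that the difference falls only on $G_{\textup{OU}}$ — and then need nothing beyond the uniform Lipschitz bound for $G_{\textup{OU}}$ (which you extract carefully from the projector expansion \eqref{Z11}, i.e.\ you actually prove the step the paper waves through), the already-established uniform pointwise bound on $G_{1,x_0}$ from Lemma \ref{lem: covariance bound}, the integrability of $(Q^{\eta_L}_{x_0})^2$, and the symmetry of the covariance kernel for the $x$-variable estimate. Your route is more elementary and self-contained at the level of kernel estimates, and it avoids invoking elliptic regularity and the duality bookkeeping; the paper's route, on the other hand, delivers the quantitatively stronger statement $\|\partial_x G_{1,x_0}\|_{L^2\to L^\infty}\le C'$ uniformly in $L$, which is a reusable smoothing estimate rather than just Lipschitz continuity of the kernel. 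One small point worth making explicit in your write-up is that the choice of orientation of the resolvent identity matters: with the factor $G_{\textup{OU}}$ on the $y$-side your argument closes immediately, whereas the other orientation would require a bootstrap; you made the right choice, and your appeal to symmetry of the Gaussian covariance kernel legitimately transfers the bound to the $x$-variable.
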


\begin{proof}
We prove only the case  $i=1$. Thanks to the resolvent identity, we can  treat the Schr\"odinger operator  $C_{1, x_0}^Q$  as a perturbation of $\mathbf{P}_{\cj V^{L^2, \text{Re}}_{x_0,0}} (1-\dx^2)^{-1}\mathbf{P}_{\cj V^{L^2, \text{Re}}_{x_0,0}}$ as follows 
\begin{align*}
G_{1,  x_0 }(x,y)&=G_{\text{OU}}(x,y)-3\int_{\T_{L^2 }} G_{1,x_0 }(x,z)    (Q^{\eta_L}_{x_0})^2(z) G_{1,  x_0 }(z,y)\, \mathrm{d}z\\
&=G_{\text{OU} }(x,y)-3   (G_{1,x_0 }(Q^{\eta_L}_{x_0})^2 G_{\text{OU} })(x,y),
\end{align*}

\noi
where $G_{\text{OU} }(x,y)$ is the Green’s function for the operator $\mathbf{P}_{\cj V^{L^2, \text{Re}}_{x_0,0}} (1-\dx^2)^{-1}\mathbf{P}_{\cj V^{L^2, \text{Re}}_{x_0,0}}$. It is easy to verify that  $G_{\text{OU} }(x,y)$ satisfies the regularity estimate in \eqref{reguGreen}. Therefore, it suffices to establish the estimate for the perturbation term $(G_{1,  x_0 }(Q^{\eta_L}_{x_0})^2G_{\text{OU} } )(x,y)$.

\noi
From the duality, we have 
\begin{align}
&\sup_{x\in \T_{L^2}}\| \partial_x  G_{1,x_0 }(Q^{\eta_L}_{x_0})^2 G_{\text{OU}}(x,\cdot) \|_{L^\infty} \notag \\
&= \sup_{x\in \T_{L^2}}\sup_{ \| f\|_{L^1} \le 1 }  \bigg| \int \partial_x  G_{1,x_0 }(Q^{\eta_L}_{x_0})^2 G_{\text{OU}}(x,y) f(y) \, \mathrm{d}y \bigg| \notag \\
&\le \sup_{ \| f\|_{L^1} \le 1 } \sup_{x\in \T_{L^2}} \bigg| \int \partial_x  G_{1,x_0 }(Q^{\eta_L}_{x_0})^2 G_{\text{OU}}(x,y) f(y) \, \mathrm{d}y \bigg|.
\label{REG0}
\end{align}

\noi
Note that 
\begin{align}
& \sup_{x\in \T_{L^2}}  \bigg| \int \big(\partial_x  G_{1,x_0 }(Q^{\eta_L}_{x_0})^2 G_{\text{OU}}(x,y) \big) f(y) \, \mathrm{d}y \bigg| \notag \\
&\le  \|\partial_x G_{1,x_0 }(Q^{\eta_L}_{x_0})^2 G_{\text{OU}}  (1-\dx^2)^{\frac{1}{2}+\eps}\|_{L^2\rightarrow L^\infty}\|(1-\dx^2 )^{-\frac{1}{2}-\eps}f\|_{L^2}.
\label{REG1}
\end{align}

\noi
Here, we have 
\begin{equation}\label{eqn: Linftychain}
\begin{split}
&\|\partial_x G_{1,x_0 }(Q^{\eta_L}_{x_0})^2 G_{\text{OU}}  (1-\dx^2)^{\frac{1}{2}+\eps}\|_{L^2\rightarrow L^\infty}\\
&\le \|Q_{x_0}^{\eta_L}\|_{L^\infty} \|\partial_x G_{1,x_0}\|_{L^2\rightarrow L^\infty} \| G_{\text{OU}}  (1-\dx^2)^{\frac{1}{2}+\eps}\|_{L^2\rightarrow L^2}\\
&\les  \|\partial_x G_{1,x_0}\|_{L^2\rightarrow L^\infty}.
\end{split}
\end{equation}

\noi
Recall that the operator $B_1^Q=-\dx^2-3(Q^{\eta_L})_{x_0}^2+1$ is invertible on the space $\cj V^{L^2, \text{Re}}_{x_0,0}$.  Hence,  from the elliptic regularity theory, we have
\begin{align}
\|\partial_{x}^2 u_f\|_{L^2}^2+\|u_f\|_{L^2}^2\le C\|\P_{\cj V^{L^2, \text{Re}}_{x_0,0}} f\|_{L^2}^2,
\label{ELLIP}
\end{align}

\noi
where $B^Q_{1}u_f=\P_{\cj V^{L^2, \text{Re}}_{x_0,0}} f$. Then, from  Sobolev embedding and \eqref{ELLIP}, we have that for any $g \in L^2$
\begin{equation*}
\begin{split}
\sup_{x\in \mathbb{R}}\Big|\int \partial_x G_{1,x_0}(x,y)g(y)\,\mathrm{d}y\Big|& \le C'\|\partial_x u_g\|_{H^{\frac{1}{2}+\eps}}\\
&\le C'\|u_g\|_{H^2}\\
&\le C'\| \P_{\cj V^{L^2, \text{Re}}_{x_0,0}}  g\|_{L^2}.
\end{split}
\end{equation*}

\noi 
This implies that  
\begin{align}
\|\partial_x G_{1,x_0}\|_{L^2\rightarrow L^\infty}\le C'.
\label{REG2}
\end{align}

\noi 
Furthermore, using the Riemann sum approximation, we have
\begin{equation}\label{eqn: sobolevL2}
\begin{split}
\|(1-\dx^2 )^{-\frac{1}{2}-\eps }f\|_{L^2(\T_{L^2})}&=\left(\sum_{n\in \mathbb{Z}} \frac{|\widehat{f}(n)|^2}{(1+ (n/{L^2})^2)^{1+2\eps}}\right)^{\frac{1}{2}} \\
&\le \left(\sum_{n\in \mathbb{Z}} \frac{1}{(1+(n/L^2)^2)^{1+2\eps}} \cdot \frac 1{L^2}\right)^{\frac{1}{2}} \|f\|_{L^1} \\
&\le C\|f\|_{L^1},
\end{split}
\end{equation}

\noi
where the Fourier transform is defined as $\ft f(n)=\frac{1}{\sqrt{L^2}}\int_{\T_{L^2}}f(x) e^{-2\pi i \frac{n}{L^2} x}\, \mathrm{d}x $, with respect to the $L^2(\T_{L^2})$-normalized orthonormal basis, and $C$ is independent of $L \ge 1$.

\noi 
Combining  \eqref{REG0}, \eqref{REG1}, \eqref{eqn: Linftychain}, \eqref{REG2}, and \eqref{eqn: sobolevL2} yields 
\begin{align*}
\sup_{x\in \T_{L^2}}\| \partial_x  G_{1,x_0 }(Q^{\eta_L}_{x_0})^2 G_{\text{OU}}(x,\cdot) \|_{L^\infty} \le \sup_{ \|f \|_{L^1}} C\| f\|_{L^1}=C.
\end{align*}

\noi
This completes the proof of Proposition \ref{PROP:regular}.

\end{proof}

\noi

\begin{ackno}\rm
The work of P.S. is partially supported by NSF grants DMS-1811093 and DMS-2154090. The authors would like to thank Bjoern Bringmann for several conversations and Nikolay Tzvetkov for helpful comments on an earlier version of this manuscript.
\end{ackno}

\end{document}